\theoremstyle{plain}
\newtheorem*{theorem*}{Theorem}
\newtheorem{theorem}{Theorem}[section]
\newtheorem{lem}[theorem]{Lemma}
\newtheorem{prop}[theorem]{Proposition}
\newtheorem{cor}[theorem]{Corollary}
\theoremstyle{definition}
\newtheorem{convention}[theorem]{Convention}
\newtheorem{defn}[theorem]{Definition}
\newtheorem{ex}[theorem]{Example}
\newtheorem{rmk}[theorem]{Remark}
\newtheorem*{conjecture}{Conjecture}
\theoremstyle{remark}
\numberwithin{equation}{section}
\newcommand\mathcircled[1]{%
  \mathpalette\@mathcircled{#1}%
}
\newcommand\@mathcircled[2]{%
  \tikz[baseline=(math.base)] \node[draw,circle,inner sep=1pt,color=red] (math) {$\m@th#1#2$};%
}
\newcommand{\et}{\textnormal{\'et}}
\newcommand{\nis}{\textnormal{Nis}}
\newcommand{\tame}{\textnormal{tame}}
\newcommand{\adic}{\textnormal{adic}}
\newcommand{\id}{\operatorname{id}}
\newcommand{\pr}{\operatorname{pr}}
\newcommand{\Hom}{\operatorname{Hom}}
\newcommand{\Sch}{\operatorname{Sch}}
\newcommand{\algspc}{\operatorname{AlgSpc}}
\newcommand{\affsch}{\operatorname{AffSch}}
\newcommand{\Sm}{\operatorname{Sm}}
\renewcommand{\sp}{\operatorname{sp}}
\newcommand{\da}{\mathbf{DA}^{\et}}
\newcommand{\deret}{\mathbf{D}^{\et}}
\newcommand{\sh}{\mathbf{SH}_{\mathfrak{M}}^{\tau}}
\newcommand{\shbb}{\mathbb{SH}_{\mathfrak{M}}^{\tau}}
\newcommand{\shbbct}{\mathbb{SH}_{\mathfrak{M},\mathrm{ct}}^{\tau}}
\newcommand{\shct}{\mathbf{SH}_{\mathfrak{M},\mathrm{ct}}^{\tau}}
\newcommand{\qush}{\mathbf{QUSH}_{\mathfrak{M}}^{\tau}}
\newcommand{\qushct}{\mathbf{QUSH}_{\mathfrak{M},\mathrm{ct}}^{\tau}}
\newcommand{\mnor}{\textnormal{M}}
\renewcommand{\th}{\operatorname{Th}}
\newcommand{\Fscr}{\mathscr{F}}
\newcommand{\Gscr}{\mathscr{G}}
\newcommand{\Rscr}{\mathscr{R}}
\newcommand{\Ex}{\operatorname{Ex}}
\newcommand{\gm}{\mathbb{G}}
\newcommand{\mscr}{\mathscr{M}}
\newcommand{\var}{\operatorname{Var}}
\newcommand{\preshv}{\mathbf{PreSh}}
\newcommand{\shv}{\mathbf{Sh}}
\newcommand{\sets}{\mathbf{Sets}}
\newcommand{\colim}{\operatorname{colim}}
\newcommand{\flag}{\mathcal{F}\ell}
\newcommand{\mfrak}{\mathfrak{M}}
\newcommand{\spect}{\mathbf{Spect}}
\newcommand{\Spec}{\operatorname{Spec}}
\DeclareSymbolFont{cyrletters}{OT2}{wncyr}{m}{n}
\DeclareMathSymbol{\Sha}{\mathalpha}{cyrletters}{"58}
\DeclareMathSymbol{\Be}{\mathalpha}{cyrletters}{"42}
\title[The integral identity conjecture in motivic homotopy theory]{The integral identity conjecture in motivic homotopy theory}  
\author[Khoa Bang. P]{Khoa Bang Pham}
\address{The Hong Kong University of Science and Technology\newline \indent Clear Water Bay, Kowloon, Hong Kong}
\email{phamkb@ust.hk}
\thanks{}
\keywords{Motivic Nearby Functors, The Integral Identity, Constant Term Functors}
\subjclass[2020]{14B05, 14F42, 32S30}
\begin{document}           
\begin{abstract}
   The integral identity conjecture of Kontsevich and Soibelman plays an important role in proving the existence of motivic Donaldson-Thomas invariants for three-dimensional noncommutative Calabi-Yau manifolds. There are a number of different formulations of this conjecture in different contexts, and accordingly, there are corresponding solutions to them. The methods devoted to solving this conjecture are diverse, ranging from $\ell$-adic cohomology of rigid analytic varieties to Hrushovski-Kazhdan motivic integration and motivic Fubini theorem for tropicalization maps,... In \cite{florian-2024}, Ivorra deduces a functorial version of the integral identity in the motivic stable homotopy categories of schemes, from the Braden hyperbolic localization theorem. This functorial version concerns Ayoub's nearby cycles functor associated with a $\gm_m$-equivariant function $f \colon \mathbb{V}(\mathcal{E}) \longrightarrow \mathbb{A}^1$ on a vector bundle $\mathbb{V}(\mathcal{E})$ over a field of characteristic zero. In the present work, we follow the functorial approach from \cite{florian-2024} and extend the scope of the original conjecture by Kontsevich and Soibelman by studying more generally the case of $\gm_m$-equivariant functions on algebraic $S$-spaces with a $\tau$-locally linearizable action of $\gm_m$ over a noetherian base scheme $S$.
\end{abstract}
\maketitle                 


\section*{Introduction}
\subsection*{State of the art}
In algebraic geometry, \textit{Donaldson-Thomas invariants} is an invariant named after Thomas and his advisor Donaldson. In the thesis \cite{thomas-2000}, Thomas introduces numbers called Donaldson-Thomas invariants associated with moduli spaces of coherent sheaves on compact Calabi-Yau threefold. These numbers can be defined as integrals of the cohomological class $1$ over the virtual fundamental class. In \cite{behrend-2009}, Behrend realized that Donaldson-Thomas invariants can be computed as integrals of some constructible functions, called \textit{Behrend functions}. By \cite{joyce+song-2012}, one knows that the moduli space of coherent sheaves can be locally represented as the critical locus of the Chern-Simons functional and in such a case the value of the Behrend function is (up to a sign) the Euler characteristic of the Milnor fiber of the corresponding Chern-Simons functional. In \cite{kontsevich+soibelman-2008}, by replacing Milnor fibers with motivic Milnor fibers (living in Grothendieck rings of varieties) in the sense of Denef-Loeser \cite{denef+loeser-1998}, generalized to formal schemes, Kontsevich and Soibelman study the \textit{motivic Donaldson-Thomas invariants} for compact Calabi-Yau threefolds, which are motivic refinements of numerical Donaldson-Thomas invariants. In \cite{kontsevich+soibelman-2008}, the authors know that motivic Donaldson-Thomas invariants are special values of an important homomorphism from the motivic Hall algebra to the motivic quantum torus ring. By using the motivic Thom-Sebastiani theorem \cite{denef+loeser-1999-2}\cite{looijenga-2002}, generalized to formal schemes, the multiplicative property of such homomorphism is equivalent to the \textit{motivic integral identity}, formulated as follows
\begin{conjecture}[Kontsevich-Soibelman's integral identity for regular functions]
Let $k$ be a field of characteristic zero. Let $X$ be a $k$-variety. Let $d_1,d_2$ be nonnegative integers, let $\gm_{m,k}$ acts on $\mathbb{A}_k^{d_1} \times_k \mathbb{A}_k^{d_2} \times_k X$ with positive weights on the first factor, negative weights on the second factor and trivially on $X$. Let $f \colon \mathbb{A}_k^{d_1} \times_k \mathbb{A}_k^{d_2} \times_k X \longrightarrow \mathbb{A}_k^1$ be a $\gm_{m,k}$-equivariant morphism, where $\mathbb{A}_k^1$ is endowed with the trivial $\gm_{m,k}$-action. Let $X$ be embedded in $\mathbb{A}_k^{d_1} \times_k \mathbb{A}_k^{d_2} \times_k X$ by zero sections and let $f_{\mid X}$ be the restriction of $f$ on $X$. The equality
\begin{equation*}
    \int_{\mathbb{A}^{d_1}_k}(\psi_f)_{\mid \mathbb{A}^{d_1}_k \times_k Y} = \mathbf{L}^{d_1}(\psi_{f\mid Y})
\end{equation*}
holds in $\mathscr{M}^{\hat{\mu}}_X$. 
\end{conjecture}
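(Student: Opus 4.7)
The plan is to lift the conjectured identity from the Grothendieck ring $\mscr^{\hat{\mu}}_X$ to a functorial isomorphism in a motivic stable homotopy category (such as $\sh(Y)$ or $\da(Y)$), deduce it from the motivic Braden hyperbolic localization theorem applied to Ayoub's motivic nearby cycles functor $\psi_f$, and realize back to $\mscr^{\hat{\mu}}_X$ via $\rat$. This follows the strategy of Ivorra in \cite{florian-2024}.

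First I would fix the geometry. Write $Y := \Abb_k^{d_1} \times_k \Abb_k^{d_2} \times_k X$; the $\gm_m$-hypotheses identify the fixed locus as $Y^0 = X$ (the common zero section), the attractor as $Y^+ = \Abb_k^{d_1} \times_k X$ (positive-weight coordinates survive as $t \to 0$), and the repeller as $Y^- = \Abb_k^{d_2} \times_k X$. Denote by $i^0, i^\pm$ the closed embeddings into $Y$ and by $p^\pm \colon Y^\pm \to X$ the $\gm_m$-equivariant retractions. The $\gm_m$-equivariance of $f$ together with the trivial action on $\Abb^1$ forces $f \circ i^\pm = f|_X \circ p^\pm$: on the attractor and repeller, $f$ factors through its restriction to the fixed locus, which is the crucial geometric input.

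Next, work with $\psi_f(\mathbf{1}_Y) \in \sh(Y_0)$, Ayoub's motivic nearby cycles. The equivariance of $f$ places this object in the $\gm_m$-monodromic subcategory and endows it with the $\hat{\mu}$-monodromy that produces the target ring structure $\mscr^{\hat{\mu}}_X$. On monodromic objects over $Y_0$ the motivic Braden theorem provides a canonical isomorphism
\begin{equation*}
 (p^+)_! (i^+)^\ast \;\xrightarrow{\;\sim\;}\; (p^-)_\ast (i^-)^!.
\end{equation*}
Applying it to $\psi_f(\mathbf{1}_Y)$ and combining with the six-operations compatibilities of $\psi_f$---smooth $\ast$-base change along $p^\pm$ (via $f|_{Y^\pm} = f|_X \circ p^\pm$) and the closed $!$-base change along $i^\pm$---both sides can be expressed in terms of $\psi_{f|X}(\mathbf{1}_X)$: the left as $(p^+)_! (p^+)^\ast \psi_{f|X}(\mathbf{1}_X)$, the right as $(p^-)_\ast (p^-)^\ast \psi_{f|X}(\mathbf{1}_X)$ with a Thom twist coming from purity for the regular closed embedding $i^-$, of codimension $d_1$.

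Finally, realizing through $\rat$ to $\mscr^{\hat{\mu}}_X$, the left side of Braden's isomorphism corresponds by construction to $\int_{\Abb_k^{d_1}}(\psi_f)_{|\Abb_k^{d_1}}$, while on the right the codimension-$d_1$ Thom isomorphism for $i^-$ produces exactly the Tate twist $\mathbf{L}^{d_1}$ multiplying $\psi_{f|X}(\mathbf{1}_X)$, yielding the conjectured equality. The main obstacle is to establish the motivic Braden theorem together with its full compatibility with $\psi_f$ and with the $\hat{\mu}$-monodromy in the generality required by the paper, namely algebraic $S$-spaces with $\tau$-locally linearizable $\gm_m$-actions over a noetherian base scheme $S$. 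Hyperbolic localization is most natural on the quotient stack $[Y/\gm_m]$, and $\tau$-local linearizability is precisely the condition allowing a $\tau$-local reduction to the classical vector bundle picture; the motivic compatibility of the Bernstein isomorphism with constant term functors, proved as the auxiliary result mentioned in the abstract, plays the role of the classical Bernstein-type ingredient feeding into Braden's theorem.
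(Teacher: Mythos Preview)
Your overall architecture is the paper's: lift to a categorical isomorphism for Ayoub's $\Psi_f$, use Braden hyperbolic localization on equivariant objects, and realize down to $\mscr^{\hat{\mu}}_X$. The geometric identification $Y^0=X$, $Y^\pm=\Abb^{d_1}\times X$ resp.\ $\Abb^{d_2}\times X$, and the factorization $f\circ e^\pm=f|_X\circ \pi^\pm$ are also correct and used in the paper.

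The substantive gap is in your ``six-operations compatibilities of $\psi_f$'' step. You invoke base change along the closed immersions $e^\pm$, i.e.\ an isomorphism $(e^+_\sigma)^*\Psi_f\simeq\Psi_{f\circ e^+}(e^+_\eta)^*$ (the transformation $\alpha_{e^+}$) or $(e^-_\sigma)^!\Psi_f\simeq\Psi_{f\circ e^-}(e^-_\eta)^!$ (the transformation $\nu_{e^-}$). In the specialization-system axiomatics, $\alpha_g$ is only known to be an isomorphism for \emph{smooth} $g$ and $\beta_g$ for \emph{proper} $g$; neither $\alpha$ along a closed immersion nor its $!$-adjoint $\nu$ is an isomorphism in general. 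So you cannot simplify $(e^\pm)^{*}/^{!}\Psi_f(\mathds{1})$ individually to something built from $\Psi_{f|_X}$ in the way you sketch. The paper is explicit about this: the required arrows ``in general cannot be deduced from the given base change morphisms of nearby functors \dots\ unless in some special cases like vector bundles.'' What the paper does instead is to commute the \emph{composite} $L^+=(\pi^+)_!(e^+)^*$ with $\Psi$, proving
\[
(\pi^+_\sigma)_!(e^+_\sigma)^*\Psi_f \;\simeq\; \Psi_{f^0}(\pi^+_\eta)_!(e^+_\eta)^*
\]
directly at the level of algebraic derivators, by unfolding $\Psi$ as $(\mathscr{R}^{\tame,I}\bullet\chi)$ and checking compatibility of the Braden transformation with each constituent operation $(p_I)_\#$, $i^*$, $(j\circ\theta)_*$, $(\theta)^*(p_I)^*$ on diagrams of spaces (Section~3 and Theorem~\ref{compatible of Braden transformations with four operations}). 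Only after this commutation is established does one simplify $L^+_\eta(\mathds{1})$ via the Braden contraction lemma and the Thom isomorphism for $s^+$ (not for $e^-$), and finally use that $\Psi_{f^0}$ commutes with Thom twists.

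A minor point: your last paragraph conflates two independent results. The compatibility of Bernstein isomorphisms with constant term functors is an \emph{application} of the same commutation technique to affine Grassmannians (Section~5), not an ingredient in the proof of the integral identity.
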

The original conjecture stated in \cite{kontsevich+soibelman-2008} is the case of the formal completion $X = \mathbb{A}_k^{d_3}$ and $f$ acts by weights $(1,-1,0)$. Let us review some processes of solving this conjecture in literature. In \cite{kontsevich+soibelman-2008} and \cite{thuong-2019}, the $\ell$-adic version with $X = \mathbb{A}_k^{d_3}$ and weights $(1,-1,0)$ is solved by Kontsevich, Soibelman and Lê. In \cite{thuong-2015}, using Hrushovski-Kazhdan motivic integration, Lê proved that the integral identity conjecture for algebraically closed fields of characteristic zero holds in some localization of $\mathscr{M}_k^{\hat{\mu}}$ (see also \cite{thuong-2012}). In \cite{nicaise-2019}, using motivic Fubini theorem for the tropicalization map, Nicaise and Payne remove this localization and improve the result to general $X$ and arbitrary weights over fields of characteristic zero containing all roots of unity. In \cite{le+nguyen-2020}, after developing the equivariant motivic integration, the authors also prove the conjecture for regular functions without additional hypothesis. Currently, Bu in \cite{bu-2024} shows that the conjecture is true for $(-1)$-shifted symplectic stacks (over $k$). These approaches have some common points: they are formulated in the virtual setting, namely, Grothendieck rings and more or less use motivic integration (over a field of characteristic zero) along with the tools it provides. Our purpose of this work is to obtain the functorial analogue of this conjecture in categories of motives (in contrast to virtual motives), after the recent work of Ivorra \cite{florian-2024}.

The theory of nearby cycles also exists in motivic homotopy theory thanks to the work  \cite{ayoub-thesis-2}. It is therefore reasonable to expect that the integral identity holds in the motivic world. Recently, in \cite{florian-2024}, Ivorra tackles the conjecture in this setting for vector bundles over $X$ with $X$ a varieties over a field $k$ of characteristic zero. The method used in Ivorra's work replies on the theory of spaces with $\gm_m$-actions. In comparison to those methods above, Ivorra's approach is functorial and holds more generally for any stable homotopical $2$-functor in the sense of \cite{ayoub-thesis-1}. Under Euler characteristics with compact support (see \cite{florian+julien-2013} or \cite{ayoub+florian+julien-2017}), Ivorra's result recovers the results above in the corresponding Grothendieck rings of variaties. Under good hypothesis, one can also eliminate the independence on characteristic and work with general base scheme; which, to our best knowledge, is not available in the virtual setting.

Let us say few words about the theory of spaces $X$ with $\gm_m$-actions. These are central objects in the study \cite{bialynicki+birula-1973}. Any such space "decomposes" into three parts: the space of fixed points $X^0$, the points $X^+$ floating to $X^0$, and the points $X^-$ floating away from $X^0$. In \cite{braden-2003}, Braden proved a theorem on localizing $\gm_m$-equivariant objects on $X$ to the subspace $X^0$. This theorem, called the \textit{Braden hyperbolic localization theorem}, is then quickly used in a number of places, especially in geometric representation theory and geometric Langlands. Subsequent studies after Braden's work can be found in \cite{drinfeld+gaitsgory-2014}\cite{drinfeld-2015} and \cite{richarz-2018}. In this work, we follow \cite{richarz-2018} mostly. Together with the formalism of operations of diagrams of algebraic spaces, we prove a functorial version of the integral identity for a variety of specialization system (not just nearby functors), of which nearby cycles functors is a special case. 

\subsection*{Main results} Let us state the following main result (see theorem \ref{compatible of Braden transformations with sp induced by derivators}) of this work, which can be viewed as a functorial generalization of the virtual integral identity and a motivic analogue of \cite[Theorem 3.3]{richarz-2018} (in fact, by taking \'etale realizations, one obtains \cite[Theorem 3.3]{richarz-2018})
\begin{theorem*} 
  Let $S$ be a noetherian scheme of finite dimension and $\sh$ the stable motivic homotopy category. Let $\Psi$ denote the nearby functors of Ayoub in \cite{ayoub-thesis-2}. Let $X$ be an algebraic $S$-space endowed with an $\tau$-locally linearizable $\gm_{m,S}$-action and, $X^0$ be its space of fixed points. Given a $\gm_{m,S}$-equivariant morphism $f \colon X \longrightarrow \mathbb{A}^1_S$ where the target is endowed with the trivial $\gm_{m,S}$-action, then there exists a commutative diagram
   \begin{equation*}
        \begin{tikzcd}[sep=large]
            (\pi^-_{\sigma})_*(e^-_{\sigma})^!\Psi_f(A) \arrow[d] & \Psi_{f^0}(\pi^-_{\eta})_*(e^-_{\eta})^!(A) \arrow[d] \arrow[l] \\ 
            (\pi^+_{\sigma})_!(e^+_{\sigma})^*\Psi_f(A) \arrow[r]  & \Psi_{f^0} (\pi_{\eta}^+)_!(e^+_{\eta})^*(A)
        \end{tikzcd}
    \end{equation*}
   natural in $A \in \sh(X_{\eta})$ with $f^0$ the restriction of $f$ to $X^0$. Moreover, all arrows are isomorphisms provided that $A$ is equivariant.
\end{theorem*}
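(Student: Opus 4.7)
The plan is to construct the four arrows of the square separately and then verify commutativity, using a motivic Braden hyperbolic localization theorem for the vertical arrows and the exchange structure of Ayoub's nearby cycles with the six functors for the horizontal arrows. The $\gm_{m,S}$-equivariance of $f$ together with the trivial action on $\mathbb{A}^1_S$ guarantees that the Bialynicki--Birula attractor/repeller diagrams $X^0 \xleftarrow{\pi^\pm} X^\pm \xrightarrow{e^\pm} X$ restrict compatibly to the generic and special fibers of $f$, producing the two subscripted diagrams present in the statement. The $\et$-local linearizability hypothesis lets one, after an étale cover, reduce to a linear action on a vector bundle, where $e^\pm$ is a closed immersion of a sub-bundle, $\pi^\pm$ is a vector-bundle projection onto $X^0$, and the formalism enjoys the expected proper/smooth properties.

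The horizontal arrows are the canonical exchange transformations between $\Psi$ and the compositions $(\pi^\pm_?)_{*/!}(e^\pm_?)^{!/*}$. They exist because $\Psi$ is a specialization system in the sense of Ayoub: for any morphism $g$ there are canonical natural transformations relating $\Psi$ with $g^*$ and $g_!$, and hence, by adjunction, with $g_*$ and $g^!$. Composing these for $g = \pi^\pm$ and $g = e^\pm$ yields both horizontal arrows, with naturality in $A$ automatic. In the linear model, $\pi^\pm$ is projective and $e^\pm$ is a closed immersion, so proper base change and support-exchange for $\Psi$ give concrete and tractable formulas for these maps.

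The vertical arrows are the motivic Braden transformations on $X_\sigma$ and $X_\eta$ respectively: these are the canonical maps $(\pi^-_?)_*(e^-_?)^! \to (\pi^+_?)_!(e^+_?)^*$ in $\sh$ whose existence follows from Richarz's construction, extended from schemes to algebraic spaces via $\et$-descent. Commutativity of the square then amounts to the compatibility of Braden's transformation with the specialization system structure of $\Psi$. To prove it, I would decompose Braden's map into its standard building blocks --- units and counits of adjunctions, base change isomorphisms, and the projection formula --- and compare, block by block, with the exchange morphisms of $\Psi$; each such comparison reduces to a standard compatibility in Ayoub's formalism. I expect this step to be the main obstacle: not because any single verification is hard, but because of the careful bookkeeping of six-functor exchange diagrams that it demands.

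For the second assertion, note that $\Psi$ preserves $\gm_{m,S}$-equivariance (being functorial in a way compatible with the group action), so $\Psi_f(A)$ on $X_\sigma$ is equivariant whenever $A$ is. The vertical arrows are then isomorphisms by the motivic Braden theorem applied fibrewise. For the horizontal arrows, on equivariant objects the required exchanges reduce, after étale descent to the linear model, to proper base change for $\pi^\pm$ and smooth base change for $e^\pm$ against $\Psi$ --- both available in Ayoub's formalism --- so they too are isomorphisms, and the proof is complete.
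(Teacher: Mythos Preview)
Your construction of the horizontal arrows from the specialization-system base-change maps $\alpha_{e^+},\mu_{\pi^+}$ (bottom) and $\beta_{\pi^-},\nu_{e^-}$ (top) is legitimate, and the identities $f^0\circ\pi^{\pm}=f\circ e^{\pm}$ (a consequence of equivariance of $f$ and triviality of the action on $\mathbb{A}^1_S$) make the composites well-typed. The genuine gap is in the final step, where you claim the horizontal arrows become isomorphisms on equivariant objects ``by proper base change for $\pi^{\pm}$ and smooth base change for $e^{\pm}$''. In the linear (vector-bundle) model, $e^{\pm}$ is a closed immersion, not smooth, and $\pi^{\pm}$ is an affine vector-bundle projection, not proper (let alone projective). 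Hence $\alpha_{e^+}$ and $\beta_{\pi^-}$ are \emph{not} isomorphisms by the axioms of a specialization system, and your decomposition does not yield invertibility of the horizontal arrows. The paper flags precisely this obstruction in the introduction: the existence (and, a fortiori, invertibility) of these arrows ``cannot be deduced from the given base change morphisms of nearby functors \ldots\ unless in some special cases like vector bundles''.

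The paper circumvents this by abandoning the specialization-system base-change maps altogether and instead unfolding $\Psi_f$ as the composite $(p_I)_{\#}\,i_f^*\,(j_f\theta_f)_*\,\theta_f^*\,(p_I)^*$ in the algebraic-derivator formalism on \emph{diagrams} of spaces. The key technical inputs are: (i) a compatibility theorem (Theorem~\ref{compatible of Braden transformations with four operations}) giving $2$-morphisms between $L^{\pm}$ and each of $(f,\alpha)^*$, $(f,\alpha)_*$, $(p_I)_{\#}$ at the diagram level; and (ii) the verification (Proposition~\ref{hyperbolic localization diagram of all specialization systems considered}) that $\mathrm{HypLoc}(\mathscr{R}_X^{\tame,I},I)=\mathrm{HypLoc}(X_\eta,I)\times_{(X_\eta,I)}(\mathscr{R}_X^{\tame,I},I)$, so that (i) actually applies. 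The square is then assembled by applying (i) to each factor of the composite; at every stage one of the two horizontal arrows is an isomorphism for structural reasons (functoriality, smooth base change for $\theta_f$ and $p_I$, or the diagram-level $\Ex^*_!$), and the vertical arrows are isomorphisms on equivariant objects by the Braden theorem for diagrams (Theorem~\ref{braden theorem}). This forces all four arrows to be isomorphisms in the affine case (Proposition~\ref{compatible of Braden transformations with sp induced by derivators}). Passage to general $\tau$-locally linearizable $X$ then requires a separate descent cube (Proposition~\ref{descent of sp}) showing that the bottom arrow is compatible with $(u_i^0)^*$ for an equivariant $\tau$-cover $u_i$; this is not a formality and is an ingredient your outline does not supply.
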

We stress to the fact that the existence of the arrows in the theorem above is non-trivial and in general cannot be deduced from the given base change morphisms of nearby functors (see \cite[Definition 3.1.1]{ayoub-thesis-2}) unless in some special cases like vector bundles considered in \cite{florian-2024}. Their existences are consequences of the geometry of fixed points, attractors and repellers together with the content of the appendix together. We also note that in \cite{cass+hove+scholbach-2024}, the authors obtained a similar result for unipotent nearby functors, though they use different tools from ours. The theorem above implies the following (for details, see theorem \ref{categorical integral identity}, corollary \ref{monodromic categorical integral identity} and corollary \ref{the virtual integral identity})
\begin{theorem*}
  Let $S$ be a noetherian scheme of finite dimension. Let $Y$ be an algebraic $S$-space endowed with the trivial $\gm_{m,S}$-action.  Let $d_1,d_2$ be nonnegative integers, let $\gm_{m,S}$ acts on $X = \mathbb{A}_S^{d_1} \times_S \mathbb{A}_S^{d_2} \times_S Y$ with positive weights on the first factor, negative weights on the second factor and trivially on $Y$. Let $f \colon \mathbb{A}_S^{d_1} \times_S \mathbb{A}_S^{d_2} \times_S Y \longrightarrow \mathbb{A}_S^1$ be a $\gm_{m,S}$-equivariant morphism, where $\mathbb{A}_S^1$ is endowed with the trivial $\gm_{m,S}$-action. Then there is an isomorphism
   \begin{equation*}
       \int_{\mathbb{A}_S^{d_1}} \big(\Psi_f(\mathds{1}_{X_{\eta}}) \big)_{\mid \mathbb{A}^{d_1}_S \times_k X} \simeq \mathds{1}_{Y_{\sigma}}(-d_1)[-2d_1] \otimes \Psi_{f_{\mid Y}}(\mathds{1}_{Y_{\eta}}),
  \end{equation*}
in $\sh(Y_{\sigma})$. In particular, if $S = \Spec(k)$ with $k$ a field of characteristic zero, then after \cite{ayoub+florian+julien-2017}\cite{florian+julien-2013}, there is an identity
\begin{equation*}
    \int_{\mathbb{A}^{d_1}_k}(\psi_f)_{\mid \mathbb{A}^{d_1}_k \times_k Y} = \mathbf{L}^{d_1}(\psi_{f\mid Y})
\end{equation*}
in the ring $K_0(\sh(Y))$.
\end{theorem*}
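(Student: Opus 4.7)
The plan is to deduce this statement directly from the first theorem of the excerpt by taking $A = \mathds{1}_{X_{\eta}}$, which carries a canonical equivariant structure, and simplifying the two sides of the resulting commutative square using standard six-functor identities.

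The first step is to make the Bialynicki-Birula geometry of the $\gm_{m,S}$-action on $X = \mathbb{A}^{d_1}_S \times_S \mathbb{A}^{d_2}_S \times_S Y$ completely explicit. Since the weights are strictly positive on the first factor, strictly negative on the second and trivial on $Y$, I identify
\begin{equation*}
X^0 = \{0\} \times \{0\} \times Y \simeq Y, \quad X^+ = \mathbb{A}^{d_1}_S \times_S Y, \quad X^- = \mathbb{A}^{d_2}_S \times_S Y,
\end{equation*}
with $\pi^+ \colon X^+ \to Y$ the projection, $e^+ \colon X^+ \hookrightarrow X$ the zero section in the $\mathbb{A}^{d_2}$-factor, and symmetrically for the repeller; in particular $f^0 = f_{|Y}$. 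I also check that $X$ inherits $\tau$-local linearizability from $Y$: any linearizing $\tau$-chart $U \to Y$ produces, by base change, a linearizing $\tau$-chart $\mathbb{A}^{d_1}_S \times_S \mathbb{A}^{d_2}_S \times_S U \to X$ on which the action is already globally linear. In case (1), $\tau$ is the Nisnevich topology and the first theorem applies to any $\sh$; in case (2), $\tau$ is the étale topology and étale descent on $\sh$ is provided by $\da(-,\Lambda)$.

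Applying the first theorem to $A = \mathds{1}_{X_{\eta}}$ then gives an isomorphism
\begin{equation*}
(\pi^+_{\sigma})_!(e^+_{\sigma})^*\Psi_f(\mathds{1}_{X_{\eta}}) \xrightarrow{\sim} \Psi_{f_{|Y}}\bigl((\pi^+_{\eta})_!(e^+_{\eta})^* \mathds{1}_{X_{\eta}}\bigr),
\end{equation*}
whose left-hand side is exactly $\int_{\mathbb{A}^{d_1}_S}\bigl(\Psi_f(\mathds{1}_{X_{\eta}})\bigr)_{\mid \mathbb{A}^{d_1}_S}$ in the notation of the statement. For the right-hand side, $(e^+_{\eta})^*\mathds{1}_{X_{\eta}} = \mathds{1}_{X^+_{\eta}}$; and since $\pi^+_{\eta}$ is the base change to $Y_{\eta}$ of the structural map $p \colon \mathbb{A}^{d_1}_S \to S$, base change for the exceptional pushforward yields
\begin{equation*}
(\pi^+_{\eta})_! \mathds{1}_{X^+_{\eta}} \simeq \mnor^{\vee}_c(\mathbb{A}^{d_1}_S) \otimes \mathds{1}_{Y_{\eta}}.
\end{equation*}
Using that $\Psi_{f_{|Y}}$ is $\sh(S)$-linear, i.e.\ commutes with tensoring against a motive pulled back from the base, one obtains the claimed isomorphism.

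The closing $K_0$-identity for $S = \Spec(k)$ in characteristic zero is then obtained by applying the Euler characteristic with compact support constructed in \cite{florian+julien-2013}\cite{ayoub+florian+julien-2017}: under this realization $\mnor^{\vee}_c(\mathbb{A}^{d_1}_k)$ becomes $\mathbf{L}^{d_1}$, the nearby functor $\Psi$ recovers Denef-Loeser's motivic nearby fiber $\psi$, and tensor products become products in the Grothendieck ring. The main obstacle in the argument is really the first step, namely identifying $X^0, X^+, X^-$ and their structure maps in a form directly compatible with the apparatus of the first theorem, together with the verification that $X$ inherits $\tau$-local linearizability from $Y$; once that point is secured, the remainder reduces to base change, the projection formula and the $\sh(S)$-linearity of $\Psi$.
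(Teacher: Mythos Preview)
Your argument is correct and follows essentially the same route as the paper: apply the main commutation result to $A=\mathds{1}_{X_\eta}$ and then simplify the right-hand side. The one difference worth flagging is how that simplification is justified. You compute $(\pi^+_\eta)_!\mathds{1}_{X^+_\eta}$ directly as $\mnor^\vee_c(\mathbb{A}^{d_1}_S)\otimes\mathds{1}_{Y_\eta}$ and then appeal to ``$\sh(S)$-linearity of $\Psi$''; as stated this is stronger than what holds for general specialization systems (they are only lax monoidal), though it is harmless here because $\mnor^\vee_c(\mathbb{A}^{d_1}_S)$ is a Tate object and specialization systems always commute with Tate twists. The paper avoids this phrasing by instead using the Braden contraction lemma to rewrite $(\pi^+_\eta)_!(e^+_\eta)^*\mathds{1}_{X_\eta}$ as $(s^+_\eta)^!(\pi^+_\eta)^*\mathds{1}_{X^0_\eta}=\th^{-1}(\pi^+_\eta,s^+_\eta)\mathds{1}_{X^0_\eta}$, and then invoking the lemma that specialization systems commute with Thom equivalences to transport $\th^{-1}$ to the $\sigma$-side. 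Both arguments compute the same twist; the paper's route via Thom equivalences is just the more uniform formulation.
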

This theorem is an extension of \cite[Corollary 3.2]{florian-2024} to the case where $Y$ is an algebraic space and not only a scheme. We note that in a recent work \cite{bu-2024}, Bu proves that the integral identity is true for stacks with affine stabilizers (in particular, algebraic spaces) using methods that are closely related to the ones used in this work. In other words, our work can be seen as a functorial version of what are done in \cite{bu-2024} but we emphasize on the fact that our isomorphism holds not only in characteristic zero (as done in \cite{florian-2024} and \cite{bu-2024}) but also in positive characteristic, where virtual nearby cycles do not make sense. 

\subsection*{Another result}

Let us mention another result which is of independent interest. In the \'etale setting, the authors \cite{richarz-2021} show that there exists a functorial analogue of the compatibility of the Bernstein isomorphism with the constant term map in \cite{haines-2014}. Here we prove a motivic version of such a result that realizes to the one in \cite[Theorem A]{richarz-2021} via the $\ell$-adic realization. 
\begin{theorem*}
Let $F$ be a non-archimedean local field with ring of integers $R$. Let $G$ be a connected reductive $F$-group with a choice of Levi $M$. Let $\mathcal{G}$ be a parahoric $R$-group scheme with generic fiber $G$. Given a cocharacter $\lambda \colon \gm_{m,R} \longrightarrow \mathcal{G}$ whose centralizer $\mathcal{M}$ is a parahoric $R$-group with generic fiber $M$, then there exists a natural transformation
 \begin{equation*}
     \operatorname{CT}_{\mathcal{M}} \circ \Psi^{\mathrm{tot}}_{\mathcal{G}} \longrightarrow \Psi^{\mathrm{tot}}_{\mathcal{M}} \circ \operatorname{CT}_M
 \end{equation*}
of functors $\da(\operatorname{Gr}_G,\mathbb{Q}) \longrightarrow \da(\flag_{\mathcal{M},\overline{\sigma}},\mathbb{Q})$, where $\operatorname{CT}_?$ denotes constant term functors and $\Psi^{\mathrm{tot}}_?$ denotes total nearby functors, $\operatorname{Gr}_?$ and $\flag_?$ are affine Grassmannians and flag varieties, respectively. The transformation is an isomorphism on $\gm_{m,F}$-equivariant objects, where the conjugation action is induced by $\lambda$. 
\end{theorem*}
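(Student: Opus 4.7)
The plan is to realize the diagram as an instance of the hyperbolic localization picture used in the first main theorem, applied not to a family over $\mathbb{A}^1$ but to the generic-to-special specialization of a parahoric Beilinson--Drinfeld Grassmannian over the trait $\Spec(R)$. Concretely, set $S = \Spec(R)$ and $X = \operatorname{Gr}_{\mathcal{G}}$, equipped with the $\gm_{m,S}$-action obtained from $\chi$ via left multiplication (or equivalently conjugation, which on $\operatorname{Gr}_{\mathcal{G}}$ coincides with it up to an inner automorphism). The first step is geometric: identify the fixed, attractor and repeller loci of this action. By a standard calculation (extending to the parahoric integral model the generic fibre computation recalled in \cite{richarz-2018}), one has canonical identifications
\begin{equation*}
   X^0 \simeq \operatorname{Gr}_{\mathcal{M}}, \qquad X^+ \simeq \operatorname{Gr}_{\mathcal{P}}, \qquad X^- \simeq \operatorname{Gr}_{\mathcal{P}^-},
\end{equation*}
where $\mathcal{P}, \mathcal{P}^-$ are the opposed parahoric parabolic $R$-group schemes cut out by $\chi$ and having common Levi $\mathcal{M}$. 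On the generic fibre one recovers $\operatorname{Gr}_G \supset \operatorname{Gr}_P, \operatorname{Gr}_{P^-} \to \operatorname{Gr}_M$, while on the special fibre one recovers the corresponding affine flag varieties $\flag_{\mathcal{G},\overline{\sigma}} \supset \flag_{\mathcal{P},\overline{\sigma}}, \flag_{\mathcal{P}^-,\overline{\sigma}} \to \flag_{\mathcal{M},\overline{\sigma}}$. Under these identifications, the constant term functors $\operatorname{CT}_M$ and $\operatorname{CT}_{\mathcal{M}}$ are precisely the attractor functors $(\pi^+)_!(e^+)^*$ appearing in the bottom row of the commutative square of the first main theorem.

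Next, one handles the fact that $\operatorname{Gr}_{\mathcal{G}}$ is only ind-finite type. The $\gm_{m,R}$-action preserves an exhaustion $\operatorname{Gr}_{\mathcal{G}} = \colim_n Y_n$ by closed $R$-subschemes of finite type (for instance the Schubert type filtration, which is $\chi$-stable because $\chi$ factors through a bounded subgroup); on each $Y_n$ the attractor/repeller/fixed loci are the intersections with the corresponding loci of $\operatorname{Gr}_{\mathcal{G}}$. One then verifies $\et$-local linearizability of the $\gm_{m,R}$-action on each $Y_n$: this is the analogue for parahoric Schubert varieties of the classical verification carried out in \cite{richarz-2018}, and can be reduced to linearizability on the finite-dimensional affine charts $L^+\mathcal{G}/L^+\mathcal{G}_n$ via loop group quotients. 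Passing to the ind-colimit, which commutes with $!$ pushforward along ind-proper morphisms and with $*$ pullback, transfers the hyperbolic localization statement from each $Y_n$ to $\operatorname{Gr}_{\mathcal{G}}$.

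With these geometric inputs in hand, one applies the first main theorem with $\mathbb{A}^1_S$ replaced by the trait $\Spec(R)$ and $\Psi_f$ replaced by Ayoub's total nearby cycles functor $\Psi^{\mathrm{tot}}$ associated with the structural morphism $\operatorname{Gr}_{\mathcal{G}} \to \Spec(R)$ and a choice of uniformizer (this adaptation is available in Ayoub's formalism since only the henselian trait structure is used in the construction). The bottom row of the commutative square in the theorem then reads, after the identifications of Step~1,
\begin{equation*}
   \operatorname{CT}_{\mathcal{M}} \circ \Psi^{\mathrm{tot}}_{\mathcal{G}}(A) \longrightarrow \Psi^{\mathrm{tot}}_{\mathcal{M}} \circ \operatorname{CT}_{M}(A),
\end{equation*}
which is the desired natural transformation. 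The equivariant part of the first main theorem then gives that this map is an isomorphism on $\gm_{m,F}$-equivariant objects, the conjugation action being induced by $\chi$, as claimed.

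The main obstacle in this argument is twofold. First, setting up the hyperbolic localization machinery of the first main theorem on the ind-scheme $\operatorname{Gr}_{\mathcal{G}}$ requires showing that the $\et$-local linearizability hypothesis, which is the crucial assumption of the appendix, holds at each finite step and is compatible with transition maps; for parahoric group schemes of non-constant type this is delicate and essentially forces one to work with the $\et$-case of the main theorem, hence with $\da(-,\mathbb{Q})$ rather than a general $\sh$. Second, identifying attractors and repellers for $\chi$ on the integral model $\operatorname{Gr}_{\mathcal{G}}$ with the corresponding parahoric ind-schemes $\operatorname{Gr}_{\mathcal{P}}$, $\operatorname{Gr}_{\mathcal{P}^-}$ in a way that is compatible with specialization from the generic to the special fibre is the heart of the geometric input; this is done in the $\ell$-adic case in \cite{richarz-2021} and the proof transports verbatim provided one works with the motivic six functor formalism of $\da$.
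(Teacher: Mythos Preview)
Your proposal is correct and follows essentially the same route as the paper: identify $(\operatorname{Gr}_{\mathcal{G}})^0,(\operatorname{Gr}_{\mathcal{G}})^{\pm}$ with $\operatorname{Gr}_{\mathcal{M}},\operatorname{Gr}_{\mathcal{P}^{\pm}}$, reduce along an equivariant ind-presentation, invoke the \'etale (in fact Zariski, by \cite[Lemma 3.3]{richarz-2021}) local linearizability, and apply the commutation of Braden transformations with the nearby functor. The only organizational difference is that the paper does not ``replace $\mathbb{A}^1_S$ by the trait'' directly; it first proves the commutation for the tame nearby functor over $\mathbb{A}^1_S$ on ind-schemes and then obtains the $\Psi^{\mathrm{tot}}$ statement by passing to the homotopy colimit over finite extensions (Corollary \ref{richarz-thm61-analogue}), which is how $\Psi^{\mathrm{tot}}$ is built in Example \ref{total nearby functors}.
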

\subsection*{Structure of the manuscript}
In section {\color{blue}\S 1}, we remind the reader on various specialization systems that are of our interest in this paper. In section {\color{blue}\S 2}, we study diagrams of algebraic spaces with $\gm_m$-actions and the commutation of three operations $((f,\alpha)_{\#},(f,\alpha)^*,(f,\alpha)_*)$ with Braden transformations at the level of diagrams of algebraic spaces (in fact, we can do this with $(f_!,f^!)$ but we do not need this so we omit it). We then state the Braden hyperbolic localization theorem for diagrams of algebraic spaces and prove it by making various reductions to the case of schemes. In the same section, by taking \'etale realization, we recover results in \'etale setting. In section {\color{blue} \S 3}, we would like to show the commutation of Braden transformations with specialization systems. In order to do this, we use the commutation of five operations with Braden transformtions in the previous section. The commutation is applied to deduce the desired functorial motivic integral identity in section {{\color{blue}\S 4}; in the same section, we define quasi-unipotent motives of algebraic spaces and demonstrate that our non-virtual results realize to the virtual ones as in \cite{florian-2024}\cite{bu-2024}. In section {\color{blue}\S 5}, we extend specialization systems to ind-schemes and deduce the commutation of nearby functors on affine Grassmannians and flag varieties with constant term functors.

Along with the main theorems, we develop a four-functor formalism for algebraic spaces in the appendix {\color{blue}\S 6} with a view towards algebraic derivators and specialization systems. This formalism is implicitly used throughout the work. More concretely, we generalize results in \cite{chowdhury-2024}\cite{chowdhury+angelo-2024} and \cite{khan+ravi-2024} for $\mathbf{SH}$ to general $\sh$ (including $\da$, the category of \'etale motives). We study operations at the level of algebraic derivators (in the sense of \cite{ayoub-thesis-2}) in order to have the language to develop the theory of specialization systems for algebraic spaces in section {\color{blue}\S 1} as well as the commutation of Braden transformations with operations. In order to make the formalism quickly accessible and compatible with \cite{ayoub-thesis-2}, we use the traditional approach of stable model categories instead of the approach using stable $\infty$-categories. However, given the equivalence between presentable $\infty$-categories and combinatorial, simplicial model categories, two approaches are formally the same. 

\subsection*{Acknowledgement} This paper is a part of the PhD project of the author when he was a PhD student at University of Rennes under the supervision of Florian Ivorra and Julien Sebag. The author would like to express the deepest gratitude to his supervisors for their vision, support and patience throughout his years as a PhD student. The author also thanks Joseph Ayoub for answering his questions on algebraic derivators during the preparation of this work. Furthermore, the author thanks the anonymous referee for his careful reading and valuable comments. Finally, this paper was revised while the author was a postdoc in the group of Quoc Ho at the Hong Kong University of Science and Technology, supported by the Hong Kong RGC GRF grants 16304923 and 16301324.
\section*{Conventions}

\subsection*{Spaces} For a scheme $S$, we denote by $\mathcal{O}_S$ its structural sheaf. An $S$-variety is a separated $S$-scheme of finite type, a morphism of $S$-varieties is a morphism of $S$-schemes.  By an algebraic $S$-space $X$, we mean a sheaf
\begin{equation*}
    X \colon (\Sch_S)^{\mathrm{op}}_{\text{fppf}} \longrightarrow \mathbf{Sets}
\end{equation*}
on the fppf-site with representable diagonal and admitting a surjective \'etale morphism $U \longrightarrow X$ with $U$ being a scheme. In this article, we assume all algebraic spaces are noetherian and separated over $S$. In particular, quasi-separatedness shows that there exists a Nisnevich atlas $V \longrightarrow X$ with $V$ an (affine) scheme (see \cite[Theorem 6.4]{knutson-1971}).

\subsection*{Ind-schemes} Let $\operatorname{AffSch}$ denote the category of affine schemes. A \textit{(strict) ind-scheme} is a presheaf $X \colon \affsch^{\mathrm{op}} \longrightarrow \sets$ which admits a presentation $X \simeq \colim_{i \in I}X_i$ as a filtered colimits of schemes $X_i$ with closed transitions $\iota_{i \to j} \colon X_i \longrightarrow X_j$. 

If \textbf{P} is a property of schemes, then an ind-scheme $X \simeq \colim_{i \in I}X_i$ is said to be ind-\textbf{P} if each $X_i$ is \textbf{P}. Similarly, if \textbf{P} is a property of morphisms of schemes, then a morphism $X \longrightarrow Y$ of ind-schemes is said to be ind-\textbf{P} if it is a colimit of $X_i \longrightarrow Y_i$, each one is \textbf{P}. We say that a schematic morphism $f \colon X \longrightarrow Y$ is \textit{cartesian} if there exists a presentation $X \simeq \colim_{i \in I}X_i$ so that $Y \simeq \colim_{i \in I} (X_i \times_X Y)$ is a presentation of $Y$. 
 
\subsection*{Categories and base change} We assume that the reader is familiar with base change notations in \cite{ayoub-thesis-1}, such as $\Ex_*^*,\Ex_!^*,$,... which have obvious meanings that the reader can guess easily. 

Throughout the article, we denote by $\sh$ the motivic stable homotopy categories of Morel-Voevodsky-Ayoub. In \cite{ayoub-thesis-1}\cite{ayoub-thesis-2}, $\sh$ is also extended to diagrams of schemes. In order to distinguish two cases, we write $\sh(X)$ for the case of schemes and $\shbb(\Fscr,I)$ for the case of diagrams of schemes. We refer to the appendix for more details and generalizations. 
\section{Specialization systems} 
The theory of specialization systems is developed in \cite{ayoub-thesis-2} as a generalization of the well-known nearby cycles functors. The definitions given in \cite{ayoub-thesis-2} extend directly to algebraic spaces with slight modifications. This section provides a short introduction to specialization systems as well as some typical examples, including motivic nearby functors. Given a diagram of algebraic $S$-spaces
\begin{equation*}
    \begin{tikzcd}[sep=large]
        \eta \arrow[r,"j"] & B & \sigma \arrow[l,"i",swap] 
    \end{tikzcd}
\end{equation*}
and for a morphism $f \colon X \longrightarrow B$ of algebraic $S$-spaces, we form the following diagram by base change 
\begin{equation*}
    \begin{tikzcd}[sep=large]
        X_{\eta} \arrow[r,"j_f"]  \arrow[d,"f_{\eta}",swap] & X  \arrow[d,"f"] & X_{\sigma} \arrow[d,"f_{\sigma}"] \arrow[l,"i_f",swap] \\ 
        \eta \arrow[r,"j"] & B & \sigma . \arrow[l,"i",swap]
    \end{tikzcd}
    \end{equation*} 
We make the following definition. 
\begin{defn} 
A \textit{specialization system} over $(B,\eta,\sigma)$ (or over $(B,j,i)$ if there is no confusion) is a collection of triangulated functors 
\begin{equation*} 
\sp_f \colon \sh(X_{\eta}) \longrightarrow \sh(X_{\sigma})
\end{equation*} 
associated with morphisms $f \colon X \longrightarrow B$ so that for another morphism $g \colon Y \longrightarrow X$, there are a natural transformation $\alpha_g$ and its adjoint $\beta_g$
\begin{equation*}
      \begin{tikzcd}[sep=large]
         \sh(X_{\eta}) \arrow[r,"\sp_f"] \arrow[d,"(g_{\eta})^*",swap] & \sh(X_{\sigma}) \arrow[Rightarrow, shorten >=27pt, shorten <=27pt, dl,"\alpha_g"] \arrow[d,"(g_{\sigma})^*"] \\ 
         \sh(Y_{\eta}) \arrow[r,"\sp_{f \circ g}"] &  \sh(Y_{\sigma}). 
    \end{tikzcd} \ \ \ \  \begin{tikzcd}[sep=large]
         \sh(Y_{\eta}) \arrow[r,"\sp_{f \circ g}"] \arrow[d,"(g_{\eta})_*",swap] & \sh(Y_{\sigma}) \arrow[d,"(g_{\sigma})_*"] \\ 
         \sh(X_{\eta}) \arrow[Rightarrow, shorten >=27pt, shorten <=27pt, ur,"\beta_g"] \arrow[r,"\sp_f"] &  \sh(X_{\sigma}).
         \end{tikzcd} 
\end{equation*}
We require $\alpha_g,\beta_g$ to be isomorphisms when $g$ is smooth, proper, respectively. The triplet $(B,\eta,\sigma)$ is called a \textit{base} when we work with specialization systems.
\end{defn} 
In \cite{bang-2024} (and \cite{ayoub-thesis-2} under the assumption on quasi-projectiveness of schemes), there are base change morphisms
\begin{equation*}
\begin{tikzcd}[sep=large]
         \sh(Y_{\eta}) \arrow[r,"\sp_{f \circ g}"] \arrow[d,"(g_{\eta})_!",swap] & \sh(Y_{\sigma}) \arrow[Rightarrow, shorten >=27pt, shorten <=27pt, dl,"\mu_g"] \arrow[d,"(g_{\sigma})_!"] \\ 
         \sh(X_{\eta}) \arrow[r,"\sp_f"] &  \sh(X_{\sigma})
    \end{tikzcd} \ \ \ \ \begin{tikzcd}[sep=large]
         \sh(X_{\eta}) \arrow[d,"(g_{\eta})^!",swap]\arrow[r,"\sp_f"] & \sh(X_{\sigma}) \arrow[d,"(g_{\sigma})^!"]   \\ 
         \sh(Y_{\eta})   \arrow[r,"\sp_{f \circ g}"] \arrow[Rightarrow, shorten >=27pt, shorten <=27pt, ur,"\nu_g"] &  \sh(Y_{\sigma}) 
    \end{tikzcd} 
    \end{equation*} 
defined by means of the Nagata compactification theorem. The effort in \cite{bang-2024} is devoted to prove that $\mu_g$ is well-defined (independent of the choice of the compactification). The formal proof showed in \cite{bang-2024} extends directly to the case of algebraic spaces so we may freely use them like when we work with schemes. Here are some examples that are of our interest. For any diagram $(\Fscr,I) \longrightarrow \eta$ of $\eta$-schemes and a specialization system $\sp$ over $(B,j,i)$, we form the diagram consisting of cartesian squares
    \begin{equation*}
    \begin{tikzcd}[sep=large]
       (\Fscr_X,I) \arrow[r,"\pi_f^{\Fscr}"] \arrow[d,"f_{\eta}^{\Fscr}",swap] & (X_{\eta},I) \arrow[r,"j_f"] \arrow[d,"f_{\eta}"] & (X,I) \arrow[d,"f"] & (X_{\sigma},I) \arrow[l,"i_f",swap] \arrow[d,"f_{\sigma}"] \arrow[r,"p_I"] & X_{\sigma} \arrow[d,"f_{\eta}"] \\ 
         (\Fscr,I) \arrow[r,"\pi_{\id}^{\Fscr}"] & (\eta,I)\arrow[r,"j"] & (B,I) & (\sigma,I) \arrow[r,"p_I"] \arrow[l,"i",swap]  & \sigma
    \end{tikzcd}
\end{equation*}
and define a specialization system $(\Fscr \bullet \chi)$ by the formula
\begin{equation*}
    (\Fscr \bullet \chi)_f \coloneqq (p_I)_{\#}\sp_f(\pi_f^{\Fscr})_*(\pi_f^{\Fscr})^*(p_I)^*
\end{equation*}
(it is a specialization system by \cite[Lemme 3.2.7, 3.2.8]{ayoub-thesis-2} and \cite{bang-2024}). Let us mention some concrete examples. The case $(B,\eta,\sigma) = (\mathbb{A}_S^1,\gm_{m,S},S)$ with $i$ the zero section and $j$ the canonical inclusion. Let $\Delta$ be the category of finite ordinals $\mathbf{n} = \left \{0 < 1 < \cdots < n \right \}$ and  $\mathbb{N}^{\times}$ be the set of non-zero natural numbers viewed as a category whose objects are non-zero natural numbers and morphisms are defined via the opposite of the division relation. In \cite[Definition 3.5.1]{ayoub-thesis-2}, Ayoub defines a diagram of $\gm_{m,S}$-schemes $\theta_{\id} \colon(\mathscr{R},\Delta \times \mathbb{N}^{\times}) \longrightarrow (\gm_{m,S},\Delta \times \mathbb{N}^{\times})$ such that $\mathscr{R}(\mathbf{n},r) = \gm_{m,S} \times_S (\gm_{m,S})^n$. The structural morphism is given by the composition
    \begin{equation*}
    \gm_{m,S} \times_S (\gm_{m,S})^n \overset{\pr_1}{\longrightarrow} \gm_{m,S} \overset{e_r}{\longrightarrow} \gm_{m,S}, 
    \end{equation*}
    in which $\pr_1$ is the projection on the first factor and the second one is the $r$-power morphism. Let $I \subset \Delta \times \mathbb{N}^{\times}$ be a subcategory, we consider the cartesian diagram 
    \begin{equation*}
        \begin{tikzcd}[sep=large]
           (\mathscr{R}^I,I) \arrow[r] \arrow[d] & (\gm_{m,S},I)\arrow[d] \\ 
           (\mathscr{R},\Delta \times \mathbb{N}^{\times}) \arrow[r] & (\gm_{m,S},\Delta \times \mathbb{N}^{\times}).
           \end{tikzcd} 
    \end{equation*}
By making suitble choices of $I$, we obtain corresponding specialization systems over $(\mathbb{A}_S^1,\gm_{m,S},S)$ that we are interested in. 

\begin{ex} \label{ex: standard specialization system} 
There is an obvious choice with $\sp_f = i_f^*(j_f)_*$ and $(\Fscr,I) = \eta$ is the identity and the resulting specialization system is called the \textit{standard specialization system}, denoted by $\chi_f$ in \cite{ayoub-thesis-2}.
\end{ex}
In the examples below, we always take $\sp_f$ to be $\chi_f$.
\begin{ex} \label{ex: unipotent nearby functors}
Let $I = \Delta \times \left \{1 \right \} \simeq \Delta$, the resulting specialization system is the unipotent nearby functors $\Upsilon_f$. Under appropriate hypotheses, the unipotent nearby functors are isomorphic to the log specialization system $\operatorname{log}$. For details, one can consult \cite{ayoub-thesis-2}\cite{ayoub-realisation-etale}. 
\end{ex}

\begin{ex} \label{tame nearby functors}
Let $I = \Delta \times \mathbb{N}^{'\times}$, with $\mathbb{N}^{'\times}$ the full subcategory of $\mathbb{N}^{\times}$ consisting of numbers invertible on $S$, the resulting specialization system is the \textit{tame nearby functors} $\Psi^{\tame}_f$ (in \cite{ayoub-realisation-etale}, it is called \textit{moderate nearby cycles functors}). We note that if $S$ is a scheme over $\mathbb{Q}$, then $\mathbb{N}^{'\times} = \mathbb{N}$ and in this case $\Psi^{\tame}_f$ is called \textit{motivic nearby functors} in \cite{ayoub-thesis-2}.
\end{ex}

\begin{ex} \label{adic nearby functors}
 The case $(B,\eta,\sigma) = (S,\eta,\sigma)$ with $S$ the spectrum of an excellent strictly henselian discrete valuation ring, $\eta$ its generic point and $\sigma$ its closed point. Let $\pi$ be a fixed uniformizer. The uniformizer $\pi$ induces a morphism $S \longrightarrow \mathbb{A}_S^1$. For morphisms $f \colon X \longrightarrow S$, the specialization systems $\Psi^{\adic}_f \coloneqq \Psi^{\tame}_{\pi \circ f}$ is called the \textit{$\pi$-adic nearby functors} (if $\pi$ is clear, we may just speak of adic nearby functors). Two notable cases are $S = \Spec(\mathbb{Z}_p)$ and $\sh = \da$ or $S = \Spec(k[\![t]\!])$ (with $k$ algebraically closed of characteristic zero) and $\sh$ can be either $\mathbf{SH}$ or $\da$. For details, one can consult \cite{ayoub-rigid-motive}\cite{ayoub+florian+julien-2017}.
\end{ex}

\begin{ex} \label{total nearby functors}
Let $K$ be a non-archimedean local field with ring of integers $R$ and finite residue field $k$ of characteristic $p$ and cardinality $q$, i.e., either $K/\mathbb{Q}_p$ is a finite extension or $K \simeq \mathbb{F}_q(\!(t)\!)$ is a local function field. We fix a uniformizer $\pi$ of the maximal ideal of $R$. For any morphism $f \colon X \longrightarrow S$ (with $S=\Spec(R)$). Fix a separable closure $K^{\mathrm{sep}}/K$ and let $K^{\mathrm{ur}}$ the maximal unramified extension of $K$ in $K^{\mathrm{sep}}$. Let $\overline{S} = \Spec(\overline{R})$ where $\overline{R}$ is the integral closure of $\mathcal{O}$ in $K^{\mathrm{sep}}$. Let $\overline{\sigma} = \Spec(\overline{k})$ where $\overline{k}$ is the residue field of $\overline{R}$, the integral closure of $R$. We let $K^{\mathrm{tr},p}$ be the union of finite extensions of $K^{\mathrm{ur}}$ of degree a power of $p$. With these data, we define the total nearby functor (following \cite[Definition 10.14]{ayoub-realisation-etale}) to be 
\begin{align*}
    \Psi_X^{\mathrm{tot}} \colon \da(X_{\eta},\Lambda) & \longrightarrow \da(X_{\overline{\sigma}},\Lambda) \\ 
    A & \longmapsto \operatorname{HoColim} (\Psi_{X \times_S T}^{\tame}(A_{\mid X_{\eta} \otimes_K L}))_{\mid X_{\overline{\sigma}}}
\end{align*}
where $\iota \colon \Spec(\overline{k}) \longrightarrow \Spec(k)$ is induced by the canonical inclusion. This is a specialization system over the base $(S,\eta,\overline{\sigma})$. For the reader would prefer a formula of $\Psi^{\mathrm{tot}}_X$ in terms of algebraic derivators, he can consult \cite{preis-2023}. 
\end{ex}

\section{The Braden hyperbolic localization theorem}

In this section, we extend the scope of the Braden hyperbolic localization theorem to diagrams of algebraic spaces. The applications we have in mind are those diagrams appearing in the definitions of specialization systems reminded in the preceding section. 
\subsection{Generalities on equivariance} We begin with extending the notion of $\gm_m$-equivariant objects (see for instance \cite{richarz-2018}) to the level of derivators and study their behaviors under several operations. 
\begin{defn}
    Let $I$ be a small category and $(\Fscr,I)$ be a diagram of $S$-varieties. A $\gm_{m,S}$-\textit{action} of $(\Fscr,I)$ is a cartesian morphism 
    \begin{equation*}
        a \colon \gm_{m,S} \times_S (\Fscr,I) \longrightarrow (\Fscr,I).
    \end{equation*}
Let $I,J$ be two small categories, let $(\Fscr,I)$ and $(\Gscr,I)$ be two diagrams equipped with $\gm_{m,S}$-action. A morphism $(f,\alpha) \colon (\Gscr,J) \longrightarrow (\Fscr,I)$ is said to be $\gm_{m,S}$-\textit{equivariant} if the diagram
    \begin{equation*}
        \begin{tikzcd}[sep=large]
            \gm_{m,S} \times_S (\Gscr,J) \arrow[r,"(\id{,}(f{,}\alpha))"] \arrow[d,"a",swap] &  \gm_{m,S} \times_S (\Fscr,I)  \arrow[d,"a"] \\ 
            (\Gscr,J) \arrow[r,"(f {,}\alpha) "] & (\Fscr,I)
        \end{tikzcd}
    \end{equation*}
    is cartesian. Since we only consider $\gm_{m,S}$-actions in this article, it is safe to just say equivariant morphisms instead of $\gm_{m,S}$-equivariant morphisms. 
\end{defn}

\begin{defn}
    Let $(\Fscr,I)$ be a diagram of $S$-varieties equipped with a $\gm_{m,S}$-action, we denote by 
    \begin{align*}
        a \colon \gm_{m,S} \times_S (\Fscr,I) & \longrightarrow (\Fscr,I) \\ 
        p \colon \gm_{m,S} \times_S (\Fscr,I) & \longrightarrow (\Fscr,I)
    \end{align*}
    the action and the projection, respectively. An object $A \in \shbb(\Fscr,I)$ is called \textit{equivariant} if $p^*(A)$ is isomorphic to $a^*(A)$ in $\shbb(\gm_{m,S} \times_S (\Fscr,I))$.
\end{defn} 

\begin{lem} \label{equivariance of six operations}
    Let $I,J$ be small categories and $(\Gscr,J),(\Fscr,I)$ be diagrams of $S$-varieties endowed with $\gm_{m,S}$-actions. Let $(f,\alpha) \colon (\Gscr,J) \longrightarrow (\Fscr,I)$ be an equivariant morphism between diagrams of $S$-varieties endowed with $\gm_{m,S}$-actions. 
    \begin{enumerate} 
    \item The operations $((f,\alpha)^*,(f,\alpha)_*)$ preserve equivariant objects. In particular, the operations $(i^*,i_*)$ preserve equivariant objects for any $i \in I$. 
    \item If $(f,\alpha)$ is smooth objectwise, then $(f,\alpha)_{\#}$ preserves equivariant objects. 
    \item If $\alpha = \id$ and $f$ is compactifiable, then $f_!$ preserves equivariant objects.
    \item If $I = J = \mathbf{e}$, i.e., $f$ is a morphism of schemes, then $f^!$ preserves equivariant objects.
    \end{enumerate}
\end{lem}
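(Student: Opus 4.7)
The proof of all four parts follows a single template: for each operation $T$, I would exhibit exchange isomorphisms $p^{*}T\simeq T'p^{*}$ and $a^{*}T\simeq T'a^{*}$ (where $T'$ denotes the analogous operation over the product with $\gm_{m,S}$), and then apply them to the equivariance isomorphism $p^{*}A\simeq a^{*}A$ to conclude $p^{*}T(A)\simeq a^{*}T(A)$, which is exactly the equivariance of $T(A)$.

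The key geometric observation is that the two squares
\begin{equation*}
\begin{tikzcd}[sep=large]
\gm_{m,S}\times_S(\Gscr,J) \arrow[r,"\id\times(f{,}\alpha)"] \arrow[d] & \gm_{m,S}\times_S(\Fscr,I) \arrow[d] \\
(\Gscr,J) \arrow[r,"(f{,}\alpha)"'] & (\Fscr,I)
\end{tikzcd}
\end{equation*}
obtained by taking the vertical arrows to be either the two actions $a$ or the two projections $p$ are both cartesian: the $a$-square by the equivariance hypothesis on $(f,\alpha)$, and the $p$-square tautologically. Moreover, both $a$ and $p$ are objectwise smooth, since each action map factors as the isomorphism $(t,x)\mapsto(t,tx)$ followed by the second projection.

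The needed exchange isomorphisms are then instances of standard base change formulas in the six-functor formalism on diagrams: for $T=(f,\alpha)^{*}$ it is functoriality of pullback; for $T=(f,\alpha)_{*}$ and, in part (4) where $I=J=\mathbf{e}$, for $T=f^{!}$, it is smooth base change against the smooth morphisms $a$ and $p$; for $T=(f,\alpha)_{\#}$ in part (2), it is the projection base change which is an isomorphism on every cartesian square once $(f,\alpha)$ is objectwise smooth (so that $_{\#}$ is defined); and for $T=f_{!}$ in part (3), the condition $\alpha=\id$ together with compactifiability allows Nagata compactification and reduces the claim to proper base change, which is again an isomorphism in every cartesian square. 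In each case the chain of isomorphisms $p^{*}T(A)\simeq T'p^{*}A\simeq T'a^{*}A\simeq a^{*}T(A)$ closes up.

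The only non-routine ingredient is to have the above base change formulas available in the four-functor formalism on diagrams of algebraic spaces in the generality needed here; this is precisely what is developed in the appendix. Granting that, the argument is formal. The specialization to the case $i^{*}, i_{*}$ for $i\in I$ in part (1) follows by regarding each object inclusion $i\colon\mathbf{e}\longrightarrow I$ as a trivially equivariant morphism of diagrams against the induced $\gm_{m,S}$-action on the corresponding fiber, and applying the general result.
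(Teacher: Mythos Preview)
Your proposal is correct and follows essentially the same route as the paper: both arguments reduce each case to an appropriate base change isomorphism along the two cartesian squares built from the action $a$ and the projection $p$, using that $a$ and $p$ are objectwise smooth. The paper is terser, simply pointing to \cite[Corollaire 2.4.21]{ayoub-thesis-1} for $(f,\alpha)_*$, \cite[Corollaire 2.4.24]{ayoub-thesis-2} for $(f,\alpha)_\#$, and Proposition~\ref{proper + smooth base change theorem for diagrams} for $f_!$; one point it stresses that you leave implicit is that $a$ and $p$ are \emph{cartesian} morphisms of diagrams (this is part of the definition of a $\gm_{m,S}$-action here), which is the hypothesis under which those Ayoub base change results for diagrams apply.
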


\begin{proof}
The proof is formally identical to \cite[Lemma 2.5]{richarz-2018} with slight modifications because we are working with diagrams. The case $(f,\alpha)^*$ follows from functoriality. The case $(f,\alpha)_*$ follows from \cite[Corollaire 2.4.21]{ayoub-thesis-1} (note that $a,p$ are cartesian). The case $(f,\alpha)_{\#}$ (with $(f,\alpha)$ smooth objectwise) follows from \cite[Corollaire 2.4.24]{ayoub-thesis-2} \footnote{We note that in \cite[Corollaire 2.4.24]{ayoub-thesis-2}, there is a typo, the correct base change morphism should be $(f',\alpha)_{\#}(g')^* \longrightarrow g^*(f,\alpha)_{\#}$.}. Finally, the case $f_!$ ($\alpha = \id$ and $f$ compactifiable) follows from proposition \ref{proper + smooth base change theorem for diagrams}.
\end{proof}
We deduce two corollaries on specialization systems. Let $(B,\eta,\sigma)$ be a base consisting of algebraic $S$-spaces endowed with the trivial $\gm_{m,S}$-action. 
\begin{cor}
  Let $X$ be an algebraic $S$-space endowed with a $\gm_{m,S}$-action. Let $f \colon X \longrightarrow B$ be an equivariant morphism. Given a diagram of algebraic $(\Fscr,I) \longrightarrow \eta$ or an object $A \in \sh(\eta)$, then specialization systems 
  \begin{equation*}
      (\Fscr \bullet \chi)_f\colon \sh(X_{\eta}) \longrightarrow \sh(X_{\sigma})
  \end{equation*}
  send equivariant objects to equivariant objects.  
\end{cor}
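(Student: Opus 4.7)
The plan is to unpack the defining formula
\begin{equation*}
(\Fscr \bullet \chi)_f = (p_I)_{\#}\,\chi_f\,(\pi_f^{\Fscr})_*\,(\pi_f^{\Fscr})^*\,(p_I)^*
\end{equation*}
and check that each of the five composed functors sends equivariant objects to equivariant objects, by invoking Lemma \ref{equivariance of six operations} factor by factor.

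First I would observe that, since $B,\eta,\sigma$ carry the trivial $\gm_{m,S}$-action and the index category $I$ is also trivially acted upon, the base changes $X_\eta$ and $X_\sigma$ inherit $\gm_{m,S}$-actions from that on $X$, and every morphism appearing in the large cartesian diagram defining $(\Fscr \bullet \chi)_f$ is $\gm_{m,S}$-equivariant --- in particular the projections $p_I$ (on both the $\eta$- and $\sigma$-sides) and $\pi_f^{\Fscr}$. Part (1) of Lemma \ref{equivariance of six operations} then handles the $*$-pullbacks $(p_I)^*,(\pi_f^{\Fscr})^*$ and the $*$-pushforward $(\pi_f^{\Fscr})_*$. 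For the outer $(p_I)_{\#}$ I would appeal to part (2) of the same lemma, noting that $p_I\colon (X_\sigma,I)\to X_\sigma$ is objectwise the identity, hence smooth objectwise.

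The main obstacle is the middle factor $\chi_f$, now viewed as a functor on the level of $I$-diagrams. I would treat this as a base-case input to the argument: in each of the specialization systems of interest in this paper --- the elementary choice $\chi_f = i_f^*(j_f)_*$, and through it the derived systems $\Upsilon$, $\Psi^{\tame}$, $\Psi^{\adic}$, $\Psi^{\mathrm{tot}}$ of Examples \ref{tame nearby functors}--\ref{total nearby functors} --- $\chi_f$ is assembled from $i_f^*$ and $(j_f)_*$ along the base changes $i_f,j_f$ of the trivially equivariant $i,j$, so these morphisms are equivariant and part (1) of Lemma \ref{equivariance of six operations} applies once more (extended componentwise over $I$, which does not affect the argument since the $\gm_{m,S}$-action on the index is trivial). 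Composing the five factors yields $(\Fscr \bullet \chi)_f(A)$ equivariant whenever $A$ is, as claimed.
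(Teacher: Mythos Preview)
Your argument is correct and follows essentially the same approach as the paper's proof: the paper endows $(\Fscr,I)$ with the trivial $\gm_{m,S}$-action so that all the structural morphisms become equivariant, and then observes that the defining formula for $(\Fscr \bullet \chi)_f$ is a composite of operations covered by Lemma \ref{equivariance of six operations}. Your write-up simply makes this explicit factor by factor, including the unwinding of $\chi_f$ as $i_f^*(j_f)_*$ in the cases of interest.
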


\begin{proof}
We endow $(\Fscr,I)$ with the trivial action, making the structural morphism $(\Fscr,I) \longrightarrow \eta$ equivariant. The equivariances become obvious since these specialization systems can be expressed in terms of functors preserving equivariant objects. 
\end{proof}
\subsection{The Braden transformation and the Braden hyperbolic localization theorem}
In this section, we define the so-called Braden transformation and study its interaction to four operations at the level of diagrams of varieties. Most of the results here are direct adaptations of section 3.1 in \cite{richarz-2018}. We make a convention first.
\begin{convention}
The notations $\mathbb{A}_S^1,(\mathbb{A}_S^1)^+,(\mathbb{A}_S^1)^-$ denote the same underlying scheme $\mathbb{A}_S^1$ but with different $\gm_{m,S}$-actions; respectively, they are the trivial action, the multiplication $(\lambda,x) \longmapsto \lambda x$ and its inverse $(\lambda,x) \longmapsto \lambda^{-1}x$. The schemes $\gm_{m,S},S$ are always assumed to be endowed with the trivial actions.
\end{convention}
\begin{defn} 
Let $X$ be an algebraic $S$-space with a $\gm_{m,S}$-action. Follow \cite{drinfeld+gaitsgory-2014}\cite{richarz-2018}, we can define three new spaces. If $T$ is an $S$-scheme, then we define the \textit{space of fixed points as}, the \textit{attractor}, the \textit{repeller}, respectively
\begin{align*}
X^0(T) & = \left \{\text{equivariant morphisms} \ T \longrightarrow X_T \right \} \\ 
    X^+(T) & = \left \{\text{equivariant morphisms} \ (\mathbb{A}^1_T)^+ \longrightarrow X_T\right \} \\ 
    X^-(T) & = \left \{\text{equivariant morphisms} \ (\mathbb{A}^1_T)^- \longrightarrow X_T\right \}.
\end{align*}
The structural morphisms $(\mathbb{A}^1_S)^{\pm} \longrightarrow S$ define morphisms $s^{\pm} \colon X^0 \longrightarrow X^{\pm}$. The zero sections $S \longrightarrow (\mathbb{A}_S^1)^{\pm}$ define morphisms $\pi^{\pm} \colon X^{\pm}  \longrightarrow X^0$ such that $\pi^{\pm} \circ s^{\pm} = \id$. Analogously, the inclusions $\gm_{m,S} \longhookrightarrow (\mathbb{A}_S^1)^{\pm}$ define morphisms $e^{\pm} \colon X^{\pm} \longrightarrow X$ such that $e^+ \circ s^+ = e^- \circ s^-$ is the inclusion functor $X^0 \longhookrightarrow X$. The commutative diagram
   \begin{equation*}
        \begin{tikzcd}[sep=large]
        X^0 \arrow[rrd,bend left = 15,"s^-"] \arrow[ddr,bend right = 15,"s^+",swap] \arrow[dr,"u"] & & &  \\ 
          &   X^+ \times_X X^-\arrow[r,"t^-"] \arrow[d,"t^+",swap] & X^{-} \arrow[d,"e^-"] \arrow[r,"\pi^-"] &  X^0\\ 
          &  X^+ \arrow[d,"\pi^+",swap] \arrow[r,"e^+"] & X  & \\ 
           & X^0  & &
        \end{tikzcd}
    \end{equation*}
 is called the \textit{hyperbolic localization diagram}, denoted $\mathrm{HypLoc}(X)$. By \cite[Proposition 1.17]{richarz-2018}, $s^+,s^-$ are closed immersions and $u=(s^+,s^-)$ is an open and closed immersion. 
\end{defn} 

\begin{defn}
For a diagram $(\Fscr,I)$ endowed with a $\gm_{m,S}$-action, we can define the \textit{space of fixed points} $(\Fscr^0,I)$, the \textit{attractor} $(\Fscr^+,I)$, the \textit{repeller} $(\Fscr^-,I)$ of $(\Fscr,I)$ objectwise, i.e., for $i$ in $I$
 \begin{equation*}
     \Fscr^0(i)  \coloneqq (\Fscr(i))^0 \ \ \ \ \Fscr^+(i)  \coloneqq (\Fscr(i))^+ \ \ \ \ \Fscr^-(i)  \coloneqq (\Fscr(i))^-.
 \end{equation*}
 We say that the \textit{hyperbolic localization diagram} of $(\Fscr,I)$ exists if there is a commutative diagram $\mathrm{HypLoc}(\Fscr,I)$ of the form
  \begin{equation*} \label{eq: hyperbolic localization diagram}
    \begin{tikzcd}[sep=large]
        (\Fscr^0,I) \arrow[rrd,bend left = 15,"s^-"] \arrow[ddr,bend right = 15,"s^+",swap] \arrow[dr,"u"] & & & \\ 
          &   (\Fscr^+,I) \times_{(\Fscr,I)} (\Fscr^-,I) \arrow[r,"t^-"] \arrow[d,"t^+",swap] \arrow[dr,phantom] & (\Fscr^{-},I) \arrow[d,"e^-"] \arrow[r,"\pi^-"] &  (\Fscr^0,I) \\ 
          &  (\Fscr^+,I) \arrow[d,"\pi^+",swap] \arrow[r,"e^+"] & (\Fscr,I) & \\ 
           &  (\Fscr^0,I)  & &
        \end{tikzcd}
    \end{equation*}
   where $\mathrm{HypLoc}(\Fscr,I)(i) \coloneqq \mathrm{HypLoc}(\Fscr(i))$ and all of the involved morphisms are strongly compactifiable. 
\end{defn}

\begin{defn} \label{Braden transformation}
With the notation above, the transformation 
\begin{align*}
      L^-_{(\Fscr,I)} =   (\pi^-)_*(e^-)^!  & \longrightarrow (\pi^-)_*(s^-)_*(s^-)^*(e^-)^! \overset{\sim}{\longrightarrow} u^*(t^-)^*(e^-)^! \\ 
     & \overset{\Ex^{!*}}{\longrightarrow} u^*(t^+)^!(e^+)^*  \overset{\sim}{\longrightarrow} u^!(t^+)^!(e^+)^* \\  
      & \overset{\sim}{\longrightarrow} (s^+)^!(e^+)^* \overset{\sim}{\longrightarrow} (\pi^+)_!(s^+)_!(s^+)^!(e^+)^*  \longrightarrow L^+_{(\Fscr,I)} = (\pi^+)_!(e^+)^* 
\end{align*}
is called a \textit{Braden transformation} and we denote by $\phi_{(\Fscr,I)} \colon  (\pi^-)_*(e^-)^! \longrightarrow (\pi^+)_!(e^+)^*$ this transformation.
\end{defn} 

The Braden theorem (see for instance \cite{braden-2003}\cite{richarz-2018} or \cite{florian-2024}) states that Braden transformations are isomorphisms on equivariant objects for nice spaces.
\begin{defn}
    Let $X$ be an algebraic $S$-space. A $\gm_{m,S}$-action on $X$ is said to be \textit{$\tau$-locally linearizable} if there exists a $\gm_{m,S}$-equivariant $\tau$-covering $\left \{U_i \longrightarrow X \right \}_{i \in I}$, where $U_i$ are $S$-affine schemes with $\gm_{m,S}$-actions.
\end{defn}

\begin{ex}  \label{linearizable of schemes}
Let us denote by Zar and \'et to indicate the Zariski and the \'etale topology, respectively.
\begin{enumerate} 
    \item Any affine $S$-variety is Zar-locally linearizable. 
    \item Let $k$ be an algebraically closed field and $U$ be a normal $k$-variety endowed with a $\gm_{m,k}$, then Sumihiro theorem (see \cite{sumihiro-1974}\cite{sumihiro-1975}) asserts that $U$ is Zar-locally linearizable. 
    \item As noted in \cite{richarz-2018}, in the work \cite[Theorem 10.1 and Corollary 10.2]{alper2023etale} (see also \cite[Theorem 2.4]{alper2021luna} for the case of $S$ being the spectrum of an algebraically closed field), it is shown that if the structural morphism $X \longrightarrow S$ is locally of finite presentation and $X$ is quasi-separated then $X$ is \'et-locally linearizable.
\end{enumerate}
\end{ex}

We state the generalized version of Braden's hyperbolic localization theorem. 
\begin{theorem} \label{braden theorem}
    Let $S$ be a quasi compact and quasi-separated scheme. Let $(\Fscr,I)$ be a diagram of algebraic $S$-spaces equipped with a $\gm_{m,S}$-action so that $\mathrm{HypLoc}(\Fscr,I)$ exists. Suppose that $\shbb$ is $\tau$-conservative. Then for any equivariant object $A$ in $\shbb(\Fscr,I)$, the Braden transformation
    \begin{equation*}
      (\pi^-)_*(e^-)^!(A) \longrightarrow (\pi^+)_!(e^+)^*(A)
    \end{equation*}
    is an isomorphism in the following cases: 
    \begin{itemize}
        \item ($\tau$-Sch)$(\Fscr,I) = X$ is an algebraic $S$-space endowed with a $\tau$-locally linearizable $\gm_{m,S}$-action.
        \item (AffDia) $(\Fscr,I)$ consists of affine $S$-varieties and in this case, we can drop the $\tau$-conservativity.
    \end{itemize}
\end{theorem}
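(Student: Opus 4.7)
The plan is to reduce both cases to the classical (scheme-level) Braden theorem for affine $S$-schemes with a linearizable $\gm_{m,S}$-action, via two complementary steps: first establish (AffDia) by an objectwise argument on the diagram, and then reduce ($\tau$-Sch) to (AffDia) by $\tau$-descent along an equivariant affine cover.

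For (AffDia), each $\Fscr(i)$ is an affine $S$-variety with $\gm_{m,S}$-action, hence Zariski-locally linearizable by Example \ref{linearizable of schemes}(1). The classical Braden theorem (\cite{braden-2003}, in the form of \cite{richarz-2018}) then applies to each scheme $\Fscr(i)$ and shows that $\phi_{\Fscr(i)}$ is an isomorphism on equivariant objects. It suffices to verify that each evaluation functor $i^{*} \colon \shbb(\Fscr,I) \longrightarrow \shbb(\Fscr(i))$ carries $\phi_{(\Fscr,I)}$ to $\phi_{\Fscr(i)}$. Since $\mathrm{HypLoc}(\Fscr,I)$ is defined objectwise, this compatibility amounts to showing that $i^{*}$ commutes with each of $(\pi^-)_*$, $(e^-)^!$, $(e^+)^*$ and $(\pi^+)_!$, and with the exchange $\Ex^{!*}$; the $*$-pullback side is automatic, while the remaining compatibilities follow from the proper and smooth base change theorems at the diagram level developed in the appendix, together with the assumption that every structural morphism in $\mathrm{HypLoc}(\Fscr,I)$ is strongly compactifyable.

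For ($\tau$-Sch), fix a $\gm_{m,S}$-equivariant $\tau$-covering $\{U_j \to X\}_{j \in J}$ by affine $S$-schemes. Form the Čech nerve $U_{\bullet} \to X$; since $X$ is separated, each $U_n$ is a disjoint union of affine $S$-schemes with an equivariant structure, and each face map is equivariant. Applying $(-)^0$ and $(-)^{\pm}$ objectwise (they preserve equivariant fiber products by their functor-of-points description) yields a simplicial hyperbolic localization diagram augmenting $\mathrm{HypLoc}(X)$. By $\tau$-conservativity of $\shbb$, a morphism in $\shbb(X^0)$ is an isomorphism iff its pullback along $U_{\bullet}^0 \longrightarrow X^0$ is; using that $*$-pullback preserves equivariance (Lemma \ref{equivariance of six operations}(1)) and commutes with the formation of $\phi$ (via the usual $*$-base-change compatibilities between the operations entering Definition \ref{Braden transformation} and $*$-pullback along the cover), the assertion for $X$ reduces to the assertion for the simplicial diagram $U_{\bullet}$, which falls under (AffDia).

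The main obstacle I expect is the diagrammatic bookkeeping: one must first check that $\phi_{(\Fscr,I)}$ is genuinely well-defined at the level of diagrams, so that in particular the exchange $\Ex^{!*}$ exists (this is where the proper and smooth base change theorems for diagrams of algebraic spaces from the appendix enter, combined with strong compactifyability in $\mathrm{HypLoc}(\Fscr,I)$), and then verify that both the evaluation $i^{*}$ in (AffDia) and the $\tau$-pullback along the cover in ($\tau$-Sch) interact correctly with each of the six operations appearing in Definition \ref{Braden transformation}. Once these compatibilities are assembled, the reduction to the classical scheme-level Braden theorem is formal.
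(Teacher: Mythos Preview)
Your reduction for (AffDia) is essentially the paper's: the paper's first lemma in the proof of Theorem~\ref{braden theorem} checks exactly that $i^*\phi_{(\Fscr,I)} \simeq \phi_{\Fscr(i)}i^*$ via Theorem~\ref{compatible of Braden transformations with four operations}, and then invokes the affine scheme case (which the paper bootstraps from vector bundles in Lemma~\ref{braden theorem for affdia}, but citing \cite{richarz-2018} directly as you do is equally valid).

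For ($\tau$-Sch), however, the paper takes a shorter path than yours. It does \emph{not} form the \v{C}ech nerve or pass through (AffDia) for a simplicial diagram; instead it applies $\tau$-conservativity only at the level of the individual cover elements $u_i \colon U_i \to X$, uses the commutative square of Theorem~\ref{compatible of Braden transformations with four operations}(2) (with the observation that $u_i$ \'etale forces $u_i^0, u_i^\pm$ \'etale, so $(\Phi^\pm)^*$ are isomorphisms even without the cartesianness of $e^\pm$), and lands directly in the affine scheme case. This buys two things your route does not:
\begin{itemize}
\item You assert that each $U_n$ in the \v{C}ech nerve is a disjoint union of affine $S$-schemes. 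Under the sole hypothesis that $S$ is qcqs this need not hold: $U_i \times_S U_j$ is affine only once $S$ is separated, and (AffDia) as stated requires genuinely affine terms. The paper never needs $n \geq 1$.
\item More importantly, the step ``$*$-pullback along the cover commutes with the formation of $\phi$'' is precisely the hypothesis of Theorem~\ref{compatible of Braden transformations with four operations}, namely $\mathrm{HypLoc}(U) = \mathrm{HypLoc}(X) \times_X U$, i.e.\ $U^\pm = X^\pm \times_X U$. This is \emph{not} a formal consequence of the functor-of-points description (your parenthetical ``preserve equivariant fiber products'' gives $(U \times_X V)^\pm = U^\pm \times_{X^\pm} V^\pm$, which is a different identity). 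It is the \'etale-specific content of \cite[Lemma~1.10]{richarz-2018} and of Lemma~\ref{descent of fixed points, attractors, repellers} in this paper, and it can fail for non-\'etale morphisms. The paper isolates and uses this fact explicitly.
\end{itemize}
Neither point is fatal---both can be patched---but the paper's direct reduction to a single affine chart is cleaner and sidesteps them entirely.
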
 

We skip the proof to upcoming sections.
\subsection{The interaction of Braden transformations with operations}

The following theorem is a (necessary for the later purposes) generalization of \cite[Proposition 3.1]{richarz-2018} to the level of diagrams of algebraic spaces. The proof is almost identical with some modifications. We propose here a proof using the purity exchange structure $\Ex^{!*}$ and moreover, since we are working with diagrams of spaces, there are details needed to be stressed in the proof. 
 
\begin{theorem} \label{compatible of Braden transformations with four operations}
    Let $I,J$ be small categories and $(f,\alpha) \colon (\Gscr,J) \longrightarrow (\Fscr,I)$ be a morphism of diagrams of $S$-varieties equipped with $\gm_{m,S}$-actions. Suppose that hyperbolic localization diagrams of $(\Fscr,I)$ and $(\Gscr,J)$ exist and 
    \begin{equation*}
        \mathrm{HypLoc}(\Gscr,J) = \mathrm{HypLoc}(\Fscr,I) \times_{(\Fscr,I)} (\Gscr,J).
    \end{equation*}
 Let $(f^0,\alpha) \colon (\Gscr^0,J) \longrightarrow (\Fscr^0,I)$ be the induced morphism on the diagrams of spaces of fixed points. 
    \begin{enumerate}
        \item There is a natural $2$-morphism
         \begin{equation*}
       \begin{tikzcd}[sep=large]
          L^-_{(\Fscr,I)}(f,\alpha)_* \arrow[d,"\phi_{(\Fscr{,}I)}",swap] \arrow[r,"(\Phi^-)_*"] & (f^0,\alpha)_*L^-_{(\Gscr,J)} \arrow[d,"\phi_{(\Gscr{,}J)}"] \\ 
           L^+_{(\Fscr,I)}(f,\alpha)_* \arrow[r,"(\Phi^-)_*"] & (f^0,\alpha)_*L^+_{(\Gscr,J)}.
       \end{tikzcd}
    \end{equation*}
        as a $2$-morphism of functors $\shbb(\Gscr,J) \longrightarrow \shbb(\Fscr^0,I)$. Moreover, if $\alpha = \id$ and $F = (f,\alpha)$ is proper objectwise, then $(\Phi^-)_*,(\Phi^+)_*$ are isomorphisms.
        \item There is a natural $2$-morphism 
         \begin{equation*}
       \begin{tikzcd}[sep=large]
           (f^0,\alpha)^*L^-_{(\Fscr,I)} \arrow[d,"\phi_{(\Fscr{,}I)}",swap] \arrow[r,"(\Phi^-)^*"] & L^-_{(\Gscr,J)}(f,\alpha)^* \arrow[d,"\phi_{(\Gscr{,}J)}"] \\ 
           (f^0,\alpha)^*L^+_{(\Fscr,I)} \arrow[r,"(\Phi^-)^*"] & L^+_{(\Gscr,J)}(f,\alpha)^*.
       \end{tikzcd}
   \end{equation*}
        as a $2$-morphism of functors $\shbb(\Fscr,I) \longrightarrow \shbb(\Gscr^0,J)$. Moreover, if $(f,\alpha)$ is smooth objectwise and $e^{\pm}$ are cartesian, then $(\Phi^-)^*,(\Phi^+)^*$ are isomorphisms. 
        \item Let $X$ be an algebraic $S$-space endowed with a $\gm_{m,S}$-action and $p_I \colon (X,I) \longrightarrow X$ be the projection, then there is a natural $2$-morphism
         \begin{equation*}
       \begin{tikzcd}[sep=large]
           (p_I)_{\#}L^-_{(X,I)}  \arrow[d] \arrow[r]  &   L^-_X(p_I)_{\#} \arrow[d] \\ 
           (p_I)_{\#}L^+_{(X,I)} \arrow[r] & L^+_X(p_I)_{\#}.
        \end{tikzcd}
         \end{equation*}
         as a $2$-morphism of functors $\shbb(X,I) \longrightarrow \sh(X^0)$. \footnote{This holds more generally under some technical assumptions but the case $p_I \colon (X,I) \longrightarrow X$ is the only case that we use in this work.}
         \end{enumerate}
\end{theorem}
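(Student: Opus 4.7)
My plan is to unpack the Braden transformation $\phi$ of Definition \ref{Braden transformation} as its defining concatenation of six links (unit/counit morphisms, the identifications $(s^{\pm})_*(s^{\pm})^* \cong u^?(t^{\pm})^?$ arising from $u = (s^+,s^-)$ being open-and-closed, and the purity exchange $\Ex^{!*}$), and then to verify link by link that each intertwines correctly with the functor in question, so that the whole square commutes by pasting and the interchange law. The hypothesis $\mathrm{HypLoc}(\Gscr,J) = \mathrm{HypLoc}(\Fscr,I) \times_{(\Fscr,I)} (\Gscr,J)$ is what makes all the required exchange transformations available, since it produces cartesian squares between every pair of corresponding arrows of the two hyperbolic diagrams.

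For part (1), write $\tilde{f}^{\pm} \colon (\Gscr^{\pm},J) \to (\Fscr^{\pm},I)$ and $\tilde{f}^{+-}$ for the maps induced by $(f,\alpha)$ on attractor, repeller, and their fibre product. I would define $(\Phi^-)_*$ as the composition
\begin{equation*}
(\pi^-_{\Fscr})_* (e^-_{\Fscr})^! (f,\alpha)_* \xrightarrow{\Ex^!_*} (\pi^-_{\Fscr})_* (\tilde{f}^-,\alpha)_* (e^-_{\Gscr})^! \; = \; (f^0,\alpha)_* (\pi^-_{\Gscr})_* (e^-_{\Gscr})^!,
\end{equation*}
and $(\Phi^+)_*$ as
\begin{equation*}
(\pi^+_{\Fscr})_! (e^+_{\Fscr})^* (f,\alpha)_* \xrightarrow{\Ex^*_*} (\pi^+_{\Fscr})_! (\tilde{f}^+,\alpha)_* (e^+_{\Gscr})^* \xrightarrow{\Ex^{*}_!} (f^0,\alpha)_* (\pi^+_{\Gscr})_! (e^+_{\Gscr})^*.
\end{equation*}
Commutativity of the large square with $\phi$ then decomposes into six smaller squares, each expressing naturality of one exchange structure ($\Ex^!_*$, $\Ex^*_*$, or $\Ex^{!*}$) against $(f,\alpha)_*$. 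When $\alpha = \id$ and $f$ is proper objectwise, $\tilde{f}^{\pm}$ and $\tilde{f}^{+-}$ are proper as well, so proper base change promotes $\Ex^!_*$ and $\Ex^*_*$ to isomorphisms and $(\tilde{f}^{\pm})_! \simeq (\tilde{f}^{\pm})_*$, yielding the claimed invertibility of $(\Phi^{\pm})_*$.

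Part (2) is the formal dual. I would build $(\Phi^-)^*$ from the exchange $\Ex^{*!}$ on the cartesian square of $(e^-, f)$ together with $\Ex^*_*$ on the square of $(\pi^-, f^0)$, and symmetrically $(\Phi^+)^*$ from $\Ex^{*}_!$ and $\Ex^*_*$. Smoothness of $(f,\alpha)$ objectwise makes $\Ex^{*!}$ an isomorphism by the smooth base change encoded in purity, and cartesianness of $e^{\pm}$ turns the $^*$-exchanges into isomorphisms as well. Part (3) follows the same template, replacing $(f,\alpha)_*$ by the left adjoint $(p_I)_{\#}$ to $(p_I)^*$: the relevant exchange $(p_I)_{\#} k^* \to k^* (p_I)_{\#}$ is well-defined and available because the projection $p_I$ gives cartesian squares for free with every arrow of $\mathrm{HypLoc}(X,I)$, these arrows being themselves produced objectwise from $\mathrm{HypLoc}(X)$.

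The principal technical obstacle is the compatibility of the purity exchange $\Ex^{!*}$ with the various $\Ex^{?}_?$ in the diagrammatic setting, since this is what glues the two halves of the Braden chain together. I would handle this by appealing to the naturality axioms of the four-functor formalism developed in the appendix, combined with Proposition \ref{proper + smooth base change theorem for diagrams} to deduce the proper and smooth cases; once these compatibilities are in hand, the entire statement reduces to a routine, if somewhat lengthy, diagram chase.
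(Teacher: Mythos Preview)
Your treatment of parts (1) and (2) is correct and close to the paper's. The paper organises things in a slightly different order: it constructs $(\Phi^{\pm})^*$ first via the explicit exchange chain you describe, and then obtains $(\Phi^{\pm})_*$ (and, in the smooth case, $(\Phi^{\pm})_{\#}$) from it by adjunction, invoking \cite[Corollaire 1.1.14]{ayoub-thesis-1}. The payoff of that route is that commutativity of the Braden square for $(\Phi^{\pm})_*$ is inherited automatically from that for $(\Phi^{\pm})^*$, so the link-by-link verification along the six constituents of $\phi$ need only be carried out once rather than repeated for each variance.

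There is, however, a genuine gap in your part (3). You write that ``the relevant exchange $(p_I)_{\#}\, k^* \to k^* (p_I)_{\#}$ is well-defined and available because the projection $p_I$ gives cartesian squares for free''. But to commute $(p_I)_{\#}$ past $L^-_{(X,I)} = (\pi^-)_*(e^-)^!$ you need, in particular, an exchange
\[
\Ex_{\#}^! \colon (p_I)_{\#}(e^-)^! \longrightarrow (e^-)^!(p_I)_{\#},
\]
and this is \emph{not} one of the standard exchange transformations: $(p_I)_{\#}$ is a left adjoint and $(e^-)^!$ a right adjoint, so neither the $\Ex_{\#}^*$ nor the $\Ex_*^!$ formalism produces it, and cartesianness of the underlying square does not help by itself. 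The paper singles this point out explicitly and constructs $\Ex_{\#}^!$ by hand as
\[
(p_I)_{\#}(e^-)^! \longrightarrow (e^-)^!(e^-)_!(p_I)_{\#}(e^-)^! \xrightarrow{\ (\Ex_{\#!})^{-1}\ } (e^-)^!(p_I)_{\#}(e^-)_!(e^-)^! \longrightarrow (e^-)^!(p_I)_{\#},
\]
where the middle arrow is available because $\Ex_{\#!}$ is invertible by Proposition~\ref{direct images commute with homotopy colimits} (the commutation $(p_I)_{\#} f_! \simeq f_!(p_I)_{\#}$). Without this ingredient the top horizontal map in your diagram for (3) is not defined, and the argument does not go through.
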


\begin{rmk}
    We can define furthermore the base change morphisms with respect to $f_!,f^!$ (here we require $\alpha=\id$ and $f$ compactifiable). As we do not need the morphisms $\Phi_!$ and $\Phi^!$ so we refer \cite{richarz-2018} to those readers who are interested in these morphisms.
\end{rmk}

\begin{proof}
In this proof, by "by adjunction", we mean an application of \cite[Corollaire 1.1.14]{ayoub-thesis-1}. The hypothesis implies that there are cartesian squares
   \begin{equation*}
       \begin{tikzcd}[sep=large]
           (\Gscr^0,J)\arrow[d,"(f^0{,}\alpha)",swap] \arrow[dr,phantom,"(\textnormal{C}^{\pm}_1)"]  & (\Gscr^{\pm},J) \arrow[dr,phantom,"(\textnormal{C}^{\pm}_2)"] \arrow[l,"\pi^{\pm}", swap] \arrow[r,"e^{\pm}"] \arrow[d,"(f^{\pm}{,}\alpha)"] & (\Gscr,J) \arrow[d,"(f{,}\alpha)"] \\ 
           (\Fscr^0,I)  & (\Fscr^{\pm},I) \arrow[l,"\pi^{\pm}", swap] \arrow[r,"e^{\pm}"] & (\Fscr,I) 
       \end{tikzcd}
   \end{equation*}
 The morphisms $(\Phi^-)^*,(\Phi^+)^*$ are defined by
 \begin{equation*}
       \begin{tikzcd}[sep=large]
           (f^0,\alpha)^*(\pi^-)_*(e^-)^!  \arrow[d] \arrow[r,"\Ex^*_*(\textnormal{C}^-_1)"]  & (\pi^-)_*(f^-,\alpha)^*(e^-)^! \arrow[r,"\Ex^{!*}(\textnormal{C}^-_2)"] &  (\pi^-)_*(e^-)^!(f,\alpha)^* \arrow[d] \\ 
           (f^0,\alpha)^*(\pi^+)_!(e^+)^* \arrow[r,"\Ex^*_!(\textnormal{C}^+_1)"] & (\pi^+)_!(f^+,\alpha)^*(e^+)^* \arrow[r,equal] & (\pi^+)_!(e^+)^*(f,\alpha)^*.
       \end{tikzcd}
   \end{equation*}
  If $(f,\alpha)$ is smooth objectwise and $e^-$ is cartesian, then from the explicit description above, we see that both $(\Phi^-)^*$ and $(\Phi^+)^*$ are isomorphisms by \cite[Lemme 2.4.26]{ayoub-thesis-1}. In such the case, by adjunction, we can consider the transformation $\Phi_{\#}=((\Phi^-)_{\#},(\Phi^+)_{\#})$ given by 
\begin{equation*}
      (f^0,\alpha)_{\#}\phi_{(\Gscr,J)} \longrightarrow (f^0,\alpha)_{\#}\phi_{(\Gscr,J)}(f,\alpha)^*(f,\alpha)_{\#} \overset{(\Phi^*)^{-1}}{\longrightarrow} (f^0,\alpha)_{\#}(f^0,\alpha)^*\phi_{(\Fscr,I)}(f,\alpha)_{\#} \longrightarrow  \phi_{(\Fscr,I)}(f,\alpha)_{\#}.
  \end{equation*}
  or in full diagram, 
   \begin{equation*}
       \begin{tikzcd}[sep=large]
           (f^0,\alpha)_{\#}(\pi^-)_*(e^-)^!  \arrow[d] \arrow[r,"\Ex_{\# *}(\square_1^-)"]  & (\pi^-)_*(f^-,\alpha)_{\#}(e^-)^! \arrow[r,"\Ex^{!}_{\#}(\square_2^-)"] &  (\pi^-)_*(e^-)^!(f,\alpha)_{\#} \arrow[d] \\ 
           (f^0,\alpha)_{\#}(\pi^+)_!(e^+)^* \arrow[r,"\Ex_{\#!}(\square_2^+)"] & (\pi^+)_!(f^+,\alpha)_{\#}(e^+)^* \arrow[r,"\Ex_{\#}^*(\square_1^+)"] &  (\pi^+)_!(e^+)^*(f,\alpha)_{\#}
       \end{tikzcd}
   \end{equation*}
  and the case $p_I \colon (X,I) \longrightarrow X$ is of course a special case. In general, one cannot hope that $e^-$ is cartesian (see \cite[Example 1.13]{richarz-2018}); however we still claim that it makes sense to write $\Ex_{\#}^!$ in the case $p_I \colon (X,I) \longrightarrow X$. Indeed, one can define 
  \begin{equation*}
      (p_I)_{\#}(e^-)^! \longrightarrow (e^-)^!(e^-)_!(p_I)_{\#}(e^-)^! \overset{(\Ex_{\#!})^{-1}}{\longrightarrow} (e^-)^!(p_I)_{\#}(e^-)_!(e^-)^! \longrightarrow (e^-)^!(p_I)_{\#}
  \end{equation*}
 thanks to proposition \ref{direct images commute with homotopy colimits}. By adjunction again, two morphisms $(\Phi^-)_*,(\Phi^+)_*$ are given by 
\begin{equation*}
    \phi_{(\Fscr,I)}(f,\alpha)_* \longrightarrow (f^0,\alpha)_*(f^0,\alpha)^* \phi_{(\Fscr,I)}(f,\alpha)*  \overset{\Phi^*}{\longrightarrow} (f^0,\alpha)\phi_{(\Gscr,J)}(f,\alpha)^*(f,\alpha)_* \longrightarrow (f^0,\alpha)\phi_{(\Gscr,J)}.
\end{equation*}
More precisely, 
   \begin{equation*}
       \begin{tikzcd}[sep=large]
           (\pi^-)_*(e^-)^! (f,\alpha)_* \arrow[d] \arrow[r,"(\Ex^!_*(\textnormal{C}^-_2))^{-1}"]  & (\pi^-)_*(f^-,\alpha)_*(e^-)^! \arrow[r,equal] &  (f^0,\alpha)_*(\pi^-)_*(e^-)^! \arrow[d] \\ 
           (\pi^+)_!(e^+)^*(f,\alpha)_* \arrow[r,"\Ex^*_*(\textnormal{C}^+_2)"] & (\pi^+)_!(f^+,\alpha)_*(e^+)^* \arrow[r,"\Ex_{!*}(\textnormal{C}^+_1)"] &  (f^0,\alpha)_*(\pi^+)_!(e^+)^*.
       \end{tikzcd}
   \end{equation*}
The rest of the proof is formally the same as \cite[Proposition 3.1]{richarz-2018}.
\end{proof}

We give a criterion to produce a lot of pairs of diagrams satisfying the hypothesis in the theorem above. We need a lemma first.

\begin{lem} \label{descent of fixed points, attractors, repellers}
    Let $X,Y$ be algebraic $S$-spaces endowed with $\gm_{m,S}$-actions. Let $f \colon X \longrightarrow Y$ be an equivariant \'etale morphism. Assume that there exists a cartesian square
    \begin{equation*}
        \begin{tikzcd}[sep=large]
                  U \arrow[r] \arrow[d] & V \arrow[d] \\ 
                  X \arrow[r] & Y
        \end{tikzcd}
    \end{equation*}
    with $U,V$ being affine $S$-varieties and $U \longrightarrow X, V \longrightarrow Y$ being $\tau$-coverings, then 
    \begin{align*}
        X^0  & =  X \times_Y Y^0 \\ 
        X^{\pm} & = X^0 \times_{Y^0} Y^{\pm}
    \end{align*}
    as functors.
\end{lem}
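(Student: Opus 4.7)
My plan is to establish both identifications as functors on $S$-schemes by checking surjectivity on $T$-valued points for an arbitrary $T$. The natural transformations $X^0 \to X \times_Y Y^0$ and $X^\pm \to X^0 \times_{Y^0} Y^\pm$ come from the universal properties and functoriality of the definitions; both are automatically monomorphisms since $X^0 \hookrightarrow X$ and $X^\pm \hookrightarrow X$ are. So the content of the lemma is surjectivity of these two transformations on $T$-points, and I will treat the fixed-point case first and then the attractor/repeller case (which is symmetric in $\pm$).

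For the fixed-point identification, given $x \colon T \to X$ such that $f \circ x$ factors through $Y^0$, I want to conclude that $x$ itself is equivariant. I would compare the two morphisms $g_1 = a_X \circ (\id \times x)$ and $g_2 = x \circ \pr_T$ from $\gm_{m,S} \times_S T$ to $X$: they agree after composition with $f$ (by equivariance of $f$ and of $f \circ x$) and they coincide on the unit section $\{1\} \times T$. Since $f$ is \'etale and algebraic spaces in this paper are separated, $\Delta_f \subset X \times_Y X$ is both an open immersion and a closed immersion, hence clopen. Consequently the equalizer $(g_1, g_2)^{-1}(\Delta_f)$ is a clopen subspace of $\gm_{m,S} \times_S T$ containing the unit section; its complement has clopen geometric fibers over $T$ which avoid the point $1$, and these are empty by connectedness of $\gm_m$. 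The complement itself is therefore empty, $g_1 = g_2$, and $x$ is equivariant.

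For the attractor case (the repeller case being symmetric), a $T$-point of $X^0 \times_{Y^0} Y^+$ is an equivariant $y^+ \colon (\Abb^1_T)^+ \to Y$ together with an equivariant $x^0 \colon T \to X$ satisfying $f \circ x^0 = y^+ \circ z$, where $z \colon T \to (\Abb^1_T)^+$ is the zero section. I want to produce a unique equivariant lift $\alpha \colon (\Abb^1_T)^+ \to X$ with $\alpha \circ z = x^0$ and $f \circ \alpha = y^+$, equivalently, a unique equivariant section of the \'etale $\gm_m$-equivariant morphism $E := X \times_Y (\Abb^1_T)^+ \to (\Abb^1_T)^+$ extending the section $s_0 \colon T \to E$ given by the pair $(x^0, z)$. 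Uniqueness will follow from the fact that any $\gm_m$-stable open subset of $(\Abb^1_T)^+$ containing the zero section is all of $(\Abb^1_T)^+$ (the only $\gm_m$-stable open subset of $\Abb^1$ containing $0$ is $\Abb^1$ itself). Existence will come from first extending $s_0$ by \'etaleness to a section on some open neighborhood $V$ of the zero section, then translating by the $\gm_m$-action to produce sections $s_\lambda$ on $\lambda \cdot V$, which glue via the uniqueness argument applied near the zero section to an equivariant section defined on the $\gm_m$-saturation $\bigcup_\lambda \lambda \cdot V = (\Abb^1_T)^+$.

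The hypothesis on the affine covers $U \to X$, $V \to Y$ does not enter the core argument I have sketched; its role is to ensure the representability of $X^0$ and $X^\pm$ as algebraic spaces, via reduction to the affine case treated in \cite{drinfeld+gaitsgory-2014} and \cite{richarz-2018}. The main subtlety I foresee is the clopen step in the fixed-point case, which depends crucially on both \'etaleness (open diagonal) and separatedness (closed diagonal); given that, the attractor/repeller case is a direct, if somewhat delicate, interplay between \'etale lifting and the contracting $\gm_m$-action on $(\Abb^1_T)^+$.
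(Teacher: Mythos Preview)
Your approach is genuinely different from the paper's. The paper does not argue directly on $T$-points; instead it uses the affine covers $U\to X$ and $V\to Y$ in an essential way, applying \'etale descent to reduce the identity $X^0 = X\times_Y Y^0$ (and likewise for $X^{\pm}$) to the corresponding statements for the morphisms $U\to V$, $U\to X$, $V\to Y$, each of which is covered by \cite[Lemma~1.10]{richarz-2018}. So the hypothesis on affine atlases is not merely about representability---it is the engine of the paper's proof.

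Your fixed-point argument is correct and is essentially the direct proof underlying \cite[Lemma~1.10]{richarz-2018}: the equalizer of $g_1,g_2$ is the preimage of the clopen diagonal of an \'etale separated map, hence clopen, and connectedness of $\gm_m$ over any field finishes it. One small quibble: your claim that $X^{\pm}\to X^0\times_{Y^0}Y^{\pm}$ is a monomorphism ``since $X^{\pm}\hookrightarrow X$ is'' is not quite right, because there is no evident map $X^0\times_{Y^0}Y^{\pm}\to X$ through which to factor. The monomorphism claim is still true, but it needs the same clopen-equalizer argument applied on $\Abb^1_T$ rather than on $\gm_{m,T}$.

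The real gap is in the attractor case. Your step ``extend $s_0$ by \'etaleness to a section on some open neighbourhood $V$ of the zero section'' does not follow from \'etaleness alone. For an \'etale separated $p\colon E\to \Abb^1_T$ with $E|_T\cong T$, there need not be any Zariski neighbourhood of $s_0(T)$ in $E$ on which $p$ is an open immersion: take $T=\Spec k$ and $E=\Abb^1_k\sqcup(\Abb^1_k\setminus\{0\})$. A section does exist in such examples, but your mechanism does not locate it, and the subsequent ``translate by $\lambda$ and glue'' is informal (there is no fixed $\lambda$ to translate by). The correct direct arguments---e.g.\ in \cite{drinfeld+gaitsgory-2014} or \cite{richarz-2018}---use the extended action $\Abb^1\times(\Abb^1_T)^+\to(\Abb^1_T)^+$ together with a careful deformation, or reduce to an explicit graded-ring computation in the affine case. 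Absent one of these, your attractor argument is incomplete; the paper sidesteps the issue entirely by descending to the affine case.
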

\begin{proof}
    We prove the case functors of fixed points, the attractors and repellers are proved similarly. By \cite[Theorem 1.8]{richarz-2018}, $U^0 \longrightarrow X^0$ and $V^0 \longrightarrow Y^0$ are \'etale coverings. By \'etale descent, $X^0 =  X \times_Y Y^0$ if and only if $X^0 \times_{X \times_Y Y^0} X \times_Y V^0 = X \times_Y V^0$. Thanks to \cite[Lemma 1.10]{richarz-2018}, one has $V^0 = V \times_Y Y^0$ and hence
    \begin{equation*}
        X^0 \times_{X \times_Y Y^0}( X \times_Y V^0) =  X^0 \times_{X \times_Y Y^0} (X \times_Y V \times_Y Y^0) = X^0 \times_Y V.
    \end{equation*}
    Again, by applying \cite[Lemma 1.10]{richarz-2018} repeatedly, one obtains cartesian squares (where cartesian of the upper left square follows from the fact that $U^0 \longrightarrow U$ is a closed immersion)
    \begin{equation*}
        \begin{tikzcd}[sep=large] 
        U^0 \arrow[r,equal] \arrow[d,equal] &  U^0 \arrow[d] \arrow[r] & V^0 \arrow[d] \\
        U^0 \arrow[r] \arrow[d]  &  U \arrow[r] \arrow[d] & V \arrow[d] \\ 
            X^0 \arrow[r] &     X \arrow[r] & Y
        \end{tikzcd}
    \end{equation*}
    which shows that $X^0 \times_Y V = X \times_Y V^0 = U^0$ as desired. 
\end{proof}
\begin{prop}  \label{hyperbolic localization diagram of all specialization systems considered}
    Let $S$ be a quasi-compact quasi-separated scheme. Let $I \subset \Delta \times \mathbb{N}^{'\times}$ be a subcategory with $\mathbb{N}^{'\times}$ the set of invertible natural numbers on $S$. Let $\mathscr{R}^{\tame,I}$ be the diagram obtained by base change
    \begin{equation*}
        \begin{tikzcd}[sep=large]
           (\mathscr{R}^{\tame,I},I) \arrow[r] \arrow[d] & (\gm_{m,S},I)\arrow[d] \\ 
           (\mathscr{R}^{\tame},\Delta \times \mathbb{N}^{'\times}) \arrow[r] & (\gm_{m,S},\Delta \times \mathbb{N}^{'\times}).
        \end{tikzcd}
    \end{equation*}
    Let $X$ be an algebraic $S$-space endowed with a $\tau$-locally linearizable $\gm_{m,S}$-action. Let $f \colon X \longrightarrow \mathbb{A}^1_S$ be a $\gm_{m,S}$-equivariant morphism. The hyperbolic localiztion diagram of $(\mathscr{R}_X^{\tame,I},I)$ exists and 
    \begin{equation*}
        \mathrm{HypLoc}(\mathscr{R}_X^{\tame,I},I) = \mathrm{HypLoc}(X_{\eta},I) \times_{(X_{\eta},I)} (\mathscr{R}^{\tame,I},I).
    \end{equation*}
\end{prop}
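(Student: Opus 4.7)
The plan is to reduce the statement to a levelwise verification that compatibly propagates through the indexing category $I$. First, I would observe that the diagram $\mathscr{R}^{\tame,I}$ itself carries the \emph{trivial} $\gm_{m,S}$-action: its structural map to $\gm_{m,S}$, endowed with the trivial action, is $\gm_{m,S}$-equivariant by construction, and therefore any compatible $\gm_{m,S}$-action on $\mathscr{R}^{\tame,I}$ is forced to be trivial. Writing $\mathscr{R}(\mathbf{n}, r) = \gm_{m,S} \times_S (\gm_{m,S})^n$ with structural map the $r$-th power composed with $\pr_1$, I would then rewrite, for each $(\mathbf{n}, r) \in I$,
\[
\mathscr{R}_X^{\tame,I}(\mathbf{n}, r) = X_{\eta} \times_{\gm_{m,S}, (-)^r} \gm_{m,S} \times_S (\gm_{m,S})^n = X_{\eta}^{(r)} \times_S (\gm_{m,S})^n,
\]
where $X_{\eta}^{(r)} \coloneqq X_{\eta} \times_{\gm_{m,S}, (-)^r} \gm_{m,S}$ is an étale cover of $X_{\eta}$ (since $r \in \mathbb{N}^{'\times}$ is invertible on $S$), and where the entire $\gm_{m,S}$-action on the right-hand side is inherited through the first factor from $X_{\eta}$.

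Next, since the $\gm_{m,S}$-action on $X$ is $\tau$-locally linearizable, so is its restriction to $X_{\eta}$, and pulling back along the étale equivariant map $X_{\eta}^{(r)} \to X_{\eta}$ preserves this property. Hence the fixed-point, attractor and repeller functors of $X_{\eta}^{(r)}$ are representable by algebraic $S$-spaces and come equipped with the structural maps $s^{\pm}, e^{\pm}, \pi^{\pm}, u$ of Richarz. By Lemma \ref{descent of fixed points, attractors, repellers} applied to the étale cover $X_{\eta}^{(r)} \to X_{\eta}$ one obtains $(X_{\eta}^{(r)})^{?} = (X_{\eta})^{?} \times_{\gm_{m,S}, (-)^r} \gm_{m,S}$ for $? \in \{0, +, -\}$. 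Since $(\gm_{m,S})^n$ carries the trivial $\gm_{m,S}$-action, the universal property defining $(-)^?$ gives directly
\[
\bigl( X_{\eta}^{(r)} \times_S (\gm_{m,S})^n \bigr)^{?} = (X_{\eta}^{(r)})^{?} \times_S (\gm_{m,S})^n = (X_{\eta})^{?} \times_{\gm_{m,S}} \mathscr{R}^{\tame,I}(\mathbf{n}, r),
\]
and the identification is natural in $(\mathbf{n}, r)$ because the transition morphisms of $\mathscr{R}^{\tame,I}$ are cartesian over $\gm_{m,S}$ with trivial action. Assembling these levelwise yields the claimed equality of hyperbolic localization diagrams.

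The main obstacle I anticipate is not the existence of the identifications, which are essentially formal once the action is traced through the product structure, but rather verifying that every morphism appearing in $\mathrm{HypLoc}(\mathscr{R}_X^{\tame,I}, I)$ is \emph{strongly compactifyable}, as demanded by the definition of $\mathrm{HypLoc}(-,I)$ for a diagram. Objectwise this is known: $s^{\pm}$ and $u$ are (locally) closed immersions and $\pi^{\pm}$ are affine by Drinfeld--Gaitsgory and Richarz, and these properties are preserved under base change with the trivially-acted factor $(\gm_{m,S})^n$. What requires care is the compatibility between these compactifications and the transition morphisms of $I$; here I would exploit that the $\mathbf{n}$-transitions are smooth projections between products of tori and the $r$-transitions are finite étale power maps, so standard Nagata-type arguments, combined with the cartesianness built into $\mathscr{R}^{\tame,I}$, should produce coherent compactifications. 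This is the step where the bookkeeping is most delicate.
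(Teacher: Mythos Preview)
Your proposal is correct and follows essentially the same route as the paper: both arguments rewrite $\mathscr{R}_X^{\tame,I}(\mathbf{n},r)$ as $X_\eta^{(r)} \times_S (\gm_{m,S})^n$ with trivial action on the torus factor, invoke the descent lemma (Lemma~\ref{descent of fixed points, attractors, repellers}) along the \'etale equivariant cover $X_\eta^{(r)} \to X_\eta$, and then use that $(-)^?$ commutes with products by trivially-acted schemes to obtain the objectwise identifications; naturality in $(\mathbf{n},r)$ is handled by noting that the transition maps affect only the trivially-acted factor. The one place you are more explicit than the paper is the strong compactifiability of the morphisms in $\mathrm{HypLoc}(\mathscr{R}_X^{\tame,I},I)$: the paper essentially absorbs this into the cartesianness check of the squares over $(X_\eta,I)$ at the end, whereas you flag it separately, but the underlying content is the same.
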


\begin{proof}
    For any $r \in \mathbb{N}^{'\times}$, there is a cartesian square
    \begin{equation*}
        \begin{tikzcd}[sep=large]
          X^r_{\eta} \arrow[d] \arrow[r,"e_r"] & X_{\eta} \arrow[d] \\ 
          \gm_{m,S} \arrow[r,"e_r"] & \gm_{m,S}.
        \end{tikzcd}
    \end{equation*}
    Since $r$ is invertible on $S$, the morphism $e_r \colon \gm_{m,S} \longrightarrow \gm_{m,S}$ is \'etale. Endow $\gm_{m,S}$ with the trivial action and $X^r_{\eta}$ with the diagonal action, the morphism $e_r\colon X^r_{\eta} \longrightarrow X_{\eta}$ is $\gm_{m,S}$-equivariant and \'etale. By lemma \ref{descent of fixed points, attractors, repellers}, we have natural identifications
    \begin{align*}
        (X^r_{\eta})^0 & = (X_{\eta})^0  \times_{X_{\eta}} X^r_{\eta} \\ 
        (X^r_{\eta})^{\pm} & = (X_{\eta})^{\pm}  \times_{X_{\eta}}  X^r_{\eta}  .
    \end{align*}
    At the level of objects $(\mathbf{n},r)$ in $I \subset \Delta \times \mathbb{N}^{'\times}$, we claim that
     \begin{equation*}
        \mathrm{HypLoc}(\mathscr{R}_X(\mathbf{n},r)) = \mathrm{HypLoc}(X_{\eta}) \times_{X_{\eta}} \mathscr{R}_X(\mathbf{n},r).
    \end{equation*}
    Indeed, we have
    \begin{equation*}
         (\mathscr{R}_X(\mathbf{n},r))^0 =  (X_{\eta}^r)^0 \times_S  ((\gm_{m,S})^n)^0 = (X_{\eta})^0  \times_{X_{\eta}} X^r_{\eta} \times_S (\gm_{m,S})^n =  (X_{\eta})^0 \times_{X_{\eta}} \mathscr{R}_X(\mathbf{n},r),
    \end{equation*}
    since $\gm_{m,S}$ acts trivially on $(\gm_{m,S})^n$. The cases of attractors and repellers are proved similarly
    \begin{equation*}
         (\mathscr{R}_X(\mathbf{n},r))^{\pm} = (X_{\eta}^r)^{\pm} \times_S  ((\gm_{m,S})^n)^{\pm} = (X_{\eta})^{\pm}  \times_{X_{\eta}}  X^r_{\eta} \times_S (\gm_{m,S})^n  = (X_{\eta})^{\pm} \times_S  \mathscr{R}_X(\mathbf{n},r),
    \end{equation*}
    which follows from the fact that morphisms $\mathbb{A}_S^1 \longrightarrow \gm_{m,S}$ are constant. To show that $\mathrm{HypLoc}(\mathscr{R}_X)$ exists at the level of morphisms, we note that any morphism in $\Delta \times \mathbb{N}^{'\times}$ just affects the second factor in $\mathrm{HypLoc}(X_{\eta}) \times_S (\gm_{m,S})^n$. Finally, to show that 
      \begin{equation*}
         \mathrm{HypLoc}(\mathscr{R}_X^{\tame,J},J) = \mathrm{HypLoc}(X_{\eta},J) \times_{(X_{\eta},J)} (\mathscr{R}^{\tame,J},J),
    \end{equation*}
    it is sufficient to prove that each square in the diagram
       \begin{equation*}
       \begin{tikzcd}[sep=large]
           ((\mathscr{R}_X^{\tame,I})^0,I)\arrow[d,"(\theta_f)^0",swap]  & ((\mathscr{R}_X^{\tame,I})^{\pm},I)  \arrow[l,"\pi^{\pm}", swap] \arrow[r,"e^{\pm}"] \arrow[d,"(\theta_f)^{\pm}"] & (\mathscr{R}_X^{\tame,I},I) \arrow[d,"\theta_f"] \\ 
           (X_{\eta}^0,I)  & (X_{\eta}^{\pm},I) \arrow[l,"\pi^{\pm}", swap] \arrow[r,"e^{\pm}"] & (X_{\eta},I) 
       \end{tikzcd}
   \end{equation*}
   is cartesian. This is easy since objectwise, vertical arrows are compositions of projections with power morphisms. 
\end{proof}

\subsection{The proof of the Braden hyperbolic localization theorem for diagram of algebraic spaces}

In this subsection, we prove theorem \ref{braden theorem}. We divide the proof into a series of lemmas, which eventually ends up with the case of schemes. 

\begin{lem}
   In theorem \ref{braden theorem}, (AffDia) holds true if and only if it holds true for affine $S$-varieties. 
\end{lem}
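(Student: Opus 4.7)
The plan is to reduce the diagram case to the case of a single affine $S$-variety by testing isomorphism objectwise. The ``only if'' direction is tautological: any affine $S$-variety with a $\gm_{m,S}$-action can be regarded as a diagram indexed by the terminal category $\mathbf{e}$, so the general statement of (AffDia) specializes in particular to single affine varieties.

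For the converse, assume the hyperbolic localization theorem holds for every affine $S$-variety with $\gm_{m,S}$-action. Given a diagram $(\Fscr, I)$ of affine $S$-varieties and an equivariant object $A \in \shbb(\Fscr, I)$, I would verify that $\phi_{(\Fscr, I)}(A)$ is an isomorphism by evaluating at each $i \in I$. Concretely, for each object $i$, consider the morphism of diagrams $\iota_i \colon \Fscr(i) \longrightarrow (\Fscr, I)$ whose underlying scheme morphism is the identity on $\Fscr(i)$. Two inputs are essential: first, the family $\{\iota_i^*\}_{i \in I}$ is conservative, as weak equivalences in $\shbb(\Fscr, I)$ are detected objectwise in Ayoub's model-categorical setup on diagrams; second, by Lemma~\ref{equivariance of six operations}(1), each $\iota_i^*$ preserves equivariant objects.

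Next, I invoke Theorem~\ref{compatible of Braden transformations with four operations}(2) applied to $\iota_i$. The underlying scheme morphism is the identity (hence smooth objectwise), and by construction the diagrams of fixed points, attractors and repellers of $(\Fscr, I)$ are formed objectwise, so the relevant $e^{\pm}$ squares attached to $\iota_i$ are trivially cartesian. This yields a natural isomorphism $\iota_i^* \phi_{(\Fscr, I)} \simeq \phi_{\Fscr(i)} \iota_i^*$ of functors on equivariant objects. Applied to $A$, the right-hand side is an isomorphism by the standing hypothesis on affine $S$-varieties (noting that $\iota_i^*(A)$ is equivariant on the affine variety $\Fscr(i)$); hence $\iota_i^* \phi_{(\Fscr, I)}(A)$ is an isomorphism for every $i \in I$, and conservativity of the evaluation family concludes.

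The step demanding the most care will be unwinding the cartesianity hypothesis of Theorem~\ref{compatible of Braden transformations with four operations}(2) for $\iota_i$; this ought to reduce to the objectwise construction of $\mathrm{HypLoc}(\Fscr, I)$ recalled above, together with the observation that $\iota_i$ at the level of underlying schemes is an identity. Once this verification is in place, the remainder of the argument is purely formal.
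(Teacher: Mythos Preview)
Your strategy is the same as the paper's: test the Braden transformation objectwise via the conservative family $\{i^*\}_{i\in I}$ and use the compatibility square relating $i^*\phi_{(\Fscr,I)}$ and $\phi_{\Fscr(i)}i^*$ coming from Theorem~\ref{compatible of Braden transformations with four operations}. The paper proceeds in exactly this way.

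There is, however, one point where your justification diverges from the paper's and becomes shaky. You invoke the ``Moreover'' clause of Theorem~\ref{compatible of Braden transformations with four operations}(2) to conclude that $(\Phi^{\pm})^*$ are isomorphisms, reading the hypothesis ``$e^{\pm}$ are cartesian'' as cartesianity of the base-change squares $(\textnormal{C}^{\pm}_2)$. But those squares are \emph{already} cartesian by the standing hypothesis $\mathrm{HypLoc}(\Gscr,J)=\mathrm{HypLoc}(\Fscr,I)\times_{(\Fscr,I)}(\Gscr,J)$, so the clause must demand more: that the diagram morphism $e^{\pm}\colon(\Fscr^{\pm},I)\to(\Fscr,I)$ is a cartesian morphism of diagrams (i.e.\ each naturality square $\Fscr(i)^{\pm}\to\Fscr(i)$, $\Fscr(j)^{\pm}\to\Fscr(j)$ is a pullback). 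This can fail for general equivariant morphisms, as the paper itself notes in the discussion of part~(3) of that theorem (citing \cite[Example~1.13]{richarz-2018}), and is not guaranteed for an arbitrary diagram of affine $S$-varieties.

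The paper does not appeal to that ``Moreover'' clause at all. Instead it writes out $(\Phi^-)^*$ and $(\Phi^+)^*$ explicitly for the evaluation morphism $i$ and checks each exchange step is an isomorphism: $\Ex^*_*$ and $\Ex^*_!$ via Proposition~\ref{proper + smooth base change theorem}, and $\Ex^{*!}$ via \cite[Lemme~2.4.26]{ayoub-thesis-1}. These base-change results for evaluation functors hold with no cartesianity assumption on $e^{\pm}$. Once you replace your citation of the ``Moreover'' clause by this direct verification, your argument is complete and coincides with the paper's.
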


\begin{proof}
    By theorem \ref{compatible of Braden transformations with four operations}, for any $i \in I$, there exists a commutative diagram 
    \begin{equation*}
        \begin{tikzcd}[sep=large]
            i^*(\pi^-)_*(e^-)^!A \arrow[r,"(\Phi^-)^*"] \arrow[d,"\phi_{(\Fscr{,}I)}",swap] & (\pi^-(i))_*(e^-(i))^!i^*A \arrow[d,"\phi_{\Fscr(i)}"]  \\ 
           i^*(\pi^+)_!(e^+)^*A  \arrow[r,"(\Phi^+)^*"] & (\pi^+(i))_!(e^+(i))^*i^*A
        \end{tikzcd}
    \end{equation*}
    where the morphism $(\Phi^-)^*$ is given by
    \begin{equation*}
        i^*(\pi^-)_*(e^-)^! \overset{\Ex^*_*}{\longrightarrow} (\pi^-(i))_*i^*(e^-)^! \overset{\Ex^{*!}}{\longrightarrow} 
        (\pi^-(i))_*(e^-(i))^!i^*A,
    \end{equation*}
    hence an isomorphism by proposition \ref{proper + smooth base change theorem} and \cite[Lemme 2.4.26]{ayoub-thesis-1}, and the morphism $(\Phi^+)$ is given 
    \begin{equation*}
        i^*(\pi^+)_!(e^+)^* \overset{\Ex^*_!}{\longrightarrow} (\pi^+(i))_!i^*(e^+)^*  = (\pi^+(i))_!(e^+(i))^*i^*
    \end{equation*}
    hence an isomorphism by proposition \ref{proper + smooth base change theorem}. The lemma then follows. 
\end{proof}

\begin{prop}
  In theorem \ref{braden theorem}, (AffDia) holds true for vector bundles of finite rank. 
\end{prop}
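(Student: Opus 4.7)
The plan is to pass from the abstract Braden composition to an explicit identification using the weight decomposition of the vector bundle, equivariance together with motivic homotopy invariance, and relative purity.

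\textbf{Geometry.} A linear $\gm_{m,S}$-action on a finite rank vector bundle $V \to X$ decomposes it into weight sub-bundles $V = V^+ \oplus V^0 \oplus V^-$. From this one reads off that the fixed sub-bundle is $V^0$, the attractor is $V^{\geq 0} \coloneqq V^+ \oplus V^0$, the repeller is $V^{\leq 0} \coloneqq V^- \oplus V^0$; the morphisms $e^{\pm}$ are closed immersions of direct summands, the $\pi^{\pm}$ are linear projections onto $V^0$, the $s^{\pm}$ are zero sections of these sub-bundles over $V^0$, and the fibre product $V^{\geq 0} \times_V V^{\leq 0}$ is canonically $V^0$ (the attractor and the repeller meet transversally along the fixed sub-bundle). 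Every morphism in $\mathrm{HypLoc}(V)$ is thus a morphism of vector bundles over $V^0$ that is either a closed immersion of a direct summand or a smooth linear projection.

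\textbf{Computation of both sides.} Let $A \in \shbb(V)$ be equivariant. Since the linear action on $V^{\geq 0}$ has only non-negative weights, it extends to an $\mathbb{A}^1$-action contracting $V^{\geq 0}$ onto $V^0$ as $\lambda \to 0$; combined with motivic homotopy invariance of $\shbb$, this produces a canonical isomorphism $(e^+)^* A \simeq (\pi^+)^*(s^+)^* A$ on the attractor. The mirror argument applied to $V^{\leq 0}$ and $\lambda \to \infty$ yields $(e^-)^* A \simeq (\pi^-)^*(s^-)^* A$ on the repeller. Rewriting $(e^-)^!$ in terms of $(e^-)^*$ via motivic purity for the closed immersion of the sub-bundle $V^{\leq 0} \subseteq V$ (with normal bundle the complementary summand $V^+$), and computing $(\pi^{\pm})_{!/*}(\pi^{\pm})^{*/!}$ via relative purity for vector bundle projections, both $(\pi^-)_*(e^-)^! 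A$ and $(\pi^+)_!(e^+)^* A$ are canonically identified with the same Thom twist of $\iota^* A$ on $V^0$, where $\iota = e^+ s^+ = e^- s^- \colon V^0 \hookrightarrow V$.

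\textbf{Matching the Braden composition.} It remains to verify that the composite of Definition \ref{Braden transformation} realises this canonical identification. The key step is the purity exchange $\Ex^{!*}$ at the heart of the definition: in the present setting it is attached to the cartesian square formed by the complementary closed sub-bundle immersions $V^{\geq 0}, V^{\leq 0} \hookrightarrow V$ with intersection $V^0$, so it is an isomorphism by purity for transverse intersections of closed immersions. The remaining unit and counit arrows involving the $s^{\pm}$ become isomorphisms once the equivariance identifications of the previous step are inserted.

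\textbf{Main obstacle.} I anticipate that the hardest part is not any single step but the bookkeeping required to match the Thom twists produced respectively by the exchange $\Ex^{!*}$ and by relative purity for the $\pi^{\pm}$, and to confirm that the resulting natural isomorphism coincides with the one prescribed by Definition \ref{Braden transformation}. The cleanest route is likely to reduce, by Whitney sum multiplicativity of Thom transformations, to the three elementary rank-one cases $V = \mathbb{A}^1_X$ endowed with the trivial action or weight $\pm 1$; for these the hyperbolic diagram collapses to a retraction of a line bundle onto its zero section and both sides of the Braden map become the same Tate twist of the zero-section pullback.
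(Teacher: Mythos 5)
The paper's own ``proof'' of this proposition is a one-line citation to \cite{richarz-2018} and \cite{florian-2024}, so the relevant comparison is with the argument given there (and which the paper itself relies on, via the Braden contraction lemma of Lemma~\ref{braden contraction lemma}).

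Your outline has a genuine gap in the ``Computation of both sides'' step. You assert that for an equivariant object $A$, the extension of the $\gm_m$-action on $V^{\geq 0}$ to an $\mathbb{A}^1$-monoid action, together with homotopy invariance, yields $(e^+)^*A \simeq (\pi^+)^*(s^+)^*(e^+)^*A$ --- i.e.\ that equivariant objects on the attractor are pulled back from the fixed locus. This is false. Take $V = \mathbb{A}^1_X$ with the weight-one action, so $V^{\geq 0} = V$, $V^0 = X$ (the zero section), $\pi^+$ the projection and $s^+$ the zero section. The object $A = j_!\mathds{1}_{\gm_{m,X}}$, where $j \colon \gm_{m,X} \hookrightarrow \mathbb{A}^1_X$ is the equivariant open immersion, is equivariant; yet $(s^+)^*A = 0$ by localization while $A \neq 0$, so $A \not\simeq (\pi^+)^*(s^+)^*A$. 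Equivariance gives $a^*A \simeq p^*A$ only over $\gm_m \times V^{\geq 0}$, and there is no reason for this identification to extend across $\{0\} \times V^{\geq 0}$; homotopy invariance only applies to objects already in the image of $\pr_2^*$. Since you then propose to feed this false isomorphism into the unit/counit arrows attached to $s^{\pm}$, that portion of the argument does not go through.

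What is actually true, and what the cited proofs (and Lemma~\ref{braden contraction lemma} in the paper) prove, are the weaker contraction statements $(\pi^-)_* B \simeq (s^-)^* B$ and $(s^+)^! B \simeq (\pi^+)_! B$ for equivariant $B$; in the example above both sides are $\mnor^{\vee}_c(\gm_{m,X})$, which is nonzero, consistent with the Braden theorem but not with descent through $\pi^+$. These contraction isomorphisms are established by a localization-triangle argument on $\mathbb{A}^1 \times V^{\geq 0}$ (comparing $\bar{a}^*$ and $\pr_2^*$ and using that the action morphism over the generic fibre $\gm_m$ identifies the two pullbacks), not by exhibiting $B$ as a pullback. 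The remaining ingredients of your outline --- the weight decomposition $V = V^+ \oplus V^0 \oplus V^-$, the observation that $e^{\pm}$ are closed sub-bundle immersions with $V^{\geq 0} \times_V V^{\leq 0} = V^0$, the identification of $\Ex^{!*}$ with a Thom-twist purity isomorphism for the transverse square, and the reduction to rank-one pieces by Whitney sum multiplicativity --- are correct and match the route taken in \cite{richarz-2018} and \cite{florian-2024}. If you replace the false ``pullback from the fixed locus'' claim by the genuine contraction lemma, the argument can be repaired along those lines.
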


\begin{proof}
    See \cite{richarz-2018} or \cite{florian-2024}.
\end{proof}

\begin{lem} \label{compatible of Braden transformations with closed immersions}
    Let $Z,X$ be affine $S$-varieties endowed with $\gm_{m,S}$-actions. Let $z \colon Z \longhookrightarrow X$ be an equivariant closed immersion. There is a commutative diagram 
    \begin{equation*}
       \begin{tikzcd}[sep=large]
           (\pi^-)_*(e^-)^!z_* \arrow[d,"\phi_X",swap] \arrow[r,"(\Phi^-)_*"] & (z^0)_*(\pi^-)_*(e^-)^! \arrow[d,"\phi_Z"] \\ 
           (\pi^+)_!(e^+)^*z_* \arrow[r,"(\Phi^+)_*"] & (z^0)_*(\pi^+)_!(e^+)^*
       \end{tikzcd}
    \end{equation*}
    and the two vertical arrows are isomorphisms. 
\end{lem}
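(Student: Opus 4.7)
The plan is to recognize that the commutative square is a direct instance of the proper case of theorem \ref{compatible of Braden transformations with four operations}(1), and then to reduce the assertion that the two vertical Braden transformations are isomorphisms on equivariant objects to the already established vector-bundle case by means of an equivariant closed embedding. Specifically, I will first apply theorem \ref{compatible of Braden transformations with four operations}(1) with $(f,\alpha) = (z,\id)$: the morphism $z$ is an equivariant closed immersion, so the hyperbolic localization diagram of $Z$ is the pullback of that of $X$ along $z$ (fixed points, attractors and repellers of an equivariant closed subscheme are intersected with those of the ambient one). Since $\alpha = \id$ and $z$ is proper, the ``moreover'' clause of that theorem produces the commutative square with horizontal arrows $(\Phi^-)_*,(\Phi^+)_*$ already isomorphisms.

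Next, to prove that $\phi_X$ and $\phi_Z$ are isomorphisms on equivariant objects, I would embed both $X$ and $Z$ equivariantly into vector bundles. An affine $S$-variety with a $\gm_{m,S}$-action has a $\mathbb{Z}$-graded finitely generated coordinate algebra, and a choice of finitely many homogeneous generators yields an equivariant closed immersion $\iota_X \colon X \hookrightarrow V$ into $V \simeq \mathbb{A}^n_S$ equipped with a linear $\gm_{m,S}$-action of weights equal to the degrees of the chosen generators. The same construction (or the composite $Z \hookrightarrow X \hookrightarrow V$) produces an equivariant closed embedding of $Z$ into a vector bundle.

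The final step combines these embeddings with the preceding proposition, which asserts that $\phi_V$ is an isomorphism on equivariant objects of $V$. Applying the commutative square already established to the pair $\iota_X$ gives a natural isomorphism $\phi_V \circ (\iota_X)_* \simeq (\iota_X^0)_* \circ \phi_X$ (both horizontal arrows being isos). For any equivariant $A \in \shbb(X)$, the object $(\iota_X)_*(A)$ is equivariant by lemma \ref{equivariance of six operations}, so $\phi_V$ applied to it is an isomorphism; whence $(\iota_X^0)_* \phi_X(A)$ is an isomorphism. Since $\iota_X^0$ is a closed immersion, its pushforward is fully faithful and thus conservative, forcing $\phi_X(A)$ to be an isomorphism. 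The same argument applied to an equivariant closed embedding of $Z$ into a vector bundle yields that $\phi_Z$ is an isomorphism on equivariant objects.

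The main obstacle is the construction of the equivariant closed embedding of an affine $S$-variety with a $\gm_{m,S}$-action into an $\mathbb{A}^n_S$ carrying a linear action, which rests on the classical correspondence between $\gm_m$-actions on affine schemes and $\mathbb{Z}$-gradings on their coordinate rings; once this input is granted, the remainder of the proof is a mechanical combination of theorem \ref{compatible of Braden transformations with four operations}, the vector-bundle case of Braden's theorem, and the conservativity of closed-immersion pushforward.
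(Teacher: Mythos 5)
The phrase ``the two vertical arrows are isomorphisms'' in the lemma statement is a typo for ``horizontal'': the lemma is invoked in the proof of Lemma~\ref{braden theorem for affdia} precisely to know that $(\Phi^-)_*,(\Phi^+)_*$ are isomorphisms, after which the author \emph{separately} argues that $\phi_{\mathbb{A}^n_S}$ is an isomorphism in order to force $\phi_X$ to be one. Taken literally, with no equivariance hypothesis, ``vertical arrows are isomorphisms'' would be false, and adding equivariance would make the lemma circular with Lemma~\ref{braden theorem for affdia}. Granting that reading, your first paragraph is a correct and complete proof: you instantiate Theorem~\ref{compatible of Braden transformations with four operations}(1) with $(f,\alpha)=(z,\id)$, observe that $\mathrm{HypLoc}(Z)=\mathrm{HypLoc}(X)\times_X Z$ follows directly from the functor-of-points description of $Z^0$ and $Z^{\pm}$ for an equivariant closed immersion, and read off the horizontal isomorphisms from the ``moreover'' clause since $z$ is proper objectwise. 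The paper's own proof simply cites Richarz's Lemma~2.22, which redoes the relevant exchange-morphism check directly; your route, which reuses the general compatibility theorem already established, is cleaner within the paper's own logical organization.

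Your second and third paragraphs go on to prove that $\phi_X$ and $\phi_Z$ are isomorphisms on equivariant objects by embedding equivariantly into vector bundles and using conservativity of the pushforward along $\iota_X^0$. That is not part of this lemma; it is precisely the content (and the proof strategy) of the subsequent Lemma~\ref{braden theorem for affdia}, which you are re-deriving. The argument there is essentially right, except that an equivariant closed immersion $X\hookrightarrow\mathbb{A}^n_S$ with a linear $\gm_{m,S}$-action is in general only available Zariski-locally on $S$ (Richarz's Lemma~2.21; the paper's proof of Lemma~\ref{braden theorem for affdia} accordingly begins by choosing a Zariski covering of $S$), because for non-affine $S$ the graded coordinate algebra need only be Zariski-locally generated by finitely many global homogeneous sections. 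This is harmless since being an isomorphism is Zariski-local, but it should be stated. In any case those two paragraphs should be removed from the proof of the present lemma.
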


\begin{proof}
    The proof is identical to the proof of \cite[Lemma 2.22]{richarz-2018}.
\end{proof}

\begin{lem} \label{compatible of Braden transformations with etale morphisms}
    Let $U,X$ be $S$-varieties endowed with $\gm_{m,S}$-actions with $U$ being $S$-affine. Let $u \colon U \longrightarrow X$ be an equivariant \'etale morphism. There is a commutative diagram 
    \begin{equation*}
       \begin{tikzcd}[sep=large]
           (u^0)^*(\pi^-)_*(e^-)^!  \arrow[d,"\phi_X",swap] \arrow[r,"(\Phi^-)^*"] & (\pi^-)_*(e^-)^!u^* \arrow[d,"\phi_U"] \\ 
           (u^0)^*(\pi^+)_!(e^+)^* \arrow[r,"(\Phi^+)^*"] & (\pi^+)_!(e^+)^*u^*.
       \end{tikzcd}
   \end{equation*}
    and the two vertical arrows are isomorphisms.
\end{lem}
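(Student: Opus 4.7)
The commutativity of the square is a direct instance of Theorem \ref{compatible of Braden transformations with four operations}(2) applied to the equivariant étale morphism $u$ (viewed as a morphism of diagrams with trivial index categories): the natural transformations $(\Phi^\pm)^*$ are constructed there and commutativity is part of the conclusion. To see that the two horizontal arrows $(\Phi^\pm)^*$ are isomorphisms, we use that $u$ is smooth (being étale) and that $e^\pm \colon X^\pm \to X$ are cartesian along $u$ by Lemma \ref{descent of fixed points, attractors, repellers} (the hypothesis of a common affine $\tau$-cover being satisfied because $U$ itself is affine and $X$ admits an affine étale atlas); unwinding the construction of $(\Phi^\pm)^*$, smooth base change then forces invertibility.

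The plan is then to establish the two vertical isomorphisms in two steps. First I would prove that $\phi_U$ is an isomorphism on equivariant inputs. Since $U$ is an affine $S$-variety with $\gm_{m,S}$-action, its ring of global sections is $\mathbb{Z}$-graded and admits a $\gm_m$-stable finitely generated graded submodule generating it as an algebra; this produces an equivariant closed immersion $z \colon U \hookrightarrow V$ into a $\gm_{m,S}$-linear vector bundle $V$ of finite rank on $S$. By the vector-bundle case of Theorem \ref{braden theorem} (the proposition preceding this lemma), $\phi_V$ is an isomorphism on equivariant inputs. Applying Lemma \ref{compatible of Braden transformations with closed immersions} to $z$ gives a commutative square whose horizontal arrows $(\Phi^\pm)_*$ and whose vertical arrow $\phi_V$ are all isomorphisms on equivariant inputs; since $z_*$ preserves equivariance (Lemma \ref{equivariance of six operations}) and $(z^0)_*$ is fully faithful (as $z^0 \colon U^0 \hookrightarrow V^0$ is a closed immersion), a short diagram chase yields that $\phi_U$ is an isomorphism on equivariant inputs.

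For $\phi_X$ I would use the commutative square of the lemma itself. Because both horizontal arrows and $\phi_U$ are isomorphisms on equivariant inputs (pullback along $u$ and $u^0$ preserves equivariance by Lemma \ref{equivariance of six operations}), the whiskered transformation $(u^0)^* \phi_X$ evaluated on pullbacks $u^*(A)$ of equivariant $A \in \shbb(X)$ is an isomorphism. In the setting where this lemma will be applied in the proof of Theorem \ref{braden theorem} (case $\tau$-Sch), $u$ is a member of an equivariant $\tau$-cover $\{U_i \to X\}_i$ realizing the $\tau$-local linearization of $X$, and $u^0$ is likewise a $\tau$-cover of $X^0$ (Lemma \ref{descent of fixed points, attractors, repellers} combined with the compatibility of fixed-point formation with $\tau$-covers). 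The $\tau$-conservativity hypothesis on $\shbb$ then descends the isomorphism from $U^0$ to $X^0$, proving that $\phi_X$ is an isomorphism on equivariant inputs.

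The main obstacle is the cartesianness of $e^\pm \colon X^\pm \to X$ along the étale morphism $u$: this is the nontrivial geometric input that simultaneously makes the horizontal base-change maps invertible and allows the étale descent of $\phi_X$ from $\phi_U$. It is precisely the content of Lemma \ref{descent of fixed points, attractors, repellers}, and once secured, the remainder of the argument is a formal combination of Theorem \ref{compatible of Braden transformations with four operations}, Lemma \ref{compatible of Braden transformations with closed immersions}, the vector-bundle case of Braden, and $\tau$-descent.
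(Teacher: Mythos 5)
The word ``vertical'' in the lemma's conclusion is almost certainly a misprint for ``horizontal.'' The intended claim is that the base-change arrows $(\Phi^\pm)^*$ are isomorphisms, not that the Braden transformations $\phi_X$ and $\phi_U$ are. You can verify this from the way the lemma is invoked: in the proof of Lemma~\ref{braden theorem for affdia} the squares supplied by Lemmas~\ref{compatible of Braden transformations with closed immersions} and~\ref{compatible of Braden transformations with etale morphisms} have $(\Phi^\pm)_*$ and $(\Phi^\pm)^*$ as their invertible edges, and $\phi_X$ being an isomorphism is precisely what remains to be \emph{concluded} from those squares (and from the vector-bundle case of Braden). In Lemma~\ref{compatible of Braden transformations with closed immersions}, which carries the same wording, the paper's own affine-case proof explicitly reads ``with $(\Phi^-)_*,(\Phi^+)_*$ being isomorphisms.'' A version of the lemma in which $\phi_X,\phi_U$ were unconditionally isomorphisms would also be false (there is no equivariance hypothesis) and would be logically misplaced, since it precedes Theorem~\ref{braden theorem}, which it is meant to help prove.

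With the statement read that way, your first paragraph is essentially the whole proof, and it agrees with the paper's intent (which defers to the \'etale analogue in Richarz): commutativity and the construction of $(\Phi^\pm)^*$ are the content of Theorem~\ref{compatible of Braden transformations with four operations}(2), the $\mathrm{HypLoc}$-compatibility hypothesis is supplied via Lemma~\ref{descent of fixed points, attractors, repellers}, and $u$ being \'etale (hence smooth) forces $(\Phi^\pm)^*$ to be invertible. The remaining three paragraphs chase the typo. Proving that $\phi_U$ is an isomorphism by embedding $U$ equivariantly into a vector bundle is essentially reproducing Lemma~\ref{braden theorem for affdia} inside the proof of a lemma that Lemma~\ref{braden theorem for affdia} depends on (note too that the equivariant embedding into a vector bundle is only available Zariski-locally on $S$, per \cite[Lemma~2.21]{richarz-2018}, so your unconditional ``this produces an equivariant closed immersion $U \hookrightarrow V$'' is not quite right as stated). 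Proving that $\phi_X$ is an isomorphism needs $u$ to be part of a $\tau$-cover, a hypothesis the lemma does not impose and which would again invert the paper's logical order. Drop those paragraphs; the lemma's burden is only the horizontal isomorphisms.
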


\begin{proof}
     The proof is identical to the proof of \cite[Lemma 2.24]{richarz-2018}.
\end{proof}

\begin{lem} \label{braden theorem for affdia}
    Theorem \ref{braden theorem} holds true for affine $S$-varieties with $\gm_{m,S}$-action.
\end{lem}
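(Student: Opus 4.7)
The plan is to reduce the statement to the already-established case of vector bundles of finite rank, by equivariantly embedding the given affine $S$-variety into a linear representation of $\gm_{m,S}$. Writing $X = \Spec(\mathcal{A})$ for a finitely presented $\mathcal{O}_S$-algebra, the $\gm_{m,S}$-action on $X$ corresponds to a $\mathbb{Z}$-grading $\mathcal{A} = \bigoplus_{n \in \mathbb{Z}} \mathcal{A}_n$. Choosing finitely many homogeneous generators of weights $w_1,\ldots,w_N \in \mathbb{Z}$ (possible since $\mathcal{A}$ is of finite type) produces an equivariant surjection $\mathcal{O}_S[x_1,\ldots,x_N] \twoheadrightarrow \mathcal{A}$ with each $x_i$ placed in weight $w_i$, hence an equivariant closed immersion $z \colon X \longhookrightarrow V \coloneqq \mathbb{A}^N_S$, where $V$ carries the linear $\gm_{m,S}$-action of weights $(w_1,\ldots,w_N)$. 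In particular $V$ is a trivial vector bundle of rank $N$ over $S$, so the preceding proposition guarantees that $\phi_V$ is an isomorphism on every equivariant object of $\shbb(V)$.

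Given an equivariant object $A \in \shbb(X)$, Lemma \ref{equivariance of six operations}(1) implies that $z_*A \in \shbb(V)$ is again equivariant, and therefore $\phi_V(z_*A)$ is an isomorphism. The compatibility identity $\mathrm{HypLoc}(X) = \mathrm{HypLoc}(V) \times_V X$ needed to invoke Theorem \ref{compatible of Braden transformations with four operations} is satisfied here, since the formation of fixed points, attractors and repellers is preserved under closed immersions, as follows from Lemma \ref{descent of fixed points, attractors, repellers} together with \cite[Lemma 1.10]{richarz-2018}. I then apply Lemma \ref{compatible of Braden transformations with closed immersions} to $z$: the commutative square provided there has horizontal base change morphisms $(\Phi^{\pm})_*$ which are isomorphisms (by properness of closed immersions, \textit{cf.} Theorem \ref{compatible of Braden transformations with four operations}(1)), and consequently $(z^0)_*\phi_X(A)$ is identified, up to these isomorphisms, with $\phi_V(z_*A)$. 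In particular $(z^0)_*\phi_X(A)$ is an isomorphism.

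To conclude, the induced morphism $z^0 \colon X^0 \longhookrightarrow V^0$ is itself a closed immersion (being the base change of $z$ along $V^0 \longhookrightarrow V$), so the direct image $(z^0)_*$ is fully faithful, hence conservative. This forces $\phi_X(A)$ to be an isomorphism, which is precisely the assertion of Theorem \ref{braden theorem} in the affine $S$-variety case.

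The main obstacle I anticipate is justifying the equivariant closed embedding in the required generality, \emph{i.e.}, over a general noetherian base $S$ rather than a field; however, this is standard once one identifies $\gm_{m,S}$-actions on affine schemes with $\mathbb{Z}$-gradings on their structure sheaves of rings. A secondary bookkeeping point is tracking which arrows in the closed immersion compatibility square are isomorphisms, but this ultimately reduces to proper base change for $z_*$ as recorded in Theorem \ref{compatible of Braden transformations with four operations}(1), so no genuinely new input is required beyond the vector bundle case and the compatibility lemmas already proved.
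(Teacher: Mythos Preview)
Your overall strategy---embed $X$ equivariantly as a closed subscheme of a linear $\gm_{m,S}$-representation, then use Lemma \ref{compatible of Braden transformations with closed immersions} together with full faithfulness of $(z^0)_*$ to transport the isomorphism from the vector bundle case---is exactly the paper's approach. The final two paragraphs of your argument coincide with the paper's proof essentially verbatim.

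There is, however, a gap in your first paragraph. You write $X = \Spec(\mathcal{A})$ and treat $\mathcal{A}$ as an honest graded ring in which one can pick finitely many homogeneous generators. This is only legitimate when the base $S$ is itself affine. For a general quasi-compact quasi-separated $S$, an ``affine $S$-variety'' means $X = \Spec_S(\mathcal{A})$ for a quasi-coherent sheaf of graded $\mathcal{O}_S$-algebras, and global homogeneous generators of $\mathcal{A}$ need not exist. This is precisely why the reference you would need, \cite[Lemma 2.21]{richarz-2018}, only produces the equivariant closed immersion $X \longhookrightarrow \mathbb{A}^n_S$ \emph{Zariski locally on $S$}. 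Your anticipated obstacle is therefore real and is not dispatched by the grading identification alone.

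The paper repairs this by inserting a preliminary Zariski localization step: choose a Zariski cover $\{S_i \to S\}$ over which the equivariant embedding exists, set $u_i \colon X \times_S S_i \to X$, and use Lemma \ref{compatible of Braden transformations with etale morphisms} (applied to the open immersions $u_i$) together with Nisnevich conservativity of $\sh$ to reduce to the case where the global embedding $X \hookrightarrow \mathbb{A}^n_S$ is available. Once that reduction is made, your argument applies without change.
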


\begin{proof}
Suppose that $X$ is affine over $S$. By \cite[Lemma 2.21]{richarz-2018}, there exists Zariski locally on $S$ a $\gm_{m,S}$-equivariant closed immersion $X \longhookrightarrow \mathbb{A}^n_S$. Let $\left \{S_i \longrightarrow S \right \}$ be a Zariski covering such that each $X \times_S S_i \longrightarrow S_i$ factors as $X \times_S S_i \longhookrightarrow \mathbb{A}_{S_i}^n \longrightarrow S_i$ (all are equivariant morphisms). Denote by $\left \{u_i \colon X \times_S S_i \longrightarrow X \right \}$ the corresponding base change, then lemma \ref{compatible of Braden transformations with etale morphisms} shows that there exist commutative squares
    \begin{equation*}
        \begin{tikzcd}[sep=large]
             (u^0_i)^*(\pi^-)_*(e^-)^!  \arrow[d,"\phi_X",swap] \arrow[r,"(\Phi^-)^*"] & (\pi^-)_*(e^-)^!(u_i)^* \arrow[d,"\phi_{X \times_S S_i}"] \\ 
           (u^0_i)^*(\pi^+)_!(e^+)^* \arrow[r,"(\Phi^+)^*"] & (\pi^+)_!(e^+)^*(u_i)^*.
        \end{tikzcd}
    \end{equation*}
    Since a Zariski covering is also a Nisnevich covering and any $\sh$ is Nis-conservative (see \cite[Proposition 1.4.3]{ayoub-thesis-1}), we can reduce to the case of a $\gm_{m,S}$-equivariant closed immersion $v \colon X \longhookrightarrow \mathbb{A}_S^n$ (with $X$ being $S$-affine).  Since $(i^0)_*$ is fully faithful, we can reduce to proving that that $(v^0)_*(\pi^-)_*(e^-)^!  \longrightarrow (v^0)_*(\pi^+)_!(e^+)^*$ is an isomorphism. By lemma \ref{compatible of Braden transformations with closed immersions}, there is a transformation
  \begin{equation*}
       \begin{tikzcd}[sep=large]
           (\pi^-)_*(e^-)^! i_* \arrow[d,"\phi_{\mathbb{A}_S^n}",swap] \arrow[r,"(\Phi^-)_*"] & (i^0)_*(\pi^-)_*(e^-)^! \arrow[d,"\phi_{X}"] \\ 
           (\pi^+)_!(e^+)^*i_* \arrow[r,"(\Phi^+)_*"] & (i^0)_*(\pi^+)_!(e^+)^*
       \end{tikzcd}
    \end{equation*}
    with $(\Phi^-)_*,(\Phi^+)_*$ being isomorphisms. The morphism $\phi_{\mathbb{A}^n_S}$ is an isomorphism by Florian's theorem above and this forces $\phi_X$ to be an isomorphism. 
\end{proof}

\begin{prop}
   Theorem \ref{braden theorem} holds true for an algebirac $S$-space $X$ endowed with a $\tau$-locally linearizable $\gm_{m,S}$-action.
\end{prop}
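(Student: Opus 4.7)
The plan is to reduce, via $\tau$-descent, the hyperbolic localization theorem for $X$ to the affine case already established in Lemma \ref{braden theorem for affdia}. Fix an equivariant object $A \in \shbb(X)$ and let $\{u_i \colon U_i \longrightarrow X\}_{i \in I}$ be a $\gm_{m,S}$-equivariant $\tau$-covering where each $U_i$ is an affine $S$-scheme with a linear $\gm_{m,S}$-action, as provided by the hypothesis of $\tau$-local linearizability. Since $\shbb$ is assumed $\tau$-conservative, to show that the Braden transformation
\begin{equation*}
    \phi_X(A)\colon (\pi_X^-)_*(e_X^-)^!A \longrightarrow (\pi_X^+)_!(e_X^+)^*A
\end{equation*}
is an isomorphism, it suffices to show that $(u_i^0)^*\phi_X(A)$ is an isomorphism for every $i$, where $u_i^0 \colon U_i^0 \longrightarrow X^0$ is the morphism induced on fixed points.

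To relate $(u_i^0)^*\phi_X(A)$ to $\phi_{U_i}((u_i)^*A)$, I apply Lemma \ref{compatible of Braden transformations with etale morphisms}, which gives a commutative square
\begin{equation*}
    \begin{tikzcd}[sep=large]
        (u_i^0)^*(\pi_X^-)_*(e_X^-)^!A \arrow[r,"(\Phi^-)^*"] \arrow[d,"(u_i^0)^*\phi_X(A)",swap] & (\pi_{U_i}^-)_*(e_{U_i}^-)^!(u_i)^*A \arrow[d,"\phi_{U_i}((u_i)^*A)"] \\
        (u_i^0)^*(\pi_X^+)_!(e_X^+)^*A \arrow[r,"(\Phi^+)^*"] & (\pi_{U_i}^+)_!(e_{U_i}^+)^*(u_i)^*A
    \end{tikzcd}
\end{equation*}
in which both horizontal arrows are isomorphisms (this uses that $u_i$ is étale together with Lemma \ref{descent of fixed points, attractors, repellers}, which guarantees the cartesianness of all the relevant squares in the hyperbolic localization diagram under pullback along $u_i$, so that the proof of Lemma \ref{compatible of Braden transformations with etale morphisms} applies verbatim). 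By Lemma \ref{equivariance of six operations}, the pullback $(u_i)^*A$ is an equivariant object on $U_i$. Since $U_i$ is an affine $S$-variety with $\gm_{m,S}$-action, Lemma \ref{braden theorem for affdia} applies and yields that $\phi_{U_i}((u_i)^*A)$ is an isomorphism. The commutative square then forces $(u_i^0)^*\phi_X(A)$ to be an isomorphism, and $\tau$-conservativity concludes the proof.

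The main obstacle I anticipate is verifying the hypothesis of Lemma \ref{compatible of Braden transformations with etale morphisms} in this setting, namely that the hyperbolic localization diagrams satisfy the base change identity $\mathrm{HypLoc}(U_i) = \mathrm{HypLoc}(X) \times_X U_i$. This is precisely what Lemma \ref{descent of fixed points, attractors, repellers} delivers, since $u_i$ is equivariant étale and locally on $X$ we have an affine scheme sitting in a cartesian square with $U_i$; one also needs to check that $(u_i^0)^*$ commutes with the formation of the Braden transformation, which again reduces to the cartesian squares on fixed points, attractors and repellers provided by that lemma. Once these descent facts are in place the argument is a direct analogue of the Zariski-local reduction to affine space used in Lemma \ref{braden theorem for affdia}, but with the Nisnevich/étale topology replacing the Zariski topology.
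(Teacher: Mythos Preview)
Your proposal is correct and follows essentially the same route as the paper: pick a $\tau$-covering by affines, use $\tau$-conservativity to reduce to checking $(u_i^0)^*\phi_X(A)$, compare with $\phi_{U_i}((u_i)^*A)$ via a compatibility square, and invoke the affine case. The paper cites Theorem~\ref{compatible of Braden transformations with four operations} for the compatibility square and then remarks that, since the $u_i$ are \'etale, so are $u_i^0,u_i^{\pm}$ (by \cite[Theorem~1.8]{richarz-2018}), whence $(u_i^{\pm})^*=(u_i^{\pm})^!$ and the horizontal comparison maps are isomorphisms; you instead invoke Lemma~\ref{compatible of Braden transformations with etale morphisms} and argue its proof goes through for algebraic spaces. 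These are the same idea packaged slightly differently.

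One small imprecision worth flagging: Lemma~\ref{descent of fixed points, attractors, repellers} yields $U_i^0 = X^0 \times_X U_i$ and $U_i^{\pm} = X^{\pm} \times_{X^0} U_i^0$, i.e.\ cartesianness of the $\pi^{\pm}$-squares, but \emph{not} in general $U_i^{\pm} = X^{\pm} \times_X U_i$, so the $e^{\pm}$-squares need not be cartesian. Your phrase ``cartesianness of all the relevant squares'' therefore overstates what that lemma provides. This does not break the argument---\'etaleness of $u_i^{\pm}$ is what actually makes $(\Phi^{\pm})^*$ isomorphisms, exactly as the paper notes---but you should adjust the justification accordingly. Also note that the hypothesis of Lemma~\ref{descent of fixed points, attractors, repellers} (existence of a suitable cartesian square of affine atlases) is not automatic from $\tau$-local linearizability alone; the paper sidesteps this by appealing directly to \cite[Theorem~1.8]{richarz-2018}.
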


\begin{proof}
By the property of being $\tau$-locally linearizable, there exists a $\tau$-covering $\left \{u_i \colon U_i \longrightarrow X \right \}$ consisting of equivariant morphisms, where each $U_i$ are affine $S$-schemes with $\gm_{m,S}$-actions. By $\tau$-conservativity in proposition \ref{descent of motives}, the transformation 
\begin{equation*}
    (\pi^-)_*(e^-)^! \longrightarrow (\pi^+)_!(e^+)^*
\end{equation*}
is an isomorphism if and only if
\begin{equation*}
    (u_i^0)^*(\pi^-)_*(e^-)^! \longrightarrow (u_i^0)^*(\pi^+)_!(e^+)^*
\end{equation*}
are isomorphisms. By theorem \ref{compatible of Braden transformations with four operations}, there are transformations
   \begin{equation*}
       \begin{tikzcd}[sep=large]
           (u^0_i)^*(\pi^-)_*(e^-)^!  \arrow[d,"\phi_X",swap] \arrow[r,"(\Phi^-)^*"] & (\pi^-)_*(e^-)^!(u_i)^* \arrow[d,"\phi_{U_i}"] \\ 
           (u^0_i)^*(\pi^+)_!(e^+)^* \arrow[r,"(\Phi^+)^*"] & (\pi^+)_!(e^+)^*(u_i)^*.
       \end{tikzcd}
   \end{equation*}
 We remark that since $u_i$ are \'etale, so are $u^0_i,u_i^{\pm}$ by \cite[Theorem 1.8]{richarz-2018}. As a consequence, $(u^0_{\pm})^* = (u^0_{\pm})^!$ and we still have that $\Phi^*$ is an isomorphism. This helps us to reduce to the case of $X$ being an affine $S$-scheme, which is known to be true thanks to the previous lemma.
\end{proof}
We need the following two lemmas in the next section. 
 \begin{lem}[Braden contraction lemma] \label{braden contraction lemma}
    Let $X$ be an algebraic $S$-space on which $\gm_{m,S}$ acts. Assume that the action is $\tau$-locally linearizable, then the transformations
     \begin{equation*}
     \begin{split} 
         (\pi^-_X)_* \longrightarrow (\pi^-_X)_*(s^-_X)_*(s^-_X)^* = (s_X^-)^* \\
         (s_X^+)^!  = (\pi_X^+)_!(s^+_X)_!(s_X^+)^! \longrightarrow (\pi_X^+)_!
         \end{split} 
     \end{equation*}
     are isomorphisms on equivariant objects in $\sh(X^0)$. Likewise, the unit $\id \longrightarrow (\pi^-)_*(\pi^-)^*$ is an isomorphism on equivariant objects. 
 \end{lem}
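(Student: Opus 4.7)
The plan is to reduce each of the three claims to a direct application of Theorem \ref{braden theorem}, applied this time to $X^-$ (respectively $X^+$) rather than to $X$ itself, using the $\gm_{m,S}$-action inherited along $e^-$ (resp.\ $e^+$). The punchline is that $X^-$ is its own repeller while its attractor collapses to $X^0$, so the Braden transformation for $X^-$ degenerates to precisely the comparison $(\pi^-_X)_* \to (s^-_X)^*$ appearing in the statement.

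First I would compute the hyperbolic localization of $X^-$. Using the functorial definitions of fixed points, attractors and repellers together with the separatedness of $X$ over $S$, one checks that
\begin{equation*}
(X^-)^0 = X^0, \qquad (X^-)^- = X^-, \qquad (X^-)^+ = X^0,
\end{equation*}
and that under these identifications $\pi^-_{X^-}$ becomes $\pi^-_X$, $e^-_{X^-}$ becomes $\id_{X^-}$, $\pi^+_{X^-}$ becomes $\id_{X^0}$, and $e^+_{X^-}$ becomes $s^-_X$. The subtle identification is $(X^-)^+ = X^0$: a $T$-point of $(X^-)^+$ assembles into a $\gm_{m,T}$-orbit $\gm_{m,T} \to X_T$ extending to both $0$ and $\infty$ in $\mathbb{A}^1_T$, which is forced to be constant by separatedness. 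A symmetric analysis of $X^+$ gives $(X^+)^0 = X^0$, $(X^+)^+ = X^+$, $(X^+)^- = X^0$ with $\pi^+_{X^+} = \pi^+_X$, $e^+_{X^+} = \id_{X^+}$, $\pi^-_{X^+} = \id_{X^0}$ and $e^-_{X^+} = s^+_X$.

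Next I would verify that $X^-$ (and $X^+$) inherits a $\tau$-locally linearizable action. Given a $\tau$-equivariant atlas $\{U_i \to X\}$ by affine $\gm_{m,S}$-schemes, Lemma \ref{descent of fixed points, attractors, repellers} applied to each $U_i \to X$ supplies a $\tau$-equivariant atlas $\{U_i^- \to X^-\}$, and each $U_i^-$ is again an affine $\gm_{m,S}$-scheme (the repeller of an affine scheme with $\gm_m$-action is affine). Hence Theorem \ref{braden theorem} applies to $X^-$. For any equivariant $B \in \sh(X^-)$, the Braden transformation for $X^-$ produces an isomorphism
\begin{equation*}
(\pi^-_X)_* B \ =\ (\pi^-_{X^-})_*(e^-_{X^-})^! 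B \ \xrightarrow{\ \sim\ } \ (\pi^+_{X^-})_!(e^+_{X^-})^* B \ =\ (s^-_X)^* B,
\end{equation*}
and unwinding Definition \ref{Braden transformation} in this degenerate situation (where $u_{X^-}$ and $t^+_{X^-}$ become identities) recovers precisely the composition $(\pi^-_X)_* \to (\pi^-_X)_*(s^-_X)_*(s^-_X)^* = (s^-_X)^*$ of the lemma. The analogous argument applied to $X^+$ yields the second isomorphism $(s^+_X)^! \xrightarrow{\sim} (\pi^+_X)_!$. For the final assertion, given an equivariant $A \in \sh(X^0)$, the pullback $B := (\pi^-_X)^* A$ is equivariant by Lemma \ref{equivariance of six operations}; applying the first isomorphism to $B$ gives $(\pi^-_X)_*(\pi^-_X)^* A \xrightarrow{\sim} (s^-_X)^*(\pi^-_X)^* A = A$ thanks to $\pi^-_X \circ s^-_X = \id_{X^0}$, and a triangle-identity computation confirms that the composition of the unit $A \to (\pi^-_X)_*(\pi^-_X)^* A$ with this isomorphism is $\id_A$, so the unit is itself an isomorphism.

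The main obstacle is the first step: correctly identifying $(X^-)^+$ with $X^0$, and then matching the abstract Braden transformation for $X^-$ with the concrete composition $(\pi^-_X)_* \to (s^-_X)^*$ displayed in the statement. The collapse $(X^-)^+ = X^0$ is the algebraic expression of the contraction phenomenon, and uses separatedness of $X$ in an essential way; once it and the accompanying identifications of $e^\pm_{X^-}, \pi^\pm_{X^-}$ are in place, the remaining verifications are routine book-keeping with the definition of the Braden transformation.
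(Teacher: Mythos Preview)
Your approach differs from the paper's. The paper descends directly: using an equivariant $\tau$-atlas $\{u_i \colon U_i \to X\}$ and the compatibility of $(\pi^-_X)_*$ and $(s^-_X)^*$ with $(u_i^0)^*$ (via \cite[Lemma 2.25]{richarz-2018}), it reduces first to affine $S$-schemes and then to vector bundles, where the statement is \cite[Lemma 2.19]{richarz-2018}; the unit claim is handled by the same reduction, ending with the $\mathbb{A}^1$-homotopy property. Your route---recognizing the contraction map as the degenerate Braden transformation for $X^-$ (resp.\ $X^+$) itself---is more conceptual and avoids re-running the descent that is already packaged into Theorem~\ref{braden theorem}.

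That said, your justification of $(X^-)^+ = X^0$ has a gap. A $\gm_m$-orbit extending to a morphism $\mathbb{P}^1_T \to X_T$ is \emph{not} forced to be constant by separatedness alone: for $X = \mathbb{P}^1$ with its standard action the identity is a nonconstant equivariant map, yet it does not arise from a point of $(X^-)^+$. What a $T$-point of $(X^-)^+$ really encodes is a $\gm_{m,T}$-equivariant morphism $(\mathbb{A}^1_T)^+ \times_T (\mathbb{A}^1_T)^- \to X_T$ for the hyperbolic action on the source, and it is the extension over the origin of $\mathbb{A}^2_T$---strictly stronger than extending over the two poles of $\mathbb{P}^1_T$---that forces the map to land in $X^0$. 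For affine $X=\Spec(A)$ this is the graded-ring identity $\bigl(A/I^+\bigr)/J^- = A/(I^+ + I^-)$; the $\tau$-locally linearizable case then follows from your own observation that $\{U_i^- \to X^-\}$ is an equivariant affine $\tau$-atlas. With this repair the rest of your argument goes through.
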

 
 \begin{proof}
 The two transformations are dual so let us prove the first one only. Let $\left \{u_i \colon U_i \longrightarrow X \right \}_{i \in I}$ be a $\tau$-covering consisting of equivariant morphisms with $U_i$ being $S$-affine. For any $i \in I$, there exists a natural commutative diagram
 \begin{equation*}
     \begin{tikzcd}[sep=large]
         (u^0_i)^*(\pi^-_X)_*  \arrow[r] \arrow[d] & (u^0_i)^*(s^-_X)^* \arrow[d] \\ 
         (\pi^-_U)_*(u^-_i)^* \arrow[r] & (s^-_U)^*(u^-_i)^*.
     \end{tikzcd}
 \end{equation*}
 by \cite[Lemma 2.25]{richarz-2018}. This reduces the problem to the case of schemes. A similar argument reduces the problem to the case $X$ is a vector bundle, which is known to be true by \cite[Lemma 2.19]{richarz-2018}. Reasoning in the same way, we only have to show that $\id \longrightarrow (\pi^-)_*(\pi^-)^*$ is an isomorphism if $X$ is a vector bundle, but this is the $\mathbb{A}^1$-homotopy property. 
 \end{proof}
 
 \begin{lem}[Braden purity theorem] \label{braden purity theorem}
    Let $X$ be an algebraic $S$-space on which $\gm_{m,S}$ acts. Assume that the action is $\tau$-locally linearizable, then the transformation 
    \begin{equation*}
        (s^-)^*(e^-)^!(A) \longrightarrow (s^+)^!(e^+)^*(A)
    \end{equation*}
    is an isomorphism on equivariant objects $A$ in $\sh(X)$. 
 \end{lem}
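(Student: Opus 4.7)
The plan is to deduce the lemma from the Braden hyperbolic localization theorem (Theorem \ref{braden theorem}) and the Braden contraction lemma (Lemma \ref{braden contraction lemma}) by exploiting the factorization of the Braden transformation already built into Definition \ref{Braden transformation}.

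First I would recall that, unravelling Definition \ref{Braden transformation}, the Braden transformation $\phi_X(A)$ factorizes as the three-step composition
\begin{equation*}
    (\pi^-)_*(e^-)^!(A) \overset{\alpha}{\longrightarrow} (s^-)^*(e^-)^!(A) \overset{\beta}{\longrightarrow} (s^+)^!(e^+)^*(A) \overset{\gamma}{\longrightarrow} (\pi^+)_!(e^+)^*(A),
\end{equation*}
where $\alpha$ comes from the unit $\id \to (s^-)_*(s^-)^*$ together with $\pi^- \circ s^- = \id$; the middle map $\beta$ is the purity exchange $\Ex^{!*}$ on the cartesian square $(e^+, t^+, e^-, t^-)$ combined with the identifications $s^{\pm} = t^{\pm} \circ u$ and $u^* \simeq u^!$ (valid since $u$ is an open and closed immersion); and $\gamma$ comes dually from the counit $(s^+)_!(s^+)^! \to \id$ together with $\pi^+ \circ s^+ = \id$. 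Crucially, $\beta$ is exactly the morphism appearing in the statement of the lemma.

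Next I would verify that $\alpha$ and $\gamma$ are isomorphisms on equivariant input. Since $e^{\pm}$ are equivariant, Lemma \ref{equivariance of six operations} implies that $(e^-)^!(A)$ and $(e^+)^*(A)$ are equivariant whenever $A$ is. Applying the Braden contraction lemma (Lemma \ref{braden contraction lemma}) to these equivariant objects on $X^-$ and $X^+$ respectively yields that $\alpha$ and $\gamma$ are isomorphisms.

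Since Theorem \ref{braden theorem} asserts that the full composite $\gamma \circ \beta \circ \alpha = \phi_X(A)$ is an isomorphism on equivariant $A$, two consecutive applications of the two-out-of-three principle for isomorphisms in the triangulated category $\sh(X^0)$ force the middle arrow $\beta$ to be an isomorphism on equivariant $A$, which is the claim. The main potential obstacle is merely bookkeeping: one must confirm that the sequence of natural isomorphisms in Definition \ref{Braden transformation} assembles so that $\beta$ is literally the map $(s^-)^*(e^-)^! \to (s^+)^!(e^+)^*$ claimed, with the identifications $(s^{\pm})^* \simeq u^*(t^{\pm})^*$, $(s^{\pm})^! \simeq u^!(t^{\pm})^!$ and $u^* \simeq u^!$ cancelling cleanly. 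This is a diagram chase already implicit in the setup of the Braden transformation and introduces no new input.
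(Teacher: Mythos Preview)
Your proposal is correct and is precisely the argument the paper intends: the paper's proof is a single sentence (``This is a consequence of the Braden contraction lemma and the Braden hyperbolic localization theorem''), and you have unpacked exactly that, factoring $\phi_X$ through the map in question via Definition~\ref{Braden transformation} and using Lemma~\ref{braden contraction lemma} on the outer pieces together with Theorem~\ref{braden theorem} on the composite.
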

 
 \begin{proof}
 This is a consequence of the Braden contraction lemma and the Braden hyperbolic localization theorem.
 \end{proof}
 
 \subsection{Realization functors}
In \cite{ayoub-realisation-etale}, Ayoub constructs an equivalence between $\da(X,\Lambda)$ and the derived category of \'etale sheaves of $\Lambda$-modules  $\deret(X,\Lambda)$ under the assumption that $X$ is an $S$-variety with $S$ either a field or a Dedekind domain. The result extends naturally to algebraic spaces. 
\begin{lem} \label{etale realization}
Let $S$ be a scheme and $\Lambda$ be a $\mathbb{Z}/N\mathbb{Z}$-algebra for some integer $N$ invertible on $S$. Assume that the statement of \cite[Théorème 4.1]{ayoub-realisation-etale} is satisfied. Let $X$ be an algebraic $S$-space, then the functor $\hat{\iota}^*_X \colon \deret(X,\Lambda) \longrightarrow \da(X,\Lambda)$ is an equivalence of categories.  
\end{lem}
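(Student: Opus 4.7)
The plan is to bootstrap from the scheme case via Nisnevich descent, using that, by the conventions fixed at the start of the paper, every (noetherian, separated) algebraic $S$-space $X$ admits a Nisnevich atlas $V \twoheadrightarrow X$ with $V$ an affine scheme, and that the Čech nerve of such an atlas is a simplicial object in schemes (the fiber products $V \times_X V \times_X \cdots \times_X V$ are schemes because the diagonal of $X$ is representable, and in fact étale, so each such fiber product is étale over $V$ hence a scheme).

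First I would record that both source and target categories satisfy Nisnevich hyperdescent on algebraic $S$-spaces. For $\da(-,\Lambda)$ this is by construction (étale local equivalences in particular Nisnevich local) together with the fact that $\da$ extends to algebraic spaces via $\tau$-descent as done in the appendix of the paper; in particular one has an equivalence
\begin{equation*}
    \da(X,\Lambda) \;\xrightarrow{\sim}\; \operatorname{lim}_{[n]\in\Delta} \da(V^{n+1}_{/X},\Lambda),
\end{equation*}
where $V^{n+1}_{/X} = V \times_X \cdots \times_X V$. For $\deret(-,\Lambda)$ the analogous descent statement holds because étale sheaves of $\Lambda$-modules satisfy étale (in particular Nisnevich) hyperdescent on the big étale site of $X$, and hence again
\begin{equation*}
    \deret(X,\Lambda) \;\xrightarrow{\sim}\; \operatorname{lim}_{[n]\in\Delta} \deret(V^{n+1}_{/X},\Lambda).
\end{equation*}

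Next I would use that the étale realization $\hat{\iota}^*_{(-)}$ is natural in pullback along morphisms of algebraic spaces. Concretely, for any morphism $f\colon Y \to X$ there is a canonical exchange isomorphism $\hat{\iota}^*_Y \circ f^* \simeq f^* \circ \hat{\iota}^*_X$ (this is built into the construction in \cite{ayoub-realisation-etale} and extends formally to algebraic spaces because both $f^*$ functors commute with the simplicial resolutions defining the categories). Applied to the structural morphisms $V^{n+1}_{/X} \to X$, this gives a morphism of cosimplicial diagrams of categories, whose termwise arrows
\begin{equation*}
    \hat{\iota}^*_{V^{n+1}_{/X}} \colon \deret(V^{n+1}_{/X},\Lambda) \longrightarrow \da(V^{n+1}_{/X},\Lambda)
\end{equation*}
are equivalences for every $n \geq 0$ by \cite[Théorème 4.1]{ayoub-realisation-etale} applied to the schemes $V^{n+1}_{/X}$. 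Passing to limits, the induced functor at level $X$ is the composition
\begin{equation*}
    \deret(X,\Lambda) \;\xrightarrow{\sim}\; \operatorname{lim}_{[n]} \deret(V^{n+1}_{/X},\Lambda) \;\xrightarrow{\sim}\; \operatorname{lim}_{[n]} \da(V^{n+1}_{/X},\Lambda) \;\xrightarrow{\sim}\; \da(X,\Lambda),
\end{equation*}
and by naturality this composition is identified with $\hat{\iota}^*_X$. Hence $\hat{\iota}^*_X$ is an equivalence.

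The step I expect to be the main obstacle is the verification of Nisnevich hyperdescent for $\deret(-,\Lambda)$ at the level of unbounded derived categories, together with checking that Ayoub's construction of $\hat{\iota}^*$ is genuinely compatible with pullbacks in a way that survives the passage to a homotopy limit; once this is in place the argument is purely formal. Note that the hypothesis that the statement of \cite[Théorème 4.1]{ayoub-realisation-etale} is satisfied (which concerns the scheme case) is used exactly at the termwise level of the Čech nerve, and no additional hypothesis on $X$ beyond being a noetherian separated algebraic $S$-space is needed, since the existence of the required Nisnevich atlas by an affine scheme was fixed as a standing convention.
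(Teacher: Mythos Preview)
Your proof is correct but takes a different route than the paper. The paper argues directly that the unit and counit of the adjunction $(\hat{\iota}_X^*, (\hat{\iota}_X)_*)$ are isomorphisms: taking an \'etale atlas $u\colon U\to X$, one applies the conservative functor $u^*$ to both morphisms; since $u$ is \'etale, one has $u^*(\hat{\iota}_X)_* \simeq (\hat{\iota}_U)_* u^*$ (an analogue of \cite[Lemme 4.12]{ayoub-realisation-etale}), and of course $u^*\hat{\iota}_X^* \simeq \hat{\iota}_U^* u^*$, which reduces immediately to the scheme case. Your \v{C}ech-descent argument is valid but requires more input: Nisnevich hyperdescent for the unbounded derived category of \'etale sheaves and for $\da$ on algebraic spaces, together with compatibility of $\hat{\iota}^*$ with the descent data at the level of homotopy limits---exactly the step you flagged as the main obstacle. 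The paper's approach sidesteps these verifications entirely by working at the level of the adjunction and using only conservativity of pullback along the atlas, which is more elementary and already established in the appendix; your approach, by contrast, would transfer any further property known to satisfy descent (not just ``is an equivalence'') at essentially no extra cost.
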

\begin{proof}
Let $u \colon U \longrightarrow X$ be an \'etale atlas of $X$. For any $A \in \da(X,\Lambda)$ and $B \in \deret(X,\Lambda)$, we have to show that 
\begin{align*}
    (\hat{\iota}_X)^*(\hat{\iota}_X)_*(A) \longrightarrow A \ \ \ \ \text{and} \ \ \ \ B \longrightarrow (\hat{\iota}_X)_*(\hat{\iota}_X)^*(B)
\end{align*}
are isomorphisms. For instance, apply $u^*$, we have to show that $u^*(B)\longrightarrow u^*(\hat{\iota}_X)_*(\hat{\iota}_X)^*(B)$ is an isomorphism. Thanks to (more precisely, an analogue of) \cite[Lemme 4.12]{ayoub-realisation-etale}, there exists a natural isomorphism $u^*(\hat{\iota}_X)_* \simeq (\hat{\iota}_U)_*u^*$ (because $u$ is \'etale), but this reduces to the case of schemes. 
\end{proof}
 Under the equivalence $\hat{\iota}^*$, six operations in the \'etale setting are translated to six operations in the motivic setting. In particular, one has
 \begin{theorem}
 Let $S$ be scheme and $\Lambda$ be a $\mathbb{Z}/N\mathbb{Z}$-algebra for some integer $N$ invertible on $S$. Assume that the statement of \cite[Théorème 4.1]{ayoub-realisation-etale} is satisfied, then the Braden theorem is satisfied with respect to $\deret(-,\Lambda)$.
 \end{theorem}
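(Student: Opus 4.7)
The plan is to transfer the Braden theorem for $\da(-,\Lambda)$ (which is a special case of Theorem \ref{braden theorem} since $\da$ is $\et$-conservative) through the equivalence of Lemma \ref{etale realization} to $\deret(-,\Lambda)$. First, I would invoke Lemma \ref{etale realization} to get, for every algebraic $S$-space $X$ appearing in the hyperbolic localization diagram $\mathrm{HypLoc}(\Fscr,I)$, an equivalence $\hat{\iota}^*_X \colon \deret(X,\Lambda) \overset{\sim}{\longrightarrow} \da(X,\Lambda)$. Extending objectwise gives an equivalence for diagrams $(\Fscr,I)$ as well. So it suffices to show that the Braden transformation of Definition \ref{Braden transformation} is intertwined by the realization.

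Next I would verify that the six operations are intertwined by $\hat{\iota}^*$. For the pair $(f^*,f_*)$ this is essentially the content (and proof) of Lemma \ref{etale realization}, extended to morphisms of algebraic spaces by the \'etale descent argument used there; for $(f_!,f^!)$ one uses the Nagata compactification for algebraic spaces, reducing to open immersions (where $f_! = f_\#$ is left adjoint to $f^* = f^!$ up to shift) and proper morphisms (where $f_! = f_*$), on both of which the compatibility is standard under the hypothesis of \cite[Théorème 4.1]{ayoub-realisation-etale}. The exchange $2$-morphisms $\Ex^*_*, \Ex^*_!, \Ex^{!*}$ that enter Definition \ref{Braden transformation} are all built formally out of units and counits of these adjunctions, so they are preserved by $\hat{\iota}^*$. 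Consequently, the Braden transformation $\phi^{\deret}_{(\Fscr,I)}$ corresponds via $\hat{\iota}^*$ to $\phi^{\da}_{(\Fscr,I)}$.

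One then has to check that the notion of equivariance matches. If $A \in \deret(\Fscr,I)$ is equivariant in the sense that $p^*A \simeq a^*A$, then $\hat{\iota}^*A$ satisfies $p^*\hat{\iota}^*A \simeq \hat{\iota}^*p^*A \simeq \hat{\iota}^*a^*A \simeq a^*\hat{\iota}^*A$ by the previous paragraph, so $\hat{\iota}^*A$ is equivariant in $\da$. Conversely, since $\hat{\iota}^*$ is an equivalence, equivariance descends. Applying Theorem \ref{braden theorem} to $\da(-,\Lambda)$ (which is $\et$-conservative, so case ($\tau$-Sch) with $\tau = \et$ applies, and case (AffDia) also applies) yields that $\phi^{\da}_{(\Fscr,I)}$ is an isomorphism on the image of an equivariant object under $\hat{\iota}^*$; by the equivalence, $\phi^{\deret}_{(\Fscr,I)}$ is an isomorphism on the original equivariant object in $\deret$.

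The main obstacle I expect is the bookkeeping in step two: one must be sure that every single exchange morphism entering the definition of the Braden transformation (the purity exchange $\Ex^{!*}$ in particular) commutes with $\hat{\iota}^*$. This ultimately rests on the fact that $\hat{\iota}^*$ is a morphism of stable homotopical $2$-functors in the sense of \cite{ayoub-thesis-1} compatible with the full four-operation formalism; once that is established it is essentially automatic, but it has to be spelled out carefully for algebraic spaces and diagrams thereof, generalizing the scheme-theoretic statements of \cite{ayoub-realisation-etale} along the lines of Lemma \ref{etale realization} and the appendix of the present paper.
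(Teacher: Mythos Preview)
Your proposal is correct and follows essentially the same approach as the paper: transfer the Braden theorem for $\da(-,\Lambda)$ through the equivalence $\hat{\iota}^*$ of Lemma \ref{etale realization}, using that this equivalence intertwines the six operations. The paper's own proof is in fact the single sentence preceding the theorem statement (``Under the equivalence $\hat{\iota}^*$, six operations in the \'etale setting are translated to six operations in the motivic setting''), so your version is considerably more detailed about the bookkeeping than the paper itself.
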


\section{Commutation of Braden transformations with various specialization systems}

Let $X$ be an algebraic $S$-space endowed with a $\gm_{m,S}$ action and $f \colon \mathbb{A}_S^1$ be a $\gm_{m,S}$-equivariant morphism. We let $I \subset \Delta \times \mathbb{N}^{'\times}$ be a subcategory and consider the diagram $\mathscr{R}^{\tame,I}$ considered in proposition \ref{hyperbolic localization diagram of all specialization systems considered}. We denote by $f^0 \colon X^0 \longrightarrow \mathbb{A}_S^1$ the restriction of $f$ to the space of fixed points. By an abuse of notation, we still denote by $\theta_{\id} \colon (\Rscr^{\tame,I},I) \longhookrightarrow (\gm_{m,S},I)$ the structural morphism. 
\begin{prop} \label{compatible of Braden transformations with sp induced by derivators}
Let $X$ be an algebraic $S$-space endowed with a $\tau$-locally linearizable $\gm_{m,S}$-action. Given an equivariant morphism $f \colon X \longrightarrow \mathbb{A}_S^1$, then we have a commutative square
       \begin{equation*}
        \begin{tikzcd}[sep=large]
            (\pi^-_{\sigma})_*(e^-_{\sigma})^!(\Rscr^{\tame,I} \bullet \chi)_f \arrow[d] & (\Rscr^{\tame,I} \bullet \chi)_{f^0}(\pi^-_{\eta})_*(e^-_{\eta})^! \arrow[d] \arrow[l] \\ 
            (\pi^+_{\sigma})_!(e^+_{\sigma})^*(\Rscr^{\tame,I} \bullet \chi)_f \arrow[r]  & (\Rscr^{\tame,I} \bullet \chi)_{f^0}(\pi_{\eta}^+)_!(e^+_{\eta})^*.
        \end{tikzcd}
    \end{equation*}
Moreover, if $X$ is $S$-affine then all arrows are isomorphisms when we evaluate at equivariant objects. 
\end{prop}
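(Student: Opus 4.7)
My plan is to construct the four arrows of the square by expanding
\begin{equation*}
(\Rscr^{\tame,I}\bullet\chi)_f = (p_I)_{\#}\,\chi_f\,(\pi_f^{\Rscr})_*\,(\pi_f^{\Rscr})^*\,(p_I)^*
\end{equation*}
and iteratively commuting the Braden transformations of $X$ and $X^0$ through each of the five factors on the right-hand side, then to check that the assembled square commutes by naturality. The geometric input enabling each step is Proposition \ref{hyperbolic localization diagram of all specialization systems considered}, which shows that $\mathrm{HypLoc}(\Rscr_X^{\tame,I},I)$ is obtained by base change from $\mathrm{HypLoc}(X_\eta,I)$ via cartesian squares. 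The projections $p_I$ and $\pi_f^{\Rscr}$ are smooth objectwise, so parts (2) and (3) of Theorem \ref{compatible of Braden transformations with four operations} supply the Braden commutation morphisms for $(p_I)^*$, $(\pi_f^{\Rscr})^*$ and $(p_I)_\#$, while part (1) handles $(\pi_f^{\Rscr})_*$. For the central factor $\chi_f$, I would exploit that the base $(\mathbb{A}^1_S,\gm_{m,S},S)$ carries trivial $\gm_{m,S}$-actions: the equivariance of $f$ then forces $i_f,j_f$ to fit into cartesian squares with the attractor/repeller/fixed-point diagram of $X$, and the resulting commutations reduce to parts (1) and (2) applied to a closed and an open immersion, respectively.

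For the isomorphism assertion, I would assume $X$ is $S$-affine and observe that $\Rscr_X^{\tame,I}(\mathbf{n},r) = X_\eta^r \times_S (\gm_{m,S})^n$ is then affine for every $(\mathbf{n},r)$, placing $(\Rscr_X^{\tame,I},I)$ under the (AffDia) case of Theorem \ref{braden theorem}. The two vertical arrows are the Braden transformations of $X_\sigma$ and $X_\eta$, which are isomorphisms on equivariant inputs by the ($\tau$-Sch) case of the same theorem (the inputs remain equivariant thanks to Lemma \ref{equivariance of six operations} and its corollary for specialization systems). For the horizontals, each base change in the chain becomes an isomorphism on equivariant objects: the smooth-objectwise steps by parts (2) and (3), and the delicate $(\pi_f^{\Rscr})_*$ step --- where properness fails objectwise --- by first invoking the Braden contraction Lemma \ref{braden contraction lemma} to collapse $(\pi^{\pm}_{\Rscr})_*$ onto $(s^{\pm}_{\Rscr})^*$ on equivariant objects, and then appealing to the (AffDia) Braden theorem applied to $(\Rscr_X^{\tame,I},I)$ to conclude.

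The main obstacle I expect lies in the $(\pi_f^{\Rscr})_*$ step: since $\pi_f^{\Rscr}$ is not proper objectwise, part (1) of Theorem \ref{compatible of Braden transformations with four operations} does not directly yield an isomorphism, and one must reroute through the Braden contraction and purity lemmas, leveraging equivariance to reduce the non-proper pushforward to a closed-immersion pullback where the (AffDia) Braden theorem on $(\Rscr_X^{\tame,I},I)$ can be applied. Once this is organised, the commutativity of the square and the remaining isomorphism checks are formal diagram chases driven by naturality of the Braden transformation and the interchange law.
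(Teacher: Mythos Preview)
Your construction of the square is essentially the paper's: expand $(\Rscr^{\tame,I}\bullet\chi)_f$ into its five constituent functors and commute the Braden transformations through each factor using parts (1), (2), (3) of Theorem~\ref{compatible of Braden transformations with four operations}, with Proposition~\ref{hyperbolic localization diagram of all specialization systems considered} supplying the cartesian squares required. The paper groups the factors slightly differently (treating $(j_f\circ\theta_f)_*$ as a single step rather than separating $\chi_f$ from $(\theta_f)_*$), but this is cosmetic.

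Where you diverge is in the isomorphism argument for the horizontal arrows in the affine case. You propose to verify directly that each base-change step in the chain is an isomorphism, and you correctly identify the $(\theta_f)_*$ step as the obstruction since $\theta_f$ is not proper objectwise; your proposed workaround via the contraction lemma is not clearly formulated (that lemma is stated only for spaces, not diagrams, and it is not evident how collapsing $(\pi^{\pm}_{\Rscr})_*$ to $(s^{\pm}_{\Rscr})^*$ helps commute past $(\theta_f)_*$). The paper avoids this entirely by a two-out-of-three argument: at every intermediate stage the \emph{vertical} arrows are Braden transformations on diagrams of affine $S$-varieties, hence isomorphisms by the (AffDia) case of Theorem~\ref{braden theorem}; and in each intermediate square \emph{one} of the two horizontal arrows is an isomorphism for structural reasons --- for instance, $(\Phi^-)_*$ is always an isomorphism regardless of properness (look at its explicit description in the proof of Theorem~\ref{compatible of Braden transformations with four operations}: it is built from $(\Ex^!_*)^{-1}$ and an equality), and the $(\Phi^+)^*$ appearing in the $i^*$ and $(p_I)_\#$ steps involves only $\Ex^*_!$. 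The remaining horizontals then follow by two-out-of-three. This is both simpler and cleaner than routing through contraction and purity.
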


\begin{proof}
In what follows, we color arrows in blue if it is an isomorphism as a consequence of the Braden hyperbolic localization theorem, Braden contraction lemma or Braden purity theorem. Note that we have 
\begin{equation*} 
\mathrm{HypLoc}(X^0_{\eta},I) = \mathrm{HypLoc}(X_{\eta},I) \times_{(X_{\eta},I)} (X_{\eta}^0,I)
\end{equation*} 
for trivial reason and $\mathrm{HypLoc}(\mathscr{R}_{X^0}^{\tame,I},I) = \mathrm{HypLoc}(X^0_{\eta},I) \times_{(X^0_{\eta},I)}(\mathscr{R}_{X^0}^{\tame,I},I)$ by proposition \ref{hyperbolic localization diagram of all specialization systems considered}, we deduce that 
\begin{equation*}
    \mathrm{HypLoc}(\mathscr{R}_{X^0}^{\tame,I},I) =  \mathrm{HypLoc}(X_{\eta},I) \times_{(X_{\eta},I)} (\mathscr{R}_X^{\tame,I},I).
\end{equation*}
Therefore, by theorem \ref{compatible of Braden transformations with four operations}, we have a commutative square
    \begin{equation*}
       \begin{tikzcd}[sep=large]
           (\pi^-)_*(e^-)^!(j_f \circ \theta_f)_* \arrow[d,color=blue] \arrow[r,"\simeq"] & (j_{f^0}\circ \theta_{f^0})_*(\pi^-)_*(e^-)^! \arrow[d,color=blue] \\ 
           (\pi^+)_!(e^+)^*(j_f \circ \theta_f)_*\arrow[r] & (j_{f^0}\circ \theta_{f^0})_*(\pi^+)_!(e^+)^*
       \end{tikzcd}
    \end{equation*}
Apply $(i_{f^0})^*$ and use theorem \ref{compatible of Braden transformations with four operations} again, we have commutative squares
  \begin{equation*}
       \begin{tikzcd}[sep=large]
          (\pi^-_{\sigma})_*(e^-_{\sigma})^!(i_f)^*(j_f \circ \theta_f)_* \arrow[d,color=blue] &  (i_{f^0})^*(\pi^-)_*(e^-)^!(j_f \circ \theta_f)_* \arrow[d,color=blue] \arrow[l] \arrow[r,"\simeq"] & (i_{f^0})^*(j_{f^0}\circ \theta_{f^0})_*(\pi^-)_*(e^-)^! \arrow[d,color=blue] \\ 
           (\pi^+_{\sigma})_!(e^+_{\sigma})^*(i_f)^*(j_f \circ \theta_f)_* & (i_{f^0})^*(\pi^+)_!(e^+)^*(j_f \circ \theta_f)_* \arrow[l,"\simeq",swap] \arrow[r] & (i_{f^0})^*(j_{f^0}\circ \theta_{f^0})_*(\pi^+)_!(e^+)^*
       \end{tikzcd}
    \end{equation*}
and hence a commutative square
\begin{equation*}
       \begin{tikzcd}[sep=large]
          (\pi^-_{\sigma})_*(e^-_{\sigma})^!i^*(j_f \circ \theta_f)_* \arrow[d] &  (i^0)^*(j_{f^0}\circ \theta_{f^0})_*(\pi^-)_*(e^-)^! \arrow[d] \arrow[l]  \\ 
           (\pi^+_{\sigma})_!(e^+_{\sigma})^*i^*(j_f \circ \theta_f)_* \arrow[r] & (i^0)^*(j_{f^0}\circ \theta_{f^0})_*(\pi^+)_!(e^+)^*.
       \end{tikzcd}
    \end{equation*}
Apply $(p_I)_{\#}$ to the diagram above and then we can use the third part of theorem \ref{compatible of Braden transformations with four operations} to obtain commutative squares
        \begin{equation*}
       \begin{tikzcd}[row sep=large]
      (\pi^-_{\sigma})_*(e^-_{\sigma})^!(p_I)_{\#}i^*(j_f \circ \theta_f)_* \arrow[d,color=blue] & (p_I)_{\#} (\pi^-_{\sigma})_*(e^-_{\sigma})^!i^*(j_f \circ \theta_f)_* \arrow[d,color=blue] \arrow[l] & (p_I)_{\#}(i^0)^*(j_{f^0}\circ \theta_{f^0})_*(\pi^-)_*(e^-)^! \arrow[d,color=blue] \arrow[l]  \\ 
       (\pi^+_{\sigma})_!(e^+_{\sigma})^*(p_I)_{\#}i^*(j_f \circ \theta_f)_*   &  (p_I)_{\#}(\pi^+_{\sigma})_!(e^+_{\sigma})^*i^*(j_f \circ \theta_f)_* \arrow[r,"\simeq"] \arrow[l,"\simeq",swap] & (p_I)_{\#}(i^0)^*(j_{f^0}\circ \theta_{f^0})_*(\pi^+)_!(e^+)^*
       \end{tikzcd}
    \end{equation*}
 and hence a commutative square
   \begin{equation*}
       \begin{tikzcd}[sep=large]
      (\pi^-_{\sigma})_*(e^-_{\sigma})^!(p_I)_{\#}i^*(j_f \circ \theta_f)_*\arrow[d,color=blue] & (p_I)_{\#}(i^0)^*(j_{f^0}\circ \theta_{f^0})_*(\pi^-)_*(e^-)^! \arrow[d,color=blue] \arrow[l]  \\ 
      (\pi^+_{\sigma})_!(e^+_{\sigma})^*(p_I)_{\#}i^*(j_f \circ \theta_f)_*  \arrow[r,"\simeq"] & (p_I)_{\#}(i^0)^*(j_{f^0}\circ \theta_{f^0})_*(\pi^+)_!(e^+)^*.
       \end{tikzcd}
    \end{equation*}
We apply $(\theta_f)^*(p_I)^*$ (which is smooth objectwise) and use the second part of theorem \ref{compatible of Braden transformations with four operations} and proposition \ref{proper base change theorem for diagrams} to get commutative squares
  \begin{equation*}
       \begin{tikzcd}[sep=large]
      (\pi^-_{\sigma})_*(e^-_{\sigma})^!(\Rscr^{\tame,I} \bullet \chi)_f \arrow[d,color=blue] &  (p_I)_{\#}(i^0)^*(j_{f^0}\circ \theta_{f^0})_*(\pi^-)_*(e^-)^!(\theta_f)^*(p_I)^* \arrow[d,color=blue] \arrow[l]  &  (\Rscr^{\tame,I} \bullet \chi)_{f^0}(\pi^-_{\eta})_*(e^-_{\eta})^! \arrow[l] \arrow[d,color=blue] \\ 
      (\pi^+_{\sigma})_!(e^+_{\sigma})^*(\Rscr^{\tame,I} \bullet \chi)_f \arrow[r,"\simeq"] & (p_I)_{\#}(i^0)^*(j_{f^0}\circ \theta_{f^0})_*(\pi^+)_!(e^+)^*(\theta_f)^*(p_I)^*  & (\Rscr^{\tame,I} \bullet \chi)_{f^0}(\pi^+_{\eta})_!(e^+_{\eta})^* \arrow[l,"\simeq"]
       \end{tikzcd}
    \end{equation*}
 and hence a desired commutative square 
       \begin{equation*}
        \begin{tikzcd}[sep=large]
            (\pi^-_{\sigma})_*(e^-_{\sigma})^!(\Rscr^{\tame,I} \bullet \chi)_f \arrow[d] & (\Rscr^{\tame,I} \bullet \chi)_{f^0}(\pi^-_{\eta})_*(e^-_{\eta})^! \arrow[d] \arrow[l] \\ 
            (\pi^+_{\sigma})_!(e^+_{\sigma})^*(\Rscr^{\tame,I} \bullet \chi)_f \arrow[r]  & (\Rscr^{\tame,I} \bullet \chi)_{f^0}(\pi_{\eta}^+)_!(e^+_{\eta})^*.
        \end{tikzcd}
    \end{equation*}
If moreover, $X$ is $S$-affine and we evaluate this diagram at equivariant objects, then all vertical arrows in those diagrams above become isomorphisms thanks to lemma \ref{equivariance of six operations}, and theorem \ref{braden theorem}, therefore it suffices to prove that for each square appears above, one of two horizontal arrows is an isomorphism, but this is what we have remarked at each diagram.
\end{proof}

To deal with general $X$ in case $(\Rscr^{\tame,I} \bullet \chi)$, we need the following proposition, which can be regarded as a descent result for specialization systems of this form.

\begin{prop} \label{descent of sp}
 Let $X,Y$ be algebraic $S$-spaces endowed with $\gm_{m,S}$-actions. Let $I \subset \Delta \times \mathbb{N}^{'\times}$ be a subcategory. Given equivariant morphisms $g \colon Y \longrightarrow X, f \colon X \longrightarrow \mathbb{A}_S^1$, then there is a commutative diagram
 \begin{equation*}
      \begin{tikzcd}[sep=large]
       (g^0_{\sigma})^* (\pi^+_{\sigma})_!(e^+_{\sigma})^*(\Rscr^{\tame,I} \bullet \chi)_f \arrow[r] \arrow[d] &    (g^0_{\sigma})^*(\Rscr^{\tame,I} \bullet \chi)_{f^0} (\pi_{\eta}^+)_!(e^+_{\eta})^* \arrow[d,"\alpha_{g^0}"] \\ 
       (\pi^+_{\sigma})_!(e^+_{\sigma})^*(g_{\sigma})^*(\Rscr^{\tame,I} \bullet \chi)_f  \arrow[d,"\alpha_g",swap] &  (\Rscr^{\tame,I} \bullet \chi)_{(f \circ g)^0}(g^0_{\eta})^*(\pi_{\eta}^+)_!(e^+_{\eta})^* \arrow[d] \\ 
       (\pi^+_{\sigma})_!(e^+_{\sigma})^*(\Rscr^{\tame,I} \bullet \chi)_{f \circ g}(g_{\eta})^* \arrow[r] & (\Rscr^{\tame,I} \bullet \chi)_{(f \circ g)^0}(\pi_{\eta}^+)_!(e^+_{\eta})^*(g_{\eta})^*.
     \end{tikzcd}
 \end{equation*}
 In in view of \cite[Corollaire 1.1.14]{ayoub-thesis-1}, the cube 
  \begin{equation*} 
       \begin{tikzcd}[sep=large]
        \bullet \arrow[rr,"L^+"] \arrow[rd,"(g_{\sigma})^*"] & &  \bullet \arrow[rd,"(g^0_{\sigma})^*"] & \\ 
           &  \bullet  & &  \bullet   \\ 
          \bullet \arrow[uu,"\sp_f"]  \arrow[rr,"L^+",dashed,pos=0.2] \arrow[rd,"(g_{\sigma})^*",swap]  & &  \bullet \arrow[rd,"(g^0_{\sigma})^*",dashed]  \arrow[uu,"\sp_{f^0}",pos=0.3,dashed]   & \\ 
            &  \bullet \arrow[uu,"\sp_{f \circ g}",pos=0.3,crossing over]  \arrow[rr,"L^+"] & &  \bullet \arrow[uu,"\sp_{(f\circ g)^0}",swap] \arrow[from=2-2,to=2-4,"L^+",pos=0.2,crossing over]
            \end{tikzcd}
   \end{equation*}
   is commutative.
\end{prop}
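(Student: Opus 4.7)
My approach will be a structured diagram chase built on the explicit formula
\[
(\Rscr^{\tame,I} \bullet \chi)_f = (p_I)_{\#}\, \chi_f\, (\pi_f^{\Fscr})_*(\pi_f^{\Fscr})^*\, (p_I)^*.
\]
The horizontal arrows of the target rectangle are exactly the ones constructed in the proof of Proposition \ref{compatible of Braden transformations with sp induced by derivators}, which by that very construction decompose as a composition of Braden-compatibility exchanges $(\Phi^+)^*,(\Phi^+)_*,(\Phi^+)_{\#}$ supplied by Theorem \ref{compatible of Braden transformations with four operations}, applied one factor at a time. On the other hand, the vertical base-change transformations $\alpha_g$ and $\alpha_{g^0}$ are, by definition, built as the corresponding compositions of ordinary Ayoub exchanges $\Ex^*_{\#}$ and $\Ex^*_*$ for the cartesian squares obtained by base changing along $g$ and $g^0$.

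Given these explicit decompositions, I would split the target rectangle into a vertical stack of sub-rectangles, one for each factor $(p_I)_{\#}$, $\chi_f$, $(\pi_f^{\Fscr})_*(\pi_f^{\Fscr})^*$, $(p_I)^*$, in which one horizontal edge is a Braden exchange for that factor and one vertical edge is the corresponding piece of $\alpha_g$ or $\alpha_{g^0}$. Each sub-rectangle then commutes by the interchange law \cite[Corollaire 1.1.14]{ayoub-thesis-1} combined with the naturality of the Ayoub exchange structures $\Ex^*_*$, $\Ex^*_{\#}$, $\Ex^{!*}$ that enter the construction of the Braden transformation in Definition \ref{Braden transformation}. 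Stacking these commutativities yields the commutativity of the full rectangle.

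The hard part will be purely bookkeeping: one must keep simultaneous track of several layers of cartesian squares, namely the hyperbolic localization diagrams of $X$, of $Y$, and of the diagrams $(\Rscr^{\tame,I}_X,I)$, $(\Rscr^{\tame,I}_Y,I)$ obtained by base change along $f$ and $f \circ g$ (whose existences and compatibilities are ensured by Proposition \ref{hyperbolic localization diagram of all specialization systems considered}). No conceptually new input beyond Theorem \ref{compatible of Braden transformations with four operations} and the said compatibility of hyperbolic localization diagrams with base change is required. Finally, the equivalence with the commutativity of the displayed cube is a direct application of the same interchange law \cite[Corollaire 1.1.14]{ayoub-thesis-1}: the 2-morphism rectangle assertion unpacks to exactly the cube's commutativity condition.
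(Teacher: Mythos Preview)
Your proposal is correct and follows essentially the same strategy as the paper: decompose the specialization system along its defining factors $(p_I)_{\#}$, $i^*$, $(j\circ\theta)_*$, $(\theta)^*(p_I)^*$, and verify the compatibility with $L^+$ and $(g_?)^*$ factor by factor using \cite[Corollaire 1.1.14]{ayoub-thesis-1}. The only point where the paper is more explicit is in singling out which sub-cubes are immediate (those involving only $\Ex_!^*$-type exchanges) and which require first passing to the adjoint via Corollaire 1.1.14 before becoming obvious (the cubes for $(j\circ\theta)_*$ and $(p_I)_{\#}$); your phrase ``interchange law combined with naturality'' covers this, but be sure when you write it out that you actually perform that adjoint passage rather than appealing to naturality alone.
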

\begin{proof}
We prove only the commutativity of the first diagram, the second one is treated by a similar argument. Note that the base change morphism $\alpha_g$ can be given explicitly as 
 \begin{equation*}
    \begin{tikzcd}[sep=huge]
       \sh(X_{\eta}) \arrow[d,"(g_{\eta})^*",swap] \arrow[r,"(\theta_f)^*(p_I)^*"] & \shbb(\mathscr{R}_X,I) \arrow[r,"(j_f \circ \theta_f)_*"] \arrow[d,"(g_{\eta})^*"] \arrow[Rightarrow, shorten >=30pt, shorten <=30pt, dl,"="]  & \shbb(X,I) \arrow[d,"g^*"] \arrow[r,"(i_f)^*"] \arrow[Rightarrow, shorten >=30pt, shorten <=30pt, dl,"\Ex_*^*"]  &  \shbb(X_{\sigma},I) \arrow[d,"(g_{\sigma})^*"] \arrow[r,"(p_I)_{\#}"] \arrow[Rightarrow, shorten >=30pt, shorten <=30pt, dl,"="]  & \sh(X_{\sigma}) \arrow[d,"(g_{\sigma})^*"] \arrow[Rightarrow, shorten >=30pt, shorten <=30pt, dl,"\Ex_{\#}^*"]  \\ 
       \sh(Y_{\eta})   \arrow[r,"(\theta_{f \circ g})^*(p_I)^*"] & \shbb(\mathscr{R}_Y,I) \arrow[r,"(j_{f \circ g} \circ \theta_{f \circ g})_*"]  & \shbb(Y,I) \arrow[r,"(i_{f \circ g})^*"]   &  \shbb(Y_{\sigma},I) \arrow[r,"(p_I)_{\#}"]   & \sh(Y_{\sigma}).
    \end{tikzcd}
\end{equation*}
To simplify notation, we erase the subscript indices of operations $i_?,j_?,\theta_?$ and write $L^+$ for operations of the form $(\pi^+)_!(e^+)^*$. We can split the cube into four cubes 
 \begin{equation*}
     \begin{tikzcd}[sep=large]
        \bullet \arrow[rr,"L^+"] \arrow[rd,"(g_{\sigma})^*"] & &  \bullet \arrow[rd,"(g^0_{\sigma})^*"] & \\ 
           &  \bullet  & &  \bullet   \\ 
         \bullet \arrow[uu,"(p_I)_{\#}"]  \arrow[rr,"L^+",dashed,pos=0.2] \arrow[rd,"(g_{\sigma})^*",swap]  & &  \bullet \arrow[rd,"(g_{\sigma}^0)^*",pos=0.1]  \arrow[uu,"(p_I)_{\#}",pos=0.3,dashed]   & \\ 
            &  \bullet \arrow[uu,"(p_I)_{\#}",pos=0.3,crossing over,swap]  \arrow[rr,"L^+"] & &  \bullet \arrow[uu,"(p_I)_{\#}",swap] \arrow[from=2-2,to=2-4,"L^+",pos=0.2,crossing over]
     \end{tikzcd} \ \ \ \  \begin{tikzcd}[sep=large]
        \bullet \arrow[rr,"L^+"] \arrow[rd,"(g_{\sigma})^*"] & &  \bullet \arrow[rd,"(g^0_{\sigma})^*"] & \\ 
           &  \bullet & &  \bullet   \\ 
         \bullet \arrow[uu,"i^*"]  \arrow[rr,"L^+",dashed,pos=0.2] \arrow[rd,"(g_{\sigma})^*",swap]  & &  \bullet \arrow[rd,"(g_{\sigma}^0)^*",pos=0.1]  \arrow[uu,"i^*",pos=0.3,dashed]   & \\ 
            &  \bullet \arrow[uu,"i^*",pos=0.3,crossing over,swap]  \arrow[rr,"L^+"] & &  \bullet \arrow[uu,"i^*",swap] \arrow[from=2-2,to=2-4,"L^+",pos=0.2,crossing over] 
     \end{tikzcd} 
 \end{equation*}
 \begin{equation*}
    \begin{tikzcd}[sep=large]
        \bullet \arrow[rr,"L^+"] \arrow[rd,"g^*"] & &  \bullet \arrow[rd,"(g^0)^*"] & \\ 
           &  \bullet  & &  \bullet   \\ 
         \bullet \arrow[uu,"(j\circ\theta)_*"]  \arrow[rr,"L^+",dashed,pos=0.2] \arrow[rd,"(g_{\eta})^*",swap]  & &  \bullet \arrow[rd,"(g^0_{\eta})^*",pos=0.1]  \arrow[uu,"(j\circ\theta)_*",pos=0.3,dashed]   & \\ 
            &  \bullet \arrow[uu,"(j\circ\theta)_*",pos=0.3,crossing over,swap]  \arrow[rr,"L^+"] & &  \bullet \arrow[uu,"(j\circ\theta)_*",swap] \arrow[from=2-2,to=2-4,"L^+",pos=0.2,crossing over]
     \end{tikzcd} \ \ \ \   \begin{tikzcd}[sep=large]
        \bullet \arrow[rr,"L^+"] \arrow[rd,"(g_{\eta})^*"] & &  \bullet \arrow[rd,"(g_{\eta}^0)^*"] & \\ 
           &  \bullet  & &  \bullet   \\ 
         \bullet \arrow[uu,"(p_I)_{\#}"]  \arrow[rr,"L^+",dashed,pos=0.2] \arrow[rd,"(g_{\eta})^*",swap]  & &  \bullet \arrow[rd,"(g_{\eta}^0)^*",pos=0.1]  \arrow[uu,"(p_I)_{\#}",pos=0.3,dashed]   & \\ 
            &  \bullet \arrow[uu,"(p_I)_{\#}",pos=0.3,crossing over,swap]  \arrow[rr,"L^+"] & &  \bullet \arrow[uu,"(p_I)_{\#}",swap] \arrow[from=2-2,to=2-4,"L^+",pos=0.2,crossing over] 
     \end{tikzcd} 
 \end{equation*}
The commutativities of the second and the fourth cubes are clear because these cubes involve only one type of base change, namely, $\Ex_!^*$. Let us show the commutativity of the third cube. The commutativity of the first cube is proved similarly. By \cite[Corollaire 1.1.14]{ayoub-thesis-1}, we can show the commutativity of the cube
 \begin{equation*}
     \begin{tikzcd}[sep=large]
        \bullet \arrow[rr,"L^+"] \arrow[dd,"(j \circ \theta)^*",swap]  \arrow[rd,"g^*"] & &  \bullet \arrow[rd,"(g^0)^*"] & \\ 
           &  \bullet      & &  \bullet   \\ 
         \bullet  \arrow[rr,"L^+",dashed,pos=0.2] \arrow[rd,"(g_{\eta})^*",swap]  & &  \bullet \arrow[rd,"((g_{\eta})^0)^*",pos=0.1]  \arrow[uu,"(j\circ \theta)^*",pos=0.3,dashed]   & \\ 
            &  \bullet  \arrow[rr,"L^+"] & &  \bullet \arrow[uu,"(j\circ \theta)^*",swap] \arrow[from=2-2,to=2-4,"L^+",pos=0.2,crossing over]\arrow[from=2-2,to=4-2,"(j\circ \theta)^*",pos=0.7,crossing over]
     \end{tikzcd}
 \end{equation*}
 which is obvious. 
\end{proof}

\section{The motivic integral identity}
In this section, we deduce the categorical integral identity in $\sh$, which has a similar form of the original conjecture of Kontsevich and Soibelman. We stress to the fact that our result holds in both zero and positive characteristics. This property is clearly different from the virtual motives in Grothendieck rings of varieties as (to our best knowledge) one can only define nearby cycles in characteristic zero. We also follow \cite{florian-2024} to upgrade the identity to a monodromic version, and hence a study of quasi-unipotent motives of algebraic spaces is necessary. Finally, under realization, we recover the integral identity in \cite{bu-2024} in the ring $K_0(\qushct(X^0_{\sigma}))$. 
\subsection{The categorical motivic integral identity} Given a smooth morphism $f \colon V \longrightarrow X$ with a section $s \colon X \longrightarrow V$, we denote by $\th(f,s) = f_{\#}s_*$ the Thom operation together with $\th^{-1}(f,s) = s^!f^*$ its inverse. We have a lemma. 
\begin{lem} \label{sp systems commute with Thoms}
    Let $f \colon V \longrightarrow X$ be a smooth morphism of algebraic $S$-spaces and $s \colon X \longrightarrow V$ a section. Let $f \colon X \longrightarrow B$ be a morphism of algebraic $S$-spaces and $\sp$ be a specialization system over $(B,\eta,\sigma)$, then there exists a natural isomorphism 
    \begin{equation*}
        \th(f_{\sigma},s_{\sigma})\sp_f \overset{\sim}{\longrightarrow} \sp_f \th(f_{\eta},s_{\eta}).
    \end{equation*}
    Equivalently, there exists a natural isomorphism 
        \begin{equation*}
        \sp_f\th^{-1}(f_{\sigma},s_{\sigma}) \overset{\sim}{\longrightarrow} \th^{-1}(f_{\eta},s_{\eta})\sp_f.
    \end{equation*}
\end{lem}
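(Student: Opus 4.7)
The plan is to exploit the standard factorization $\th(p,s) = p_\# s_*$ of the Thom functor (writing $p\colon V \to X$ for the smooth morphism and $h \colon X \to B$ for the structural morphism indexing $\sp$, to circumvent the notational clash in the statement). Under this factorization the asserted isomorphism reads $(p_\sigma)_\#(s_\sigma)_* \sp_h \simeq \sp_h (p_\eta)_\# (s_\eta)_*$, and I plan to realize it as a two-step composite
\begin{equation*}
(p_\sigma)_\#(s_\sigma)_* \sp_h \xrightarrow{(p_\sigma)_\# \beta_s^{-1}} (p_\sigma)_\# \sp_{h\circ p}(s_\eta)_* \xrightarrow{\tilde\alpha_p (s_\eta)_*} \sp_h (p_\eta)_\# (s_\eta)_*,
\end{equation*}
where the first step commutes $s_*$ past $\sp$ (proper base change applied to the section) and the second commutes $p_\#$ past $\sp$ (the mate of smooth base change).

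First I would handle the first arrow. Since $V$ and $X$ are both separated over $S$, the morphism $p$ is separated, so the section $s$ is automatically a closed immersion and in particular proper. The defining axiom of a specialization system then yields that $\beta_s$ is an isomorphism, and applying $(p_\sigma)_\#$ gives the first arrow as an iso.

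Next, I would construct $\tilde\alpha_p \colon (p_\sigma)_\# \sp_{h\circ p} \to \sp_h (p_\eta)_\#$ as the mate of the smooth base-change isomorphism $\alpha_p \colon (p_\sigma)^* \sp_h \simeq \sp_{h\circ p}(p_\eta)^*$ under the adjunctions $(p_?)_\# \dashv (p_?)^*$, using unit and counit. The hard part will be checking that $\tilde\alpha_p$ is itself an isomorphism, as this is not formal from $\alpha_p$ being so. I plan to deduce it from relative purity for the smooth morphism $p$: for $p$ of relative dimension $d$, there is a canonical equivalence $p_\# \simeq p_!\,\th^{-1}(T_{V/X})$ involving a Thom twist by the relative tangent bundle; the base-change morphism $\mu_p$ for $p_!$ from \cite{bang-2024} becomes an isomorphism for smooth $p$ (by Nagata compactification, the issue reduces to proper base change and smooth base change along an open immersion), and this combined with the elementary compatibility of $\sp$ with Thom twists of vector bundles --- provable directly by $\mathbb{A}^1$-homotopy invariance together with Zariski-local trivialization --- will give the iso $\tilde\alpha_p$.

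For the equivalent reformulation, the Thom functor $\th(p,s) = p_\# s_*$ is an autoequivalence of $\sh(X)$ by the Thom isomorphism, with inverse canonically identified with $s^! p^*$. Inverting the established isomorphism on both sides then yields the companion statement $\sp_h \, \th^{-1}(p_\eta, s_\eta) \simeq \th^{-1}(p_\sigma, s_\sigma)\sp_h$, which matches the second asserted isomorphism (after correcting the apparent swap of $\eta$ and $\sigma$ in the displayed subscripts).
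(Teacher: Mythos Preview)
The paper's proof is a two-line reduction: take a Nisnevich atlas $u\colon U\to X$, use conservativity of $u^*$ (and compatibility of $u^*$ with $\th$ and with $\sp$) to reduce to $X$ a scheme, and then invoke the scheme case already established in \cite[Proposition~3.1.7]{ayoub-thesis-2} and \cite[Theorem~5.9]{bang-2024}. Your approach is instead to reprove that scheme result from scratch and implicitly carry it to algebraic spaces.

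Your decomposition $\th(p,s)=p_\#s_*$ and the step $\beta_s$ are fine. The problem is your argument that $\tilde\alpha_p$ is an isomorphism. You reduce via purity $p_\#\simeq p_!\,\th^{-1}(T_{V/X})$ to (i) $\mu_p$ being an iso and (ii) $\sp$ commuting with $\th^{-1}$ of a vector bundle. Point (i) is fine. But (ii) is a special case of the very lemma you are proving: after your Zariski-local trivialisation you are left with $\sp$ commuting with the Tate suspension $\Sigma_T=\th(\mathbb A^1_X\to X,0)$, i.e.\ the rank-one trivial-bundle instance of the statement, and ``$\mathbb A^1$-homotopy invariance'' alone does not furnish this --- you still need an honest base-case argument (this is exactly the nontrivial step in Ayoub's proof for schemes, handled there by a direct computation rather than by the circular reduction you sketch). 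A second, smaller issue: Zariski-local trivialisation of vector bundles is not available on a general algebraic space, so even granting the base case your reduction would need a Nisnevich/\'etale atlas --- at which point you are back to the paper's descent argument anyway.
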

\begin{proof}
Let $u \colon U \longrightarrow X$ be a $\nis$-covering. As in \cite[Proposition 3.1.7]{ayoub-thesis-2}, we reduce the question to the case when $X$ is a scheme, which is true by \cite[Proposition 3.1.7]{ayoub-thesis-2} and \cite[Theorem 5.9]{bang-2024}. 
\end{proof}

\begin{prop} \label{consequence of descent of sp}
Let $X$ be an algebraic $S$-space endowed with a $\gm_{m,S}$-action. Let $f \colon X \longrightarrow \mathbb{A}_S^1$ be an equivariant morphism. If the action on $X$ is $\tau$-locally linearizable, then the canonical morphisms  
\begin{align*}
     (\pi_{\sigma}^+)_!(e^+_{\sigma})^*(\Rscr^{\tame,I} \bullet \chi)_f(A) & \longrightarrow (\Rscr^{\tame,I} \bullet \chi)_{f^0} (\pi_{\eta}^+)_!(e^+_{\eta})^*(A) 
\end{align*}
are isomorphisms if $A$ is an equivariant object in $\sh(X_{\eta})$. In particular, if $\tau$ is the \'etale topology, then the motivic integral identity is always true. 
\end{prop}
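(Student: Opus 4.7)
The plan is to use $\tau$-descent along an equivariant linearizing cover to reduce the statement to the $S$-affine case already settled in proposition~\ref{compatible of Braden transformations with sp induced by derivators}. By hypothesis, I pick an equivariant $\tau$-covering $\{u_i \colon U_i \to X\}_{i \in I}$ by $S$-affine schemes carrying $\gm_{m,S}$-actions. By \cite[Theorem~1.8]{richarz-2018} the induced maps $u_i^0 \colon U_i^0 \to X^0$ and $u_i^\pm \colon U_i^\pm \to X^\pm$ are \'etale, and $\{u_i^0\}$ is still a $\tau$-covering of $X^0$; the same holds after restricting to the special fiber. By $\tau$-conservativity of $\sh$ (the running hypothesis of theorem~\ref{braden theorem}), in order to check that the stated morphism is an isomorphism it is enough to check that $((u_i^0)_\sigma)^*$ carries it to an isomorphism for every $i$.

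Next I apply the commutative cube of proposition~\ref{descent of sp} to $u_i \colon U_i \to X$ and $f$, which pushes $((u_i^0)_\sigma)^*$ through $(\Rscr^{\tame,I}\bullet\chi)_{f^0}$ on the target and through $(\Rscr^{\tame,I}\bullet\chi)_{f}$ on the source. Since $u_i$ and $u_i^0$ are \'etale, the base change morphisms $\alpha_{u_i}$ and $\alpha_{u_i^0}$ appearing in the cube are isomorphisms. The remaining faces exchange $((u_i^0)_\sigma)^*$ with the operations forming $L^+ = (\pi^+_?)_!(e^+_?)^*$: by lemma~\ref{descent of fixed points, attractors, repellers} the square $U_i^{\pm} = U_i^0 \times_{X^0} X^{\pm}$ is cartesian, so $\Ex^*_!$ along the \'etale map $u_i^\pm$ is an isomorphism by proposition~\ref{proper + smooth base change theorem for diagrams}, while the $(e^+)^*$-faces commute by plain functoriality. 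Altogether this identifies both sides of the $((u_i^0)_\sigma)^*$-pullback with the analogous expressions for $U_i$, $f \circ u_i$ and $(u_i)_\eta^* A$ in place of $X$, $f$ and $A$.

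To conclude, $(u_i)_\eta^* A$ is still equivariant by lemma~\ref{equivariance of six operations}, and $U_i$ is an $S$-affine scheme, so the already-established affine case of proposition~\ref{compatible of Braden transformations with sp induced by derivators} supplies the required isomorphism. I expect the main difficulty to be the bookkeeping needed to recognize all the base change isomorphisms in the cube simultaneously; no new geometric content beyond lemma~\ref{descent of fixed points, attractors, repellers} and the \'etale descent of fixed points and attractors/repellers is required. For the final assertion, $\da(-,\Lambda)$ is \'etale-conservative and, by example~\ref{linearizable of schemes}(3), every quasi-separated algebraic $S$-space locally of finite presentation is \'et-locally linearizable; hence the argument applies with $\tau = \et$ and yields the motivic integral identity of the main theorem without any restriction on the characteristic of $S$.
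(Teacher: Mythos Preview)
Your proof is correct and follows essentially the same route as the paper: choose an equivariant affine $\tau$-cover, use $\tau$-conservativity to reduce to checking after $((u_i^0)_\sigma)^*$, invoke the descent cube of proposition~\ref{descent of sp} together with the smoothness of $u_i$ to push the pullback past the specialization system and past $L^+$, and finish with the affine case of proposition~\ref{compatible of Braden transformations with sp induced by derivators}. The only cosmetic difference is that where you unpack the $L^+$-compatibility by hand via lemma~\ref{descent of fixed points, attractors, repellers} and $\Ex^*_!$, the paper cites theorem~\ref{compatible of Braden transformations with four operations} directly for the same step.
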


\begin{proof}
Let $\left \{u_i \colon U_i \longrightarrow X \right \}$ be a $\gm_{m,S}$-equivariant $\tau$-covering of $X$ with $U_i$ being $S$-affine. By $\tau$-conservativity (see proposition \ref{descent of motives}), it suffices to prove that 
   \begin{equation*}
      ((u_i^0)_{\sigma})^*(\Rscr^{\tame,I} \bullet \chi)_{f^0} (\pi_{\eta}^+)_!(e^+_{\eta})^*(A) \longrightarrow ((u_i^0)_{\sigma})^* (\pi_{\sigma}^+)_!(e^+_{\sigma})^*(\Rscr^{\tame,I} \bullet \chi)_f(A)
   \end{equation*}
   is an isomorphism for any $u_i$. By proposition \ref{descent of sp}, there is a commutative diagram 
    \begin{equation*}
      \begin{tikzcd}[sep=large]
        ((u_i^0)_{\sigma})^* (\pi^+_{\sigma})_!(e^+_{\sigma})^*(\Rscr^{\tame,I} \bullet \chi)_f \arrow[r] \arrow[d] &    ((u_i^0)_{\sigma})^*(\Rscr^{\tame,I} \bullet \chi)_{f^0} (\pi_{\eta}^+)_!(e^+_{\eta})^* \arrow[d,"\alpha_{g^0}"] \\ 
       (\pi^+_{\sigma})_!(e^+_{\sigma})^*((u_i)_{\sigma})^*(\Rscr^{\tame,I} \bullet \chi)_f \arrow[d,"\alpha_g",swap] &  (\Rscr^{\tame,I} \bullet \chi)_{(f \circ u_i)^0}((u_i^0)_{\eta})^*(\pi_{\eta}^+)_!(e^+_{\eta})^* \arrow[d] \\ 
       (\pi^+_{\sigma})_!(e^+_{\sigma})^*(\Rscr^{\tame,I} \bullet \chi)_{f \circ u_i}((u_i)_{\eta})^* \arrow[r] & (\Rscr^{\tame,I} \bullet \chi)_{(f \circ u_i)^0}(\pi_{\eta}^+)_!(e^+_{\eta})^*((u_i)_{\eta})^*.
     \end{tikzcd}
 \end{equation*}
 In this diagrams, morphisms $\alpha_?$ are isomorphisms because $u_i$ are smooth, two other vertical arrows are isomorphisms thanks to theorem \ref{compatible of Braden transformations with four operations}, the lower horizontal arrow is an isomorphism by proposition \ref{compatible of Braden transformations with sp induced by derivators}.
\end{proof}

We obtain the categorical version of the motivic integral identity.

\begin{theorem} \label{categorical integral identity}
Let $X = \mathbb{A}_S^{d_1} \times_S \mathbb{A}_S^{d_2} \times_S Y$ be a $S$-variety with $Y$ a $S$-variety. Suppose that $\gm_{m,S}$ acts by positive weights on $\mathbb{A}^{d_1}_S$, by negative weights on $\mathbb{A}^{d_2}_S$ and trivially on $Y$, then the motivic integral identities
  \begin{equation*}
     (\pi_{\sigma}^+)_!(e^+_{\sigma})^*(\Rscr^{\tame,I} \bullet \chi)_f(\mathds{1}_{X_{\eta}}) \overset{\sim}{\longrightarrow}  \mathds{1}_{Y_{\sigma}}(-d_1)[-2d_1] \otimes (\Rscr^{\tame,I} \bullet \chi)_{f^0}(\mathds{1}_{X_{\eta}^0})
\end{equation*}
holds in $\sh(X^0_{\sigma})$. 
\end{theorem}

\begin{proof}
In this case, we can take any affine Nisnevich atlas of $Y$ (see \cite[Theorem 6.4]{knutson-1971}) and use proposition \ref{descent of motives} to see that the topology does not matter here. We denote $\sp$ to indicate $(\Rscr^{\tame,I} \bullet \chi)$. We see that
    \begin{align*}
          (\pi_{\sigma}^+)_!(e^+_{\sigma})^*\sp_f(\mathds{1}_{X_{\eta}}) & \simeq \sp_{f^0} (\pi_{\eta}^+)_!(e^+_{\eta})^*(\mathds{1}_{X_{\eta}}) &  \text{by proposition \ref{consequence of descent of sp}}  \\ 
          & \simeq \sp_{f^0}(s^+_{\eta})^!(\mathds{1}_{X^+_{\eta}}) & \text{by lemma \ref{braden contraction lemma}} \\ 
          & \simeq \sp_{f^0}(s^+_{\eta})^!(\pi^+_{\eta})^*(\mathds{1}_{X^0_{\eta}}) \\
          & \simeq (s^+_{\sigma})^!(\pi^+_{\sigma})^*\sp_{f^0}(\mathds{1}_{X^0_{\eta}})  & \text{by lemma \ref{sp systems commute with Thoms}} \\
          & \simeq \th^{-1}(\pi^+_{\sigma},s^+_{\sigma})(\mathds{1})  \otimes \sp_{f^0}(\mathds{1}_{X^0_{\eta}}) & \text{\cite[Section 2.4.13]{cisinski+deglise}} \\ 
         & \simeq \th^{-1}(\pi^+_{\sigma},s^+_{\sigma})(\mathds{1})  \otimes \sp_{f_{\mid Y}}(\mathds{1}_{Y_{\eta}}) 
    \end{align*}
and we conclude by observing that $X^0 = Y$ (embedded in $X$ via zero sections) and $\th^{-1}(\pi^+_{\sigma},s^+_{\sigma})(\mathds{1}) =  \mathds{1}_{Y_{\sigma}}(-d_1)[-2d_1]$ since in this case $\pi^+ \colon \mathbb{A}^{d_1}_Y \longrightarrow Y$ is the projection and $s^+ \colon Y \longhookrightarrow \mathbb{A}^{d_1}_Y$ is the zero section. 
\end{proof}
\subsection{Quasi-unipotent motives of algebraic spaces and monodromic categorical integral identity}
Now we would like to obtain the monodromic version of the theorem above. We extend the definition of quasi-unipotent motives (see \cite{ayoub-rigid-motive}\cite{florian+julien-2021}) to the case of algebraic spacess. Let $X$ be an algebraic $S$-space. The category $\qush(X)$ of \textit{quasi-unipotent motives} is a smallest triangulated subcategory of $\sh(\gm_{m,X})$ stable under all small direct sums and contains motives of the form $\mnor^{\vee}_n(Z,\bullet) = \pi_{\#}\mathds{1}_{Q^{\mathrm{gm}}_n(Z,\bullet)}(r)$, where
\begin{equation*}
    \pi \colon Q^{\mathrm{gm}}_n(Z,\bullet) \coloneqq \Spec\left(\frac{\mathcal{O}_Z[T,T^{-1},V]}{(V^n - \bullet T)} \right) \longrightarrow \Spec(\mathcal{O}_X[T,T^{-1}]) = \gm_{m,X}.
\end{equation*}
with $Z$ a scheme and $Z \longrightarrow X$ smooth, $\bullet \in \mathcal{O}_Z(Z)^{\times}$, $n \in \mathbb{N}^{\times}$. The category of constructible quasi-unipotent motives is defined as
\begin{equation*}
\qushct(X) = \shct(\gm_{m,X}) \cap \qush(X).
\end{equation*}
Let $f \colon X \longrightarrow Y$ be a morphism of algebraic $S$-spaces. If $X,Y$ are schemes, then by \cite[Proposition 4.8]{bang-2024} and \cite[Lemma 3.2.1]{florian+julien-2021}, the morphisms $(\breve{f})^*,(\breve{f})_!$ preserve quasi-unipotent motives and if $f$ is smooth, then $(\breve{f})_{\#}$ also preserves quasi-unipotent motives. If $X,Y$ are algebraic spaces, the same results hold for $(\breve{f})^*$ or $(\breve{f})_{\#}$. Let us prove that $(\breve{f})_!$ preserves quasi-unipotent motives, which is used in the subsequence. First we have the following corollary of proposition \ref{descent of motives}
\begin{lem} \label{compact generators of quasi-unipotent motives}
    Let $u \colon U \longrightarrow X$ be a $\tau$-atlas. Assume that $\sh(\gm_{m,U})$ is compactly generated by a set $\mathcal{S}$ of compact objects, then $\sh(\gm_{m,X})$ is compactly generated by the set $(\breve{u})_{\#}(\mathcal{S})$. Moreover, $\qush(X)$ is compactly generated by the set $\mathcal{S}(U,X)$ consisting of Tate twists of motives of the form $(\breve{u})_{\#}\big(\mnor^{\vee}_n(T,\bullet)\big)$ with $T \longrightarrow U$ a smooth morphism, $\bullet \in \mathcal{O}_T(T)^{\times}$, $n \in \mathbb{N}^{\times}$. 
\end{lem}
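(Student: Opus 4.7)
The plan is to treat the two statements in turn, using that $u$ (and hence $\breve{u}$) is smooth so that $(\breve{u})^{*}$ admits both adjoints, and that $\sh(-)$ satisfies $\tau$-descent (proposition \ref{descent of motives}).

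For the first statement, I would first argue that $(\breve{u})_{\#}$ preserves compact objects: its right adjoint $(\breve{u})^{*}$ itself has a further right adjoint $(\breve{u})_{*}$ (smoothness), hence $(\breve{u})^{*}$ commutes with arbitrary small coproducts, which is exactly the criterion for $(\breve{u})_{\#}$ to preserve compactness. Thus $(\breve{u})_{\#}(\mathcal{S})$ is a set of compact objects of $\sh(\gm_{m,X})$. To see that this set generates, suppose $A \in \sh(\gm_{m,X})$ satisfies $\Hom\bigl((\breve{u})_{\#}(B), A[m]\bigr) = 0$ for every $B \in \mathcal{S}$ and every $m \in \mathbb{Z}$. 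By adjunction, $\Hom\bigl(B, (\breve{u})^{*}A[m]\bigr)=0$ for all such $B,m$; since $\mathcal{S}$ generates $\sh(\gm_{m,U})$, this forces $(\breve{u})^{*}A = 0$. Applying the $\tau$-conservativity part of proposition \ref{descent of motives}, we conclude $A=0$, so $(\breve{u})_{\#}(\mathcal{S})$ is indeed a set of compact generators.

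For the second statement, I would first observe that $\mathcal{S}(U,X) \subseteq \qush(X)$: each member has the form $(\breve{u})_{\#}(\mnor^{\vee}_{n}(T,\bullet))(r)$, and by the fact recalled above the text of the lemma, $(\breve{u})_{\#}$ preserves quasi-unipotent motives. The compactness of the elements of $\mathcal{S}(U,X)$ in $\sh(\gm_{m,X})$ follows from step one applied with $\mathcal{S}$ the standard generating set of $\qush(U)$ (namely Tate twists of $\mnor^{\vee}_{n}(T,\bullet)$ for $T \to U$ smooth, which is a set of compact generators of $\sh(\gm_{m,U})$ when $U$ is a scheme by the work in \cite{ayoub-rigid-motive}\cite{florian+julien-2021}); since $\qush(X)$ is stable under all small coproducts inside $\sh(\gm_{m,X})$, compactness transfers. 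It remains to show that these objects generate $\qush(X)$. I would do this by the orthogonality argument: let $A \in \qush(X)$ satisfy $\Hom_{\sh(\gm_{m,X})}\bigl((\breve{u})_{\#}(\mnor^{\vee}_{n}(T,\bullet))(r), A[m]\bigr)=0$ for all admissible data. By adjunction this rewrites as $\Hom_{\sh(\gm_{m,U})}\bigl(\mnor^{\vee}_{n}(T,\bullet)(r), (\breve{u})^{*}A[m]\bigr)=0$. Since $(\breve{u})^{*}$ preserves quasi-unipotent motives and the latter objects are a compact generating set of $\qush(U)$, we obtain $(\breve{u})^{*}A = 0$ in $\qush(U)$, hence in $\sh(\gm_{m,U})$. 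By $\tau$-conservativity, $A=0$.

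A standard Neeman-style argument for compactly generated triangulated categories then upgrades this vanishing-of-Hom statement into the assertion that the localizing subcategory of $\qush(X)$ generated by $\mathcal{S}(U,X)$ is all of $\qush(X)$, completing the proof. I expect the main subtlety to lie not in these two orthogonality arguments themselves but in making sure $\tau$-conservativity is correctly invoked at the level of $\gm_{m,X}$ (rather than $X$), which is immediate once one notes that $\breve{u} \colon \gm_{m,U} \to \gm_{m,X}$ is again a $\tau$-atlas obtained by the flat base change $\gm_{m,S} \times_S u$, so proposition \ref{descent of motives} applies verbatim.
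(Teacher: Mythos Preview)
Your proposal is correct and follows essentially the same route as the paper: the paper invokes proposition \ref{compactly generated property of alg spaces} (whose proof is exactly your adjunction-plus-conservativity argument) for the first assertion, and for the second uses that $(\breve{u})^*$ is conservative together with the fact that $\qush(U)$ is compactly generated by the $\mnor^{\vee}_n(T,\bullet)(r)$---precisely your orthogonality argument.

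One small correction: in your parenthetical you assert that the Tate twists of $\mnor^{\vee}_n(T,\bullet)$ form a set of compact generators of $\sh(\gm_{m,U})$. They do not; they only generate the full subcategory $\qush(U)$. Fortunately your argument never actually uses this stronger claim: for compactness of $\mathcal{S}(U,X)$ you only need that these objects are compact in $\sh(\gm_{m,U})$ (which they are, being of the form $\pi_{\#}\mathds{1}(r)$ with $\pi$ smooth) together with the fact, already established in your first paragraph, that $(\breve{u})_{\#}$ preserves compact objects. So drop the words ``generators of $\sh(\gm_{m,U})$'' from that parenthetical and the proof stands.
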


\begin{proof}
The first assertion is proposition \ref{compactly generated property of alg spaces}. The proof of proposition \ref{compactly generated property of alg spaces} also shows that motives of the form $(\breve{u})_{\#}\big(\mnor^{\vee}_n(T,\bullet)\big)$ are compact. It suffices to check that shifts of these motives detect zero object. Again, this is the same as the proof of proposition \ref{compactly generated property of alg spaces} because $(\breve{u})^*$ is conservative by propositon \ref{descent of motives} and $\qush(U)$ is compactly generated by those twists of $\mnor_n^{\vee}(T,\bullet)$. 
\end{proof}

\begin{prop} \label{proper morphism and quasi-unipotent motives}
    Let $f \colon X \longrightarrow Y$ be a proper morphism of algebraic $S$-spaces. If $A$ is in $\qush(X)$, then $(\breve{f})_*(A)$ is in $\qush(Y)$. Consequently, for any $f \colon X \longrightarrow Y$, $(\breve{f})_!$ preserves quasi-unipotent motives.
\end{prop}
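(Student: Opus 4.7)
The plan is to prove the proposition in two stages: first establish the proper case for $(\breve{f})_*$, and then deduce the statement about $(\breve{f})_!$ for arbitrary $f$ via Nagata compactification for algebraic spaces. For the latter deduction, factor an arbitrary $f \colon X \to Y$ as $f = \bar{f} \circ j$ with $j \colon X \hookrightarrow \bar{X}$ an open immersion of algebraic spaces and $\bar{f} \colon \bar{X} \to Y$ proper. Since $j$ is étale we have $(\breve{j})_! \simeq (\breve{j})_{\#}$, which preserves $\qush$ by the algebraic-space analogue of \cite[Lemma 3.2.1]{florian+julien-2021} already recorded in the paragraph preceding the proposition. Combined with the proper case for $\bar{f}$, this gives the corollary.

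For the proper case, the structural input is that when $f$ is proper one has $(\breve{f})_* \simeq (\breve{f})_!$, which admits the right adjoint $(\breve{f})^!$ and hence commutes with arbitrary small colimits. Because $\qush(X)$ is the localizing subcategory of $\sh(\gm_{m,X})$ generated by the compact family $\mathcal{S}(U,X)$ of Lemma~\ref{compact generators of quasi-unipotent motives} (for a Nisnevich atlas $u \colon U \to X$ with $U$ a scheme), it suffices to verify that $(\breve{f})_*$ sends each element of $\mathcal{S}(U,X)$ into $\qush(Y)$. A typical generator has the form $(\breve{u})_{\#}\pi_{\#}\mathds{1}(r)$, where $\pi \colon Q^{\mathrm{gm}}_n(T,\bullet) \to \gm_{m,U}$ is the defining smooth morphism attached to a smooth $T \to U$. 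Using that $u$ is étale (so $u_{\#} = u_!$) and that $\pi$ is smooth (so $\pi_{\#}$ equals $\pi_!$ up to a Thom twist), together with $(\breve{f})_* = (\breve{f})_!$ and the composition law for $(-)_!$, one rewrites
\begin{equation*}
(\breve{f})_*(\breve{u})_{\#}\pi_{\#}\mathds{1}(r) \;\simeq\; (\breve{g})_!\mathds{1}(\text{twist}),
\end{equation*}
where $g \colon Q^{\mathrm{gm}}_n(T,\bullet) \to \gm_{m,Y}$ is the evident composite and its source is a scheme.

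It remains to show that such an object $(\breve{g})_!\mathds{1}(r)$, for $g$ from a scheme to the $\gm_m$-torsor over the algebraic space $Y$, lies in $\qush(Y)$. The plan is to transfer the scheme-level \cite[Proposition 4.8]{bang-2024} through an atlas: choosing a Nisnevich atlas $v \colon V \to Y$ with $V$ a scheme and forming the pullback square gives $g' \colon Q' \to \gm_{m,V}$ between schemes, and proper base change yields $(\breve{v})^*(\breve{g})_!\mathds{1}(r) \simeq (\breve{g'})_!\mathds{1}(r) \in \qush(V)$ by the scheme case. The principal obstacle is then descending this local $\qush$-membership back to $Y$, i.e.\ showing that $B \in \sh(\gm_{m,Y})$ lies in $\qush(Y)$ whenever $(\breve{v})^*B$ lies in $\qush(V)$ for a scheme atlas $v \colon V \to Y$. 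I expect to handle this by invoking the Nisnevich descent for $\sh$ of Proposition~\ref{descent of motives} along the Čech nerve $v_\bullet$ to present $B$ as a homotopy colimit of $(\breve{v_\bullet})_{\#}(\breve{v_\bullet})^*B$, and then using that each $(\breve{v_\bullet})_{\#}$ preserves $\qush$ in the algebraic-space setting (the smooth case, already in hand) to exhibit this colimit inside $\qush(Y)$. This descent step, rather than the generator-level computation, is the true heart of the argument and the only place where the proof genuinely exploits the extension from schemes to algebraic spaces.
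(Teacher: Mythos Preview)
Your reduction of the ``consequently'' clause to the proper case via Nagata compactification is correct and matches the paper. However, your treatment of the proper case is genuinely different from the paper's, and the descent step you flag as the principal obstacle is indeed a real gap.

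The paper does \emph{not} attack a general proper $f$ directly. Instead it invokes the Chow lemma, cdh descent, and Noetherian induction (as in \cite[Proposition~4.8]{bang-2024}) to reduce to the single case where $f$ is a \emph{closed immersion}. For that case it exploits a second system of compact generators for $\qush(Y)$, namely \cite[Lemma~3.2.2]{florian+julien-2021}: with a cartesian square of atlases $u\colon U\to X$, $v\colon V\to Y$, $g\colon U\to V$, one has $(\breve{f})_*(\breve{u})_{\#}\simeq(\breve{v})_{\#}(\breve{g})_*$ (all four maps are either \'etale or closed immersions, so $(-)_{\#}=(-)_!$ and $(-)_*=(-)_!$ where needed), and $(\breve{g})_*\bigl(\mnor_n^\vee(T,\bullet)\bigr)$ is visibly one of the generators in $\mathcal{S}'(V,Y)$. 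No descent argument is required.

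Your route instead hinges on the claim that membership in $\qush$ is Nisnevich-local: $B\in\qush(Y)$ whenever $(\breve{v})^*B\in\qush(V)$. Your proposed justification via a colimit presentation $B\simeq\operatorname{colim}(\breve{v_\bullet})_{\#}(\breve{v_\bullet})^*B$ is not what Proposition~\ref{descent of motives} gives you; that proposition yields conservativity, which is equivalent to the \emph{limit} formula $B\simeq\lim(\breve{v_\bullet})_*(\breve{v_\bullet})^*B$, and $(\breve{v_\bullet})_*$ has no reason to preserve $\qush$. A colimit-style \v{C}ech resolution for \'etale atlases can be established (e.g.\ by induction on the stratification implicit in the Nisnevich-atlas condition, using localization triangles), but this is a separate lemma that you would have to prove; it is not in the paper and it is not a formal consequence of the descent statement you cite. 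Absent that lemma, your argument does not close. The paper's reduction to closed immersions sidesteps this issue entirely.
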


\begin{proof}
The proof is very similar to the proof of \cite[Proposition 4.8]{bang-2024} by using the Chow lemma, cdh descent and Noetherian induction. The only case that we need to check is when $f$ is a closed immersion. Let us choose a cartesian square
\begin{equation*}
    \begin{tikzcd}[sep=large]
       U \arrow[d,"u",swap] \arrow[r,"g"] & V \arrow[d,"v"] \\ 
       X \arrow[r,"f"] & Y
    \end{tikzcd}
\end{equation*}
with $U,V$ being $\tau$-covering of $X,Y$, respectively. By \cite[Lemma 3.2.2]{florian+julien-2021}, the category $\qush(Y)$ is compactly generated by the set $(\breve{v})_{\#}(\mathcal{S}'(V,Y))$ with $\mathcal{S}'(V,Y)$ the set of Tate twists of motives of the form $(\breve{\iota})_*\big(\mnor^{\vee}_n(T,\bullet) \big)$ with $T \overset{h}{\longrightarrow} T'  \overset{\iota}{\longhookrightarrow} V$, $T,T'$ a scheme, $\iota$ closed immersion, $h$ smooth. We note that 
\begin{equation*}
    (\breve{f})_*(\breve{u})_{\#}(\mathcal{S}(U,X)) \subset (\breve{v})_{\#}(\mathcal{S}'(V,Y)). 
\end{equation*}
Indeed, this follows from the isomorphism $(\breve{f})_*(\breve{u})_{\#} \simeq (\breve{v})_{\#}(\breve{g})_*$. 
\end{proof}

Let $X$ be an algebraic $S$-space and $f \colon X \longrightarrow \mathbb{A}_S^1$ be a $S$-morphism. We form the diagram
\begin{equation*} \label{monodromic nearby cycles}
    \begin{tikzcd}[sep=large]
      \gm_{m,X_{\eta}} \arrow[d,"f_{\eta}^{\gm_m}",swap] \arrow[r]  & \gm_{m,X} \arrow[d,"f^{\gm_m}"] & \gm_{m,X_{\sigma}} \arrow[l] \arrow[d,"f_{\sigma}^{\gm_m}"] \\ 
     \gm_{m,S} \arrow[r,"j"] & \mathbb{A}_S^1 & S \arrow[l,"i",swap]
    \end{tikzcd}
\end{equation*}
where we define $f^{\gm_m}\colon \Spec(\mathcal{O}_X[T,T^{-1}]) \longrightarrow \mathbb{A}^1_S = \Spec(\mathcal{O}_S[t])$ to be the morphism given by $t \longmapsto fT$ once we identify $f$ with the image of $t$ under the canonical morphism $\mathcal{O}_S[t] \longrightarrow \mathcal{O}_X(X)$. With all these data, the \textit{monodromic nearby functor} is defined as
\begin{equation*}
    \Psi_f^{\mathrm{mon}} \coloneqq \Psi_{f^{\gm_m}}^{\tame}  p^*\colon \sh(X_{\eta}) \longmapsto \sh(\gm_{m,X_{\sigma}})
\end{equation*}
in which $p\colon \gm_{m,X_{\eta}} \longrightarrow X_{\eta}$ is the canonical projection. One can also define the \textit{monodromic unipotent nearby functor} $\Upsilon_f^{\textnormal{mon}}$ by a similar formula after replacing $\Psi^{\tame}_{f^{\gm_m}}$ with $\Upsilon_{f^{\gm_m}}$ in example \ref{ex: unipotent nearby functors}. Let $g \colon Y \longrightarrow X$ be a $S$-morphism, then there are base change morphisms 
\begin{equation*}
      \begin{tikzcd}[sep=large]
         \sh(X_{\eta}) \arrow[r,"\Psi_f^{\mathrm{mon}}"] \arrow[d,"(g_{\eta})^*",swap] & \sh(\gm_{m,X_{\sigma}}) \arrow[Rightarrow, shorten >=27pt, shorten <=27pt, dl,"\alpha_g"] \arrow[d,"(\breve{g}_{\sigma})^*"] \\ 
         \sh(Y_{\eta}) \arrow[r,"\Psi_{f \circ g}^{\mathrm{mon}}"] &  \sh(\gm_{m,Y_{\sigma}})
    \end{tikzcd} \ \ \ \  \begin{tikzcd}[sep=large]
         \sh(Y_{\eta}) \arrow[r,"\Psi_{f \circ g}^{\mathrm{mon}}"] \arrow[d,"(g_{\eta})_*",swap] & \sh(\gm_{m,Y_{\sigma}}) \arrow[d,"(\breve{g}_{\sigma})_*"] \\ 
         \sh(X_{\eta}) \arrow[Rightarrow, shorten >=27pt, shorten <=27pt, ur,"\beta_g"] \arrow[r,"\Psi_f^{\mathrm{mon}}"] &  \sh(\gm_{m,X_{\sigma}}).
         \end{tikzcd} 
\end{equation*}
which are isomorphisms if $g$ is smooth, proper, respectively. We can recover $\Psi_f$ from $\Psi^{\mathrm{mon}}_f$ thanks to \cite[Theorem 4.1.1]{florian+julien-2021} and \cite{bang-2024}. It was proved in \cite{florian+julien-2021} that $\Psi^{\mathrm{mon}}_f$ takes values in $\qush(X_{\sigma})$ if $X$ is a scheme (see \cite[Theorem 4.2.1]{florian+julien-2021}). A similar result holds for algebraic $k$-spaces with $k$ a field of characteristic zero.

\begin{prop}
 Let $S = \Spec(k)$ be the spectrum of a field of characteristic zero. Let $X$ be an algebraic $S$-space. Let $f \colon X \longrightarrow \mathbb{A}_S^1$ be a $S$-morphism and $A$ be an object in $\sh(X_{\eta})$, then $\Upsilon^{\mathrm{mon}}_f(A),\Psi^{\mathrm{mon}}_f(A)$ are in $\qush(X_{\sigma})$. 
\end{prop}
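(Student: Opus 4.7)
The plan is to reduce to the known scheme case \cite[Theorem 4.2.1]{florian+julien-2021} via a Nisnevich atlas. By the conventions on algebraic spaces, $X$ admits a Nisnevich atlas $u \colon U \longrightarrow X$ with $U$ an affine scheme. Since $p^*$ and the tame nearby functor $\Psi^{\tame}$ are triangulated functors commuting with arbitrary small direct sums, the composite $\Psi^{\mathrm{mon}}_f = \Psi^{\tame}_{f^{\gm_m}} \circ p^*$ commutes with small direct sums, and likewise for $\Upsilon^{\mathrm{mon}}_f$. Consequently the full subcategory of $\sh(X_\eta)$ sent into $\qush(X_\sigma)$ is a localizing triangulated subcategory, and it suffices to check it contains a compact-generating family of $\sh(X_\eta)$.

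The analogue of lemma \ref{compact generators of quasi-unipotent motives} for $\sh$ (that is, proposition \ref{compactly generated property of alg spaces}) shows that $\sh(X_\eta)$ is compactly generated by the family $(u_\eta)_\#(\mathcal{S}_U)$, where $\mathcal{S}_U$ is a compact-generating set of $\sh(U_\eta)$. The key commutation needed is a smooth $\sharp$-base change
\begin{equation*}
\Psi^{\mathrm{mon}}_f \circ (u_\eta)_\# \overset{\sim}{\longrightarrow} (\breve{u}_\sigma)_\# \circ \Psi^{\mathrm{mon}}_{f \circ u}.
\end{equation*}
To obtain it, start from the isomorphism $\alpha_{\breve u} \colon (\breve{u}_\sigma)^* \Psi^{\tame}_{f^{\gm_m}} \simeq \Psi^{\tame}_{(f\circ u)^{\gm_m}}(\breve{u}_\eta)^*$, which is an iso because $\breve{u} = \id_{\gm_{m}} \times u$ is smooth, and take its mate with respect to the adjunctions $(\breve{u}_\sigma)_\# \dashv (\breve{u}_\sigma)^*$ and $(\breve{u}_\eta)_\# \dashv (\breve{u}_\eta)^*$. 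Combining with the smooth-base-change isomorphism $p^* (u_\eta)_\# \simeq (\breve{u}_\eta)_\# p^*$ for the cartesian square defining $\breve{u}$ yields the desired iso. Equivalently, one uses the purity iso $u_\# \simeq u_! \circ T_u$, the base change $\mu_u$ for $\Psi^{\tame}$, and lemma \ref{sp systems commute with Thoms} to commute the Thom twist past $\Psi^{\tame}$.

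With this in hand, for any $B \in \mathcal{S}_U$ we have $\Psi^{\mathrm{mon}}_f\bigl((u_\eta)_\#(B)\bigr) \simeq (\breve{u}_\sigma)_\# \Psi^{\mathrm{mon}}_{f \circ u}(B)$. The scheme case \cite[Theorem 4.2.1]{florian+julien-2021} places $\Psi^{\mathrm{mon}}_{f \circ u}(B)$ in $\qush(U_\sigma)$, and since $\breve{u}_\sigma$ is smooth, the extension of \cite[Lemma 3.2.1]{florian+julien-2021} to algebraic spaces (noted in the discussion preceding the proposition) ensures that $(\breve{u}_\sigma)_\#$ preserves quasi-unipotent motives; hence $\Psi^{\mathrm{mon}}_f((u_\eta)_\# B) \in \qush(X_\sigma)$. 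The identical reasoning, with $\Upsilon$ in place of $\Psi^{\tame}$, handles $\Upsilon^{\mathrm{mon}}_f$. The main technical obstacle is the smooth $\sharp$-base change for $\Psi^{\mathrm{mon}}$; once it is established the rest is a formal generation-plus-descent argument combined with the scheme case of Ivorra--Sebag.
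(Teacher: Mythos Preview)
Your reduction strategy differs from the paper's. Both start from a Nisnevich atlas $u\colon U\to X$ and compact generation, but the paper does \emph{not} use a smooth $\sharp$-exchange. Instead it rewrites the generators $(u_\eta)_\#(\cdots)$ successively as twists of $(g_\eta)_!(\mathds{1}_{Y_\eta})$ with $g$ smooth (via purity), then as twists of $(g_\eta)_*(\mathds{1}_{Y_\eta})$ with $g$ \emph{proper} and $Y$ a scheme (via Nagata compactification and localization). It then applies the proper base change $\beta_g$, which is an isomorphism by the very axioms of a specialization system, and concludes using Proposition~\ref{proper morphism and quasi-unipotent motives} that $(\breve g_\sigma)_*$ preserves quasi-unipotent motives. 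Your route is shorter in outline, since you stay with the smooth morphism $u$ and avoid the detour through compactification; it also does not need the proper-pushforward preservation result.

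The trouble is the step you flag as ``the main technical obstacle''. Taking the mate of the isomorphism $\alpha_{\breve u}$ with respect to the adjunctions $(\breve u_?)_\#\dashv(\breve u_?)^*$ produces a \emph{morphism} $(\breve u_\sigma)_\#\,\Psi^{\tame}_{(f u)^{\gm_m}}\to \Psi^{\tame}_{f^{\gm_m}}\,(\breve u_\eta)_\#$, but mates of isomorphisms are not isomorphisms in general, so this does not by itself give the exchange you need. Your alternative via purity $u_\#\simeq u_!\circ T_u$ and $\mu_u$ is circular as written: it reduces the $\sharp$-exchange to $\mu_u$ being invertible for smooth $u$, which is exactly what is in question and is \emph{not} among the axioms of a specialization system (only $\alpha_g$ for $g$ smooth and $\beta_g$ for $g$ proper are postulated to be invertible). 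One can in fact prove the $\sharp$-exchange for the specific functors $\Upsilon,\Psi^{\tame}$ by unwinding their construction as $(\Fscr\bullet\chi)$ and commuting $(g_?)_\#$ past each constituent, but that is real work you have not supplied. The paper's route through the proper base change is chosen precisely to stay within the guaranteed isomorphisms and thereby sidestep this issue.
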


\begin{proof}
The result holds for schemes thanks to \cite[Theorem 4.2.1]{florian+julien-2021} (note that for schemes, the proof for $\Upsilon^{\mathrm{mon}}_f$ is similar to the case of $\Psi^{\mathrm{mon}}_f$ done in \cite{florian+julien-2021} so here we can just focus on $\Psi^{\mathrm{mon}}_f$). Let $u \colon U \longrightarrow X$ be a $\nis$-atlas of $X$. The category $\sh(U_{\eta})$ is compactly generated by Tate twists of motives of the form 
$(g_{\eta})_{\#}(\mathds{1}_{Y_{\eta}})$ with $g \colon Y \longrightarrow U$ a smooth morphism (see \cite[Théorème 4.5.67]{ayoub-thesis-2}). Hence, by corollary \ref{compactly generated property of alg spaces} and enlarging if necessary, the category $\sh(X_{\eta})$ is compactly generated by twists of motives of the form $(g_{\eta})_{\#}(\mathds{1}_{Y_{\eta}})$ with $g \colon Y \longrightarrow X$ a smooth morphism and $Y$ a scheme. Locally for $g$ smooth, $g_{\#}$ differs from $g_!$ some twists so $\sh(X_{\eta})$ is compactly generated by twists of motives of the form $(g_{\eta})_!(\mathds{1}_{Y_{\eta}})$ with $g \colon Y \longrightarrow X$ smooth and $Y$ a scheme. By the Nagata compactification theorem and localization property, we see that $\sh(X_{\eta})$ is compactly generated by twists of motives of the form $(g_{\eta})_*(\mathds{1}_{Y_{\eta}})$ with $g \colon Y \longrightarrow X$ proper and $Y$ a scheme (using the Chow lemma, one can even reduce to $g$ projective). Therefore, it is enough to show that
\begin{equation*}
    \Psi^{\mathrm{mon}}_f(g_{\eta})_*(\mathds{1}_{Y_{\eta}}) \simeq (\breve{g}_{\sigma})_*\Psi^{\mathrm{mon}}_{f \circ g}(\mathds{1}_{Y_{\eta}})
\end{equation*}
is in $\qush(X_{\sigma})$. This follows from the assumption on the case of schemes and proposition \ref{proper morphism and quasi-unipotent motives}.
\end{proof}

\begin{cor} \label{monodromic categorical integral identity}
Let $S = \Spec(k)$ be the spectrum of a field of characteristic zero. Let $Y$ be an algebraic $S$-space and $X = \mathbb{A}_S^{d_1} \times_S \mathbb{A}_S^{d_2} \times_S Y$. Suppose that $\gm_{m,S}$ acts by positive weights on $\mathbb{A}^{d_1}_S$, by negative weights on $\mathbb{A}^{d_2}_S$ and trivially on $Y$. Let $f \colon X \longrightarrow \mathbb{A}_S^1$ be a $\gm_{m,S}$-equivariant, where the target is equipped with the trivial action, then the monodromic motivic integral identities
    \begin{align*}
       (\breve{\pi}_{\sigma}^+)_!(\breve{e}^+_{\sigma})^*\Psi_f^{\mathrm{mon}}(\mathds{1}_{X_{\eta}}) & \simeq  \mathds{1}_{\gm_{m,Y_{\sigma}}}(-d_1)[-2d_1] \otimes \Psi_{f_{\mid Y}}^{\mathrm{mon}}(\mathds{1}_{Y_{\eta}}) \\ 
       (\breve{\pi}_{\sigma}^+)_!(\breve{e}^+_{\sigma})^*\Upsilon_f^{\mathrm{mon}}(\mathds{1}_{X_{\eta}}) & \simeq \mathds{1}_{\gm_{m,Y_{\sigma}}}(-d_1)[-2d_1] \otimes \Upsilon_{f_{\mid Y}}^{\mathrm{mon}}(\mathds{1}_{Y_{\eta}})
  \end{align*}
    hold in $\qush(X^0_{\sigma})$.
\end{cor}

\begin{proof}
This is an application of theorem \ref{categorical integral identity} to the morphism $f^{\gm_m}$ where we endow $\gm_{m,X} = \gm_{m,S} \times_S X$ with the diagonal action that is the trivial action on the first factor and the given action of $X$ on the second factor.    
\end{proof}
\subsection{The piecewise integral identity} In the rest of this subsection, we assume that $S = \Spec(k)$ is a field of characteristic zero. Let us recall the definition of $K_0(\var_X)$ with $X$ an algebraic $k$-space (see \cite{joyce-2007} or \cite{bu-2024}). The group $K_0(\var_X)$ is generated by isomorphisms classes of morphisms of algebraic $k$-spaces $U \longrightarrow X$ with $U$ a scheme of finite type over $k$ modulo the usual cut-paste relation. We denote by $\mathscr{M}_X$ the localization $K_0(\var_X)[\mathbf{L}^{-1}]$ with $\mathbf{L}=[\mathbb{A}_X^1]$ the class of the affine line. For any morphism of algebraic $k$-spaces $Y \longrightarrow X$, there exists a class $[Y]$ in $\mathscr{M}_X$, agreeing with $[Y]$ when $Y$ is a scheme. Given a triangulated category $\mathcal{T}$, we write $K_0(\mathcal{T})$ for its Grothendieck group with the set of generators being isomorphisms classes $[A]$ ($A \in \mathcal{T}$) of objects modulo the relation 
\begin{equation*}
    [B] = [A] + [C]
\end{equation*}
if there exists a distinguished triangle
\begin{equation*}
    A \longrightarrow B \longrightarrow C \longrightarrow +1.
\end{equation*}
The following lemma is an obvious analogue of \cite[Lemma 3.1]{florian+julien-2013}.
\begin{lem}
    Let $X$ be an algebraic $k$-space, there exists a unique ring homomorphism \begin{equation*}
        \chi_{X,c} \colon \mscr_X \longrightarrow K_0(\shct(X))
    \end{equation*}
    such that 
    \begin{equation*}
        \chi_{X,c}([U]) = [f_!(\mathds{1}_U)] 
    \end{equation*}
    for any morphism of algebraic $k$-spaces $f \colon U \longrightarrow X$ with $U$ a scheme.
\end{lem}
Analogous to the case of non-virtual motives, we can define the monodromic version of $\mathscr{M}_X$. Let $X$ be a $k$-variety. Follow \cite{bittner-2005}\cite{guibert+loeser+merle-2006}\cite{bu-2024}, we write $\mathscr{M}^{\hat{\mu}}_X$ the Grothendieck ring of $X$-varieties endowed with a $\hat{\mu}$-action with $\hat{\mu} = \varprojlim \mu_n$ the projective limit of roots of unity. One obtains an obvious analogue of \cite[Proposition 5.2.1]{florian+julien-2021}
\begin{lem}
    Let $X$ be an algebraic $k$-space, there exists a unique ring homomorphism 
    \begin{equation*}
        \chi_{X,c}^{\hat{\mu}} \colon \mscr_X^{\hat{\mu}} \longrightarrow K_0(\qushct(X))
    \end{equation*}
    such that 
    \begin{equation*}
        \chi_{X,c}([U]) = [f_!(\mathds{1}_U)] 
    \end{equation*}
    for any morphism of algebraic $k$-spaces $f \colon U \longrightarrow X$ with $U$ a scheme.
\end{lem}

By \cite{bu-2024} (for schemes, see for instance \cite{bittner-2005}), there is a nearby morphism 
\begin{equation*}
    \psi_f \colon \mscr_X \longrightarrow \mscr_{X_{\sigma}}^{\hat{\mu}}
\end{equation*}
satisfying smooth and proper base change theorems. 
\begin{lem}
    Let $X$ be an algebraic $k$-space and $f \colon X \longrightarrow \mathbb{A}_k^1$ be a morphism of algebraic $k$-spaces, then the diagram 
    \begin{equation*}
        \begin{tikzcd}[sep=large]
                    \mscr_X \arrow[r,"\psi_f"] \arrow[d,"\chi_{X,c}",swap]& \mscr_{X_{\sigma}}^{\hat{\mu}} \arrow[d,"\chi^{\hat{\mu}}_{X_{\sigma},c}"] \\ 
            K_0(\shct(X)) \arrow[r,"\Psi^{\mathrm{mon}}_f"] & K_0(\qushct(X_{\sigma}))
        \end{tikzcd}
    \end{equation*}
    is commutative. 
\end{lem}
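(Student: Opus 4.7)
Since both $\chi_{X,c}$ and $\chi^{\hat{\mu}}_{X_\sigma,c}$ are defined on generators of the form $[g\colon Y \to X]$ with $Y$ a scheme of finite type over $k$, and since $\psi_f$ and $\Psi^{\mathrm{mon}}_f$ are additive, it suffices to check commutativity on such generators. Fix one such $g\colon Y \to X$ with $Y$ a scheme, and write $h = f \circ g\colon Y \to \mathbb{A}^1_k$. The plan is to reduce the question to the already-known case where $X = Y$ is a scheme by pushing everything along $g$.

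First, I would use the definition of $\psi_f$ in \cite{bu-2024} together with its proper base change property to rewrite
\begin{equation*}
\psi_f([g\colon Y \to X]) = [g_\sigma \colon Y_\sigma \to X_\sigma]\cdot \psi_h([\mathrm{id}_Y]),
\end{equation*}
where the product is via the natural $\mathscr{M}^{\hat{\mu}}_{Y_\sigma} \to \mathscr{M}^{\hat{\mu}}_{X_\sigma}$ induced by $g_\sigma$; more precisely, since $\chi_{X,c}$ factors through pushforwards, one obtains
\begin{equation*}
\chi^{\hat{\mu}}_{X_\sigma,c}(\psi_f[Y\to X]) = (g_\sigma)_!\,\chi^{\hat{\mu}}_{Y_\sigma,c}(\psi_h[\mathrm{id}_Y]).
\end{equation*}
On the other side, I would apply the base change morphism for the monodromic specialization system,
\begin{equation*}
\Psi^{\mathrm{mon}}_f \,(g_\eta)_! \longrightarrow (\breve{g}_\sigma)_!\,\Psi^{\mathrm{mon}}_h,
\end{equation*}
obtained (via Nagata compactification) from the proper and smooth base change properties of specialization systems in section \S 1, and evaluate at $\mathds{1}_{Y_\eta}$. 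This reduces the claim to the equality
\begin{equation*}
\chi^{\hat{\mu}}_{Y_\sigma,c}\bigl(\psi_h[\mathrm{id}_Y]\bigr) = \bigl[\Psi^{\mathrm{mon}}_h(\mathds{1}_{Y_\eta})\bigr] \quad \text{in } K_0(\mathbf{QUSH}_{\mathfrak{M},\mathrm{ct}}^\tau(Y_\sigma)),
\end{equation*}
i.e.\ to the case of the identity map on a scheme $Y$.

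For $Y$ a scheme, this equality is exactly the compatibility proved in \cite{florian+julien-2021, ayoub+florian+julien-2017}: the virtual motivic nearby fiber realizes to the class of Ayoub's monodromic motivic nearby cycles, via the $\chi_{-,c}^{\hat{\mu}}$ realization. Concretely, one unwinds $\psi_h[\mathrm{id}_Y]$ as the limit of the classes of arc-truncation varieties in the Denef--Loeser construction, whose motivic classes are shown in \loccit{} to compute $[\Psi^{\mathrm{mon}}_h(\mathds{1}_{Y_\eta})]$ in $K_0$; this uses the generators $\mathrm{M}^{\vee}_n(Z,\bullet)$ of $\mathbf{QUSH}$ and the explicit description of $\mathscr{R}^{\tame}$. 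Assembling the two reductions produces the commutativity of the square.

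The main obstacle I anticipate is the reduction step itself: the formulas above implicitly use that both $\psi_f$ and $\Psi^{\mathrm{mon}}_f$ enjoy a $g_!$-type base change where the target is an algebraic space rather than a scheme. For $\psi_f$, this is established in \cite{bu-2024}. For $\Psi^{\mathrm{mon}}$, the base change $(g_\eta)_!$-compatibility at the level of $K_0$ must be extracted from the base change morphism $\mu_g$ of \cite{bang-2024} (recalled in \S 1), combined with the fact that $(\breve{g}_\sigma)_!$ preserves quasi-unipotent motives (proposition \ref{proper morphism and quasi-unipotent motives}). Once one has checked that these two base changes agree under $\chi^{\hat{\mu}}_{-,c}$---which is a formal consequence of $\chi_{-,c}$ intertwining $f_!$ with the corresponding pushforward on Grothendieck rings of varieties---the reduction to schemes goes through and the proof is complete.
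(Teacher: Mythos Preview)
Your argument is correct but takes a different route from the paper. You reduce to schemes by pushing forward along generators $g\colon Y \to X$ with $Y$ a scheme, which forces you to establish a $g_!$-type compatibility for both $\psi$ and $\Psi^{\mathrm{mon}}$ at the level of $K_0$; you correctly identify this as the delicate step and handle it via Nagata compactification and localization. The paper instead pulls back along a Nisnevich atlas $u\colon U \to X$: by a $K_0$-version of proposition \ref{descent of motives}, the map $(u_\sigma)^*$ on $K_0(\qushct)$ is injective, so it suffices to check the square after applying $(u_\sigma)^*$; smooth base change for both $\psi$ and $\Psi^{\mathrm{mon}}$ (genuine isomorphisms, not merely equalities in $K_0$) then reduces directly to the scheme case, which the paper cites as \cite[Proposition 5.6]{bang-2024}. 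The paper's approach is shorter because pullback by a smooth atlas avoids the compactification and localization bookkeeping entirely; your approach is equally valid and has the virtue of being generator-by-generator, but it trades one nontrivial input (injectivity of $u^*$ on $K_0$) for another (the $K_0$-level $g_!$-base change).
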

\begin{proof}
By a piecewise version of proposition \ref{descent of motives}, we see that 
\begin{equation*} 
(u_{\sigma})^* \colon K_0(\qushct(U_{\sigma})) \longrightarrow K_0(\qushct(X_{\sigma}))
\end{equation*} 
is injective if $u \colon U \longrightarrow X$ is a Nisnevich atlas of $X$. The result then follow from \cite[Proposition 5.6]{bang-2024}.
\end{proof}

\begin{cor} \label{the virtual integral identity}
    Let $X = \mathbb{A}_k^{d_1} \times_k \mathbb{A}_k^{d_2} \times_k Y$ with $Y$ an algebraic $k$-space. Suppose that $\gm_{m,k}$ acts by positive weights on $\mathbb{A}^{d_1}_k$, by negative weights on $\mathbb{A}^{d_2}_k$ and trivially on $Y$, then the virtual integral identity
    \begin{equation*}
         \int_{\mathbb{A}^{d_1}_k}(\psi_f)_{\mid \mathbb{A}^{d_1}_k \times_k Y} = \mathbf{L}^{d_1}(\psi_{f\mid Y})
    \end{equation*}
    holds in $K_0(\qushct(X^0_{\sigma}))$.
\end{cor}

\begin{proof}
 One can copy the proof in \cite{florian-2024}. 
\end{proof}

\section{Motivic constant term functors}
The aim of this section is to extend the framework of motivic nearby cycles functors and Braden transformations to ind-schemes. As a consequence, we obtain a motivic enhancement of the compatibility of geometric constant term functors and Bernstein isomorphisms. 
\subsection{Extending specialization systems to ind-schemes}
In order to have the language to establish the next section, which involves ind-schemes like affine Grassmannians, here we study specialization systems on ind-schemes. Let $X = \colim_{i \in I} X_i$ be an ind-$S$-scheme. The transition morphisms induce adjunctions 
\begin{equation*}
    \big((\iota_{i \to j})_* = (\iota_{i \to j})_! \dashv (\iota_{i \to j})^! \big) \colon \sh(X_i) \longrightarrow \sh(X_j).  
\end{equation*}
We define a category $\sh(X)$ by the formula
\begin{equation*}
    \sh(X) \coloneqq \colim_{(\iota_{i \to j})_!} \sh(X_i)
\end{equation*}
where in the colimit, the transition morphisms are $(\iota_{i \to j})_!$. The category $\sh(X)$ is canonically a triangulated category and independent of the choice of the presentation $X = \colim_{i \in I}X_i$ by the same reason as in \cite[Lemma 8]{gaitsgory-2001}. 

\begin{lem} \label{well-definition of four operations of ind-schemes}
For any morphism $f$ of ind-$S$-schemes, the operations $(f_*,f_!,f^!)$ are well-defined. If moreover, $f$ is cartesian, then $f^*$ is well-defined. They satisfies the usual base change properties whenever they are defined.
\end{lem}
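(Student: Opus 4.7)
The plan is to define each operation at the level of a compatible presentation of $f$ and then pass to the appropriate (co)limit description of $\sh$. By cofinality, given a morphism $f \colon X \longrightarrow Y$ of ind-$S$-schemes I may choose presentations $X \simeq \colim_{i} X_i$ and $Y \simeq \colim_{i} Y_i$ over a common filtered indexing category together with morphisms of schemes $f_i \colon X_i \longrightarrow Y_i$ such that $f \simeq \colim f_i$. When $f$ is cartesian, the very definition of cartesianness ensures in addition that every square
\begin{equation*}
    \begin{tikzcd}
        X_i \arrow[d, "\iota^X_{ij}"'] \arrow[r, "f_i"] & Y_i \arrow[d, "\iota^Y_{ij}"] \\
        X_j \arrow[r, "f_j"] & Y_j
    \end{tikzcd}
\end{equation*}
is cartesian.

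Using the colimit presentation $\sh(X) = \colim_{(\iota^X)_!} \sh(X_i)$, I would define $f_! \coloneqq \colim (f_i)_!$; the coherence $(\iota^Y_{ij})_!(f_i)_! \simeq (f_j)_!(\iota^X_{ij})_!$ is mere functoriality of $(-)_!$ on the equal compositions $\iota^Y_{ij} \circ f_i = f_j \circ \iota^X_{ij}$. Dually, the limit presentation yields $f^! \coloneqq \lim (f_i)^!$, with coherence from functoriality of $(-)^!$. For $f_*$, I would set $f_* \coloneqq \lim (f_i)_*$ on the limit presentation; compatibility with the transitions demands the base change $(\iota^Y_{ij})^!(f_j)_* \simeq (f_i)_*(\iota^X_{ij})^!$, which is the $(!,*)$-base change for a cartesian square and can be arranged by further refining the presentation of $Y$ so that the transition squares above become cartesian (possible since the $\iota^X_{ij}, \iota^Y_{ij}$ are closed immersions and the refinement is cofinal). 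When $f$ is cartesian, I would likewise set $f^* \coloneqq \lim (f_i)^*$ on the limit presentation, with coherence $(\iota^X_{ij})^!(f_j)^* \simeq (f_i)^*(\iota^Y_{ij})^!$ coming from the $(*,!)$-base change applied to the already-cartesian squares.

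Base change for a cartesian square of ind-schemes then follows by selecting a compatible presentation exhibiting the square as a filtered colimit of cartesian squares of schemes, and passing the scheme-level base change isomorphisms from \cite{ayoub-thesis-1} through the (co)limit. The main obstacle is to verify that none of these constructions depends on the choice of presentation: this is a standard cofinality argument, but it is most delicate for $f_*$ because, in contrast to $f_!$, the required coherence is not automatic from functoriality and genuinely relies on the transition squares being cartesian. This can always be secured by refining the $Y$-presentation to contain, for each $i$, the scheme-theoretic image $f(X_i)$, and cofinality then implies independence from the chosen refinement. Throughout, the identity $\iota_! \simeq \iota_*$ for a closed immersion $\iota$ of schemes reconciles the colimit and limit presentations of $\sh$ and allows one to move freely between the two descriptions.
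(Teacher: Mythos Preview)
Your treatment of $f_!$, $f^!$, and $f^*$ is correct and essentially identical to the paper's argument: each is handled by functoriality of the relevant operation along the commutative transition squares, with $f^*$ additionally invoking the $(*,!)$-base change which is an isomorphism precisely because of the cartesianness hypothesis.

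The one place where you diverge from the paper, and where there is a genuine gap, is your construction of $f_*$. You define $f_*$ via the \emph{limit} presentation, which forces you to verify the coherence $(\iota^Y_{ij})^!(f_j)_* \simeq (f_i)_*(\iota^X_{ij})^!$; as you correctly observe, this is the $(!,*)$-base change and it requires the transition squares to be cartesian. Your proposed fix---refining the $Y$-presentation to include the scheme-theoretic images $f(X_i)$---does not work in general: the images need not exhaust $Y$ (think of $f$ a proper closed immersion of ind-schemes), and even after interleaving them with the original $Y_j$'s there is no reason for $X_i$ to equal $X_j \times_{Y'_j} Y'_i$, since $f_j^{-1}(f(X_i))$ can strictly contain $X_i$ when $f_j$ is not a monomorphism.

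The paper avoids this entirely by constructing $f_*$ on the \emph{colimit} side instead. Since the transitions are closed immersions one has $(\iota_{ij})_! = (\iota_{ij})_*$, so the colimit $\sh(X) = \colim_{(\iota^X)_*}\sh(X_i)$ is available; sending $A \in \sh(X_i)$ to $(f_i)_*(A) \in \sh(Y_i)$ then only requires the coherence $(\iota^Y_{ij})_*(f_i)_* \simeq (f_j)_*(\iota^X_{ij})_*$, which is mere functoriality of $(-)_*$ on the equality $\iota^Y_{ij}\circ f_i = f_j\circ\iota^X_{ij}$---no cartesianness, no refinement. You already note in your final sentence that $\iota_!\simeq\iota_*$ lets one move between the two presentations; the point is simply to exploit this for $f_*$ as well, exactly as you do for $f_!$.
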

\begin{proof}
 Let us show that for instance, that we can define $f_!$. The other operations are treated similarly. An object $A \in \sh(X)$ coming from $\sh(X_i)$ is sent to $(f_i)_!(A)$ in $\sh(Y_i)$. We need to show that for $i \to j$ in $I$, there is an isomorphism
 \begin{equation*}
     (\iota_{i \to j}^Y)_*(f_i)_!(A) \simeq (f_j)_!(\iota_{i \to j}^X)_*(A),
 \end{equation*}
 but this is just the functoriality of the exceptional direct image. 
\end{proof}
\begin{prop} \label{extending sp systems to ind-schemes}
Let $f \colon X = \colim_{i \to I}X_i\longrightarrow B$ be a morphism of ind-$S$-schemes. Let $f_i \coloneqq X_i \longrightarrow B$ be induced morphisms of $S$-schemes. Let $\sp$ be a specialization system over $(B,i,j)$, then the functor $\sp_f \coloneqq \colim_{i \in I} \sp_{f_i} \colon \sh(X_{\eta}) \longrightarrow \sh(X_{\sigma})$ is well-defined. Moreover, for each morphism $g \colon Y \longrightarrow X$ of ind-$S$-schemes, the base change morphisms 
\begin{align*}
    \beta_g \colon \sp_f (g_{\eta})_* & \longrightarrow (g_{\sigma})_*\sp_{f \circ g} \\ 
    \nu_g \colon (g_{\sigma})_!\sp_{f \circ g} & \longrightarrow \sp_f (g_{\eta})_! \\ 
    \nu_g \colon \sp_{f \circ g}(g_{\eta})^!& \longrightarrow (g_{\sigma})^!\sp_f.
\end{align*}
are well-defined. If $g$ is cartesian, then the morphism 
\begin{align*}
    \alpha_g \colon (g_{\sigma})^*\sp_f & \longrightarrow \sp_{f \circ g}(g_{\eta})^*
\end{align*}
is well-defined.
\end{prop}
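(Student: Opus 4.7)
The plan is to first verify that the family $(\sp_{f_i})_i$ descends to a functor on the colimit category $\sh(X_\eta)$, and then to construct each base change morphism by transporting the corresponding scheme-level morphisms through this colimit construction.

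\textbf{Step 1: Well-definition of $\sp_f$.} Recall $\sh(X_\eta) = \colim_{(\iota_{i\to j})_!}\sh((X_i)_\eta)$, where the transitions $\iota_{i\to j}\colon X_i \longhookrightarrow X_j$ are closed immersions, so $(\iota_{i\to j})_! = (\iota_{i\to j})_*$. For $(\sp_{f_i})_i$ to define a functor out of this colimit, I need natural isomorphisms
\begin{equation*}
\sp_{f_j}\circ (\iota^\eta_{i\to j})_* \simeq (\iota^\sigma_{i\to j})_*\circ \sp_{f_i}.
\end{equation*}
But this is exactly the transformation $\beta_{\iota_{i\to j}}$, which is invertible by the very axioms of a specialization system since $\iota_{i\to j}$ is proper. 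A dual check using the presentation $\sh(X_\sigma) = \lim_{(\iota^!)}\sh((X_i)_\sigma)$ shows that $\sp_f$ assembles into a triangulated functor, and the cofinality argument of \cite{gaitsgory-2001} ensures independence of the presentation $X = \colim_i X_i$.

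\textbf{Step 2: Construction of $\beta_g,\nu_g,\alpha_g$.} For a morphism $g\colon Y = \colim_j Y_j \longrightarrow X = \colim_i X_i$, each $Y_j$ is quasi-compact, so its image under $g$ lies in some $X_{i(j)}$, yielding scheme-level morphisms $g_j\colon Y_j\longrightarrow X_{i(j)}$. The operations $(g_*,g_!,g^!)$ and (in the cartesian case) $g^*$ are then assembled from the $(g_j)_?$ by lemma \ref{well-definition of four operations of ind-schemes}. At each level $j$, the scheme-level results of \cite{ayoub-thesis-2} and \cite{bang-2024} supply the base change transformations $\beta_{g_j},\nu_{g_j},\alpha_{g_j}$. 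To promote them to natural transformations on the colimit categories, I will verify that they are compatible with the transitions used in step 1, which amounts to checking commutativity of squares such as
\begin{equation*}
\sp_{f_{j'}}(\iota^\eta_{j\to j'})_*(g_{j,\eta})_* \Longleftrightarrow (\iota^\sigma_{j\to j'})_*(g_{j,\sigma})_*\sp_{(f\circ g)_j}.
\end{equation*}
Such squares commute by pasting the specialization-system axiom for the composable pair $(g_j,\iota_{j\to j'})$ with the interchange law, together with invertibility of $\beta$ on the closed transitions. The morphism $\alpha_g$ in the cartesian case is handled analogously, noting that cartesianness is precisely what allows $(g_\sigma)^*$ to be defined as the colimit of the $(g_{j,\sigma})^*$.

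\textbf{Expected main obstacle.} The substantive difficulty is entirely bookkeeping: morphisms of ind-schemes do not respect the chosen presentations, so one must repeatedly pass to cofinal reindexings and verify that every construction is independent of such choices. This is delicate but essentially formal, following the template of \cite{gaitsgory-2001}\cite{barlev-misc} and \cite{richarz+scholbach-2020}. Once all the reindexing compatibilities are arranged, the existence and naturality of $\beta_g,\nu_g,\alpha_g$ follow levelwise from the scheme case, and all coherences reduce to the invertibility of $\beta_{\iota_{i\to j}}$ on the closed transitions together with the standard four-functor compatibilities on the schemes $X_i$ and $Y_j$.
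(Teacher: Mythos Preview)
Your proposal is correct and follows essentially the same route as the paper: both define $\sp_f$ levelwise and verify compatibility with the closed transitions via the invertibility of $\beta_{\iota_{i\to j}}$ (using properness of $\iota_{i\to j}$), then assemble the base change morphisms from their scheme-level counterparts and check coherence against the transitions. The only minor difference is that the paper assumes from the outset a presentation $g = \colim_{i\in I} g_i$ over a common index category, whereas you explicitly invoke quasi-compactness of the $Y_j$ to manufacture the reindexing $i(j)$ and mention the cofinality bookkeeping; this is a slight gain in precision but not a different argument.
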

\begin{proof}
 We define $\sp_f(A)$ to be $\sp_{f_i}(A)$ if $A$ comes from $\sh((X_i)_{\eta})$. To prove that $\sp_f$ is well-defined, we have to show that if $i \to j$, then $(\iota_{i \to j,\sigma})_*(\sp_{f_i}(A)) \simeq \sp_{f_j}(\iota_{i \to j,\eta})_*(A)$, but this follows from the definition. Let us explain how to define base change morphisms $\beta_g$, the others are done using the analogous arguments. We assume that $g = \colim_{i \in I} g_i \colon Y = \colim_{i \in I}Y_i \longrightarrow X = \colim_{i \in I}X_i$ is colimit of $g_i \colon Y_i \longrightarrow X_i$.  If $A \in \sh((Y_{i,\eta})$, we define $\beta_g(A)$ to be $\sp_{f_i}(g_{i,\eta})_*(A) \longrightarrow (g_{i,\sigma})_*\sp_{f_i \circ g_i}(A)$. This is well-defined as for $i \to j$ in $I$, there is a commutative diagram 
 \begin{equation*}
     \begin{tikzcd}[sep=large]
         (\iota_{i \to j,\sigma})_*\sp_{f_i}(g_{i,\eta})_*(A) \arrow[r] \arrow[d,"\simeq",swap] & (\iota_{i \to j,\sigma})_*(g_{i,\sigma})_*\sp_{f_i \circ g_i}(A) \arrow[r,equal] & (f_{j,\sigma})_*(\iota_{i \to j,\sigma})_*\sp_{f_i \circ g_i}(A) \arrow[dd,"\simeq"]  \\ 
         \sp_{f_j}(\iota_{i \to j,\eta})_*(g_{i,\eta})_*(A) \arrow[d,equal] & & \\
         \sp_{f_j}(f_{j,\eta})_*(\iota_{i \to j,\eta})_*(A) \arrow[rr] & & (f_{j,\sigma})_*\sp_{f_i}(\iota_{i \to j,\eta})_*(A)
     \end{tikzcd}
 \end{equation*}
 with indicated isomorphisms. Let $A \longrightarrow B$ be a morphism in $\sh(Y_{\eta})$. Suppose that $A \in \sh(Y_{i,\eta})$ and $B \in \sh(Y_{j,\eta})$ and without loss of generality, we can assume that $i \longrightarrow j$, hence a morphism $A \longrightarrow B$ is simply a morphism $(\iota_{i \to j,\eta})_*(A) \longrightarrow B$ in $\sh(Y_{j,\eta})$. The rest now follows from the naturality of $\sp$. 
\end{proof}

\subsection{Extending Braden transformations to ind-schemes}
 In geometric Langlands, it is natural to work with ind-schemes and we want to have analogous results of the preceding section for such objects. The reader feels free to extend results in this section to ind-algebraic spaces. Let $R$ be a ring and $S = \Spec(R)$ its spectrum. Let $X = \colim_{i \in I}X_i$ be an ind-$S$-variety endowed with a $\tau$-locally linearizable $\gm_{m,S}$-action (where $\tau$ is at least the \'etale topology), this means there exists a presentation $X = \colim_{i \in I} X_i$ where the $\gm_{m,S}$-action on each $X_i$ is $\tau$-locally linearizable in the sense of of schemes, see for instance \cite[Definition 1.6]{richarz-2018}. Then \cite[Theorem 2.1]{richarz-2021} proves that the morphism $X^+ \longrightarrow X$ is cartesian and hence by lemma \ref{well-definition of four operations of ind-schemes}, the operation $(e^+)^*$ is well-defined. 
 \begin{prop} \label{braden theorem for ind-schemes}
     Let $X$ be an ind-$S$-variety endowed with a $\tau$-locally linearizable $\gm_{m,S}$-action, then the Braden transformation
     \begin{equation*}
         (\pi^-)_*(e^-)^! \longrightarrow (\pi^+)_!(e^+)^*
     \end{equation*}
     is well-defined and is an isomorphism on equivariant objects. 
 \end{prop}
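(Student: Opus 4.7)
The plan is to reduce the ind-scheme statement to the scheme case (Theorem \ref{braden theorem}) by using the colimit presentation $X = \colim_{i \in I} X_i$ with closed, equivariant transitions $\iota_{i \to j} \colon X_i \hookrightarrow X_j$, together with the compatibility of Braden transformations with (proper) closed immersions established in Theorem \ref{compatible of Braden transformations with four operations}. The first preparatory step is to check that the hyperbolic-localization pieces $X^0$, $X^+$, $X^-$ are themselves naturally ind-schemes presented by $\colim_{i \in I} X_i^0$ and $\colim_{i \in I} X_i^{\pm}$ with closed transitions. For $X^+$ this follows from $e^+ \colon X^+ \to X$ being cartesian (\cite[Theorem 2.1]{richarz-2021}); for $X^-$ the same argument applies after inverting the action, and for $X^0 = X^+ \times_X X^-$ it then follows formally. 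At this stage Lemma \ref{well-definition of four operations of ind-schemes} guarantees that each of the four operations $(\pi^-)_*$, $(e^-)^!$, $(\pi^+)_!$, $(e^+)^*$ is well-defined at the ind-level (the only delicate one is $(e^+)^*$, which requires cartesianness and is covered by the cited result).

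Next I would define the Braden transformation $\phi_X$ on $\sh(X)$ level by level. Given $A \in \sh(X)$, pick $A_i \in \sh(X_i)$ with $A \simeq (\iota_i)_* A_i$, and set $\phi_X(A)$ to be the image of $\phi_{X_i}(A_i)$. To see that this is independent of the chosen level, I would apply Theorem \ref{compatible of Braden transformations with four operations}(1) to the equivariant closed immersion $\iota_{i \to j}$ (which is proper with $\alpha = \id$), yielding the commutative square
\begin{equation*}
    \begin{tikzcd}[sep=large]
    (\pi_j^-)_*(e_j^-)^! (\iota_{i\to j})_* \arrow[d,"\phi_{X_j}",swap] \arrow[r,"(\Phi^-)_*","\simeq"'] & (\iota_{i\to j}^0)_*(\pi_i^-)_*(e_i^-)^! \arrow[d,"\phi_{X_i}"] \\
    (\pi_j^+)_!(e_j^+)^* (\iota_{i\to j})_* \arrow[r,"(\Phi^+)_*","\simeq"'] & (\iota_{i\to j}^0)_*(\pi_i^+)_!(e_i^+)^*
    \end{tikzcd}
\end{equation*}
in which the horizontal morphisms are isomorphisms. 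This square, evaluated on $A_i$, shows both the coherence of the definition and that $\phi_X(A)$ is encoded faithfully by $\phi_{X_i}(A_i)$ transported along the closed immersion $\iota_{i\to j}^0$.

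For the isomorphism statement, I would argue as follows. Since the $\gm_{m,S}$-action is assumed to restrict to each $X_i$, an equivariant object $A$ in $\sh(X)$ descends to an equivariant object $A_i$ in $\sh(X_i)$ (the equivariance datum on $A$ restricts because all structure morphisms are cartesian on each $X_i$). By hypothesis each $X_i$ carries a $\tau$-locally linearizable $\gm_{m,S}$-action, so Theorem \ref{braden theorem}, case ($\tau$-Sch), applies and $\phi_{X_i}(A_i)$ is an isomorphism. The compatibility square above, together with the fact that $(\iota_{i\to j}^0)_*$ reflects isomorphisms (it is the pushforward along a closed immersion, hence conservative on its essential image), then upgrades this to $\phi_X(A)$ being an isomorphism.

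The main obstacle I anticipate is the bookkeeping for ind-scheme equivariance and compatibility: concretely, verifying that the hyperbolic localization diagrams fit into a compatible ind-system so that all four operations entering the Braden transformation genuinely pass to the $\colim/\lim$ description of $\sh(X)$, and that both the equivariance datum and the Braden transformation are strictly functorial in the transitions $\iota_{i\to j}$. Once this is in place the statement reduces mechanically to the scheme case already proved.
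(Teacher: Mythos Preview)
Your strategy is correct and matches the paper's: reduce to the scheme case by showing the Braden transformation is compatible with the closed transitions $\iota_{i\to j}$, so that $\phi_X$ is well-defined level by level and inherits the isomorphism property from Theorem \ref{braden theorem}. The paper phrases this as proving a generalization of Lemma \ref{compatible of Braden transformations with closed immersions} to $\tau$-locally linearizable (not just affine) varieties, whereas you invoke Theorem \ref{compatible of Braden transformations with four operations}(1) directly; these amount to the same thing once the base-change hypothesis $\mathrm{HypLoc}(X_i) = \mathrm{HypLoc}(X_j) \times_{X_j} X_i$ is verified.

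One small inaccuracy: you write ``$X^0 = X^+ \times_X X^-$'', but in general the canonical map $u\colon X^0 \to X^+ \times_X X^-$ is only an open and closed immersion (cf.\ \cite[Proposition 1.17]{richarz-2018}), not an isomorphism. So the cartesianness of $X^0 \to X$ does not follow ``formally'' from that of $e^\pm$ in the way you suggest. The paper handles this by checking separately, via an affine equivariant $\tau$-cover and fpqc descent, that $Z^0 = Z^\pm \times_{X^\pm} X^0$ for an equivariant closed immersion $Z \hookrightarrow X$; combined with the cartesianness of $e^\pm$ from \cite[Theorem 2.1]{richarz-2021} this yields the full compatibility $\mathrm{HypLoc}(X_i) = \mathrm{HypLoc}(X_j) \times_{X_j} X_i$ you need. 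This is the only place where your outline needs a patch, and it is exactly the substantive geometric step the paper isolates.
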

 
\begin{proof}
It suffices to prove a generalization of lemma \ref{compatible of Braden transformations with closed immersions}: let $Z,X$ be $S$-varieties endowed with $\tau$-locally linearizable $\gm_{m,S}$-actions and $z \colon Z \longhookrightarrow X$ be an equivariant closed immersion. Then there is a commutative diagram 
    \begin{equation*}
       \begin{tikzcd}[sep=large]
           (\pi^-)_*(e^-)^!z_* \arrow[d,"\phi_X",swap] \arrow[r,"(\Phi^-)_*"] & (z^0)_*(\pi^-)_*(e^-)^! \arrow[d,"\phi_Z"] \\ 
           (\pi^+)_!(e^+)^*z_* \arrow[r,"(\Phi^+)_*"] & (z^0)_*(\pi^+)_!(e^+)^*
       \end{tikzcd}
    \end{equation*}
    and the two vertical arrows are isomorphisms. Indeed, by the proof of \cite[Theorem 2.1]{richarz-2021}, we know that $Z^0 \longrightarrow X^0$ and $Z^{\pm} \longrightarrow X^{\pm}$ are closed immersions. To repeat the proof of \cite[Lemma 2.23]{richarz-2018}, it suffices to show that $Z^0 = Z^{\pm} \times_{X^{\pm}} X^0$. We choose an affine, equivariant $\tau$-covering $U \longrightarrow X$, then $U \times_X Z \longrightarrow X$ is an affine, equivariant $\tau$-covering as well. Since being an isomorphism is fpqc local on target, we see that $Z^0 = Z^{\pm} \times_{X^{\pm}} X^0$ is true since it is true after base changing to $U$. 
\end{proof}
 \begin{ex} \label{constant term functors}
 Let $G$ be an algebraic group over a field $k$, assumed to be algebraically closed for simplicity. The \textit{affine Grassmannian} $\operatorname{Gr}_G$ is defined to be the fpqc sheaf associated with the functor 
 \begin{equation*}
     A \longmapsto G\big(A(\!(t)\!) \big)/G\big(A[\![t]\!]\big)
 \end{equation*}
with $A$ varying over $k$-algebras. It is well-known to be an ind-scheme and ind-projective if $G$ is reductive (see for instance \cite{mirkovic+vilonen-2018}\cite{beilinson+drinfeld-1996} and see also \cite{richarz-2019}\cite{zhu-2016}\cite{achar-secondbook}). Let us assume that $G$ is connected reductive over $k$. Let $T \subset B\subset G$ be a Borel subgroup $B$ and a maximal torus $T$. Let $B \subset P \subset G$ be a parabolic subgroup containing $B$ and let $T \subset M \subset P$ be a Levi subgroup containing $T$. These data induce
\begin{equation*}
    \mathrm{Gr}_M \overset{\pi^+}{\longleftarrow} \mathrm{Gr}_P \overset{e^+}{\longrightarrow} \mathrm{Gr}_G.
\end{equation*}
The connected components $\pi_0(\mathrm{Gr}_M)$ are canonically identified with $X_*(T)/\mathbb{Z}R^{\vee}(M,T)$. Let $\rho_G$ be half-sum of positive roots of $G$ determined by $B$. Simiarly, let $\rho_M$ be half-sum of positive roots of $M$ determined by $B \cap M$. As $\left <2\rho_G - 2\rho_M,\lambda \right>=0$ for any $\lambda \in R^{\vee}(M,T)$, we can define a locally constant function
\begin{equation*}
    \deg_M \colon \mathrm{Gr}_M \longrightarrow \pi_0(\mathrm{Gr}_M) \overset{\left <2\rho_G - 2\rho_M,- \right >}{\longrightarrow} \mathbb{Z}.
\end{equation*}
We then define the \textit{constant term functor} as
\begin{equation*}
    \operatorname{CT}_M^G \coloneqq (\pi^+)_!(e^+)^*[\deg_M] \colon \sh(\mathrm{Gr}_G) \longrightarrow \sh(\mathrm{Gr}_M)
\end{equation*}
where one puts $\deg_M$ here for perversity reason (see the example below). Constant term functors are well-known in geometric Langlands, see for instance \cite{beilinson+drinfeld-1996}\cite{braverman+gaitsgory-2002}\cite{mirkovic+vilonen-2018}\cite{cass+pepin-2024}. Particular cases of constant term functors are those induced by cocharacters. Let $\lambda  \colon \gm_{m,k} \longrightarrow G$ be a cocharacter. The cocharacter $\lambda$ induces via conjugation a $\gm_{m,k}$-action on $G$ as follows
\begin{equation*}
    \gm_{m,k} \times G \longrightarrow G, \ \ (t,g) \longmapsto \lambda(t)g\lambda(t)^{-1}.
\end{equation*}
With this action, $P^{\pm} \coloneqq G^{\pm}$ are opposite parabolic subgroups defined by dynamic method in \cite{conrad-2014} and $M = P^+ \cap P^- = G^0$ is a common Levi. This gives rise to a Braden transformation 
\begin{equation*}
  (\pi^-)_*(e^-)^![\deg_M] \longrightarrow  (\pi^+)_!(e^+)^*[\deg_M]
\end{equation*}
between functors $\sh(\mathrm{Gr}_G) \longrightarrow \sh(\mathrm{Gr}_M)$. The action is even Zar-locally linearizable by \cite[Lemma 3.3]{richarz-2021} so the Braden transformation above is an isomorphism on equivariant objects. 
\end{ex}

\begin{ex}
In \cite{florian+morel-2019} (and subsequent works such as \cite{terenzi-2025}\cite{tubach-2025}), Ivorra and Morel develops the theory of \textit{perverse Nori motives} (also called perverse motivic sheaves) $\mathscr{M}\operatorname{Perv}(-)$ and show that they acquire a four-functor formalism $(f^*,f_*,f_!,f^!)$ (the two other functors $(\otimes,\underline{\Hom})$ are constructed in \cite{terenzi-2025}) on the bounded derived categories $\mathbf{DN}^b(-) = \mathbf{D}^b(\mathscr{M}\operatorname{Perv}(-))$. The Braden transformation can be defined analogously as above between constant term functors on Nori motives, namely, there are constant term functors $\mathbf{DN}^b(\mathrm{Gr}_G) \longrightarrow \mathbf{DN}^b(\mathrm{Gr}_M)$. It is natural to expect certain perversity of this functor. For instance, in \cite{pham-2026}, the author builds the equivariant Nori motives $\mathscr{M}\operatorname{Perv}_{\mathrm{L}^+G}(\mathrm{Gr}_G), \mathbf{DN}^b_{\mathrm{L}^+G}(\mathrm{Gr}_G)$ (where $\mathrm{L}^+G(A) = G\big(A[\![t]\!]\big)$ is the positive loop group) so that we expect that the constant term functors produce a functor $\mathscr{M}\operatorname{Perv}_{\mathrm{L}^+G}(\mathrm{Gr}_G) \longrightarrow \mathbf{DN}^b_{\mathrm{L}^+M}(\mathrm{Gr}_M)$ (where both sides should be understood as stratified perverse Nori motives as in \cite{pham-2026}) that is monoidal with respect to the convolution product on both sides (see \cite{pham-2026} for the definition of convolution products on perverse Nori motives of affine Grassmannians) and moreover, the image is an equivariant perverse Nori motives as in the classical setting (see \cite[Lemma 15.1]{baumann+riche-2018} for instance).
\end{ex}

\subsection{Compatibility of geometric constant term functors and Bernstein isomorphisms}
Let us assume from now on that $R$ is the ring of integers of a local field $F$.
\begin{prop}
Let $X$ be an ind-$S$-variety endowed with a $\tau$-locally linearizable $\gm_{m,S}$-action. Let $f \colon X \longrightarrow \mathbb{A}^1_S$ be an equivariant morphism, then there exists a commutative diagram 
   \begin{equation*}
        \begin{tikzcd}[sep=large]
            (\pi^-_{\sigma})_*(e^-_{\sigma})^!\Psi_f(A) \arrow[d] & \Psi_{f^0}(\pi^-_{\eta})_*(e^-_{\eta})^!(A) \arrow[d] \arrow[l] \\ 
            (\pi^+_{\sigma})_!(e^+_{\sigma})^*\Psi_f(A) \arrow[r]  & \Psi_{f^0} (\pi_{\eta}^+)_!(e^+_{\eta})^*(A)
        \end{tikzcd}
    \end{equation*}
    natural in $A \in \sh(X_{\eta})$. Moreover, then all arrows are isomorphisms providedt that $A$ is equivariant.
\end{prop}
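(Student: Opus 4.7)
The plan is to bootstrap from the algebraic-space case (propositions \ref{compatible of Braden transformations with sp induced by derivators} and \ref{consequence of descent of sp}) to the ind-scheme setting using the colimit description of $\sh$ recalled in section 5.1 and the extension of specialization systems in proposition \ref{extending sp systems to ind-schemes}.

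First, I pick a presentation $X \simeq \colim_{i \in I} X_i$ with closed $\gm_{m,S}$-equivariant transition immersions. Such a presentation exists because, starting from any presentation, one can pass to a cofinal sub-system on which $\gm_{m,S}$ acts, using quasi-compactness of $\gm_{m,S} \times_S X_i$ and the fact that its image under the action lands in some later $X_j$. Each $X_i$ then carries a $\tau$-locally linearizable $\gm_{m,S}$-action, so the hyperbolic localization diagram $\mathrm{HypLoc}(X_i)$ exists. Since equivariant maps from a quasi-compact test scheme into $X$ factor through some $X_i$, one has $X^? \simeq \colim_i X_i^?$ for $? \in \{0,+,-\}$, and the structural morphisms $\pi^\pm, e^\pm$ at the ind-level are the colimits of their level-$i$ analogues. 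With $f_i \coloneqq f|_{X_i}$, proposition \ref{extending sp systems to ind-schemes} furnishes the ind-level functor $\Psi_f$ together with the base change morphisms that will be used below.

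Using the colimit description $\sh(X_\eta) = \colim_i \sh((X_i)_\eta)$, every $A \in \sh(X_\eta)$ is of the form $(\iota_{i,\eta})_* A_i$ for some $A_i \in \sh((X_i)_\eta)$. Each corner of the desired square is then the $(\iota^0_{i,\sigma})_*$-pushforward of the corresponding corner of the level-$i$ square, because $(\iota^0_{i,\sigma})_*$ commutes with $(\pi^\pm_\sigma)_*$ and $(\pi^\pm_\sigma)_!$ via proper base change along the cartesian square arising from the identity $X_i^0 = X_i^\pm \times_{X^\pm} X^0$ established in the proof of proposition \ref{braden theorem for ind-schemes}. I therefore define the ind-level square as the $(\iota^0_{i,\sigma})_*$-image of the level-$i$ square produced by proposition \ref{compatible of Braden transformations with sp induced by derivators}; naturality in $A$ is inherited from the naturality of the level-$i$ construction.

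The main obstacle is showing that this construction is independent of the chosen level $i$, i.e., that the level-$i$ and level-$j$ squares agree after pushforward when $i \to j$. This reduces to the compatibility of Braden transformations with the equivariant closed immersion $\iota_{i \to j} \colon X_i \hookrightarrow X_j$ (the ind-scheme version of lemma \ref{compatible of Braden transformations with closed immersions} already exploited in the proof of proposition \ref{braden theorem for ind-schemes}), combined with the base change $\beta_{\iota_{i \to j}}$ for $\Psi$ from proposition \ref{extending sp systems to ind-schemes}; the check amounts to chasing a diagram of the shape of theorem \ref{compatible of Braden transformations with four operations}, where the required cartesianness is automatic since $\iota_{i \to j}$ is a closed immersion. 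For the isomorphism statement, if $A$ is equivariant then each $A_i$ is equivariant, because equivariance is the datum of an isomorphism $a^* A \simeq p^* A$ and this descends to level $i$; then every arrow in the level-$i$ square is an isomorphism by proposition \ref{consequence of descent of sp} applied to the $\tau$-locally linearizable algebraic $S$-space $X_i$, and since $(\iota^0_{i,\sigma})_*$ preserves isomorphisms, the ind-level arrows are isomorphisms as well.
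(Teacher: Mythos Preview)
Your proposal is correct and follows essentially the same approach as the paper's proof: both reduce to a fixed level $i$ in the presentation, invoke the algebraic-space case (proposition \ref{compatible of Braden transformations with sp induced by derivators}, supplemented by proposition \ref{consequence of descent of sp} for the isomorphism claim at non-affine levels), and use the compatibility of Braden transformations with the closed transition immersions established in the proof of proposition \ref{braden theorem for ind-schemes}. You spell out the well-definedness and transition-compatibility checks in more detail than the paper, which simply refers to \cite[Theorem 6.1]{richarz-2021} and sketches the reduction.
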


\begin{proof}
The proof is similar to the proof of \cite[Theorem 6.1]{richarz-2021}. We remind the proof for the convenience of the reader. At first we note that $\Psi_f$ and Braden transformations are well-defined thanks to propositions \ref{extending sp systems to ind-schemes}, and \ref{braden theorem for ind-schemes}. The morphisms $(e^-_?)^!$ also make sense because they are cartesian thanks to \cite[Theorem 2.1]{richarz-2021}. All arrows are constructed by taking limits of those coming from schemes. Let $A \in \sh(X_{\eta})$, then it suffices to assume that $A \in \sh(X_{i,\eta})$ for some $i$. The result follows from proposition \ref{compatible of Braden transformations with sp induced by derivators}.
\end{proof}

By taking homotopy colimits, we obtain an analogue of \cite[Theorem 6.1]{richarz-2021}.
\begin{cor} \label{richarz-thm61-analogue}
  Regarding the hypothesis in \ref{total nearby functors}, let $X$ be a separated ind-$R$-scheme of ind-finite type endowed with a $\et$-locally linearizable $\gm_{m,R}$-action. Let $A$ be an equivariant object in $\da(X_{\eta},\mathbb{Q})$. 
  There exists a commutative diagram
    \begin{equation*}
        \begin{tikzcd}[sep=large]
            (\pi^-_{\overline{\sigma}})_*(e^-_{\overline{\sigma}})^!\Psi_f^{\mathrm{tot}}\arrow[d] & \Psi_{f^0}^{\mathrm{tot}}(\pi^-_{\eta})_*(e^-_{\eta})^! \arrow[d] \arrow[l] \\ 
            (\pi^+_{\overline{\sigma}})_!(e^+_{\overline{\sigma}})^*\Psi_f^{\mathrm{tot}} \arrow[r]  & \Psi_{f^0}^{\mathrm{tot}} (\pi_{\eta}^+)_!(e^+_{\eta})^*
        \end{tikzcd}
    \end{equation*}
    all arrows are isomorphisms when computed at equivariant objects. 
\end{cor}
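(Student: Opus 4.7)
The plan is to obtain the statement for $\Psi^{\mathrm{tot}}$ by applying the preceding proposition fiberwise along the tower of finite extensions $L/K^{\mathrm{ur}}$ of $p$-power degree used to define $\Psi^{\mathrm{tot}}$, and then by exchanging the Braden operations with both the restriction to $X_{\overline{\sigma}}$ and the homotopy colimit over $L$.

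For an equivariant $A \in \da(X_{\eta},\mathbb{Q})$, and for each such $L$, set $T_L = \Spec(\mathcal{O}_L)$, $X_L = X \times_S T_L$, let $f_L \colon X_L \to \mathbb{A}^1_{T_L}$ be the base change of the given equivariant $f$, and write $A_L = A_{\mid X_{\eta} \otimes_K L}$. Example \ref{total nearby functors} expresses
$$
    \Psi^{\mathrm{tot}}_f(A) \;=\; \operatorname{HoColim}_{L}\,\bigl(\Psi^{\tame}_{f_L}(A_L)\bigr)_{\mid X_{\overline{\sigma}}}.
$$
The $\gm_{m,T_L}$-action on $X_L$ remains $\et$-locally linearizable and $f_L$ stays equivariant; by Lemma \ref{descent of fixed points, attractors, repellers} applied to the projection $X_L \to X$ one has $(X_L)^0 = X^0 \times_S T_L$ and $(X_L)^{\pm} = X^{\pm} \times_S T_L$, so that $\mathrm{HypLoc}(X_L) = \mathrm{HypLoc}(X) \times_S T_L$.

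For each $L$, the preceding proposition (which applies to ind-$S$-varieties thanks to Propositions \ref{extending sp systems to ind-schemes} and \ref{braden theorem for ind-schemes}) produces a commutative square for $\Psi^{\tame}_{f_L}$ with vertices $(\pi^{\pm}_{\sigma_L})_{?}(e^{\pm}_{\sigma_L})^{?}\Psi^{\tame}_{f_L}(A_L)$ and $\Psi^{\tame}_{f_L^0}(\pi^{\pm}_{\eta_L})_{?}(e^{\pm}_{\eta_L})^{?}(A_L)$, natural in $A_L$ and consisting of isomorphisms at all four corners because $A_L$ is equivariant. I would then pull this square back along $\overline{\sigma} \to \sigma_L$ and take the homotopy colimit over $L$. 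The left adjoint composition $(\pi^+_?)_!(e^+_?)^*$ tautologically commutes with both operations (pullback along $\overline{\sigma}\to\sigma_L$ by proper base change for $(-)_!$ composed with functoriality of $(-)^*$, and filtered homotopy colimits by being a composition of left adjoints); on equivariant input Theorem \ref{braden theorem} identifies $(\pi^-_?)_*(e^-_?)^!$ with $(\pi^+_?)_!(e^+_?)^*$, which transfers the same commutation to the upper row of the square.

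Since every functor involved (tame nearby, pullback along $\overline{\sigma}\to\sigma_L$, the operations $(\pi^{\pm})_?(e^{\pm})^?$) preserves equivariance by Lemma \ref{equivariance of six operations} and its corollary on specialization systems, the colimit over $L$ of the isomorphisms above yields the required natural diagram for $\Psi^{\mathrm{tot}}_f$, with all four arrows isomorphisms on equivariant $A$. The main technical obstacle is precisely this commutation of the a priori right adjoint composition $(\pi^-)_*(e^-)^!$ both with the restriction to the geometric closed fiber $X_{\overline{\sigma}}$ and with the filtered homotopy colimit indexed by the tower of $p$-power extensions of $K^{\mathrm{ur}}$; both are handled uniformly by restricting to equivariant input and invoking Theorem \ref{braden theorem} to replace this composition by the left adjoint composition $(\pi^+)_!(e^+)^*$, for which the required commutations are automatic.
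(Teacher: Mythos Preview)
Your argument is correct and is exactly what the paper intends by the single phrase ``by taking homotopy colimits'': apply the preceding proposition at each finite level $L$, restrict along $\overline{\sigma}\to\sigma_L$, and pass to the filtered homotopy colimit, using Theorem~\ref{braden theorem} on equivariant objects to transfer the needed commutations from the left-adjoint composite $(\pi^+)_!(e^+)^*$ to $(\pi^-)_*(e^-)^!$.

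One small correction: your justification of $(X_L)^0 = X^0 \times_S T_L$ and $(X_L)^{\pm} = X^{\pm} \times_S T_L$ via Lemma~\ref{descent of fixed points, attractors, repellers} is misplaced, since that lemma concerns equivariant \'etale morphisms and $T_L \to S$ is in general ramified. The identities you need hold for the simpler reason that the functors $X \mapsto X^0, X^{\pm}$ commute with arbitrary base change in $S$, which is immediate from their description as mapping spaces from $(\mathbb{A}^1_T)^{\pm}$ and $T$; see for instance \cite[Remark~1.18]{richarz-2018}. With this adjustment your proof stands.
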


\begin{rmk}
    The corollary above shows that \cite[Theorem 6.1]{richarz-2021} (see also \cite[Theorem 3.3]{richarz-2018}) is motivic in the following sense: in \cite{ayoub-realisation-etale}, Ayoub constructs an \'etale realization functor 
    \begin{equation*}
        \mathfrak{R}^{\et} \colon \da_{\mathrm{ct}}(-,\mathbb{Q}) \longrightarrow  \mathbf{D}^b_{\mathrm{ct}}(-,\overline{\mathbb{Q}}_{\ell}) 
    \end{equation*}
    commuting with four operations $(f^*,f_*,f_!,f^!)$ of schemes. Moreover, if \cite[Hypothèse 7.3]{ayoub-realisation-etale} is satisfied (which is automatic in the situation of example \ref{total nearby functors}), then $\mathfrak{R}^{\et}$ commutes with nearby cycles functors (see \cite[Théorème 10.16]{ayoub-realisation-etale}) in \'etale cohomology. For a separated $R$-ind-scheme of ind--finite type endowed with a $\gm_{m,R}$-action $X = \colim_{i \in I}X_i$, we define 
    \begin{equation*}
        \mathbf{D}^b_{\mathrm{ct}}(X,\overline{\mathbb{Q}}_{\ell})  = \colim_{i \in I}\mathbf{D}^b_{\mathrm{ct}}(X_i,\overline{\mathbb{Q}}_{\ell}) 
    \end{equation*}
    then under $\mathfrak{R}^{\et}$, the diagram in \ref{richarz-thm61-analogue} becomes the diagram in \cite[Theorem 6.1]{richarz-2021}.
\end{rmk}
We would like to establish a motivic version of \cite[Theorem A]{richarz-2021}, which is an analogue of the compatibility of Bernstein isomorphism with constant term map (see \cite{haines-2014}). We fix a triplet $(G,\mathcal{G},\lambda)$ where $G$ is a connected reductive $F$-group, $\mathcal{G}$ is a parahoric $R$-group with generic fiber $G$ and $\lambda \colon \gm_{m,R} \longrightarrow \mathcal{G}$ be a cocharacter over $R$. The cocharacter $\lambda$ acts on $\mathcal{G}$ via conjugation. Let $\mathcal{M} = \mathcal{G}^0$ and $\mathcal{P}^{\pm} = \mathcal{G}^{\pm}$ be the fixed points and repeller, attractor. The generic fiber $M = \mathcal{M}_F$ is a $F$-Levi subgroup of $G$ and $P^{\pm} = \mathcal{P}^{\pm}_F$ are parabolic subgroups with $P^+ \cap P^- = M$. The natural morphisms 
\begin{equation*}
    \mathcal{M} \overset{\pi^{\pm}}{\longleftarrow} \mathcal{P}^{\pm} \overset{e^{\pm}}{\longrightarrow} \mathcal{G}
\end{equation*}
give rise to morphisms of Beilinson-Drinfeld Grassmannians (see for instance \cite{zhu-2014}\cite{richarz-2016}\cite{richarz-2021} and see also \cite{achar-secondbook})
\begin{equation*}
   \mathrm{Gr}_{\mathcal{M}}^{\textnormal{BD}} \overset{\pi^{\pm}}{\longleftarrow} \mathrm{Gr}_{\mathcal{P}^{\pm}}^{\textnormal{BD}} \overset{e^{\pm}}{\longrightarrow} \mathrm{Gr}_{\mathcal{G}}^{\textnormal{BD}}
\end{equation*}
whose generic fiber and special fiber are 
\begin{align*}
     \mathrm{Gr}_M \overset{\pi^{\pm}_{\eta}}{\longleftarrow} \mathrm{Gr}_{P^{\pm}} \overset{e^{\pm}_{\eta}}{\longrightarrow} \mathrm{Gr}_G\\ 
    \flag_{\mathcal{M}} \overset{\pi^{\pm}_{\sigma}}{\longleftarrow} \flag_{\mathcal{P}^{\pm}} \overset{e^{\pm}_{\sigma}}{\longrightarrow} \flag_{\mathcal{G}}
\end{align*}
affine Grassmannians and affine flag varieties, respectively. Hence, there are two total nearby functors (see example \ref{total nearby functors})
\begin{align*}
    \Psi_{\mathcal{M}}^{\mathrm{tot}} \colon \da(\mathrm{Gr}_M,\mathbb{Q}) & \longrightarrow \da(\flag_{\mathcal{M},\overline{\sigma}},\mathbb{Q}) \\ 
    \Psi_{\mathcal{G}}^{\mathrm{tot}} \colon \da(\mathrm{Gr}_G,\mathbb{Q}) & \longrightarrow \da(\flag_{\mathcal{G},\overline{\sigma}},\mathbb{Q})  
\end{align*}
and geometric constant term functors (see example \ref{constant term functors}) 
\begin{align*}
    \mathrm{CT}_M \coloneqq (\pi_{\eta}^+)_!(e^+_{\eta})^* \colon \da(\mathrm{Gr}_G,\mathbb{Q}) & \longrightarrow \da(\mathrm{Gr}_M,\mathbb{Q}) \\ 
    \mathrm{CT}_{\mathcal{M}} \coloneqq (\pi_{\overline{\sigma}}^+)_!(e_{\overline{\sigma}}^+)^* \colon \da(\flag_{\mathcal{G},\overline{\sigma}},\mathbb{Q}) & \longrightarrow \da(\flag_{\mathcal{M},\overline{\sigma}},\mathbb{Q}) 
\end{align*}
Here is main theorem of this section that we have for free thanks to what we have built in previous sections
\begin{theorem}
There is a natural transformation 
\begin{equation*}
    \mathrm{CT}_{\mathcal{M}} \circ  \Psi^{\mathrm{tot}}_{\mathcal{G}} \longrightarrow \Psi^{\mathrm{tot}}_{\mathcal{M}} \circ \mathrm{CT}_M 
\end{equation*}
which is an isomorphism on $\gm_{m,K}$-equivariant objects. 
\end{theorem}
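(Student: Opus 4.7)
The plan is to obtain the natural transformation directly as the bottom horizontal arrow of the commutative square produced by Corollary \ref{richarz-thm61-analogue}, applied to the Beilinson-Drinfeld Grassmannian $\mathrm{Gr}_{\mathcal{G}}$ equipped with the $\gm_{m,R}$-action induced by conjugation via $\chi$. Concretely, we take $X = \mathrm{Gr}_{\mathcal{G}}$ as a separated ind-$R$-scheme of ind-finite type, with the $\gm_{m,R}$-action coming functorially from the action on $\mathcal{G}$. By the identification of fixed points, attractors and repellers with $\mathrm{Gr}_{\mathcal{M}}$ and $\mathrm{Gr}_{\mathcal{P}^{\pm}}$ recalled just before the theorem, the morphisms $\pi^{\pm}$ and $e^{\pm}$ of the hyperbolic localization diagram are precisely the ones appearing in the definition of the constant term functors.

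First I would check that the hypotheses of Corollary \ref{richarz-thm61-analogue} are met: the action on $\mathrm{Gr}_{\mathcal{G}}$ is $\et$-locally linearizable in the sense of the preceding subsection. This reduces, along a presentation $\mathrm{Gr}_{\mathcal{G}} = \colim_i X_i$ by finite-type $R$-schemes stable under the $\gm_{m,R}$-action, to producing equivariant \'etale affine atlases on each $X_i$; this follows from the corresponding statement for the action of $\chi$ on $\mathcal{G}$ (as in \cite{richarz-2021}) by the standard Beilinson-Drinfeld style covers, exactly as done at the level of $\ell$-adic sheaves in \cite[Theorem A]{richarz-2021}. Invoking Corollary \ref{richarz-thm61-analogue} then yields a commutative diagram of natural transformations between functors $\da(\mathrm{Gr}_{G,\eta},\mathbb{Q}) \longrightarrow \da(\mathrm{Gr}_{\mathcal{M},\overline{\sigma}},\mathbb{Q})$
\begin{equation*}
    \begin{tikzcd}[sep=large]
         (\pi^-_{\overline{\sigma}})_*(e^-_{\overline{\sigma}})^!\Psi_{\mathcal{G}}^{\mathrm{tot}} \arrow[d] & \Psi_{\mathcal{M}}^{\mathrm{tot}}(\pi^-_{\eta})_*(e^-_{\eta})^! \arrow[d] \arrow[l] \\
        (\pi^+_{\overline{\sigma}})_!(e^+_{\overline{\sigma}})^*\Psi_{\mathcal{G}}^{\mathrm{tot}} \arrow[r]  & \Psi_{\mathcal{M}}^{\mathrm{tot}} (\pi_{\eta}^+)_!(e^+_{\eta})^*
    \end{tikzcd}
\end{equation*}
whose arrows are isomorphisms on $\gm_{m,K}$-equivariant objects.

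The bottom horizontal arrow is exactly the sought transformation once we account for the locally constant shift $[\deg_L]$ appearing in the definition of $\mathrm{CT}_?$. Since $\deg_L$ is defined as a locally constant function on $\mathrm{Gr}_{\mathcal{M}}$ and is pulled back to $\mathrm{Gr}_{\mathcal{P}^+}$ along $\pi^+$, the shift commutes with all the operations involved (with both $(e^+)^*$, $(\pi^+)_!$, and with $\Psi^{\mathrm{tot}}$, as the latter is a triangulated functor compatible with locally constant shifts through the cartesian diagram relating $\flag_{\mathcal{M}}$ to $\mathrm{Gr}_M$). Composing the Braden arrow with this harmless shift produces the desired natural transformation $\mathrm{CT}_{\mathcal{M}} \circ \Psi_{\mathcal{G}}^{\mathrm{tot}} \longrightarrow \Psi_{\mathcal{M}}^{\mathrm{tot}} \circ \mathrm{CT}_M$.

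The main obstacle I expect is the verification of $\et$-local linearizability at the ind-scheme level together with the compatibility of the setup of Corollary \ref{richarz-thm61-analogue} with the Beilinson-Drinfeld Grassmannian $\mathrm{Gr}_{\mathcal{G}}$ over $S = \Spec(R)$: one must ensure that the hyperbolic localization morphisms $\pi^{\pm},e^{\pm}$ on $\mathrm{Gr}_{\mathcal{G}}$ are exactly the schematic morphisms that compute the fixed points, attractors and repellers, and that these interchange properly with the generic/special fiber decomposition. Once this geometric input is in place (and it essentially follows from \cite{richarz-2021} applied componentwise to the presentation of $\mathrm{Gr}_{\mathcal{G}}$), the rest is a formal application of the results already developed in the preceding sections, with the equivariance assertion transferred verbatim from Corollary \ref{richarz-thm61-analogue}.
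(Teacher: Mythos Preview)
Your proposal is correct and matches the paper's intended approach: the paper states this theorem with the comment that it comes ``for free thanks to what we have built in previous sections'' and gives no further proof, so applying Corollary~\ref{richarz-thm61-analogue} to $X=\mathrm{Gr}_{\mathcal{G}}$ is exactly what is meant. One small correction: in the paper's setup for this theorem the constant term functors $\mathrm{CT}_M$ and $\mathrm{CT}_{\mathcal{M}}$ are defined as $(\pi^+_{\eta})_!(e^+_{\eta})^*$ and $(\pi^+_{\overline{\sigma}})_!(e^+_{\overline{\sigma}})^*$ \emph{without} the shift $[\deg_L]$, so your paragraph about accounting for that shift is unnecessary here (though harmless). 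For the linearizability input, the paper already records in the preceding example that the conjugation action on $\mathrm{Gr}_G$ is even Zariski-locally linearizable by \cite[Lemma~3.3]{richarz-2021}, and the Beilinson--Drinfeld case is handled the same way.
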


We end this section by an example
\begin{ex} 
Given the previous section, we can carry some computations motivated by the local model of the group $G = \operatorname{GL}_{2,\mathbb{Q}_p}$ in \cite{richarz-2021}. Let $S = \Spec(\mathbb{Z}_p)$ and let $f \colon X \longrightarrow S$ be the projective $S$-scheme obtained by blowing up $\mathbb{P}_{\mathbb{Z}_p}^1$ at the point $\left \{0 \right \}_{\mathbb{F}_p}$ of the special fiber. Hence, the generic fiber of $X$ is $\mathbb{P}^1_{\mathbb{Q}_p}$ while the special fiber is the intersection of two copies of $\mathbb{P}^1_{\mathbb{F}_p}$ meeting transversally at a point denoted by $e_{\sigma}$. The $\gm_{m,\mathbb{Z}_p}$-action $\lambda \cdot [x : y] = [\lambda x : y]$ on $\mathbb{P}_{\mathbb{Z}_p}^1$ lifts to a unique action on $X$. The $\mathbb{Q}_p$-points $0_{\eta}$ and $\infty_{\eta}$ lift uniquely to $\mathbb{Z}_p$-points $0_S$ and $\infty_S$ by the valuative criterion for properness applied to the diagram
    \begin{equation*}
        \begin{tikzcd}[sep=large]
           \eta \arrow[r,"0_{\eta}\text{,}\infty_{\eta}"] \arrow[d] & X_{\eta} \arrow[r] & X \arrow[d] \\
           S \arrow[rr,equal] \arrow[rru,dashed] & & S.
        \end{tikzcd}
    \end{equation*}
    We see that
    \begin{equation*}
         X^0  = 0_S \sqcup \infty_S \sqcup e_{\sigma} \ \ \ \ \text{and} \ \ \ \   X^{\pm}  = (\mathbb{A}^1_S)^{\pm} \sqcup \infty_S \sqcup (\mathbb{A}^1_{\sigma})^{\pm}.
    \end{equation*}
    The morphisms $\pi^+,\pi^-$ are given by contracting to fixed point. One can write $X^+ = \mathbb{A}_{0_S \sqcup e_{\sigma}}^1 \sqcup \infty_S$ with two inclusions $i_1 \colon \mathbb{A}^1_{0_S \sqcup e_{\sigma}} \longhookrightarrow X^+$ and $i_2 \colon \infty_S \longhookrightarrow X^+$. We also denote $i_3 \colon U = 0_S \sqcup e_{\sigma} \longhookrightarrow X^0$ and $i_4 \colon V= \infty_S \longhookrightarrow X^0$ canonical inclusions. We denote by $p \colon \mathbb{A}^1_{0_S \sqcup e_{\sigma}} \longrightarrow 0_S \sqcup e_{\sigma}$ the canonical projection. The motive $A = \mathds{1}_{X_{\eta}}$ is equivariant and we can compute
    \begin{align*}
        L^{\pm}_{X_{\eta}}(A) & \simeq  (\pi_{\eta}^+)_!(e^+_{\eta})^*(\mathds{1}_{X_{\eta}}) \simeq (\pi_{\eta}^+)_!(\mathds{1}_{X^+_{\eta}}) \\ 
        & \simeq (\pi_{\eta}^+)_!(i_{1,\eta})_!(\mathds{1}_{\mathbb{A}^1_{0_\eta}})  \oplus (\pi_{\eta}^+)_!(i_{2,\eta})_!(\mathds{1}_{\infty_{\eta}}) \\ 
        & \simeq (i_{3,\eta})_!(p_{\eta})_!(\mathds{1}_{\mathbb{A}^1_{0_\eta}}) \oplus (i_{4,\eta})_!(\mathds{1}_{\infty_{\eta}}) \\
        & \simeq (i_{3,\eta})_!(\mathds{1}_{0_{\eta}})(-1)[-2] \oplus (i_{4,\eta})_!(\mathds{1}_{\infty_{\eta}}).
    \end{align*}
   Since $\Psi_S^{\mathrm{tot}} \colon \da(\eta,\mathbb{Q}) \longrightarrow \da(\overline{\sigma},\mathbb{Q})$ is unitary monoidal by \cite[Théorème 10.19]{ayoub-realisation-etale}, we see that
    \begin{align*}
       (\pi_{\overline{\sigma}}^+)_!(e^+_{\overline{\sigma}})^*\Psi_X^{\mathrm{tot}}(A) \simeq \Psi_{X^0}^{\mathrm{tot}}(L^+_{X_{\eta}}(A)) & \simeq \Psi_{X^0}^{\mathrm{tot}}(i_{3,\eta})_!(\mathds{1}_{0_{\eta}})(-1)[-2] \oplus \Psi_{X^0}^{\mathrm{tot}}(i_{4,\eta})_!(\mathds{1}_{\infty_{\eta}}) \\
        & \simeq  (i_{3,\overline{\sigma}})_!\Psi_U^{\mathrm{tot}}(\mathds{1}_{0_{\eta}})(-1)[-2] \oplus (i_{4,\overline{\sigma}})_!\Psi_V^{\mathrm{tot}}(\mathds{1}_{\infty_{\eta}}) \\ 
        & \simeq  (i_{3,\overline{\sigma}})_!(\mathds{1}_{0_{\overline{\sigma}}})(-1)[-2] \oplus (i_{4,\overline{\sigma}})_!(\mathds{1}_{\infty_{\overline{\sigma}}}).
    \end{align*}
    Under the $\ell$-adic realization and at the level of $\ell$-adic cohomology, the identity above reads
    \begin{equation*}
        \mathbb{H}^*_c(X_{\overline{\sigma}}^+,\Psi^{\mathrm{tot}}_X(A)) \simeq  \overline{\mathbb{Q}}_{\ell}(-1)[-2] \oplus \overline{\mathbb{Q}}_{\ell}
    \end{equation*}
    which is compatible with \cite[Example 3.5]{richarz-2018}\cite[Section 1.3.2]{richarz-2021}. 
\end{ex} 
    \begin{rmk}
        One can perform the same computation with $S = \Spec(k[\![t ]\!])$, $k$ a field of characteristic zero and $\Psi^{\textnormal{tot}}$ is rep $t$-adic nearby functor considered in example \ref{adic nearby functors}. The importance is that $\Psi^{\adic}_{\id}$ is unitary monoidal. 
    \end{rmk}
\section{Appendix: Four-functor formalism of algebraic spaces}
In \cite{chowdhury-2024}\cite{chowdhury+angelo-2024}\cite{khan+ravi-2024}, the four-functor formalism for nice algebraic stacks is constructed for the case of $\mathbf{SH}$, the motivic stable homotopy category of schemes (which is $\sh$ below with $\tau$ the Nisnevich topology and $\mfrak = \spect^{\Sigma}_{S^1}(\sets^{\Delta^{op}})$ the category of $S^1$-symmetric spectra) of Morel-Voevodsky (see \cite{voevodsky-1999} for the unstable version). Here we generalize to the general case $\sh$ defined by Ayoub in \cite{ayoub-thesis-2} with a view towards the version for diagrams and applications to specialization systems. Moreover, we study operations at the level of diagram of algebraic spaces which helps us generalize the notion of motivic nearby functors of algebraic spaces.

\subsection{Motives} We define $\sh$ for algebraic spaces and investigate some first basic properties. 

\begin{defn}
A \textit{Nisnevich atlas} (or a Nisnevich covering) of an algebraic space $X$ is a surjective \'etale morphism $f \colon U \longrightarrow X$ morphism and there exists a stratification of $X$ such that $f$ admits a section over each stratum. Equivalently, it is enough to require that field-valued points of $X$ can be lifted to $U$.
\end{defn}
We define three sites $(\Sm/X)_{\textnormal{sch}}, (\Sm/X)_{\textnormal{rep}},(\Sm/X)$ whose objects are smooth schemes over $X$, smooth, representable algebraic spaces over $X$, smooth algebraic spaces over $X$ and morphisms are morphisms over $X$. Let $\mfrak$ be either $\spect^{\Sigma}_{S^1}(\sets^{\Delta^{op}})$ the category of symmetric $S^1$-spectra or $\mathrm{Comp}(\Lambda)$ the category of complexes over a $\Lambda$ (commutative with unity). Both of these categories have a suitable theory of homotopy groups; in the former case, it is stable homotopy groups and in the latter case it is homology groups. Let $X$ be an algebraic space and let $\preshv_?(\Sm/X,\mfrak)$ be the category of presheaves on $(\Sm/X)_?$ with values in $\mfrak$. The category $\preshv_?(\Sm/X,\mfrak)$ can be endowed with the \textit{projective model structure} whose weak equivalences and fibrations are objectwise. The $\tau$-\textit{local equivalences} are those morphisms of presheaves in $\preshv_?(\Sm/X,\mfrak)$ inducing isomorphisms on associated sheaves of presheaves of homotopy groups and this gives us the category $\preshv_{\tau,?}(\Sm/X,\mfrak)$. 
\begin{prop} \label{equivalences of topoi}
    Let $X$ be an algebraic space and $\tau$ be either the \'etale topology $\et$ or the Nisnevich topology $\nis$, then the canonical inclusions 
\begin{equation*}
    (\Sm/X)_{\tau,\textnormal{sch}} \longrightarrow (\Sm/X)_{\tau,\textnormal{rep}}  \longrightarrow (\Sm/X)_{\tau}
\end{equation*}
induce Quillen equivalences
\begin{equation*}
    \preshv_{\tau,\textnormal{sch}}(\Sm/X,\mfrak) \simeq  \preshv_{\tau,\textnormal{rep}}(\Sm/X,\mfrak) \simeq \preshv_{\tau}(\Sm/X,\mfrak).
\end{equation*}
\end{prop}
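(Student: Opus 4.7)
The plan is to combine the classical comparison lemma for sites (SGA 4, expos\'e III, 4.1) with a standard transfer argument at the level of the presheaf model categories. First I would check that the two inclusion functors
\[
(\Sm/X)_{\tau,\textnormal{sch}} \longrightarrow (\Sm/X)_{\tau,\textnormal{rep}} \longrightarrow (\Sm/X)_{\tau}
\]
are continuous and cocontinuous morphisms of sites. This is routine: the $\tau$-topology is stable under base change, the inclusions are fully faithful, and a $\tau$-cover of an object of the subsite by objects of the larger site can be refined to a $\tau$-cover within the subsite (see next paragraph).

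The geometric content of the proposition is the theorem of Knutson \cite[Theorem 6.4]{knutson-1971}, already recalled in the conventions: every quasi-separated algebraic space admits a Nisnevich atlas by a scheme. Applied to a smooth algebraic $X$-space $Y$, this furnishes a Nisnevich (hence $\tau$-)cover of $Y$ by a smooth $X$-scheme. The analogous statement for the inclusion $(\Sm/X)_{\tau,\textnormal{sch}} \subset (\Sm/X)_{\tau,\textnormal{rep}}$ follows because the source of a smooth representable morphism $Y \to X$ is itself an algebraic space, to which Knutson's theorem again applies. Consequently every object of the larger site admits a $\tau$-cover by objects of the smaller one, which is precisely the hypothesis needed for the comparison lemma; I would thereby deduce equivalences between the associated $\tau$-sheaf topoi of sets.

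To transfer this equivalence of topoi to the presheaf model categories $\preshv_\tau(\Sm/X,\mfrak)$, I would argue that restriction $i^*$ along each inclusion is right Quillen for the projective model structure, since it preserves objectwise fibrations and objectwise weak equivalences. The $\tau$-local weak equivalences are by definition those morphisms inducing isomorphisms on the $\tau$-sheafified presheaves of stable homotopy (resp.\ homology) groups; because sheafification is compatible with the equivalence of topoi established in the previous step, $i^*$ both preserves and reflects $\tau$-local weak equivalences. Combined with the fact that the derived unit and counit become isomorphisms after $\tau$-sheafification of homotopy groups, this suffices to conclude that the adjunctions are Quillen equivalences.

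The step I expect to be the main obstacle is the last one: passing rigorously from the classical equivalence of sheaf topoi valued in sets to the $\mfrak$-valued variant adapted to the projective $\tau$-local model structure of \cite{ayoub-thesis-2}. While the underlying principle---equivalent sites yield Quillen-equivalent presheaf model categories after Bousfield localization at the hypercover descent maps---is well established, checking the compatibility of the comparison lemma with the specific localization chosen in the construction of $\sh$ requires careful bookkeeping, particularly to ensure that the homotopy sheaves computed on the larger and smaller sites really agree.
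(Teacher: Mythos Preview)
Your proposal is correct and follows essentially the same route as the paper: first establish the equivalence of $\tau$-topoi via the comparison lemma (the paper cites the Stacks Project formulation \cite[Lemma 7.29.1]{stacks-project} rather than SGA~4, but the content is identical), using Knutson's Nisnevich-atlas theorem exactly as you do for cocontinuity and for the refinement condition; then transfer this to the $\mfrak$-valued $\tau$-local model categories. The one difference is that where you sketch the model-categorical transfer by hand and flag it as the delicate step, the paper simply invokes \cite[Proposition 4.4.56]{ayoub-thesis-2}, which packages precisely the passage from an equivalence of set-valued sheaf topoi to a Quillen equivalence of the corresponding $\tau$-local presheaf model categories; citing this result would spare you the bookkeeping you anticipate.
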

\begin{proof}
First we claim that the inclusions
\begin{equation*}
    (\Sm/X)_{\tau,\textnormal{sch}} \longrightarrow (\Sm/X)_{\tau,\textnormal{rep}}  \longrightarrow (\Sm/X)_{\tau}
\end{equation*}
induce equivalence of topoi 
\begin{equation*}
    \shv_{\tau,\textnormal{sch}}(\Sm/X) \simeq  \shv_{\tau,\textnormal{rep}}(\Sm/X) \simeq \shv_{\tau}(\Sm/X).
\end{equation*}
Indeed, the proof is very similar to the proof of \cite[Lemma 66.18.3]{stacks-project} with slight modifications. We use \cite[Lemma 7.29.1]{stacks-project}. The morphism sending $U/X$ to $U/X$ is clearly continuous. For cocontinuity, let $U \in (\Sm/X)_{\nis,\textnormal{sch}}$ and $\left \{v_i \colon V_i \longrightarrow U \right \}_{i \in I}$ be a Nisnevich covering of $U$ considered as an object in $(\Sm/X)_{\tau}$. For each $i \in I$, we choose a Nisnevich covering $u_i \colon U_i \longrightarrow V_i$ (this is possible thanks to \cite[Theorem 6.4]{knutson-1971}) with $U_i$ a scheme, the family $\left \{v_i \circ u_i \colon U_i \longrightarrow X \right \}_{i \in I}$ refines the given family because a composition of Nisnevich covering is a Nisnevich covering. The given functor is fully faithful, so conditions $(3)$ and $(4)$ in \cite[Lemma 7.29.1]{stacks-project} are automatically satisfied. The same reason as for cocontinuity shows that condition $(5)$ holds true and we are done. Now we assert that the existence of Quillen equivalences 
\begin{equation*}
    \preshv_{\tau,\textnormal{sch}}(\Sm/X,\mfrak) \simeq  \preshv_{\tau,\textnormal{rep}}(\Sm/X,\mfrak) \simeq \preshv_{\tau}(\Sm/X,\mfrak).
\end{equation*}
is a special case of \cite[Proposition 4.4.56]{ayoub-thesis-2} with respect to the following choice: $\mathcal{S} = (\Sm/X)_{\textnormal{sch}}$, the Yoneda embedding $(\Sm/X)_{\textnormal{sch}} \longhookrightarrow \shv_{\tau,\textnormal{sch}}(\Sm/X) \simeq \shv_{\tau}(\Sm/X)$ has image contained in $\mathcal{T} = (\Sm/X)$. The topology $\tau$ on $\mathcal{T}$ is clearly the finest topology on which $F \colon (\Sm/X) \longrightarrow \sets$ are sheaves for $F \in \shv_{\tau}(\Sm/X)$. The same argument applies to $(\Sm/X)_{\textnormal{sch}}$ and $(\Sm/X)_{\textnormal{rep}}$ and this completes the proof.  
\end{proof}
From the point of homotopy theory, proposition \ref{equivalences of topoi} shows that when working with model structures of sheaves, we do not distinguished between using $(\Sm/X)_{\tau,\textnormal{sch}}, (\Sm/X)_{\tau,\textnormal{rep}}, (\Sm/X)_{\tau}$ and thus we can just use the largest one, namely $(\Sm/X)_{\tau}$. We perform Bousfield localization $\preshv_{\tau}(\Sm/X,\mfrak)$ at the class consisting of morphisms $\mathbb{A}_U^1 \otimes A \longrightarrow U \otimes A$ with $U \in \Sm/X, A \in \mfrak$. The resulting category is the $(\mathbb{A}^1,\tau)$-\textit{local model structure} $\preshv_{(\mathbb{A}^1,\tau)}(\Sm/X,\mfrak)$. Let $T_X$ denote the presheaf 
\begin{equation*}
    T_X = \frac{\gm_{m,X} \otimes \mathds{1}}{X \otimes \mathds{1}}
\end{equation*}
We consider the category $\spect^{\Sigma}_{T_X}(\preshv_{(\mathbb{A}^1,\tau)}(\Sm/X,\mfrak))$ of symmetric $T_X$-spectra. We endow this category with the $(\mathbb{A}^1,\tau)$-\textit{local stable projective model structure} and set
\begin{equation*}
    \sh(X) \coloneqq \mathbf{Ho}_{(\mathbb{A}^1,\tau)-st}\big( \spect^{\Sigma}_{T_X}(\preshv_{(\mathbb{A}^1,\tau)}(\Sm/X,\mfrak)) \big).
\end{equation*}
If $f \colon X \longrightarrow Y$ is a morphism of algebraic $S$-spaces. By taking derived functors, we get suitable adjunctions of exact functors $(f^* \dashv f_*) \colon \sh(Y) \longrightarrow \sh(X)$ and $(f_{\#} \dashv f^*) \colon \sh(X) \longrightarrow \sh(Y)$ provided that $f$ is smooth. 
\begin{prop} \label{descent of motives}
Given a $\tau$-covering $\left \{u_i \colon U_i \longrightarrow X \right \}_{i \in I}$ of algebraic spaces, the functor 
    \begin{equation*}
        \prod_{i\in I} (u_i)^* \colon  \sh(X)  \longrightarrow  \prod_{i \in I}\sh(U_i) 
    \end{equation*}
is conservative in the following cases:
\begin{enumerate}
    \item $\tau_1 = \nis$ and any $\mfrak,\tau$. 
    \item $\tau_1 = \tau $ and any $\mfrak$.
\end{enumerate}
\end{prop}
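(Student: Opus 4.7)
The plan is to reduce to the classical conservativity statement for schemes by a three-step argument: refining the given covering to one with schemes as sources, testing vanishing of objects against smooth schemes over $X$ via the equivalence of topoi, and then applying scheme-theoretic descent.

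First I would apply Knutson's theorem (as recalled in the Conventions) to refine each $U_i$: there exists a Nisnevich covering $\{v_{i,\alpha}\colon V_{i,\alpha} \to U_i\}$ with the $V_{i,\alpha}$ affine schemes. The composite family $\{u_i \circ v_{i,\alpha}\colon V_{i,\alpha} \to X\}$ is still a $\tau_1$-covering of $X$, since every Nisnevich covering is in particular an étale covering. The identity $(u_i \circ v_{i,\alpha})^* \simeq v_{i,\alpha}^* \circ u_i^*$ shows that conservativity of $\prod_i u_i^*$ would follow from conservativity of the refined family; hence we may assume that each $U_i$ is already a scheme.

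Next, by Proposition \ref{equivalences of topoi}, the category $\sh(X)$ is computed equivalently using the small smooth scheme site $(\Sm/X)_{\tau,\mathrm{sch}}$, and so $\sh(X)$ is generated (up to shifts and Tate twists) by motives of the form $g_\#\mathds{1}_W$ for $g\colon W \to X$ smooth with $W$ a scheme. In particular, an object $A \in \sh(X)$ vanishes if and only if $g^*A$ vanishes in $\sh(W)$ for every such $g$. For each smooth $g\colon W \to X$ with $W$ a scheme, the base change $\{\tilde u_i\colon U_i \times_X W \to W\}_i$ is a $\tau_1$-covering of the scheme $W$ (the fiber products are schemes because $U_i \to X$ is representable), and the smooth base change isomorphism yields $\tilde u_i^* g^*A \simeq \tilde g^* u_i^* A = 0$. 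The classical conservativity of $\tau_1$-covering pullback for schemes then forces $g^*A = 0$: this is valid for any $\tau \supseteq \nis$ local model structure in case (1) and for the étale local model structure in case (2); see for instance \cite[Proposition 1.4.3]{ayoub-thesis-1}. Applying this to every smooth scheme $W$ over $X$ gives $A \simeq 0$, as required.

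The chief obstacle is the consistency of the reductions: one must verify that the passage via Proposition \ref{equivalences of topoi} commutes with the pullbacks $u_i^*$, and that the smooth base change invoked in the third step is well-behaved when going from algebraic spaces to schemes. Once this bookkeeping is in place, the proposition is a direct translation of the scheme-theoretic descent statement.
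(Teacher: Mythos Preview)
Your argument is correct and in fact more explicit than the paper's own proof, which simply asserts that case~(1) follows from the localization property ``with the same proof as in the case of schemes'' and that case~(2) is analogous to \cite[Lemme~3.4]{ayoub-realisation-etale}. Your route is genuinely different: rather than rerunning the scheme-theoretic localization or étale-descent argument verbatim for algebraic spaces, you use Proposition~\ref{equivalences of topoi} to recognise that $\sh(X)$ is compactly generated by $g_\#\mathds{1}_W(n)$ with $W$ a smooth $X$-\emph{scheme}, and then test vanishing after pullback along each such $g$, where the problem becomes the known scheme statement. This has the pleasant feature of treating both cases uniformly and of avoiding any reliance on the localization triangles for algebraic spaces (which in the paper are established \emph{after} this proposition and in fact invoke it).

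Two minor points of bookkeeping. First, what you call ``smooth base change'' in Step~3 is really just the $2$-functoriality $(g\circ\tilde u_i)^*\simeq(\tilde u_i)^*g^*$ applied to the commutative square, so no base-change theorem is required there; this dissolves the concern you flag at the end. Second, the claim that $U_i\times_X W$ is a scheme is justified precisely by the representability of the diagonal of $X$ assumed in the Conventions, so it would be worth saying this rather than the slightly imprecise ``because $U_i\to X$ is representable''. With these adjustments the proof stands.
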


\begin{proof}
The first case follows from the localization property (see for instance, \cite[Corollaire 1.4.4]{ayoub-thesis-1}) and the second case is similar to \cite[Lemme 3.4]{ayoub-realisation-etale}. 
\end{proof}
\begin{cor} \label{compactly generated property of alg spaces}
  Let $u \colon U \longrightarrow X$ be a $\tau$-atlas. Assume that $\sh(U)$ is compactly generated by a set $\mathcal{S}$ of compact objects, then $\sh(X)$ is compactly generated by the set $u_{\#}(\mathcal{S})$. 
\end{cor}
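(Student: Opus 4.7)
The plan is to verify the two requirements of a compact generating set: that each $u_{\#}(A)$ with $A \in \mathcal{S}$ is compact in $\sh(X)$, and that the collection $\{u_{\#}(A)[n] : A \in \mathcal{S},\ n \in \mathbb{Z}\}$ detects the zero object.

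For the compactness part, I would use the standard criterion that a triangulated left adjoint sends compact objects to compact objects if and only if its right adjoint commutes with small direct sums. Here the right adjoint of $u_{\#}$ is $u^*$, which is itself a left adjoint (to $u_*$) and hence commutes with arbitrary colimits, in particular with small sums. Therefore $u_{\#}$ preserves compactness, and $u_{\#}(A)$ is compact in $\sh(X)$ for every $A \in \mathcal{S}$.

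For the generation part, let $M \in \sh(X)$ be such that $\Hom_{\sh(X)}(u_{\#}(A)[n], M) = 0$ for all $A \in \mathcal{S}$ and all $n \in \mathbb{Z}$. By the $(u_{\#} \dashv u^*)$-adjunction this is equivalent to
\begin{equation*}
\Hom_{\sh(U)}(A[n], u^*(M)) = 0 \quad \text{for all } A \in \mathcal{S},\ n \in \mathbb{Z}.
\end{equation*}
Since $\mathcal{S}$ is a compact generating set of $\sh(U)$, this forces $u^*(M) \simeq 0$. Now I invoke Proposition \ref{descent of motives}: for a Nisnevich atlas (with any choice of $\tau$ and $\mfrak$) or an étale atlas with $\tau = \et$, the pullback $u^*$ is conservative. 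Hence $M \simeq 0$, which establishes that $u_{\#}(\mathcal{S})$ is a generating family.

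There is no real obstacle in this argument; the content has already been packaged into the adjunction $(u_{\#} \dashv u^*)$ together with the conservativity statement. The only subtlety worth flagging is that one must be in a regime where Proposition \ref{descent of motives} applies, which is exactly the setting of a Nisnevich atlas (always available for algebraic spaces by the quasi-separatedness assumption recalled in the conventions, via \cite[Theorem 6.4]{knutson-1971}) or of an étale atlas with $\tau = \et$.
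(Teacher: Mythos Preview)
Your proof is correct and follows essentially the same approach as the paper: both arguments use the adjunction $(u_{\#} \dashv u^*)$ together with the fact that $u^*$, being itself a left adjoint, preserves direct sums to establish compactness, and then invoke the conservativity of $u^*$ from Proposition~\ref{descent of motives} to obtain generation. The only cosmetic difference is that the paper writes out the $\Hom$-isomorphism explicitly rather than citing the abstract criterion you used.
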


\begin{proof}
Let $A$ be a compact object in $\sh(U)$. Let us show first that $u_{\#}(A)$ is a compact object in $\sh(X)$. By definition, we have to show that
\begin{equation*}
    \Hom \big(u_{\#}(A),\bigoplus_{i \in I}B_i \big) \longrightarrow \bigoplus_{i \in I}\Hom \big(u_{\#}(A),B_i \big)
\end{equation*}
is bijective for any family of objects $(B_i)_{i \in I}$ in $\sh(X)$. By adjunction, the morphism above corresponds to 
\begin{equation*}
    \Hom \bigg(A,u^*\big(\bigoplus_{i \in I}B_i\big) \bigg) \longrightarrow \bigoplus_{i \in I}\Hom \big(A,u^*(B_i) \big).
\end{equation*}
Now this follows from the fact that $A$ is compact and $u^*$, being a left adjoint, commutes with colimits. To finish, we have to show that the set $u_{\#}(S)$ detects zero object, but this follows from the fact that $u^*$ is conservative (proposition \ref{descent of motives}) and $S$ is a generating set of $\sh(U)$. 
\end{proof}
\subsection{Four operations for algebraic spaces}
The proposition above allows us to reduce many results to known results for schemes. 
\begin{prop}
  The following properties of $\sh(X)$ hold: 
  \begin{enumerate}
      \item $\sh(\varnothing) = 0$. 
      \item If $i$ is an immersion, then $i^*i_* \longrightarrow \id$ is an isomorphism .
      \item Given a cartesian square
      \begin{equation*}
      \begin{tikzcd}[sep=large]
          X' \arrow[r,"g'"] \arrow[d,"f'",swap] & X \arrow[d,"f"] \\ 
          Y' \arrow[r,"g"]  & Y
      \end{tikzcd}
      \end{equation*}
      with $g$ smooth, then the natural base change morphism 
      \begin{equation*}
          \Ex_{\#}^* \colon (f')_{\#}(g')^* \longrightarrow g^*f_{\#}
      \end{equation*}
      is an isomorphism. 
      \item Let $j$ be an open immersion and $i$ be its closed complement, then $(j^*,i^*)$ is conservative. Consequently, there are distinguished triangles 
        \begin{equation*}
            \begin{split}
                j_{\#}j^* \longrightarrow \id \longrightarrow i_*i^* \longrightarrow +1 \\ 
                i_*i^! \longrightarrow \id \longrightarrow j_*j^* \longrightarrow +1. 
            \end{split}
        \end{equation*}
        \item Let $p \colon \mathbb{A}^1_X \longrightarrow X$ be the projection, then $\id \overset{\sim}{\longrightarrow} p_*p^*$ is an isomorphism. 
        \item Let $f \colon V \longrightarrow X$ be a smooth morphism (not necessarily representable) and $s \colon X \longrightarrow V$ be a section, then $\th(f,s) \coloneqq f_{\#}s_* \colon \sh(X) \longrightarrow \sh(X)$ is an equivalence with a quasi-inverse given by $\th^{-1}(f,s) = s^!f^*$.
  \end{enumerate}
\end{prop}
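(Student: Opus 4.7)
\medskip

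\noindent\textbf{Proof proposal.} The strategy is to reduce each statement to the corresponding statement for schemes, which is already established in \cite{ayoub-thesis-2, cisinski+deglise}, by descending along a Nisnevich atlas $u\colon U\longrightarrow X$ with $U$ a scheme (existence guaranteed by our running noetherian/quasi-separated hypothesis and \cite[Theorem 6.4]{knutson-1971}). The key tool is Proposition~\ref{descent of motives}, which provides Nisnevich conservativity of $u^{*}$; combined with the smooth base change isomorphism $\Ex_{\#}^{*}$ of point (3), conservativity propagates an isomorphism statement from the atlas level down to $X$.

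Properties (1), (2), (3), (5) will be handled uniformly. For (1), $(\Sm/\varnothing)_{\tau}$ is empty so all presheaves are zero. For (3), the base change morphism $\Ex^*_{\#}\colon(f')_{\#}(g')^{*}\longrightarrow g^{*}f_{\#}$ is defined by the usual adjunction recipe; choose Nisnevich atlases of $Y$ and $X$ fitting into a commutative cube, and use that the analogous $\Ex_{\#}^{*}$ for schemes (known by \cite{ayoub-thesis-1}) combined with conservativity of pullback along the atlas forces the morphism to be an isomorphism. For (2), if $i\colon Z\longhookrightarrow X$ is an immersion, choose $u\colon U\longrightarrow X$ a Nisnevich atlas; then $i_{U}\colon Z\times_{X}U\longrightarrow U$ is an immersion of schemes, and base change together with (3) identifies $u^{*}(i^{*}i_{*}\longrightarrow \id)$ with the scheme-level counit, which is an isomorphism. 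For (5), homotopy invariance on $X$ reduces by smooth base change along $u$ to homotopy invariance on $U$.

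The localization property (4) is the first substantive step. Given an open-closed decomposition $X=j(U_0)\sqcup i(Z)$ and a Nisnevich atlas $u\colon U\longrightarrow X$ with $U$ a scheme, the pullbacks $u^{-1}(U_0)\longrightarrow U_0$ and $u^{-1}(Z)\longrightarrow Z$ are Nisnevich atlases by schemes. On the atlas $U$ one has the scheme-level localization triangles
\begin{equation*}
j_{\#}j^{*}\longrightarrow \id \longrightarrow i_{*}i^{*}\longrightarrow +1, \qquad i_{*}i^{!}\longrightarrow \id \longrightarrow j_{*}j^{*}\longrightarrow +1,
\end{equation*}
and $(j^{*},i^{*})$ is conservative. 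The plan is to first construct the candidate triangles on $X$ from the natural transformations (which exist at any base), then pull back along $u$: by smooth base change (point (3)) and the proper base change for closed immersions (from (2)), the pulled-back sequence coincides with the scheme-level localization triangle. Conservativity of $u^{*}$ upgrades this to a triangle on $X$ and yields the conservativity of $(j^{*},i^{*})$.

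Point (6), the Thom equivalence, will be the main obstacle because $f\colon V\longrightarrow X$ is only smooth, not necessarily representable, so $V$ itself is an algebraic space and ordinary tubular-neighborhood arguments must be applied twice. The plan is: first, observe that $\th(f,s)=f_{\#}s_{*}$ and $\th^{-1}(f,s)=s^{!}f^{*}$ are mutually adjoint, and both preserve small colimits (the left one by being a composite of a left adjoint with a pushforward along a closed immersion, the right one because on compactly generated triangulated categories $s^{!}$ commutes with sums once $s$ is quasi-finite, which can be arranged locally). Choose a Nisnevich atlas $v\colon V'\longrightarrow V$ with $V'$ a scheme, and a compatible Nisnevich atlas $u\colon U\longrightarrow X$ through which $s$ lifts to a section $s'$ of the scheme-level smooth morphism $f'\colon V'\times_{V}U\longrightarrow U$. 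Smooth base change (3) reduces the unit and counit of the adjunction $(\th(f,s),\th^{-1}(f,s))$ to the corresponding unit and counit of $(\th(f',s'),\th^{-1}(f',s'))$, which are isomorphisms by the scheme-level Thom equivalence \cite[Section 2.4.13]{cisinski+deglise}. Conservativity of $u^{*}$ then forces $\th(f,s)$ to be an equivalence with the stated quasi-inverse. The delicate point is ensuring that $s$ can be lifted Nisnevich-locally through $v$, which follows from the stratified-section characterization of Nisnevich atlases and the existence of sufficiently fine refinements.
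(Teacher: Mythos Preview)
Your approach matches the paper's exactly: the paper only writes out the locality axiom (4), reducing conservativity of $(j^{*},i^{*})$ to the scheme case via a Nisnevich atlas, and tacitly leaves (1)--(3), (5), (6) to the same descent-to-schemes mechanism you describe.

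Two small cautions on the details you added. First, in your treatment of (3) there is a bootstrap issue: to pull $\Ex_{\#}^{*}$ back along an atlas $u$ you must commute $u^{*}$ past $f_{\#}$, which is itself an instance of (3). This is harmless because the \'etale case of (3) can be established directly at the level of presheaves (both $u^{*}$ and $f_{\#}$ are explicit left Kan extensions there), and that is all the reduction needs; alternatively, (3) and (5) go through for algebraic spaces by the same model-categorical arguments as in \cite[Chapitre 4]{ayoub-thesis-2} without invoking descent at all. Second, for (6) your proposed lift of the section $s$ through a Nisnevich atlas of $V$ genuinely need not exist globally; your stratification idea works once (4) is in hand, but it is cleaner to first base change along an atlas $u\colon U\to X$ (so that $V\times_{X}U\to U$ is smooth with section $s_{U}$, reducing $X$ to a scheme), and then use deformation to the normal bundle to replace $V\times_{X}U$ by a vector bundle over $U$, which is representable and brings you to the scheme case.
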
 

\begin{proof}
Let us check only the locality axiom. Let $u \colon W \longrightarrow X$ be a Nisnevich atlas of $X$. We obtain cartesian squares by base change
\begin{equation*}
    \begin{tikzcd}[sep=large]
        W_U \arrow[r,hook,"j_W"] \arrow[d,"u_U",swap]  & W \arrow[d,"u"] & W_Z \arrow[d,"u_Z"] \arrow[l,"i_W",swap,hook] \\ 
        U \arrow[r,"j"] & X & Z \arrow[l,"i",swap] 
    \end{tikzcd}
\end{equation*}
then it is easy to see that $(j^*,i^*)$ is conservative if $((j_W)^*,(i_W)^*)$ is conservative but the latter is true since $W$ is a scheme. 
\end{proof}

With the result above, we can construct a formalism of four operations for morphisms of algebraic spaces. 
\begin{prop} \label{proper + smooth base change theorem}
  Given a cartesian square of morphisms of algebraic spaces
  \begin{equation*}
      \begin{tikzcd}[sep=large]
          X' \arrow[r,"g'"] \arrow[d,"f'",swap] & X \arrow[d,"f"] \\ 
          Y' \arrow[r,"g"]  & Y,
      \end{tikzcd}
  \end{equation*}
 then the natural transformation
 \begin{equation*} 
     \Ex^*_* = \Ex^*_*(f,g) \colon g^*f_* \longrightarrow (f')_*(g')^*
 \end{equation*} 
 is an isomorphism if either $g$ is smooth or $f$ is proper (not necessarily representable). 
\end{prop}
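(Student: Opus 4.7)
The plan is to reduce both cases to the corresponding base change theorems for morphisms of schemes, which are built into the construction of $\sh$ as in \cite{ayoub-thesis-1}, by systematically exploiting the Nisnevich conservativity of proposition \ref{descent of motives}.

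For the smooth case, fix a Nisnevich atlas $u \colon U \to Y$ with $U$ a scheme, and form the pullback $u' \colon U' = U \times_Y Y' \to Y'$, which is a Nisnevich atlas of $Y'$. Similarly, choose a Nisnevich atlas $v \colon V \to X \times_Y U$ with $V$ a scheme, so that $V$ maps both to $U$ (smoothly, since $u$ is) and to $X$ (as a Nisnevich atlas). By proposition \ref{descent of motives} applied to the covering $\{u'\}$ of $Y'$, the natural transformation $\Ex^*_*(f,g)$ is an isomorphism if and only if its pullback along $(u')^*$ is. Using the compatibility $u^* f_* \simeq (f_V)_* (v')^*$ of $f_*$ with the smooth base change along the atlas $u$, which is itself verifiable on a further Nisnevich atlas of $X$ and is therefore scheme-theoretic, the verification of $\Ex^*_*(f,g)$ reduces to the classical smooth base change for morphisms of schemes sitting in the diagram built from $V \to U \to Y$ and $U' \to U$.

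For the proper case, the subtlety is that $f$ is allowed to be non-representable. Using the same type of Nisnevich conservativity argument applied on $Y$ (together with the smooth base change just proved, to commute $f_*$ past the atlas projection), one first reduces to the situation where $Y$ is a scheme. Next, invoking the Chow lemma for noetherian separated algebraic spaces \cite[Theorem IV.3.1]{knutson-1971}, pick a proper surjective morphism $\pi \colon \widetilde{X} \to X$ with $\widetilde{X}$ a scheme, which is an isomorphism over a dense open $U \subset X$. Let $Z \hookrightarrow X$ be the reduced complement. The classical proper base change for schemes handles $f \circ \pi \colon \widetilde{X} \to Y$. Combining this with the localization triangles
\begin{equation*}
j_{\#} j^* \longrightarrow \id \longrightarrow i_* i^* \longrightarrow +1
\end{equation*}
from the preceding proposition, along the stratification $(U, Z)$ of $X$, and applying noetherian induction on $\dim X$ (the restriction of $f$ to $Z$ is again proper with smaller-dimensional source), one deduces $\Ex^*_*(f,g)$ is an isomorphism.

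The main obstacle is the proper case: one must orchestrate the descent along a proper Chow cover, keep track of the various instances of $\Ex^*_*$ across the stratification, and verify their compatibility with the localization triangles and projection formula, all while running a noetherian induction. The smooth case, by contrast, is a nearly formal two-step Nisnevich-local descent to the scheme-theoretic statement.
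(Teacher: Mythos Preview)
Your proper case is essentially the paper's argument: descend along a $\tau$-atlas of $Y$ (using the already-established smooth base change to commute $f_*$ past the atlas pullback), then apply the Chow lemma for separated noetherian algebraic spaces together with localization and noetherian induction to reduce to the projective (hence representable) case. The paper organizes this slightly differently, first isolating the proper \emph{representable} case via a cube diagram built from an atlas of $Y$, and then invoking Chow to reduce the general proper case to the projective one; but the content is the same.

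Your smooth case, however, has a circularity you should not gloss over. In step two you write that the compatibility $u^* f_* \simeq (f_V)_* (v')^*$ ``is itself verifiable on a further Nisnevich atlas of $X$ and is therefore scheme-theoretic''. But to verify that compatibility by descent, you must pull back along an atlas of $X$ and then commute the resulting $(\cdot)_*$ past that \'etale pullback --- which is again an instance of smooth base change for algebraic spaces. The argument never bottoms out. The paper avoids this entirely: when $g$ is smooth, the functor $g_{\#}$ exists and one already has the isomorphism $\Ex_{\#}^* \colon (g')_{\#}(f')^* \overset{\sim}{\longrightarrow} f^* g_{\#}$ from item (3) of the preceding proposition; passing to right adjoints yields $\Ex^*_* \colon g^* f_* \overset{\sim}{\longrightarrow} (f')_*(g')^*$ in one line, with no descent and no circularity. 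You should replace your smooth-case sketch with this adjunction argument.
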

\begin{proof}
When $g$ is smooth, the isomorphism is obtained by simply taking the adjoint of the isomorphism $f^*g_{\#} \simeq (g')_{\#}(f')^*$. Let us verify second case when $f$ is proper, representable. As remarked at conventions, there exists a $\tau$-covering $v \colon V \longrightarrow Y$ with $V$ being a scheme (in particular, $v$ is smooth since it is at least \'etale). By base change, we obtain a cube
   \begin{equation*}
       \begin{tikzcd}[sep=large]
       & U' \arrow[dl,"u'",swap] \arrow[rr,"k'"] \arrow[dd,"h'",pos=0.7,swap,dashed] & &U \arrow[dl,"u",swap] \arrow[dd,"h",pos=0.6] \\
       X' \arrow[rr, crossing over,"g'",pos=0.7] \arrow[dd,"f'",pos=0.4,swap] & & X    \\
        &V'  \arrow[dl,"v'",swap,dashed] \arrow[rr,"k",pos=0.4,dashed] & &V \arrow[dl,"v"] \\
          Y' \arrow[rr,"g"] & &Y  \arrow[from=uu, crossing over,"f",pos=0.4]
        \end{tikzcd}
    \end{equation*}
in which every face is cartesian. By proposition \ref{descent of motives}, we only need to check that $(v')^*g^*f_* \longrightarrow (v')^*(f')_*(g')^*$ is an isomorphism. We note that $(v')^*(f')_* \simeq (h')_*(u')^*$ by the smooth base change theorem, hence by transitivity of the base change structure, we reduce to checking $k^*h_* \simeq (h')_*(k')^*$, which is true by the case of schemes. To verify the general case, we use the Chow lemma (see \cite[Chapter Four, Theorem 3.1]{knutson-1971}) together with noetherian induction (see \cite[Chapter Two, Proposition 5.18]{knutson-1971}). Suppose that $\Ex^*_*(g,f \circ i)$ is an isomorphism for any closed immersion $i \colon Z \longhookrightarrow X$ with $Z_{\textnormal{red}} \neq X_{\textnormal{red}}$. By the Chow lemma, there exists a projective, birational morphism $p \colon Z \longrightarrow X$ such that $f \circ p$ is projective. We form a cartesian square 
\begin{equation*}
    \begin{tikzcd}[sep=large]
        T \arrow[r,"t"] \arrow[d,"q",swap] & Z \arrow[d,"p"] \\ 
        U \arrow[r,"j"] & X 
    \end{tikzcd}
\end{equation*}
where $j \colon U \longhookrightarrow X$ is a dense open immersion over which $p$ is an isomorphism, i.e., $q$ is an isomorphism. We let $i \colon Z \longhookrightarrow X$ denote the closed complement of $j$ endowed with the reduced structure. The morphism $g^*f_* \longrightarrow (f')_*(g')^*$ is an isomorphism if and only if both
\begin{align*}
   \Ex_*^*(g,f \circ i) \colon  g^*f_*i_* & \longrightarrow (f')_*(g')^*i_* \\ 
   \Ex_*^*(g,f \circ j) \colon  g^*f_*j_* & \longrightarrow (f')_*(g')^*j_*
\end{align*}
are isomorphisms. By the induction hypothesis, the first morphism is an isomorphism. For the second morphism, we note that $j_* = p_*t_*q^*$ (since $q$ is an isomorphism) and hence the results follows from the projective case ($f \circ p$ projective).
\end{proof}
Since a morphism $f \colon X \longrightarrow Y$ of algebraic $S$-spaces is compactifiable (see \cite{conrad-2012}), we can follow the classical formulation of
\begin{cor}
  Let $f \colon X \longrightarrow Y$ be a morphism of algebraic $S$-spaces, then $f_!$ admits a right adjoint $f^!$. 
\end{cor}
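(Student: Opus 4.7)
The plan is to follow the Nagata--Deligne construction as adapted by Ayoub \cite{ayoub-thesis-1}. Using Conrad's compactification theorem for algebraic spaces \cite{conrad-2012}, first factor $f$ as $f = \overline{f} \circ j$ with $j \colon X \longhookrightarrow \overline{X}$ an open immersion and $\overline{f} \colon \overline{X} \longrightarrow Y$ proper, and set $f_! \coloneqq \overline{f}_* \circ j_{\#}$. The functor $j_{\#}$ is defined since open immersions are smooth, and $\overline{f}_*$ is the derived direct image constructed above.

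Independence of $f_!$ from the choice of compactification is the usual diagonal argument: any two compactifications are dominated by a common one obtained as the closure of $X$ in their fibre product, and the comparison between the resulting candidate functors is formal once one has proper and smooth base change isomorphisms. Both of these are available thanks to proposition \ref{proper + smooth base change theorem}, so the argument of \cite[\S 1.6]{ayoub-thesis-1} carries over verbatim.

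To produce a right adjoint $f^!$ to $f_!$, I would appeal to Neeman's Brown representability theorem. Corollary \ref{compactly generated property of alg spaces} shows that $\sh(X)$ is compactly generated, so a right adjoint to $f_!$ exists as soon as $f_!$ preserves small direct sums. The factor $j_{\#}$ is already a left adjoint (to $j^*$) and therefore preserves all colimits; the problem thus reduces to showing that the proper pushforward $\overline{f}_*$ commutes with small sums. For this I would apply Nisnevich descent (proposition \ref{descent of motives}) in the target: choosing a Nisnevich atlas $v \colon V \longrightarrow Y$ with $V$ a scheme, conservativity of $v^*$ combined with proper base change (proposition \ref{proper + smooth base change theorem}) reduces the assertion to the analogous statement for the base-changed proper morphism $\overline{X} \times_Y V \longrightarrow V$ targeting a scheme. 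This last statement is then handled by a Chow lemma plus noetherian induction argument, exactly as in the second half of the proof of proposition \ref{proper + smooth base change theorem}, reducing to the case of a projective morphism of schemes where the result is classical.

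I expect the main obstacle to be the verification of the independence of $f_!$ from the chosen compactification; the construction of the right adjoint itself is formal given proper base change and compact generation of $\sh(X)$, both of which are now available for algebraic spaces.
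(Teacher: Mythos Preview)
Your proposal is correct and follows essentially the same approach as the paper: Nagata compactification, then Brown representability via compact generation (corollary \ref{compactly generated property of alg spaces}) to reduce to showing that proper pushforward preserves small sums, which is handled by combining descent along an atlas of the target with the Chow lemma plus noetherian induction to reach a projective morphism of schemes. The only cosmetic differences are that the paper applies the Chow-lemma reduction before the descent step (so that the morphism becomes representable and the base-changed source is automatically a scheme), and the paper does not discuss independence of $f_!$ from the compactification in this proof, treating it as already established.
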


\begin{proof}
By the Nagata compactification theorem, we can treat two separated cases: $f$ is an open immersion and $f$ is proper. In the former case, $f_! = f_{\#}$ has $f^*$ as its right adjoint. Let us prove the latter case. Thanks to the Brown representability theorem and proposition \ref{compactly generated property of alg spaces}, we only have to show that $f_*$ preserves direct sums if $f$ is proper. As in the proof of the proper base change theorem above, we can reduce to the case when $f$ is projective. In fact, we can prove the result for any representable morphism. We can choose $\tau$-atlases $V \longrightarrow Y$ and $U \longrightarrow X$ fitting into a cartesian square
\begin{equation*}
    \begin{tikzcd}[sep=large]
        U \arrow[d,"u",swap] \arrow[r,"f'"] & V \arrow[d,"v"] \\ 
        X \arrow[r,"f"] & Y.
    \end{tikzcd}
\end{equation*}
Since $u^*$ is conservative and it commutes with $f_*$ and direct sums, we only have to check that $(f')_*$ commutes with direct sums. This is true for schemes, see for instance \cite[Corollary 2.3.13]{cisinski+deglise}.
\end{proof}

Since we only use base change properties rather than all properties, we leave it as an exercise to the reader to check that a full-fledged version of four-functor formalism for $\sh$ (like the case of $\mathbf{SH}$ as established in \cite[Theorem 5.5.1]{chowdhury-2024}) is available. 

\subsection{Four operations for diagrams of algebraic spaces}
Before constructing operations at the level of diagrams of algebraic $S$-spaces, we need some definitions and auxiliary results. We begin with some basic definitions.
\begin{defn}
\begin{enumerate}
    \item A \textit{diagram of $S$-algebraic spaces} $(\Fscr,I)$ consists of a small category $I$ and a functor $\Fscr \colon I \longrightarrow \algspc_S$. 
    \item  A \textit{morphism} $(f,\alpha) \colon (\Gscr,J) \longrightarrow (\Fscr,I)$ consists of a functor $\alpha \colon J \longrightarrow I$ and a natural transformation $\Gscr \longrightarrow \Fscr \circ \alpha$.  
    \item Let $\mathbf{P}$ be a property of morphisms of algebraic spaces, then a morphism $(f,\alpha) \colon (\Gscr,J) \longrightarrow (\Fscr,I)$ is said to have $\mathbf{P}$ if for each $j \in J$, the morphism $f(j) \colon \Gscr(j) \longrightarrow \Fscr(\alpha(j))$ has $\mathbf{P}$.
    \item Let $(\Fscr,I)$ be a diagram of $S$-algebraic spaces, the category $\Sm/(\Fscr,I)$ has objects as $(U,i)$ with $i \in I$ an object and $U \longrightarrow \Fscr(i)$ an object in $(\Sm/\Fscr(i))$.
    \item Let $\tau$ denote either the \'etale topology $\et$ or the Nisnevich topology $\nis$. The category $\Sm/(\Fscr,I)$ is endowed with the $\tau$-topology whose a covering of $(U,i)$ is simply a $\tau$-covering of $U$.
    \end{enumerate}
\end{defn} 
Let $\preshv(\Sm/(\Fscr,I),\mfrak)$ be the category of presheaves on $\Sm/(\Fscr,I)$ with values in $\mfrak$. The category $\preshv(\Sm/(\Fscr,I),\mfrak)$ admits a semi-projective model structure whose weak equivalences are objectwise weak equivalences and cofibrations are those becoming surjections in all $\preshv(\Sm/\Fscr(i),\mfrak)$ with $i \in I$. The $\tau$-local equivalences are those morphisms of presheaves in $\preshv(\Sm/(\Fscr,I),\mfrak)$ inducing isomorphisms on associated sheaves of presheaves of homotopy groups. We can define the category $\shbb(\Fscr,I)$ by running through usual steps like the case of $\sh(X)$ and there are adjunctions $((f,\alpha)^* \dashv (f,\alpha)_*) \colon \shbb(\Fscr,I) \longrightarrow  \shbb(\Gscr,J) $ and $((f,\alpha)_{\#} \dashv (f,\alpha)^*) \colon \shbb(\Gscr,J) \longrightarrow \shbb(\Fscr,I) $ (provided that $(f,\alpha)$ is smooth objectwise in the latter case). They satisfy axioms \textbf{AlgDer 0} to \textbf{AlgDer 5} of an algebraic derivator in the sense of \cite[Definition 2.4.12]{ayoub-thesis-1} with proofs same as the case of schemes (see the fourth chapter of the same reference).
\begin{lem} \label{conservativity of pullbacks of schemes in a diagram}
    Let $(\Fscr,I)$ be a diagram of algebraic $S$-spaces, the family of functors $i^* \colon \shbb(\Fscr,I) \longrightarrow \sh(\Fscr(i))$ is conservative. 
\end{lem}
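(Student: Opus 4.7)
The strategy is to reduce the claim to compact generation of $\shbb(\Fscr,I)$ combined with the adjunctions $(i_{\#} \dashv i^*)$ available objectwise. Concretely, I would first verify that $\shbb(\Fscr,I)$ is compactly generated by the family
\[
\mathcal{G} = \{\, i_{\#}(H) \,:\, i \in I,\ H \in \mathcal{G}_i \,\},
\]
where $\mathcal{G}_i$ is a set of compact generators of $\sh(\Fscr(i))$ (whose existence comes from the compact generation for algebraic spaces established in Corollary~\ref{compactly generated property of alg spaces}). This should be routine from the cofibrantly generated nature of the semi-projective model structure on $\preshv(\Sm/(\Fscr,I),\mfrak)$: the generating (trivial) cofibrations are exactly the $i_{\#}$-images of the generating (trivial) cofibrations for each $\sh(\Fscr(i))$, and this characterization is preserved under Bousfield localization at $\tau$-local and $\mathbb{A}^1$-local equivalences and under $T_{\Fscr}$-stabilization, in parallel with the scheme-theoretic situation treated in \cite[Chapter 4]{ayoub-thesis-2}.

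Given compact generation, conservativity is purely formal. Let $f \colon A \longrightarrow B$ be a morphism in $\shbb(\Fscr,I)$ with $i^*(f)$ an isomorphism for every $i \in I$, and complete it to a distinguished triangle $A \longrightarrow B \longrightarrow C \longrightarrow A[1]$. Since each $i^*$ is exact (being a left adjoint between triangulated categories), one has $i^*(C) \simeq \mathrm{cone}(i^*(f)) \simeq 0$ for all $i$. For any generator $G = i_{\#}(H) \in \mathcal{G}$ and any $n \in \mathbb{Z}$, the adjunction then yields
\[
\Hom_{\shbb(\Fscr,I)}(i_{\#}(H), C[n]) \simeq \Hom_{\sh(\Fscr(i))}(H, i^*(C)[n]) = 0.
\]
As $\mathcal{G}$ is a generating family of compact objects, this forces $C \simeq 0$, and hence $f$ is an isomorphism.

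The only substantive ingredient is the compact generation claim; the remainder is mechanical. I expect this step to be the main technical burden, but it ought to follow exactly as in the scheme-theoretic case, with the only subtlety being that one needs $\tau$-descent for algebraic spaces (Proposition~\ref{descent of motives}) to identify the compact generators objectwise. No new obstructions should appear in the passage from schemes to algebraic spaces.
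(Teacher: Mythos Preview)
Your argument is correct, but it takes a detour that the paper (via \cite[Lemme 4.5.25]{ayoub-thesis-2}) avoids. The paper's proof is a direct model-categorical one: weak equivalences in the semi-projective model structure on $\preshv(\Sm/(\Fscr,I),\mfrak)$ are by definition objectwise, and this persists through the $(\mathbb{A}^1,\tau)$-localization and $T$-stabilization because the localizing classes are themselves generated objectwise. Hence a morphism in $\shbb(\Fscr,I)$ is an isomorphism iff each $i^*$ of it is.

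Your route through compact generation is logically equivalent but circuitous. Observe that ``$\mathcal{G}=\{i_{\#}H\}$ generates'' unpacks, via the adjunction $(i_{\#}\dashv i^*)$, to exactly the statement ``$i^*C=0$ for all $i$ implies $C=0$'', which is the conservativity you are trying to prove. So your step 1 and step 2 are the same assertion viewed from two sides. The model-categorical input you invoke for step 1 (generating cofibrations are the $i_{\#}$-images) is precisely dual to the direct argument (weak equivalences are $i^*$-detected), so you have correctly identified the content; the packaging through compact generation just adds a layer. Note also that in the paper's logical order, the compact-generation proposition for $\shbb(\Fscr,I)$ comes \emph{after} this lemma and explicitly uses it, so you were right to seek an independent model-categorical justification rather than cite that proposition.
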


\begin{proof}
The proof is identical to the proof of \cite[Lemme 4.5.25]{ayoub-thesis-2}.
\end{proof}
The following proposition, which is an analogue of \cite{bang-2024}, based on proposition \ref{proper + smooth base change theorem}

\begin{prop} \label{proper + smooth base change theorem for diagrams}
  Given a cartesian square of $S$-varieties
    \begin{equation*}
        \begin{tikzcd}[sep=large]
                  (\mathscr{G}',I) \arrow[r,"(g'{,}\beta)"] \arrow[rd,phantom,"\square"] \arrow[d,"f'",swap] & (\mathscr{G},I) \arrow[d,"f"] \\ 
         (\mathscr{F}',I) \arrow[r,"(g{,}\beta)"] & (\mathscr{F},I)
        \end{tikzcd}
    \end{equation*}
such that one of the following conditions is satisfied:
\begin{itemize}
    \item $f$ is proper objectwise;
    \item $(g,\beta)$ is smooth objectwise,
\end{itemize}
then the canonical base change morphism 
\begin{equation*}
    \Ex^*_* \colon (g,\beta)^*f_* \longrightarrow (f')_*(g',\beta)^*
\end{equation*}
is an isomorphism. 
\end{prop}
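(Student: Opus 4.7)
The plan is to reduce the statement to the already-established base change result for algebraic spaces (proposition~\ref{proper + smooth base change theorem}) by evaluating at each object of the index category $I$. By lemma~\ref{conservativity of pullbacks of schemes in a diagram}, the family of evaluation functors $\{i^{*}\colon \shbb(\Fscr',I)\longrightarrow \sh(\Fscr'(i))\}_{i\in I}$ is jointly conservative, so it suffices to check that $i^{*}\Ex^{*}_{*}$ is an isomorphism for every $i\in I$.

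I would begin by introducing the component morphisms. Since $f$ and $f'$ are indexed by the identity functor on $I$, for each $j\in I$ we have morphisms $f_{j}\colon \Gscr(j)\longrightarrow \Fscr(j)$ and $f'_{j}\colon \Gscr'(j)\longrightarrow \Fscr'(j)$, while $(g,\beta)$ and $(g',\beta)$ give, for each $i\in I$, morphisms $g_{i}\colon \Fscr'(i)\longrightarrow \Fscr(\beta(i))$ and $g'_{i}\colon \Gscr'(i)\longrightarrow \Gscr(\beta(i))$. Cartesianness of the original square of diagrams forces cartesianness of
\begin{equation*}
    \begin{tikzcd}[sep=large]
        \Gscr'(i) \arrow[r,"g'_i"] \arrow[d,"f'_i",swap] & \Gscr(\beta(i)) \arrow[d,"f_{\beta(i)}"] \\
        \Fscr'(i) \arrow[r,"g_i"] & \Fscr(\beta(i))
    \end{tikzcd}
\end{equation*}
for every $i\in I$, and under either hypothesis of the proposition, $f_{\beta(i)}$ is proper or $g_{i}$ is smooth.

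The next step is to exploit the compatibilities built into the algebraic derivator $\shbb$ (as in \cite[Section 2.4]{ayoub-thesis-1} and the extension to algebraic spaces developed above): because $f'$ is over the identity on $I$, one has $i^{*}(f')_{*}\simeq (f'_{i})_{*}i^{*}$; for the pullbacks along morphisms over $\beta$ one has $i^{*}(g,\beta)^{*}\simeq (g_{i})^{*}\beta(i)^{*}$ and $i^{*}(g',\beta)^{*}\simeq (g'_{i})^{*}\beta(i)^{*}$. Chasing the definition of $\Ex^{*}_{*}$ through these identifications shows that $i^{*}\Ex^{*}_{*}(f,(g,\beta))$ is nothing but the exchange morphism $\Ex^{*}_{*}(f_{\beta(i)},g_{i})$ of the pointwise cartesian square, precomposed with $\beta(i)^{*}$. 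Proposition~\ref{proper + smooth base change theorem} then settles the claim.

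The main obstacle is not conceptual but bookkeeping: one must certify that the exchange morphism defined at the diagram level, once evaluated at $i$, genuinely coincides with the exchange morphism for the pointwise cartesian square of algebraic spaces. This amounts to a routine unwinding of unit/counit diagrams via \cite[Corollaire 1.1.14]{ayoub-thesis-1}, combined with the naturality of the pointwise identifications listed above; all of this is packaged into the algebraic derivator axioms \textbf{AlgDer 0}--\textbf{AlgDer 5} and poses no further difficulty.
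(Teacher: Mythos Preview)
Your proposal is correct and matches the paper's approach. The paper does not spell out a proof for this proposition; it only remarks that the statement is ``an analogue of \cite{bang-2024}, based on proposition \ref{proper + smooth base change theorem}''. Your reduction via the conservativity of the evaluation functors $i^*$ together with the pointwise identifications $i^*(f')_*\simeq (f'_i)_*\,i^*$ and $i^*(g,\beta)^*\simeq (g_i)^*\,\beta(i)^*$ --- facts the paper itself invokes freely, for instance in the proof of proposition \ref{support property of derivators} under the label \textbf{AlgDer 3d} --- is exactly the intended argument.
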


\begin{defn}
    Let $f \colon (\Gscr,I) \longrightarrow (\Fscr,I)$ be a morphism of diagrams of algebraic $S$-spaces. We say that $f$ is \textit{compactifiable} if there exists a factorization, called a \textit{compactification}
    \begin{equation*}
        \begin{tikzcd}[sep=large]
            (\mathscr{G},I) \arrow[r,"j"] \arrow[dr,"f",swap] & (\overline{\mathscr{G}},I) \arrow[d,"p"] \\ 
            & (\mathscr{F},I)
        \end{tikzcd}
    \end{equation*}
   with $p$ proper objectwise and $j$ cartesian, open objectwise, we define a functor
    \begin{equation*}
        f_! \coloneqq p_*j_{\#} \colon \shbb(\Gscr,I) \longrightarrow \shbb(\Fscr,I)
    \end{equation*}
    called a \textit{exceptional direct image} of $f$, which depends on a choice of the compactification.  There is an obvious natural transformation $f_! \longrightarrow f_*$. If $p,j$ can be chosen to be cartesian, we say that $f$ is \textit{strongly compactifiable} and such a factorization is said to be a \textit{strong compactification}.
\end{defn}

\begin{ex}
Any morphism $f \colon (\Gscr,I) \longrightarrow (\Fscr,I)$ obtained by base change of a morphism of algebraic $S$-spaces has a strong compactification thanks to the Nagata compactification theorem \cite{conrad-2012}. We call such a morphism \textit{scheme-like representation} and such a compactification a \textit{scheme-like compactification}. 
\end{ex}

\begin{prop} \label{support property of derivators}
Given a commutative (not necessarily cartesian) square
    \begin{equation*}
    \begin{tikzcd}[sep=large]
        (\mathscr{U}',I) \arrow[r,"u'"] \arrow[dr,phantom,"\textnormal{(C)}"] \arrow[d,"p'",swap] & (\mathscr{U},I) \arrow[d,"p"] \\ 
        (\mathscr{F}',I) \arrow[r,"u"] & (\mathscr{F},I).
    \end{tikzcd}
\end{equation*}
of diagrams of algebraic $S$-spaces where $u,u'$ are open objectwise and $p,p'$ are proper objectwise. If $u,u'$ are cartesian, then the canonical base change morphism 
\begin{equation*}
    \Ex_{\#*} \colon u_{\#}(p')_* \longrightarrow p_*(u')_{\#}
\end{equation*}
is an isomorphism.
\end{prop}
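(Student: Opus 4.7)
The plan is to verify the claim objectwise and thereby reduce to the already-established case of algebraic spaces. By Lemma~\ref{conservativity of pullbacks of schemes in a diagram} the family of evaluation functors $\{i^*\}_{i \in I}$ is conservative on $\shbb(\mathscr{F},I)$, so it suffices to prove that $i^* \Ex_{\#*}$ is an isomorphism for every $i \in I$. To this end, I would commute $i^*$ past each of the four operations entering $\Ex_{\#*}$. Since $u$ and $u'$ are smooth objectwise and cartesian as morphisms of diagrams, the standard smooth base change yields canonical isomorphisms $i^* u_\# \simeq u(i)_\# \, i^*$ and $i^* (u')_\# \simeq u'(i)_\# \, i^*$. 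Since $p$ and $p'$ are proper objectwise, Proposition~\ref{proper + smooth base change theorem for diagrams} applied to the canonical (cartesian) evaluation squares at $i$ gives $i^* p_* \simeq p(i)_* \, i^*$ and $i^* (p')_* \simeq p'(i)_* \, i^*$. Concatenating these isomorphisms, the morphism $i^* \Ex_{\#*}$ is identified with the scheme-theoretic base change morphism $u(i)_\# \, p'(i)_* \to p(i)_* \, u'(i)_\#$ attached to the commutative square of algebraic spaces obtained by evaluating (C) at $i$.

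It then remains to show this objectwise base change is an isomorphism. For the open immersions $u(i), u'(i)$ the four functor formalism built in the preceding subsection identifies $u(i)_\# = u(i)_!$ and $u'(i)_\# = u'(i)_!$, while properness of $p(i), p'(i)$ gives $p(i)_* = p(i)_!$ and $p'(i)_* = p'(i)_!$. The commutativity $u(i) \circ p'(i) = p(i) \circ u'(i)$ and the functoriality of $f_!$ for algebraic spaces then yield
\[
u(i)_\# \circ p'(i)_* = (u(i) \circ p'(i))_! = (p(i) \circ u'(i))_! = p(i)_* \circ u'(i)_\#,
\]
and a direct inspection shows that under this identification the scheme-theoretic $\Ex_{\#*}$ is nothing but the canonical equality of functors, hence an isomorphism.

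The main technical point I anticipate is the compatibility check underlying the reduction above: one must verify that $\Ex_{\#*}$, defined at the level of diagrams via units and counits (e.g.\ through the canonical $2$-morphism $p^* u_\# \to (u')_\# (p')^*$, which makes sense without any cartesianness hypothesis), evaluates, through the base change isomorphisms assembled in the first paragraph, to the scheme-theoretic $\Ex_{\#*}$ and in turn to the functoriality identification of $f_!$. This is a routine but careful unwinding of the defining $2$-morphisms; once in place, the conservativity argument concludes the proof.
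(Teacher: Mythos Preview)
Your proof is correct and rests on the same core idea as the paper's: use conservativity of the evaluation functors $i^*$ (Lemma~\ref{conservativity of pullbacks of schemes in a diagram}) together with the base-change compatibilities $i^*u_\#\simeq u(i)_\# i^*$, $i^*(u')_\#\simeq u'(i)_\# i^*$, $i^*p_*\simeq p(i)_* i^*$, $i^*(p')_*\simeq p'(i)_* i^*$ (what the paper cites as \textbf{AlgDer 3d}) to reduce to the support property for a single algebraic space. The paper, however, organizes the argument differently: it first proves the case where the square $(\mathrm{C})$ is cartesian by exactly your objectwise reduction, and then handles the general case by factoring through the fiber product, producing a morphism $\Phi\colon(\mathscr{U}',I)\to(\mathscr{F}',I)\times_{(\mathscr{F},I)}(\mathscr{U},I)$ which is cartesian and clopen objectwise, so that $\Phi_\#\simeq\Phi_*$ and one can splice the cartesian case with this identification.

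Your route is more economical: you observe that the objectwise support property $u(i)_\# p'(i)_*\simeq p(i)_* u'(i)_\#$ already holds for a commutative (not necessarily cartesian) square of algebraic spaces, precisely by the $(-)_!$-functoriality argument you give, so the detour through the cartesian case and the clopen factorization is unnecessary. The coherence you flag at the end (that the concatenated base-change isomorphisms really carry $i^*\Ex_{\#*}$ to the objectwise $\Ex_{\#*}$) is the same one the paper leaves implicit in its chain of isomorphisms, and is indeed routine.
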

\begin{proof}
  First, we see that if $I = \mathbf{e}$, then this proposition can be deduce from the proper base change theorem \ref{proper + smooth base change theorem} (for schemes, this is called the \textit{support property} in \cite{cisinski+deglise}). Now we assume that $\textnormal{(C)}$ is cartesian then $u'$ is also cartesian.  By lemma \ref{conservativity of pullbacks of schemes in a diagram}, it is sufficient to check that for each $i \in I$, the morphism 
    \begin{equation*}
        i^*u_{\#}(p')_* \longrightarrow i^*p_*(u')_{\#}
    \end{equation*}
    is an isomorphism. Indeed, we see that
    \begin{align*}
        i^*u_{\#}(p')_* & \simeq u(i)_{\#}i^*(p')_* & \textbf{AlgDer 3d} \\ 
                    & \simeq u(i)_{\#}(p'(i))_*i^* & \textbf{AlgDer 3d} \\ 
                    & \simeq p(i)_*(u'(i))_{\#}i^* & \text{by the support property} \\ 
                    & \simeq p(i)_*i^*(u')_{\#}  \simeq i^*p_*(u')_{\#}.
    \end{align*}
    In general, we form the following commutative diagram 
    \begin{equation*}
    \begin{tikzcd}[sep=large]
    (\mathscr{U}',I) \arrow[dr,"\Phi"] \arrow[rrd,"u'",bend left = 15] \arrow[ddr,"p'",swap,bend right = 15] & & \\ 
         & (\mathscr{G},I) \arrow[r,"v"] \arrow[d,"q"] \arrow[rd,phantom,"\textnormal{(C')}"]  & (\mathscr{U},I) \arrow[d,"p"] \\ 
        & (\mathscr{F}',I) \arrow[r,"u"] & (\mathscr{F},I),
    \end{tikzcd}
    \end{equation*}
    in which $\textnormal{(C')}$ is cartesian. In this diagram, $\Phi$ is cartesian (because both $v,u'$ are cartesian), closed and open objectwise. We claim that $\Phi_{\#} \longrightarrow \Phi_*$ is an isomorphism. By lemma \ref{conservativity of pullbacks of schemes in a diagram} again, it is sufficient to check that $i^*\Phi_{\#} \longrightarrow i^*\Phi_*$ is an isomorphism for every $i^* \colon \shbb(\Gscr,I) \longrightarrow \sh(\Gscr(i))$, but this is a consequence of \textbf{AlgDer3d}. The morphism
    \begin{equation*}
        u_{\#}(p')_* = u_{\#}q_*\Phi_* \overset{\sim}{\longrightarrow} p_*v_{\#}\Phi_* \overset{\sim}{\longrightarrow} p_*v_{\#}\Phi_{\#}= p_*(u')_{\#}
    \end{equation*}
    is an isomorphism as desired. 
\end{proof}

Once we fix a choice of a scheme-like compactification, the operation $f_!$ does not depend on the choice of scheme-like compactifications and the connection $f_!g_! \simeq (f \circ g)_!$ can be performed in cases that $f,g$ are base change of two morphisms of algebraic spaces. We leave this detail to the reader since it is similar to schemes. We come to the right adjoint $f^!$ of $f_!$.

\begin{prop}
    Let $(\Fscr,I)$ be a diagram of algebraic $S$-spaces. For each compact object $A$ of $\shbb(\Fscr(i))$, the object $i_{\#}(A)$ is compact in $\shbb(\Fscr,I)$. Moreover, we know that
    \begin{equation*}
        \shbb(\Fscr,I) = \left <\left< i_{\#}A \mid i \colon (\Fscr(i),\mathbf{e}) \longrightarrow (\Fscr,I), A \in \shbbct(\Fscr(i)) \right> \right >.
    \end{equation*}
    In particular, $\shbb(\Fscr,I)$ is compactly generated. 
\end{prop}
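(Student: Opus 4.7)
The plan is to deduce both assertions from three ingredients: the adjunction $i_{\#} \dashv i^{*} \dashv i_{*}$ coming from the algebraic derivator structure (\textbf{AlgDer}), the conservativity of the family $\{i^{*}\}_{i \in I}$ given by lemma \ref{conservativity of pullbacks of schemes in a diagram}, and the compact generation of $\shbb(\Fscr(i))$ for each $i$, which is corollary \ref{compactly generated property of alg spaces} applied objectwise (combined with the analogous fact for schemes).

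First, for the compactness of $i_{\#}(A)$, given a family $(B_{k})_{k \in K}$ in $\shbb(\Fscr,I)$ I would compute
\begin{equation*}
\Hom_{\shbb(\Fscr,I)}\bigl(i_{\#}(A), \textstyle\bigoplus_{k} B_{k}\bigr) \simeq \Hom_{\shbb(\Fscr(i))}\bigl(A, i^{*}\textstyle\bigoplus_{k} B_{k}\bigr)
\end{equation*}
using the $(i_{\#}, i^{*})$-adjunction. Since $i^{*}$ is also a left adjoint (to $i_{*}$), it commutes with arbitrary coproducts, so the right-hand side becomes $\Hom_{\shbb(\Fscr(i))}(A, \bigoplus_{k} i^{*} B_{k})$. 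Compactness of $A$ in $\shbb(\Fscr(i))$ now yields $\bigoplus_{k} \Hom(A, i^{*} B_{k}) \simeq \bigoplus_{k} \Hom(i_{\#} A, B_{k})$, as required.

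For the generation statement, let $\mathcal{E}$ denote the localizing subcategory of $\shbb(\Fscr,I)$ generated by the objects $i_{\#}(A)$ with $i \in I$ and $A \in \shbbct(\Fscr(i))$. These objects are compact by the first part, so it suffices to check that they detect the zero object, i.e.\ that any $X \in \shbb(\Fscr,I)$ with $\Hom(i_{\#}(A)[n], X) = 0$ for every such triple $(i,A,n)$ must vanish. Once again the $(i_{\#},i^{*})$-adjunction rewrites this vanishing as $\Hom_{\shbb(\Fscr(i))}(A[n], i^{*} X) = 0$ for every compact object $A$ and every $n \in \mathbb{Z}$. Since $\shbb(\Fscr(i))$ is compactly generated, this forces $i^{*}(X) = 0$ for every $i \in I$, and the conservativity statement of lemma \ref{conservativity of pullbacks of schemes in a diagram} then gives $X = 0$. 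The final sentence of the proposition is an immediate consequence, since $\shbb(\Fscr,I)$ is a triangulated category with arbitrary coproducts admitting a generating set of compact objects.

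I do not anticipate a real obstacle here: every nontrivial input has been established earlier in the appendix, and the argument is formal once $i^{*}$ is recognised as simultaneously a left and right adjoint. The only point deserving care is to ensure that the set of compact generators of each $\shbb(\Fscr(i))$ is indeed a \emph{set} (not a proper class), which is guaranteed by corollary \ref{compactly generated property of alg spaces} together with the scheme-theoretic generation results already invoked in \cite{ayoub-thesis-2}.
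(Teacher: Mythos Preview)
Your proof is correct and follows essentially the same route as the paper: the compactness of $i_{\#}(A)$ is obtained via the $(i_{\#},i^{*})$-adjunction together with the fact that $i^{*}$ preserves coproducts, and the generation statement is reduced, through the same adjunction, to the compact generation of each $\shbb(\Fscr(i))$ plus the conservativity of the family $\{i^{*}\}_{i\in I}$ from lemma \ref{conservativity of pullbacks of schemes in a diagram}. The only cosmetic difference is that the paper invokes \cite[Proposition 2.1.27]{ayoub-thesis-1} for the passage from ``detects zero'' to ``compactly generates'', whereas you spell this out directly.
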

\begin{proof}
    Let us prove that each $i_{\#}A$ is a compact object. By definition, we have to show that 
    \begin{equation*}
        \bigoplus_{j \in J}\Hom(i_{\#}A,B_j) \longrightarrow \Hom \left(i_{\#}A,\bigoplus_{j \in J}B_j \right)
    \end{equation*}
    is bijective for every family $(B_j)_{j\in J}$ of objects in $\shbb(\Fscr,I)$. By adjunction, it is equivalent to prove that
    \begin{equation*}
        \bigoplus_{j \in J}\Hom(A,i^*B_j) \longrightarrow \Hom \bigg(A,i^*\bigg(\bigoplus_{j \in J}B_j \bigg) \bigg)
    \end{equation*}
    is bijective. This is true since $i^*$ commutes with colimits and $A$ is a compact object. By \cite[Proposition 2.1.27]{ayoub-thesis-1}, it remains to prove that the family $\Hom(i_{\#}A[n],-)$ with $i \in I$, $A \in \shbbct(\Fscr(i))$, $n\in \mathbb{Z}$ is conservative. This is a consequence of the fact that for each $i$, the family $\Hom(A[n],-)$ is conservative and the family $i^*\colon \shbb(\Fscr,I) \longrightarrow \shbb(\Fscr(i))$ itself is conservative.
\end{proof}

\begin{prop} \label{right adjoint of morphism proper objectwise}
    Let $I$ be a small category and $p \colon (\Gscr,I) \longrightarrow (\Fscr,I)$ be a morphism proper objectwise between diagrams of algebraic $S$-spaces, then $p_*$ preserves direct sums. Consequently, $p_*$ admits a right adjoint $p^!$. 
\end{prop}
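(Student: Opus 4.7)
The plan is to invoke the Brown representability theorem: the preceding proposition establishes that $\shbb(\Gscr,I)$ is compactly generated, so a triangulated functor out of it admits a right adjoint as soon as it preserves arbitrary direct sums. Hence it is enough to prove the first assertion, namely that $p_*$ commutes with direct sums.

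To this end, I would exploit the conservativity of the family of pullback functors $i^* \colon \shbb(\Fscr,I) \longrightarrow \sh(\Fscr(i))$ indexed by $i \in I$, provided by lemma \ref{conservativity of pullbacks of schemes in a diagram}. Since each $i^*$ is a left adjoint, it automatically preserves colimits, so it suffices to show that $i^* p_*$ preserves direct sums for every $i \in I$. Here the proper base change theorem for diagrams (proposition \ref{proper + smooth base change theorem for diagrams}) applies because $p$ is proper objectwise, giving a canonical isomorphism $i^* p_* \simeq p(i)_* i^*$. The question is thereby reduced to the objectwise statement: for each $i$, the functor $p(i)_* \colon \sh(\Gscr(i)) \longrightarrow \sh(\Fscr(i))$ preserves direct sums. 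This was already established earlier in the appendix, in the corollary following proposition \ref{proper + smooth base change theorem}, as part of the construction of $f^!$ for proper morphisms of algebraic $S$-spaces (via reduction to a Nisnevich atlas and the corresponding result for schemes).

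Concretely, for a family $(A_k)_{k \in K}$ in $\shbb(\Gscr,I)$, applying $i^*$ to the canonical morphism
\begin{equation*}
  \bigoplus_{k \in K} p_*(A_k) \longrightarrow p_*\Bigl( \bigoplus_{k \in K} A_k \Bigr)
\end{equation*}
yields, by the two preceding compatibilities,
\begin{equation*}
  \bigoplus_{k \in K} p(i)_* i^*(A_k) \longrightarrow p(i)_* \Bigl( \bigoplus_{k \in K} i^*(A_k)\Bigr),
\end{equation*}
which is an isomorphism by the scheme/algebraic-space case. The conservativity of the family $\{i^*\}_{i \in I}$ then forces the original morphism to be an isomorphism, so $p_*$ commutes with direct sums. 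Brown representability, applied to the compactly generated triangulated category $\shbb(\Gscr,I)$, produces the desired right adjoint $p^!$.

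I do not foresee a genuine obstacle here; the only subtle point is to make sure the proper base change isomorphism $i^* p_* \simeq p(i)_* i^*$ is available in the diagram setting, which is exactly the content of proposition \ref{proper + smooth base change theorem for diagrams}, and to remember that the objectwise input $p(i)_*$ preserving sums is itself a non-trivial result from the earlier discussion of algebraic spaces rather than just schemes.
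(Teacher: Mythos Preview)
Your proposal is correct and follows essentially the same approach as the paper: reduce to the objectwise statement via conservativity of the family $\{i^*\}_{i\in I}$, use that $i^*$ commutes with colimits and that $i^*p_* \simeq p(i)_* i^*$ by proper base change, then invoke the previously established fact that $p(i)_*$ preserves direct sums for proper morphisms of algebraic spaces, and conclude by Brown representability using the compactly generated structure from the preceding proposition. If anything, you are slightly more careful than the paper, which says ``the case of schemes'' where the objectwise input is really the algebraic-space case you cite.
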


\begin{proof}
     Let $(B_j)_{j \in J}$ be a set of objects in $\shbb(\Gscr,I)$, we have to prove that the canonical morphism
    \begin{equation*} 
    \bigoplus_{j \in J}p_*B_j \longrightarrow p_*\left(\bigoplus_{j \in J}B_j \right)
    \end{equation*} 
    is an isomorphism. Since the family $i^* \colon \shbb(\Fscr,I) \longrightarrow \shbb(\Fscr(i))$ with $i \in I$ is conservative, we have to check that 
    \begin{equation*} 
    i^*\left(\bigoplus_{j \in J}p_*B_j \right) \longrightarrow i^*p_*\left(\bigoplus_{j \in J}B_j \right)
    \end{equation*} 
    is an isomorphism for every $i \in I$. This follows from the fact that $i^*$ commutes with colimits, the isomorphism $i^*p_* \simeq p(i)_*i^*$ and the case of schemes. By using a consequence of the Brown representability theorem (see for instance \cite[Corollaire 2.1.22]{ayoub-thesis-1}) and the preceding proposition, we see that $p_*$ admits a right adjoint. 
\end{proof}

\begin{cor}
    Let $f\colon (\mathscr{G},I) \longrightarrow (\mathscr{F},I)$ be a strongly compactifiable morphism, then $f_!$ admits a right adjoint $f^!$.
\end{cor}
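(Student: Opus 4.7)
The proof is essentially a formal two-line argument once the preceding proposition is in hand, so the plan is to exploit the definition of $f_!$ from a strong compactification directly. The plan is to factor $f$ as $p \circ j$, where $j \colon (\mathscr{G},I) \longrightarrow (\overline{\mathscr{G}},I)$ is cartesian and open objectwise, and $p \colon (\overline{\mathscr{G}},I) \longrightarrow (\mathscr{F},I)$ is proper objectwise, so that by definition
\begin{equation*}
    f_! = p_* \circ j_{\#}.
\end{equation*}

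First, I would observe that since $j$ is open (hence smooth) objectwise, the functor $j_{\#}$ is by construction a left adjoint in the algebraic derivator $\shbb$; its right adjoint is the standard pullback $j^*$. Second, I would invoke Proposition \ref{right adjoint of morphism proper objectwise}, which ensures that $p_*$ preserves direct sums (thanks to the conservativity of the family $i^*$, the analogous statement for schemes, and the isomorphism $i^* p_* \simeq p(i)_* i^*$ from the proper base change of Proposition \ref{proper + smooth base change theorem for diagrams}), and which then yields the existence of a right adjoint $p^!$ via the Brown representability argument of \cite[Corollaire 2.1.22]{ayoub-thesis-1}.

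Combining these two steps, $f_! = p_* \circ j_{\#}$ is a composition of left adjoints, so it admits a right adjoint given by the composition of the respective right adjoints in the opposite order; explicitly, one sets
\begin{equation*}
    f^! \coloneqq j^* \circ p^!.
\end{equation*}
The adjunction $(f_!, f^!)$ is then immediate from the two partial adjunctions $(j_\#, j^*)$ and $(p_*, p^!)$.

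The only non-routine input here is actually Proposition \ref{right adjoint of morphism proper objectwise}, whose proof in the excerpt already handles the main technical obstacle, namely showing $p_*$ commutes with arbitrary direct sums in the diagrammatic setting. Once that is granted, the corollary is a purely formal composition of adjoints, so there is no further obstacle. One should, however, note that $f^!$ as defined here a priori depends on the chosen strong compactification; the independence statement is not part of the claim and would require a separate argument comparing two compactifications via a common refinement (analogous to what is done for $f_!$ in \cite{bang-2024}).
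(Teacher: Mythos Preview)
Your proof is correct and follows essentially the same approach as the paper: factor $f$ via a strong compactification, then use the adjunction $(j_\#, j^*)$ for the open part and invoke Proposition~\ref{right adjoint of morphism proper objectwise} (together with Brown representability) for the proper part. The paper phrases it as treating the two cases separately rather than as a composition, but the content is identical.
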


\begin{proof}
    Since $f$ is compactifiable, we may treat two separated cases when $f$ is proper and open. If $f$ is proper, then $f_! = f_*$ admits an adjoint by proposition \ref{right adjoint of morphism proper objectwise}. If $f$ is open, then $f_! = f_{\#}$ has $f^*$ as its right adjoint. 
\end{proof}

\begin{prop} \label{proper base change theorem for diagrams}
  Given a cartesian square of diagrams of $S$-varieties
    \begin{equation*}
        \begin{tikzcd}[sep=large]
                  (\mathscr{G}',J) \arrow[r,"(g'{,}\beta)"] \arrow[rd,phantom,"\square"] \arrow[d,"f'",swap] & (\mathscr{G},I) \arrow[d,"f"] \\ 
         (\mathscr{F}',J) \arrow[r,"(g{,}\beta)"] & (\mathscr{F},I)
        \end{tikzcd}
    \end{equation*}
with $f$ compactifiable, then the canonical base change morphisms 
\begin{align*}
    \Ex^*_! \colon (g,\beta)^*f_! \longrightarrow (f')_!(g',\beta)^* \\ 
     \Ex^!_* \colon  f^!(g,\beta)_*  \longrightarrow (g',\beta)_*(f')^!
\end{align*}
are isomorphisms. By adjunction, one obtains 
\begin{align*}
    \Ex^{!*}\colon (g',\beta)^*f^! \longrightarrow (f')^!(g,\beta)^* \\ 
    \Ex^!_* \colon (g',\beta)_*(f')^! \longrightarrow f^!(g,\beta)_*
\end{align*}
\end{prop}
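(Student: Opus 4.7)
The plan is to factor $f$ through a compactification and reduce each piece to the previously established proposition \ref{proper + smooth base change theorem for diagrams}, then obtain the three remaining exchange morphisms by the standard mate correspondences.

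Since $f$ is compactifyable, fix a factorization $f = p \circ j$ with $p \colon (\overline{\mathscr{G}},I) \to (\mathscr{F},I)$ proper objectwise and $j \colon (\mathscr{G},I) \hookrightarrow (\overline{\mathscr{G}},I)$ cartesian and objectwise an open immersion, so that by definition $f_! = p_* j_\#$. Pulling back this factorization along $(g,\beta)$ produces $f' = p' \circ j'$ with $p'$ proper objectwise and $j'$ cartesian, objectwise an open immersion, because properness, openness and cartesianness are preserved by base change. Denoting by $(g'',\beta) \colon (\overline{\mathscr{G}}',J) \to (\overline{\mathscr{G}},I)$ the middle vertical of the pulled-back diagram, the exchange morphism $\Ex^*_!$ is the composite
\begin{equation*}
(g,\beta)^* f_! \;=\; (g,\beta)^* p_* j_\# \;\xrightarrow{\;\Ex^*_*\;}\; (p')_* (g'',\beta)^* j_\# \;\xrightarrow{\;(\Ex^*_\#)^{-1}\;}\; (p')_* (j')_\# (g',\beta)^* \;=\; (f')_! (g',\beta)^*.
\end{equation*}
The first arrow is an isomorphism by proposition \ref{proper + smooth base change theorem for diagrams} applied to the proper objectwise morphism $p$; the second is an isomorphism by the smooth case of the same proposition applied to the objectwise open morphism $j$. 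Independence of the chosen compactification, already recorded above, ensures that this composite really is the canonical $\Ex^*_!$.

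The morphism $\Ex^!_*$ in the first display is the right mate of $\Ex^*_!$ through the adjunctions $(f_!, f^!)$, $((f')_!, (f')^!)$ and $((g,\beta)^*, (g,\beta)_*)$; since the mate of an isomorphism is an isomorphism, it is an isomorphism as well. The exchange $\Ex^{!*} \colon (g',\beta)^* f^! \to (f')^! (g,\beta)^*$ and the inverse-direction $\Ex^!_* \colon (g',\beta)_* (f')^! \to f^!(g,\beta)_*$ are then obtained by applying one further mate correspondence, using the adjunction $((g',\beta)^*, (g',\beta)_*)$, and inherit the isomorphism property by the same formal argument.

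The main obstacle is checking that the compactification survives the base change, i.e., that the pulled-back factorization $f' = p' \circ j'$ is still a compactification in the sense of the paper (proper plus cartesian-open). This amounts to the stability of these three conditions under base change, which is standard; the only genuinely non-trivial inputs are already packaged in proposition \ref{proper + smooth base change theorem for diagrams}, and everything else is the formal bookkeeping of adjunctions and mates.
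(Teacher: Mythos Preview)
The paper states this proposition without proof, so there is no argument to compare against; your approach---factor $f$ through a compactification, handle the proper and open pieces separately via proposition \ref{proper + smooth base change theorem for diagrams} and the smooth base change for $(-)_\#$, then pass to adjoints---is the expected one and is essentially correct for the two isomorphisms the proposition actually asserts.

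Two points need attention. First, a citation issue: the isomorphism $(g'',\beta)^* j_\# \simeq (j')_\# (g',\beta)^*$ is not literally the ``smooth case'' of proposition \ref{proper + smooth base change theorem for diagrams}, which concerns $\Ex^*_*$; it is the smooth base change for $(-)_\#$, i.e.\ axiom \textbf{AlgDer 3d} (equivalently \cite[Corollaire 2.4.24]{ayoub-thesis-1}, which the paper invokes elsewhere). The content is fine, but cite the right source.

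Second, and more substantively, your claim that ``the mate of an isomorphism is an isomorphism'' is false in general. It holds for the \emph{total} mate, taken with respect to adjunctions on all four sides of the square simultaneously, which is exactly how $\Ex^!_* \colon f^!(g,\beta)_* \to (g',\beta)_*(f')^!$ arises from $\Ex^*_!$; so that part is fine. But a \emph{single} mate, using only one pair of adjunctions, does not preserve invertibility (the standard counterexample is that the tautological isomorphism $f'^*g^* \simeq g'^*f^*$ has as mate the base change $g^*f_* \to f'_*g'^*$, which is not an isomorphism without further hypotheses). Your last paragraph therefore over-claims: the morphisms $\Ex^{!*}$ and the second $\Ex^!_*$ in the proposition are only asserted to \emph{exist}, not to be isomorphisms, and your mate argument does not show they are. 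Drop the phrase ``inherit the isomorphism property'' and simply note that these transformations are obtained formally by adjunction.
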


We end this appendix by a useful lemma 
\begin{lem} \label{direct images commute with homotopy colimits} 
    Let $f \colon X \longrightarrow Y$ be a morphism of algebraic $S$-spaces and $I$ be a small category, from the cartesian square
    \begin{equation*}
        \begin{tikzcd}[sep=large]
                  (X,I) \arrow[r,"p_I"] \arrow[d,"f",swap] & X \arrow[d,"f"] \\ 
                  (Y,I) \arrow[r,"p_I"] & Y
        \end{tikzcd}
    \end{equation*}
    we deduce an isomorphism 
    \begin{equation*}
        (p_I)_{\#}f_! \longrightarrow f_!(p_I)_{\#}. 
    \end{equation*}
\end{lem}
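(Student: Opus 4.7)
The plan is to identify the morphism $(p_I)_{\#}f_! \to f_!(p_I)_{\#}$ as the mate, under the adjunctions $(p_I)_{\#} \dashv (p_I)^*$ on both $\shbb(X,I)$ and $\shbb(Y,I)$, of the canonical base change morphism
\begin{equation*}
    \alpha \colon (p_I^Y)^* f_! \longrightarrow (f_I)_! (p_I^X)^*,
\end{equation*}
where $f_I \colon (X,I) \longrightarrow (Y,I)$ denotes the morphism of diagrams induced by $f$ and we have the strict equality $p_I^Y \circ f_I = f \circ p_I^X$. By the calculus of mates, $(p_I)_{\#}f_! \to f_!(p_I)_{\#}$ is an isomorphism if and only if $\alpha$ is, so it suffices to establish that $\alpha$ is an isomorphism.

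To prove this, I would invoke Nagata compactification for algebraic spaces (\cite{conrad-2012}) to factor $f = p \circ j$ with $j$ a dense open immersion and $p$ proper, and correspondingly $f_! = p_* \circ j_{\#}$. The base change morphism $\alpha$ then decomposes along this factorization, and by compatibility of mates/base change with horizontal composition it suffices to treat the two factors separately. For the open part, the base change $(p_I)^* j_{\#} \longrightarrow (j_I)_{\#} (p_I)^*$ is an isomorphism by the smooth base change axiom \textbf{AlgDer 3d} for algebraic derivators (already exploited in the proof of proposition \ref{support property of derivators}). For the proper part, the base change $(p_I)^* p_* \longrightarrow (p_I)_* (p_I)^*$ is an isomorphism thanks to the proper base change theorem for diagrams, namely proposition \ref{proper + smooth base change theorem for diagrams}. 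Composing yields that $\alpha$ is an isomorphism, hence so is its mate.

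The main obstacle here is more bookkeeping than conceptual: one must carefully distinguish the instances of $p_I$ attached to $X$, $Y$, and the intermediate compactification space, and verify that the mate correspondence behaves well under the Nagata factorization. All the substantive ingredients—Nagata for algebraic spaces, proposition \ref{proper + smooth base change theorem for diagrams}, and the smooth base change axioms of algebraic derivators—are already installed in the appendix, so the argument is essentially a formal assembly of these inputs. One could alternatively argue by appealing to the fact that $f_!$ is a left adjoint (to $f^!$) and thus preserves homotopy colimits, of which $(p_I)_{\#}$ is an instance; but verifying that the abstractly-preserved colimit comparison coincides with the base change morphism in question still reduces to the mate calculus above.
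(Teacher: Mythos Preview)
Your argument is correct and in fact slightly more direct than the paper's. The paper first cites the scheme case from \cite{bang-2024} and then reduces the algebraic-space case to it by choosing $\tau$-atlases $U \to X$, $V \to Y$ and invoking conservativity together with \cite[Corollaire 2.4.24]{ayoub-thesis-1} and proposition \ref{proper base change theorem for diagrams} to descend. You instead work directly at the level of algebraic spaces via Nagata and the diagrammatic base change results, avoiding the external citation entirely.

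Two remarks. First, you could shorten the argument: proposition \ref{proper base change theorem for diagrams} already gives the isomorphism $\Ex^*_! \colon (p_I)^* f_! \xrightarrow{\ \sim\ } (f_I)_!(p_I)^*$ for any compactifyable $f$, so the Nagata factorization and the separate treatment of the open and proper factors are not needed---you are essentially reproving a special case of that proposition. Second, the reference to \textbf{AlgDer 3d} for the open part is slightly off: that axiom concerns the inclusion of a single object $i \colon (\Fscr(i),\mathbf{e}) \hookrightarrow (\Fscr,I)$, not the projection $p_I$. The correct reference is \cite[Corollaire 2.4.24]{ayoub-thesis-1} (or one checks the isomorphism objectwise using \textbf{AlgDer 3d} and lemma \ref{conservativity of pullbacks of schemes in a diagram}). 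Neither point affects the validity of your proof.
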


\begin{proof}
    If $X,Y$ are schemes, this is \cite[Proposition 1.2]{bang-2024}. In general, one can choose $\tau$-atlases and use \cite[Corollaire 2.4.24]{ayoub-thesis-1} and proposition \ref{proper base change theorem for diagrams} to reduce to the case of schemes. We leave the details to the readers. 
\end{proof}
\bibliographystyle{alpha}
\bibliography{ref}

\end{document}